\newtheorem{thm}{Theorem}[section]
\newtheorem{prop}[thm]{Proposition}
\newtheorem{defn}[thm]{Definition}
\numberwithin{equation}{section}
\begin{document}

\title{\bf Affine connections on singular multiply warped products and singular twisted products}
\author{Siyao Liu  \hskip 0.4 true cm Tong Wu \hskip 0.4 true cm  Yong Wang$^{*}$}

\thanks{{\scriptsize
\hskip -0.4 true cm \textit{2010 Mathematics Subject Classification:}
53C40; 53C42.
\newline \textit{Key words and phrases:} singular multiply warped products; singular twisted products; semi-symmetric metric Koszul forms; semi-symmetric non-metric Koszul forms; Koszul forms associated with the almost product structure.
\newline \textit{$^{*}$Corresponding author}}}

\maketitle

\begin{abstract}
 \indent In this paper, we generalize the results in [Y. Wang: Affine connections on singular warped products. Int. J. Geom. Methods Mod. Phys. 18(5), 2150076, (2021).] to singular multiply warped products and singular twisted products.
We study singular multiply warped products and singular twisted products and their curvature with the semi-symmetric metric connection and the semi-symmetric non-metric connection. We also discuss Koszul forms associated with the almost product structure and their curvature of singular multiply warped products and singular twisted products.
Finally, several examples are presented to demonstrate the theoretical results.

\end{abstract}

\vskip 0.2 true cm


\pagestyle{myheadings}
\markboth{\rightline {\scriptsize Liu}}
         {\leftline{\scriptsize  Affine connections on singular multiply warped products and singular twisted products}}

\bigskip
\bigskip


\section{ Introduction}
Bishop and O'Neill presented the concept of warped products \cite{RL}.
Multiply warped products are natural generalization of warped products, for example, generalized Robertson-Walker space-times.
The warped products provides a way to construct new semi-Riemannian manifolds from known ones.
This construction are useful for analyzing and interpreting General Relativity, cosmological models and black holes.
Due to the wide applicability, the topics on warped products have received increased attentions from geometers, for instance, some researchers using warped product spaces found Einstein manifolds and manifolds with constant scalar curvature \cite{DF,OC5,Wang4}.

Stoica extended the definition of the warped product between two non-degenerate semi-Riemannian manifolds to two singular semi-Riemannian manifolds in \cite{OC2}.
If the metric becomes degenerate, the Levi-Civita connection and the Riemann curvature no longer work, because they are based on the inverse of the metric, and on related operations.
For the Friedmann-Lemaitre-Robertson-Walker model, the metric of the product manifold becomes degenerate, and the Levi-Civita connection and Riemann curvature, as usually defined, become singular or undefined.
For this purpose, in \cite{OC1}, Stoica gave the properties of the Koszul form, which are similar to those of the Levi-Civita connection.
Stoica also obtained an useful formula that the Riemann curvature tensor was expressed directly in terms of the Koszul form.
Wang generalized multiply warped products to singular multiply warped products in \cite{Wang1}.

By changing the domain of the warping function, we can get the definition of the twisted product.
Wang studied the multiply twisted products in \cite{Wang2}, as generalizations of multiply warped products and twisted products.\par

In \cite{HA}, the notion of a semi-symmetric metric connection on a Riemannian manifold was introduced.
N.S. Agashe and M.R. Chafle showed some properties of submanifolds of a Riemannian manifold with a semi-symmetric non-metric connection \cite{NS1, NS2}.
Semi-symmetric metric connection and semi-symmetric non-metric connection on a Riemannian manifold, have been studied extensively by, among others,  by Demirbag et al \cite{Wang3, OC4, SD, AG}.
In \cite{Wang1}, the author introduced semi-symmetric metric Koszul forms and semi-symmetric non-metric Koszul forms on singular semi-Riemannian manifolds.
Wang also introduced Koszul forms associated with the almost product structure on singular almost product semi-Riemannian manifolds.\par

It is natural to try to relate singular multiply warped products to the semi-symmetric metric Koszul forms, semi-symmetric non-metric Koszul forms and Koszul forms associated with the almost product structure, respectively.
To make up the gaps in the research, the current study gives the definitions of singular twisted products, similarly we can study the singular twisted products and their curvature with the semi-symmetric metric connection and semi-symmetric non-metric connection. 
We also discuss Koszul forms associated with the almost product structure and their curvature of the singular twisted products.\par

A brief description of the organization of this paper is as follows.
In Section 2, this paper will firstly introduce the definitions of singular multiply warped products, then give the formulae of semi-symmetric metric Koszul forms. Via some simple calculations,
we generalize the results in \cite{Wang1} to singular multiply warped products.
In the third section, we briefly sketch semi-symmetric non-metric Koszul forms, hereby get the Riemann curvature tensor of the semi-symmetric non-metric covariant derivative on a semi-symmetric non-metric semi-regular semi-Riemannian manifold.
In Section 4, we extend the results of Theorem 4.17 in \cite{Wang1} to singular multiply warped products.
In the next Section, we generalize the twisted product to singular semi-Riemannian manifolds.
Firstly, we compute the Koszul form on a degenerate twisted product, and secondly we study the Riemann curvature tensor of the covariant derivative on a semi-regular semi-Riemannian manifold. Then we give the semi-symmetric metric Koszul forms on degenerate twisted products and the Riemann curvature tensor of the semi-symmetric metric covariant derivative on a semi-symmetric metric semi-regular semi-Riemannian manifold.
Section 6 is devoted to the study of semi-symmetric non-metric Koszul forms and their curvature of singular twisted products.
In Section 7, it is shown that the Koszul forms associated with the almost product structure on a singular almost product semi-Riemannian manifold. We also develop the Theorem 4.8 to singular twisted products.
In Section 8, we give four examples to illustrate the results in this paper.

\vskip 1 true cm
\section{ Semi-Symmetric Metric Koszul Forms and Their Curvature of Singular Multiply Warped Products}

We begin by recalling the notion of multiply warped products of singular semi-Riemannian manifolds.

\begin{defn}{\rm\cite{Wang1} }
Let $(B, g_{B})$ and $(F_{j}, g_{F_j}),~ 1\leq j\leq m,$  be singular semi-Riemannian manifolds, and $b_{j}: B\rightarrow \mathbb{R},~ 1\leq j\leq m,$ smooth functions. The multiply warped product of $B$ and $F_{j}$ with warping function $b_{j},~ 1\leq j\leq m,$ is the semi-Riemannian manifold
\begin{equation}
B\times_{b_{1}}F_{1}\times\cdots\times_{b_{m}}F_{m}:=(B\times F_{1}\times\cdots\times F_{m}, \pi^{*}_{B}(g_{B})+\sum_{j=1}^{m}(b_{j}\circ\pi_{B})^{2}\pi^{*}_{F_{j}}(g_{F_j})),
\end{equation}
where $\pi_{B}: B\times F_{1}\times\cdots\times F_{m}\rightarrow B$ and $\pi_{F_{j}}: B\times F_{1}\times\cdots\times F_{m}\rightarrow F_{j}$ are the canonical projections.
\end{defn}

The inner product on $B\times_{b_{1}}F_{1}\times\cdots\times_{b_{m}}F_{m}$ takes, for any point $p \in B\times F_{1}\times\cdots\times F_{m}$ and for any pair of tangent vectors $x, y \in T_{p}(B\times F_{1}\times\cdots\times F_{m}),$ the explicit form
\begin{equation}
g(x, y)=g_{B}(d\pi_{B}(x), d\pi_{B}(y))+\sum_{j=1}^{m}b^{2}_{j}(p)g_{F_{j}}(d\pi_{F_{j}}(x), d\pi_{F_{j}}(y)).
\end{equation}

Let us look back to the definitions of the Koszul form and semi-symmetric metric Koszul forms.

\begin{defn}{\rm\cite{OC1} }
Let $(M, g)$ be a singular semi-Riemannian manifold. The Koszul form is defined as $\mathcal{K}: \Gamma(TM)^{3}\rightarrow C^{\infty}(M),$
\begin{align}
\mathcal{K}(X, Y, Z):=&\frac{1}{2}\{X(g(Y, Z))+Y(g(Z, X))-Z(g(X, Y))\\
&-g(X, [Y, Z])+g(Y, [Z, X])+g(Z, [X, Y])\}.\nonumber
\end{align}
\end{defn}

\begin{defn}{\rm\cite{OC1} }
Let $X, Y\in\Gamma(TM).$ The lower covariant derivative of $Y$ in the direction of $X$ is defined as the differential $1$-form $\nabla^{\flat}_{X}Y\in A^{1}(M)$
\begin{equation}
(\nabla^{\flat}_{X}Y)(Z):=\mathcal{K}(X, Y, Z),
\end{equation}
for any $Z\in\Gamma(TM).$
\end{defn}

\begin{defn}{\rm\cite{Wang1} }
Let $P$ be a vector field on $M.$ Semi-symmetric metric Koszul forms $\overline{\mathcal{K}_{p}}: \Gamma(TM)^{3}\rightarrow C^{\infty}(M)$ on $(M, g)$ is defined as
\begin{equation}
\overline{\mathcal{K}_{p}}(X, Y, Z):=\mathcal{K}(X, Y, Z)+g(Y, P)g(X, Z)-g(X, Y)g(P, Z).
\end{equation}
Usually, we write it $\overline{\mathcal{K}}$ for brevity.
\end{defn}

\begin{defn}{\rm\cite{Wang1} }
Let $X, Y\in\Gamma(TM).$ The semi-symmetric metric lower covariant derivative of $Y$ in the direction of $X$ as the differential $1$-form $\overline{\nabla}^{\flat}_{X}Y\in A^{1}(M)$
\begin{equation}
(\overline{\nabla}^{\flat}_{X}Y)(Z):=\overline{\mathcal{K}}(X, Y, Z),
\end{equation}
for any $Z\in\Gamma(TM).$
\end{defn}

Similar to Proposition 2.15 and Proposition 2.16 in \cite{Wang1}, we have the following statements.
\begin{prop}
Let $B\times_{b_{1}}F_{1}\times\cdots\times_{b_{m}}F_{m}$ be a degenerate multiply warped product and let the vector fields $X, Y, Z\in\Gamma(TB)$ and $U_{j}, V_{j}, W_{j}\in\Gamma(TF_{j}).$ Let $\overline{\mathcal{K}}$ be the semi-symmetric metric Koszul form on $B\times_{b_{1}}F_{1}\times\cdots\times_{b_{m}}F_{m}$ and $\overline{\mathcal{K}}_{B}$, $\overline{\mathcal{K}}_{F_{j}}$ be the lifts of the semi-symmetric metric
Koszul form on $B,$ $F_{j},$ respectively. Let $P\in\Gamma(TB),$ then
\begin{align}
&(1)\overline{\mathcal{K}}(X, Y, Z)=\overline{\mathcal{K}}_{B}(X, Y, Z);\nonumber\\
&(2)\overline{\mathcal{K}}(X, Y, W_{j})=\overline{\mathcal{K}}(X, W_{j}, Y)=\overline{\mathcal{K}}(W_{j}, X, Y)=0;\nonumber\\
&(3)\overline{\mathcal{K}}(X, V_{i}, W_{j})=b_{j}X(b_{j})g_{F_{j}}(V_{j}, W_{j}),~if~i=j;\nonumber\\
&(4)\overline{\mathcal{K}}(V_{i}, X, W_{j})=-\overline{\mathcal{K}}(V_{i}, W_{j}, X)=b_{j}X(b_{j})g_{F_{j}}(V_{j}, W_{j})+b^{2}_{j}g_{B}(X, P)g_{F_{j}}(V_{j}, W_{j}),~if~i=j;\nonumber\\
&(5)\overline{\mathcal{K}}(X, V_{i}, W_{j})=\overline{\mathcal{K}}(V_{i}, X, W_{j})=\overline{\mathcal{K}}(V_{i}, W_{j}, X)=0,~if~i\neq j;\nonumber\\
&(6)\overline{\mathcal{K}}(U_{i}, V_{j}, W_{k})=b^{2}_{j}\mathcal{K}_{F_{j}}(U_{j}, V_{j}, W_{j}),~if~i=j=k;\nonumber\\
&(7)\overline{\mathcal{K}}(U_{i}, V_{j}, W_{k})=0,~other~cases.\nonumber
\end{align}
\end{prop}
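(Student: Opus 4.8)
The plan is to reduce each of the seven identities to two independent pieces: the ordinary Koszul form $\mathcal{K}$ evaluated on lifts, and the correction term
\[
C(A,B,D):=g(B,P)\,g(A,D)-g(A,B)\,g(P,D)
\]
that, by the definition of $\overline{\mathcal{K}}$, is the only difference between $\overline{\mathcal{K}}$ and $\mathcal{K}$. Thus for every triple I would write $\overline{\mathcal{K}}(A,B,D)=\mathcal{K}(A,B,D)+C(A,B,D)$ and handle the two summands separately. The values of $\mathcal{K}$ on triples of lifts are obtained exactly as in the singly warped situation (the analogues of Propositions 2.15 and 2.16 of \cite{Wang1} carried over to the multiply warped metric), so the genuinely new content is deciding which correction terms survive.

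Before the case analysis I would record the structural facts that make the computation mechanical. From the block form of $g$ one has $g(X,W_j)=0$ for $X\in\Gamma(TB),\,W_j\in\Gamma(TF_j)$, and $g(V_i,W_j)=\delta_{ij}\,b_j^2\,g_{F_j}(V_j,W_j)$ on the fibers. For lifts the Lie brackets satisfy $[X,W_j]=0$, $[V_i,W_j]=0$ when $i\neq j$, and $[V_j,W_j]\in\Gamma(TF_j)$. Finally $b_j$ is a function on $B$, so $X(b_j^2)=2b_jX(b_j)$ while $U_j(b_j)=0$ for any fiber field $U_j$, and $g_{F_j}(\cdot,\cdot)$ is annihilated by base directions. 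Feeding these into the definition of $\mathcal{K}$ collapses the six terms of the Koszul expression and yields the stated values of $\mathcal{K}$: namely $\mathcal{K}_B(X,Y,Z)$ in case (1), the common value $b_jX(b_j)g_{F_j}(V_j,W_j)$ in cases (3) and (4), the pure-fiber value $b_j^2\mathcal{K}_{F_j}(U_j,V_j,W_j)$ in case (6), and $0$ in cases (2), (5), (7).

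The decisive observation for the correction is that $P\in\Gamma(TB)$, whence $g(P,W_j)=g(W_j,P)=0$ for every fiber field; consequently $C$ can be nonzero only through base--base pairings $g(\cdot,P)$ sitting on a slot occupied by a base vector. In case (1) both surviving pairings are $g_B$-pairings, so $C$ reproduces precisely the correction defining $\overline{\mathcal{K}}_B$, giving (1). In cases (2), (3), (5), (6), (7) at least one factor in each product forming $C$ is a base--fiber or distinct-fiber inner product, so $C\equiv 0$ and $\overline{\mathcal{K}}$ coincides with $\mathcal{K}$; this yields the listed zeros together with the pure-fiber value of (6). The only case where the semi-symmetric term contributes is (4): for the triple $(V_j,X,W_j)$ the summand $g(X,P)\,g(V_j,W_j)=b_j^2\,g_B(X,P)\,g_{F_j}(V_j,W_j)$ survives, which is exactly the extra term appearing there.

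It remains to confirm the antisymmetry $\overline{\mathcal{K}}(V_i,X,W_j)=-\overline{\mathcal{K}}(V_i,W_j,X)$, which I would deduce from the skew relation $\mathcal{K}(A,B,D)+\mathcal{K}(A,D,B)=A(g(B,D))$ for the ordinary Koszul form together with $g(X,W_j)=0$, and then check that the correction terms for $(V_j,X,W_j)$ and $(V_j,W_j,X)$ are negatives of one another. I expect no conceptual difficulty here. The only real obstacle is bookkeeping: keeping the index conditions $i=j$ versus $i\neq j$ straight across the seven cases, and tracking the asymmetry of $C$ in its three arguments so that the additional term lands only in case (4) and with the correct sign.
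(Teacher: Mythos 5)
Your proposal is correct and follows essentially the same route the paper intends: the paper omits the proof, asserting the result "similar to Propositions 2.15 and 2.16 in [Wang1]", and that argument is exactly your decomposition $\overline{\mathcal{K}}=\mathcal{K}+C$ with the known values of $\mathcal{K}$ on the multiply warped metric plus the observation that $g(P,W_j)=0$ kills every correction term except the one in case (4). Your sign check for the antisymmetry in (4) via $\mathcal{K}(A,B,D)+\mathcal{K}(A,D,B)=A(g(B,D))$ is also sound.
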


\begin{prop}
Let $B\times_{b_{1}}F_{1}\times\cdots\times_{b_{m}}F_{m}$ be a degenerate multiply warped product and let the vector fields $X, Y, Z\in\Gamma(TB)$ and $U_{j}, V_{j}, W_{j}\in\Gamma(TF_{j}).$ Let $\overline{\mathcal{K}}$ be the semi-symmetric metric Koszul form on $B\times_{b_{1}}F_{1}\times\cdots\times_{b_{m}}F_{m}$ and $\overline{\mathcal{K}}_{B}$, $\overline{\mathcal{K}}_{F_{j}}$ be the lifts of the semi-symmetric metric
Koszul form on $B,$ $F_{j},$ respectively. Let $P\in\Gamma(TF_{l}),$ then
\begin{align}
&(1)\overline{\mathcal{K}}(X, Y, Z)=\mathcal{K}_{B}(X, Y, Z);\nonumber\\
&(2)\overline{\mathcal{K}}(X, Y, W_{j})=-\overline{\mathcal{K}}(X, W_{j}, Y)=-b^{2}_{j}g_{B}(X, Y)g_{F_{j}}(W_{j}, P),~if~j=l;\nonumber\\
&(3)\overline{\mathcal{K}}(X, Y, W_{j})=\overline{\mathcal{K}}(X, W_{j}, Y)=0,~if~j\neq l;\nonumber\\
&(4)\overline{\mathcal{K}}(W_{j}, X, Y)=0;\nonumber\\
&(5)\overline{\mathcal{K}}(X, V_{i}, W_{j})=\overline{\mathcal{K}}(V_{i}, X, W_{j})=-\overline{\mathcal{K}}(V_{i}, W_{j}, X)=b_{j}X(b_{j})g_{F_{j}}(V_{j}, W_{j}),~if~i=j;\nonumber\\
&(6)\overline{\mathcal{K}}(X, V_{i}, W_{j})=\overline{\mathcal{K}}(V_{i}, X, W_{j})=\overline{\mathcal{K}}(V_{i}, W_{j}, X)=0,~if~i\neq j;\nonumber\\
&(7)\overline{\mathcal{K}}(U_{i}, V_{j}, W_{k})=b^{2}_{j}\mathcal{K}_{F_{j}}(U_{j}, V_{j}, W_{j})+b^{4}_{j}g_{F_{j}}(U_{j}, W_{j})g_{F_{j}}(V_{j}, P)-b^{4}_{j}g_{F_{j}}(U_{j}, V_{j})g_{F_{j}}(W_{j}, P),\nonumber\\
&~if~i=j=k=l;\nonumber\\
&(8)\overline{\mathcal{K}}(U_{i}, V_{j}, W_{k})=-\overline{\mathcal{K}}(U_{i}, W_{k}, V_{j})=-b^{2}_{j}b^{2}_{k}g_{F_{j}}(U_{j}, V_{j})g_{F_{k}}(W_{k}, P),~if~i=j\neq k=l;\nonumber\\
&(9)\overline{\mathcal{K}}(U_{i}, V_{j}, W_{k})=0,~other~cases.\nonumber
\end{align}
\end{prop}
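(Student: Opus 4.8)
The plan is to reduce the whole statement to the additive definition of the semi-symmetric metric Koszul form together with the values of the \emph{ordinary} Koszul form $\mathcal{K}$ on a multiply warped product. Starting from
\[
\overline{\mathcal{K}}(X,Y,Z)=\mathcal{K}(X,Y,Z)+g(Y,P)g(X,Z)-g(X,Y)g(P,Z),
\]
the first point to exploit is that the two correction terms are purely algebraic in $g$ and $P$: they carry no derivatives and no Lie brackets, so all analytic content sits in $\mathcal{K}$. Its nine index patterns are exactly the unbarred analogues of the present statement, namely $\mathcal{K}(X,Y,Z)=\mathcal{K}_B(X,Y,Z)$, $\mathcal{K}(X,Y,W_j)=\mathcal{K}(X,W_j,Y)=\mathcal{K}(W_j,X,Y)=0$, $\mathcal{K}(X,V_i,W_j)=\mathcal{K}(V_i,X,W_j)=-\mathcal{K}(V_i,W_j,X)=b_jX(b_j)g_{F_j}(V_j,W_j)$ when $i=j$, and $\mathcal{K}(U_i,V_j,W_k)=b_j^2\mathcal{K}_{F_j}(U_j,V_j,W_j)$ when $i=j=k$ and zero otherwise. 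I would either cite these (the multiply warped generalization of the Koszul-form formulae of \cite{Wang1}) or record them once as a preliminary lemma, the proof being the standard warped-product computation done fiber by fiber, using that lifted fields from distinct factors have vanishing brackets and that $X(b_j)$ is the only way a horizontal derivative interacts with a fiber.

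Next I would tabulate the metric pairings forced by $(2.2)$: $\Gamma(TB)$ is $g$-orthogonal to every $\Gamma(TF_j)$, distinct fibers are mutually orthogonal, and for $V_j,W_j\in\Gamma(TF_j)$ one has $g(V_j,W_j)=b_j^2\,g_{F_j}(V_j,W_j)$. The hypothesis $P\in\Gamma(TF_l)$ then makes the two correction terms highly selective: $g(\,\cdot\,,P)$ vanishes on $\Gamma(TB)$ and on $\Gamma(TF_j)$ for $j\neq l$, and on $\Gamma(TF_l)$ it reduces to $b_l^2\,g_{F_l}(\,\cdot\,,P)$. Consequently a correction term survives only when the slot paired with $P$ actually lies in $F_l$, and this is precisely what singles out cases (2), (7) and (8), where a factor $W_l$, $V_l$ or $U_l$ meets $P$; in every other pattern both corrections vanish and $\overline{\mathcal{K}}$ collapses to $\mathcal{K}$.

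With these two inputs the proof is a finite case check: for each pattern I substitute the value of $\mathcal{K}$ and evaluate $g(Y,P)g(X,Z)-g(X,Y)g(P,Z)$ from the pairing table. For instance, in case (7) ($i=j=k=l$) the first correction is $g(V_l,P)g(U_l,W_l)=b_l^4g_{F_l}(V_l,P)g_{F_l}(U_l,W_l)$ and the second is $g(U_l,V_l)g(P,W_l)=b_l^4g_{F_l}(U_l,V_l)g_{F_l}(W_l,P)$, which together with $b_l^2\mathcal{K}_{F_l}$ give the stated three-term expression, while in case (8) ($i=j\neq k=l$) only the second correction survives, yielding $-b_j^2b_k^2g_{F_j}(U_j,V_j)g_{F_k}(W_k,P)$. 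The antisymmetry relations asserted in (2), (5) and (8) I would obtain structurally rather than by recomputing: the identity $\overline{\mathcal{K}}(A,B,C)+\overline{\mathcal{K}}(A,C,B)=A(g(B,C))$ follows from the definition and the symmetry of $g$ (the corrections are symmetric in $B,C$), so whenever the relevant pairing is identically zero — $g(Y,W_j)=0$ in (2), $g(X,W_j)=0$ in (5), $g(V_j,W_k)=0$ for $j\neq k$ in (8) — exact skew-symmetry in the last two slots drops out for free.

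The work here is organizational rather than conceptual: keeping the fiber indices $i,j,k$ and the distinguished index $l$ straight across all nine patterns, and propagating the warping factors correctly (the $b_l^4$ appearing exactly when two $F_l$-pairings coincide, the mixed $b_j^2b_k^2$ in case (8)). The one place demanding genuine care is the preliminary lemma: I would pin down the unbarred values in the mixed cases first — in particular that $\mathcal{K}(X,Y,W_j)=0$ and that the horizontal–fiber coupling yields precisely $b_jX(b_j)$ — since every later substitution rests on them, after which the case analysis is routine.
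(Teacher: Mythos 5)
Your proposal is correct and matches the approach the paper intends (the paper gives no explicit proof here, deferring to the analogous computations for Propositions 2.15--2.16 of \cite{Wang1}): expand the definition of $\overline{\mathcal{K}}$, substitute the known values of the ordinary Koszul form $\mathcal{K}$ on the multiply warped product, and let the orthogonality of the factors together with $P\in\Gamma(TF_{l})$ kill all but the listed correction terms. One wording slip: in your justification of $\overline{\mathcal{K}}(A,B,C)+\overline{\mathcal{K}}(A,C,B)=A(g(B,C))$ the correction terms are \emph{antisymmetric} under swapping $B$ and $C$ (hence cancel in the sum) rather than symmetric, but the identity itself and the way you apply it to cases (2), (5) and (8) are correct.
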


On a semi-regular semi-Riemannian manifold, we define the Riemann curvature tensor of the covariant derivative
\begin{equation}
R(X, Y, Z, T):=(\nabla_{X}\nabla^{\flat}_{Y}Z)(T)-(\nabla_{Y}\nabla^{\flat}_{X}Z)(T)-(\nabla^{\flat}_{[X, Y]}Z)(T).
\end{equation}
Likewise, on a semi-symmetric metric semi-regular semi-Riemannian manifold, we define the Riemann curvature tensor of the semi-symmetric metric covariant derivative
\begin{equation}
\overline{R}(X, Y, Z, T):=(\overline{\nabla}_{X}\overline{\nabla}^{\flat}_{Y}Z)(T)-(\overline{\nabla}_{Y}\overline{\nabla}^{\flat}_{X}Z)(T)-(\overline{\nabla}^{\flat}_{[X, Y]}Z)(T).
\end{equation}

Via some simple calculations, we can get
\begin{prop}{\rm\cite{Wang1} }
For any vector fields $X, Y, Z, T\in\Gamma(TM)$ on a semi-symmetric metric semi-regular semi-Riemannian manifold $(M, g),$
\begin{align}
\overline{R}(X, Y, Z, T)&=R(X, Y, Z, T)-g(Y, P)g(X, Z)g(P, T)+g(X, P)g(Y, Z)g(P, T)\\
&+\overline{\mathcal{K}}(X, P, Z)g(Y, T)-\overline{\mathcal{K}}(Y, P, Z)g(X, T)\nonumber\\
&-\mathcal{K}(X, P, T)g(Y, Z)+\mathcal{K}(Y, P, T)g(X, Z)\nonumber.
\end{align}
\end{prop}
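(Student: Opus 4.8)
The strategy is to reduce $\overline{R}$ to $R$ by isolating the purely algebraic difference between $\overline{\mathcal{K}}$ and $\mathcal{K}$. I set $\Theta(A,B,C):=g(B,P)g(A,C)-g(A,B)g(P,C)$, so that $\overline{\mathcal{K}}=\mathcal{K}+\Theta$ and, for fixed $Y,Z$, the one-form $\theta_{Y,Z}:=\overline{\nabla}^{\flat}_{Y}Z-\nabla^{\flat}_{Y}Z$ is $T\mapsto g(Z,P)g(Y,T)-g(Y,Z)g(P,T)$. Since $\theta_{Y,Z}$ is a function-combination of $g(Y,\cdot)$ and $g(P,\cdot)$, it lies in the image of $g^{\flat}$ and is therefore admissible for the contractions that semi-regularity makes well defined. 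Throughout I would lean on the two structural identities satisfied by the Koszul form, namely metric compatibility $X\bigl(g(A,B)\bigr)=\mathcal{K}(X,A,B)+\mathcal{K}(X,B,A)$ and the torsion-free relation $\mathcal{K}(X,Y,Z)-\mathcal{K}(Y,X,Z)=g([X,Y],Z)$, both of which follow directly from the definition of $\mathcal{K}$.

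The first genuine step is to record how $\overline{\nabla}_{X}$ acts on a one-form $\omega$ relative to $\nabla_{X}$. From $\overline{\nabla}^{\flat}_{X}T=\nabla^{\flat}_{X}T+\bigl(g(T,P)g(X,\cdot)-g(X,T)g(P,\cdot)\bigr)$, the rule $(\nabla_{X}\omega)(T)=X\bigl(\omega(T)\bigr)-\langle\omega,\nabla^{\flat}_{X}T\rangle$ and the semi-regular pairing $\langle\omega,g(V,\cdot)\rangle=\omega(V)$ (valid because the lower covariant derivatives annihilate the radical) together give
\[
(\overline{\nabla}_{X}\omega)(T)=(\nabla_{X}\omega)(T)-g(T,P)\,\omega(X)+g(X,T)\,\omega(P).
\]
This is the one place where the degeneracy of $g$ must be handled with care: the contraction $\langle\omega,\overline{\nabla}^{\flat}_{X}T\rangle$ is only meaningful through semi-regularity, and the identity above is exactly what turns the remainder of the argument into formal algebra.

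Next I would substitute $\omega=\overline{\nabla}^{\flat}_{Y}Z=\nabla^{\flat}_{Y}Z+\theta_{Y,Z}$ into the two second-derivative terms of $\overline{R}$, evaluate $(\nabla_{X}\theta_{Y,Z})(T)$ by the Leibniz rule together with metric compatibility, and expand the bracket term as $(\overline{\nabla}^{\flat}_{[X,Y]}Z)(T)=\overline{\mathcal{K}}([X,Y],Z,T)$. The leading pieces $(\nabla_{X}\nabla^{\flat}_{Y}Z)(T)-(\nabla_{Y}\nabla^{\flat}_{X}Z)(T)-(\nabla^{\flat}_{[X,Y]}Z)(T)$ reassemble, by the definition of $R$, into $R(X,Y,Z,T)$, leaving a collection of algebraic terms built from $g$, $P$ and the slots of $\mathcal{K}$ and $\overline{\mathcal{K}}$. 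Collecting these is the bulk of the work, and the point most prone to error. Several families of cancellation do the job: the terms carrying the factor $g(Z,P)$ together with the bracket contribution collapse by the torsion-free identity; the symmetric products $g(T,P)g(Z,P)g(X,Y)$ arising from the two quadratic-in-$P$ contributions cancel against each other; the mixed terms $\mathcal{K}(X,Z,P)g(Y,T)$ (and their $X\leftrightarrow Y$ partners) cancel between the $\nabla\theta$ pieces and the $\overline{\mathcal{K}}(\cdot,Z,P)$ pieces; and the $\mathcal{K}(\cdot,Z,\cdot)$ contributions weighted by $g(P,T)$ cancel in pairs once metric compatibility is applied to the terms $X\bigl(g(Y,Z)\bigr)$ and $Y\bigl(g(X,Z)\bigr)$. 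What survives, after re-expanding $\overline{\mathcal{K}}(X,P,Z)=\mathcal{K}(X,P,Z)+g(P,P)g(X,Z)-g(X,P)g(P,Z)$ and its analogues, is precisely the seven summands on the right-hand side. The main obstacle is thus not conceptual but combinatorial: keeping the many cubic terms in $g$ and $P$ organized so that the stated cancellations become visible and the surviving terms group into the asserted form.
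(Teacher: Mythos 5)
Your proposal is correct: I checked that with your derivative rule $(\overline{\nabla}_{X}\omega)(T)=(\nabla_{X}\omega)(T)-g(T,P)\,\omega(X)+g(X,T)\,\omega(P)$, metric compatibility, and the torsion-free identity, the cancellations you describe do occur and the seven surviving summands are exactly those in the statement. The paper itself gives no proof (it cites \cite{Wang1} with the remark ``via some simple calculations''), and your argument is precisely the direct computation that remark refers to, so there is nothing further to compare.
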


And we have
\begin{equation}
\overline{R}(X, Y, Z, T)=-\overline{R}(Y, X, Z, T),~~~~~~~ \overline{R}(X, Y, Z, T)=-\overline{R}(Y, X, T, Z).
\end{equation}

According to the detailed descriptions in \cite{Wang1}, we know that
\begin{thm}
Let $(B, g_{B})$ be a non-degenerate semi-Riemannian manifold and $(F_{j}, g_{F_j})$ be semi-regular semi-Riemannian manifolds and $b_{j}\in C^{\infty}(B),~ 1\leq j\leq m.$ Then the multiply warped product manifold $B\times_{b_{1}}F_{1}\times\cdots\times_{b_{m}}F_{m}$ is a semi-regular semi-Riemannian manifold.
\end{thm}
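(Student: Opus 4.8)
The plan is to verify directly the two conditions in Stoica's definition of a semi-regular semi-Riemannian manifold: that $M=B\times_{b_1}F_1\times\cdots\times_{b_m}F_m$ is \emph{radical-stationary}, i.e. the Koszul form $\mathcal{K}(X,Y,\cdot)$ annihilates the radical $\mathrm{Rad}(TM)$ for all $X,Y\in\Gamma(TM)$, and that for any vector fields $X,Y,Z,T$ the contraction $\mathcal{K}(X,Y,\cdot)\bullet\mathcal{K}(Z,T,\cdot)$ with respect to the radical-annihilator inner product $\bullet$ is a smooth function. The starting point is the decomposition of the Koszul form of $M$ into the Koszul forms $\mathcal{K}_B$, $\mathcal{K}_{F_j}$ of the factors together with the warping data $b_j$ and $X(b_j)$; these formulae are the $P=0$ specializations of the two preceding Propositions, and recover the ordinary Koszul form of the product as in \cite{Wang1}. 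Because $(B,g_B)$ is non-degenerate, the radical of $g$ is purely vertical: at a point $p=(q,r_1,\dots,r_m)$ it consists of vectors tangent to the fibers lying in $\mathrm{Rad}(g_{F_j})_{r_j}$ when $b_j(q)\neq0$, together with the entire tangent space to $F_j$ on the degeneration locus $\{b_j=0\}$.

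For radical-stationarity I would take an arbitrary radical vector field $R$, which may be assumed vertical and valued in a single fiber $F_j$, and evaluate each component of the Koszul form against $R$ using the decomposition. The only components that can be nonzero are of the two shapes $b_jX(b_j)\,g_{F_j}(\cdot,\cdot)$ and $b_j^2\,\mathcal{K}_{F_j}(\cdot,\cdot,\cdot)$. The first vanishes on $R$ because $g_{F_j}$ vanishes on $\mathrm{Rad}(g_{F_j})$, and the second vanishes because each $(F_j,g_{F_j})$, being semi-regular, is itself radical-stationary, so $\mathcal{K}_{F_j}(\cdot,\cdot,R_j)=0$; on $\{b_j=0\}$ both shapes vanish identically. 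Hence $\mathcal{K}(X,Y,R)=0$ for all $X,Y$, and $M$ is radical-stationary.

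The heart of the argument is the smoothness of the contraction, and this is where the main obstacle lies: the radical-annihilator inner product in the fiber directions is computed with $(b_j^2 g_{F_j})^\bullet = b_j^{-2}\,g_{F_j}^\bullet$, so raising indices introduces negative powers of $b_j$ that threaten to blow up along $\{b_j=0\}$. The plan is to split $\mathcal{K}(X,Y,\cdot)\bullet\mathcal{K}(Z,T,\cdot)$ into a base part and fiber parts, cross terms between distinct factors vanishing since the metric is block diagonal. The base part is contracted with $g_B^{-1}$ and is smooth because $B$ is non-degenerate. In each fiber part one checks that the powers of $b_j$ in the Koszul components exactly cancel the $b_j^{-2}$ coming from $g_{F_j}^\bullet$: a contraction of two components of type $b_jX(b_j)g_{F_j}$ produces $X(b_j)\,\tilde X(b_j)\,g_{F_j}^\bullet(\cdots)$, a contraction of two components of type $b_j^2\mathcal{K}_{F_j}$ produces $b_j^2\,\bigl(\mathcal{K}_{F_j}\bullet_{F_j}\mathcal{K}_{F_j}\bigr)$, and the mixed contraction produces a term with a single surviving factor $b_jX(b_j)$ times a contraction of fiber Koszul forms. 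In every case the surviving expression is a product of smooth functions on $B$ with a contraction of fiber data that is smooth precisely because $(F_j,g_{F_j})$ is semi-regular; moreover only non-negative powers of $b_j$ remain, so each term extends smoothly across the degeneration locus $\{b_j=0\}$. Collecting the base and fiber contributions shows that every contraction $\mathcal{K}(X,Y,\cdot)\bullet\mathcal{K}(Z,T,\cdot)$ is smooth, which together with radical-stationarity yields that $M$ is semi-regular.
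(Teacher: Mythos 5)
Your proposal is correct and follows essentially the same route the paper takes for the analogous statements (Theorems 4.7 and 5.5, the paper itself only citing [Wang1] for this one): decompose the Koszul form into base and fiber blocks, check radical-stationarity, and verify that the negative powers of $b_j$ coming from $(b_j^2 g_{F_j})^\bullet = b_j^{-2}g_{F_j}^\bullet$ are cancelled case by case so that every contraction $\mathcal{K}(X,Y,\bullet)\mathcal{K}(Z,T,\bullet)$ extends smoothly across $\{b_j=0\}$. Your explicit treatment of the radical and of radical-stationarity is a welcome addition that the paper leaves implicit.
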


\begin{prop}
A singular semi-Riemannian manifold $(M, g)$ is a semi-regular semi-Riemannian manifold if and only if it is a semi-symmetric metric semi-regular semi-Riemannian manifold.
\end{prop}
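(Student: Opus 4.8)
The plan is to exploit the explicit relation between the two Koszul forms so as to reduce each defining condition for semi-symmetric metric semi-regularity to the corresponding condition for ordinary semi-regularity. Recall from \cite{OC1} that $(M,g)$ is \emph{semi-regular} when it is radical-stationary, meaning that every lower covariant derivative $\nabla^{\flat}_{X}Y=\mathcal{K}(X,Y,\cdot)$ lies in the image $A^{\bullet}(M)$ of the musical map $\flat\colon\Gamma(TM)\to A^{1}(M)$, $V\mapsto V^{\flat}:=g(V,\cdot)$, and moreover the contraction $\langle\nabla^{\flat}_{X}Y,\nabla^{\flat}_{Z}T\rangle$ of any two Koszul forms is smooth, where $\langle\cdot,\cdot\rangle$ denotes the inner product induced by $g$ on $A^{\bullet}(M)$. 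The semi-symmetric metric notion is the same pair of requirements with $\nabla^{\flat}$ and $\mathcal{K}$ replaced by $\overline{\nabla}^{\flat}$ and $\overline{\mathcal{K}}$.

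First I would read the definition of $\overline{\mathcal{K}}$ as an identity of $1$-forms in the last slot, obtaining
\begin{equation}
\overline{\nabla}^{\flat}_{X}Y=\nabla^{\flat}_{X}Y+g(Y,P)\,X^{\flat}-g(X,Y)\,P^{\flat}.
\end{equation}
The two correction terms are smooth multiples of the $\flat$-images $X^{\flat}$ and $P^{\flat}$, hence lie in the $C^{\infty}(M)$-module $A^{\bullet}(M)$; thus $\overline{\nabla}^{\flat}_{X}Y$ and $\nabla^{\flat}_{X}Y$ differ by an element of $A^{\bullet}(M)$, and one belongs to $A^{\bullet}(M)$ exactly when the other does. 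This settles the equivalence of the radical-stationary halves of the two conditions.

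Next, assuming this common radical-stationarity, I would expand the bilinear contraction
\begin{align}
\langle\overline{\nabla}^{\flat}_{X}Y,\overline{\nabla}^{\flat}_{Z}T\rangle
&=\langle\nabla^{\flat}_{X}Y,\nabla^{\flat}_{Z}T\rangle
+\big\langle\nabla^{\flat}_{X}Y,\,g(T,P)Z^{\flat}-g(Z,T)P^{\flat}\big\rangle\\
&\quad+\big\langle g(Y,P)X^{\flat}-g(X,Y)P^{\flat},\,\nabla^{\flat}_{Z}T\big\rangle
+\big\langle g(Y,P)X^{\flat}-g(X,Y)P^{\flat},\,g(T,P)Z^{\flat}-g(Z,T)P^{\flat}\big\rangle,\nonumber
\end{align}
and invoke the two structural properties of the induced inner product, namely $\langle V^{\flat},\omega\rangle=\omega(V)$ for every $\omega\in A^{\bullet}(M)$ and $\langle V^{\flat},W^{\flat}\rangle=g(V,W)$. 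Each of the last three terms then collapses to products of the smooth functions $g(\cdot,P)$ and $g(\cdot,\cdot)$ with values of the Koszul form such as $\mathcal{K}(Z,T,X)$ and $\mathcal{K}(Z,T,P)$, or with further factors of $g$, all of which are manifestly smooth. Hence $\langle\overline{\nabla}^{\flat}_{X}Y,\overline{\nabla}^{\flat}_{Z}T\rangle$ and $\langle\nabla^{\flat}_{X}Y,\nabla^{\flat}_{Z}T\rangle$ differ by a smooth function, so one is smooth precisely when the other is; combined with the previous step this yields the asserted equivalence.

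The hard part will be the careful handling of the inner product on $A^{\bullet}(M)$: the entire reduction rests on the identity $\langle V^{\flat},\omega\rangle=\omega(V)$, which says that pairing a $\flat$-image against any form in $A^{\bullet}(M)$ returns a plain evaluation and never calls upon the (possibly nonsmooth) inverse of a degenerate metric. Once this is secured, every term produced by the semi-symmetric modification is visibly smooth, and the proposition reduces to the observation that passing from $\mathcal{K}$ to $\overline{\mathcal{K}}$ only adjoins $\flat$-image terms, which are inert for both the radical-stationary and the smoothness requirements.
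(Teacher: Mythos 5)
Your argument is correct and is exactly the reduction that must underlie this statement: the paper itself states the proposition without proof (deferring to the cited work of Wang), and the intended argument is precisely that $\overline{\nabla}^{\flat}_{X}Y-\nabla^{\flat}_{X}Y=g(Y,P)X^{\flat}-g(X,Y)P^{\flat}$ is a smooth combination of $\flat$-images, so it preserves membership in $A^{\bullet}(M)$ and contributes only manifestly smooth terms (values of $\mathcal{K}$ on smooth fields and products of metric coefficients) to the contraction $\overline{\mathcal{K}}(X,Y,\bullet)\overline{\mathcal{K}}(Z,T,\bullet)$. You also correctly isolate the one point that needs care, namely that the induced inner product on $A^{\bullet}(M)$ satisfies $\langle V^{\flat},\omega\rangle=\omega(V)$ without invoking any inverse metric, so no gap remains.
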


Summarizing, we have
\begin{thm}
Let $(B, g_{B})$ be a non-degenerate semi-Riemannian manifold, $(F_{j}, g_{F_j})$ be semi-regular semi-Riemannian manifolds and $b_{j}\in C^{\infty}(B),~ 1\leq j\leq m.$
Let the vector fields $X, Y, Z, T\in\Gamma(TB)$ and $U_{j}, V_{j}, W_{j}, Q_{j}\in\Gamma(TF_{j}).$  Let $P\in\Gamma(TB),$ then
\begin{align}
&(1)\overline{R}(X, Y, Z, T)=\overline{R}_{B}(X, Y, Z, T);\nonumber\\
&(2)\overline{R}(X, Y, Z, W_{j})=\overline{R}(Z, W_{j}, X, Y)=0;\nonumber\\
&(3)\overline{R}(X, Y,  V_{i}, W_{j})=\overline{R}(V_{i}, W_{j}, X, Y)=0,~if~i=j;\nonumber\\
&(4)\overline{R}(X, V_{i}, W_{j}, Y)=b_{j}g^{*}_{B}(\nabla^{\flat}_{X}Y, db_{j})g_{F_{j}}(V_{j}, W_{j})-b_{j}XY(b_{j})g_{F_{j}}(V_{j}, W_{j})-b^{2}_{j}g_{F_{j}}(V_{j}, W_{j})\nonumber\\
&\times \mathcal{K}_{B}(X, P, Y)-b_{j}P(b_{j})g_{B}(X, Y)g_{F_{j}}(V_{j}, W_{j})+b^{2}_{j}g_{B}(X, P)g_{B}(Y, P)g_{F_{j}}(V_{j}, W_{j})\nonumber\\
&-b^{2}_{j}g_{B}(X, Y)g_{B}(P, P)g_{F_{j}}(V_{j}, W_{j}),~if~i=j;\nonumber\\
&(5)\overline{R}(X, Y,  V_{i}, W_{j})=\overline{R}(V_{i}, W_{j}, X, Y)=\overline{R}(X, V_{i}, W_{j}, Y)=0,~if~i\neq j;\nonumber\\
&(6)\overline{R}(X, V_{i}, W_{j}, U_{k})=\overline{R}(W_{j}, U_{k}, X, V_{i})=0,~if~i=j=k;\nonumber\\
&(7)\overline{R}(X, V_{i}, W_{j}, U_{k})=\overline{R}(X,  U_{k}, V_{i}, W_{j})=\overline{R}(W_{j}, U_{k}, X, V_{i})=\overline{R}(V_{i}, W_{j}, X,  U_{k})=0,\nonumber\\
&~if~i=j\neq k;\nonumber\\
&(8)\overline{R}(X, V_{i}, W_{j}, U_{k})=\overline{R}(W_{j}, U_{k}, X, V_{i})=0,~where~i, j, k~are~different;\nonumber
\end{align}
\begin{align}
&(9)\overline{R}(U_{k}, V_{i}, W_{j}, Q_{s})=b^{2}_{j}R_{F_{j}}(U_{j}, V_{j}, W_{j}, Q_{j})+(b^{2}_{j}g^{*}_{B}(db_{j}, db_{j})+2b^{3}_{j}P(b_{j})+b^{4}_{j}g_{B}(P, P))\nonumber\\
&\times(g_{F_{j}}(U_{j}, W_{j})g_{F_{j}}(V_{j}, Q_{j})-g_{F_{j}}(V_{j}, W_{j})g_{F_{j}}(U_{j}, Q_{j})),~if~i=j=k=s;\nonumber\\
&(10)\overline{R}(U_{k}, V_{i}, W_{j}, Q_{s})=\overline{R}(W_{j}, Q_{s}, U_{k}, V_{i},)=0,~if~i=j=s\neq k;\nonumber\\
&(11)\overline{R}(U_{k}, V_{i}, W_{j}, Q_{s})=(b_{i}b_{j}g^{*}_{B}(db_{i}, db_{j})+b_{i}b^{2}_{j}P(b_{i})+b_{j}b^{2}_{i}P(b_{j})+b^{2}_{i}b^{2}_{j}g_{B}(P, P))\nonumber\\
&\times g_{F_{i}}(V_{i}, Q_{i})g_{F_{j}}(U_{j}, W_{j}),~if~j=k\neq i=s;\nonumber\\
&(12)\overline{R}(U_{k}, V_{i}, W_{j}, Q_{s})=0,~other~cases.\nonumber
\end{align}
\end{thm}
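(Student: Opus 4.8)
The plan is to reduce everything to the master identity of Proposition~2.8, which expresses the semi-symmetric metric curvature $\overline{R}$ as the ordinary curvature $R$ of the multiply warped product plus correction terms built from the metric $g$, the vector field $P$, and the two Koszul forms $\mathcal{K}$ and $\overline{\mathcal{K}}$. Since $P\in\Gamma(TB)$, the crucial simplification is that $g(\cdot,P)$ annihilates every fiber vector $U_j,V_j,W_j,Q_j\in\Gamma(TF_j)$ and that the $B$-factor is orthogonal to each $F_j$-factor, with $g(U_i,V_j)=b_j^2\,g_{F_j}(U_j,V_j)\delta_{ij}$. Before computing I would use the symmetries in (2.10), namely $\overline{R}(X,Y,Z,T)=-\overline{R}(Y,X,Z,T)$ together with the skew-symmetry in the last pair, to reduce the number of genuinely independent evaluations; the paired entries recorded in the statement, such as $\overline{R}(X,Y,Z,W_j)$ and $\overline{R}(Z,W_j,X,Y)$, then turn out to vanish individually.

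For each of the twelve cases I would substitute the prescribed arguments into the formula of Proposition~2.8 and evaluate term by term. The two Koszul-form factors there are read off directly from Proposition~2.6: the barred values $\overline{\mathcal{K}}(\cdot,P,\cdot)$ are its stated entries, while the unbarred $\mathcal{K}(\cdot,P,\cdot)$ are recovered as the $P=0$ specialization of the same proposition (so that $\mathcal{K}(X,P,T)$ equals $\overline{\mathcal{K}}(X,P,T)$ up to the two $g(\cdot,P)g(\cdot,\cdot)$ monomials coming from Definition~2.4). The ordinary curvature $R$ of the multiply warped product entering Proposition~2.8 is obtained by the same substitution method using the definition (2.7) and the plain Koszul forms; these are the multiply warped analogues of Theorem~4.17 of \cite{Wang1}. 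Many of the cases collapse immediately: whenever the four slots split as three in $TB$ and one in $TF_j$, or whenever two distinct fibers appear asymmetrically, the orthogonality of the factors together with $P\in\Gamma(TB)$ forces both $R$ and every correction term to vanish, giving the zeros recorded in (2), (3) for $i\neq j$, (5)--(8), (10) and (12).

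The substantive work sits in the genuinely mixed entries (4), (9) and (11). The hardest is (4), $\overline{R}(X,V_i,W_j,Y)$ with $i=j$, where the ordinary curvature contributes the gradient and Hessian terms $b_jg^{*}_{B}(\nabla^{\flat}_{X}Y,db_j)g_{F_j}(V_j,W_j)-b_jXY(b_j)g_{F_j}(V_j,W_j)$, and the correction terms of Proposition~2.8 must be shown to assemble into the remaining pieces involving $\mathcal{K}_B(X,P,Y)$, $P(b_j)$, $g_B(X,P)g_B(Y,P)$ and $g_B(P,P)$. Here the main obstacle is the careful bookkeeping of the dual metric $g^{*}_{B}$ on the base, which is legitimate precisely because $B$ is non-degenerate, so that $g^{*}_{B}(\cdot,db_j)$ and $\nabla^{\flat}$ remain well defined even though the total space is degenerate. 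The diagonal fiber case (9) requires in addition the intrinsic curvature $R_{F_j}$ of the fiber and the combination $b_j^2g^{*}_{B}(db_j,db_j)+2b_j^3P(b_j)+b_j^4g_B(P,P)$, while the two-fiber case (11) produces the analogous mixed coefficient $b_ib_jg^{*}_{B}(db_i,db_j)+b_ib_j^2P(b_i)+b_jb_i^2P(b_j)+b_i^2b_j^2g_B(P,P)$; in each I would expand $\overline{\mathcal{K}}$ through its definition, apply Proposition~2.6, and collect. Throughout, Theorem~2.9 and Proposition~2.10 guarantee that the manifold is semi-symmetric metric semi-regular, so every Koszul form and curvature expression used is well defined and the computation closes.
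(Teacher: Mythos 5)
Your proposal follows essentially the same route as the paper's own proof: both reduce every case to the identity of Proposition~2.8, substitute the values of $\overline{\mathcal{K}}$ from Proposition~2.6 and of $\mathcal{K}$ and $R$ from the plain multiply warped product formulas (Proposition~5.2 and Theorem~5.5 of \cite{Wang1}), and observe that orthogonality of the factors together with $P\in\Gamma(TB)$ kills all but cases (1), (4), (9), (11). The only cosmetic difference is that you propose to rederive the plain curvature $R$ from the definition rather than citing it, which changes nothing in substance.
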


\begin{proof}
\begin{align}(1)
\overline{R}(X, Y, Z, T)&=R(X, Y, Z, T)-g(Y, P)g(X, Z)g(P, T)+g(X, P)g(Y, Z)g(P, T)\nonumber\\
&+\overline{\mathcal{K}}(X, P, Z)g(Y, T)-\overline{\mathcal{K}}(Y, P, Z)g(X, T)\nonumber\\
&-\mathcal{K}(X, P, T)g(Y, Z)+\mathcal{K}(Y, P, T)g(X, Z)\nonumber\\
&=R_{B}(X, Y, Z, T)-g_{B}(Y, P)g_{B}(X, Z)g_{B}(P, T)+g_{B}(X, P)g_{B}(Y, Z)g_{B}(P, T)\nonumber\\
&+\overline{\mathcal{K}}_{B}(X, P, Z)g_{B}(Y, T)-\overline{\mathcal{K}}_{B}(Y, P, Z)g_{B}(X, T)\nonumber\\
&-\mathcal{K}_{B}(X, P, T)g_{B}(Y, Z)+\mathcal{K}_{B}(Y, P, T)g_{B}(X, Z)\nonumber\\
&=\overline{R}_{B}(X, Y, Z, T),\nonumber
\end{align}
where we applied (1) from Proposition 5.2 in \cite{Wang1}, (1) from Theorem 5.5 in \cite{Wang1} and (1) from Proposition 2.6.\par
(4) Follows from the Proposition 5.2 and Theorem 5.5 in \cite{Wang1}, we have
\begin{align}
&R(X, V_{j}, W_{j}, Y)=-b_{j}(XY(b_{j})-g^{*}_{B}(\nabla^{\flat}_{X}Y, db_{j}))g_{F_{j}}(V_{j}, W_{j}),\nonumber\\
&\mathcal{K}(X, Y, Z)=\mathcal{K}_{B}(X, Y, Z).\nonumber
\end{align}
A simple calculation shows that
\begin{align}
\overline{R}(X, V_{j}, W_{j}, Y)&=R(X, V_{j}, W_{j}, Y)-g(V_{j}, P)g(X, W_{j})g(P, Y)+g(X, P)g(V_{j}, W_{j})g(P, Y)\nonumber\\
&+\overline{\mathcal{K}}(X, P, W_{j})g(V_{j}, Y)-\overline{\mathcal{K}}(V_{j}, P, W_{j})g(X, Y)\nonumber\\
&-\mathcal{K}(X, P, Y)g(V_{j}, W_{j})+\mathcal{K}(V_{j}, P, Y)g(X, W_{j})\nonumber\\
&=R(X, V_{j}, W_{j}, Y)+g(X, P)g(V_{j}, W_{j})g(P, Y)-\overline{\mathcal{K}}(V_{j}, P, W_{j})g(X, Y)\nonumber\\
&-\mathcal{K}(X, P, Y)g(V_{j}, W_{j})\nonumber\\
&=-b_{j}XY(b_{j})g_{F_{j}}(V_{j}, W_{j})+b_{j}g^{*}_{B}(\nabla^{\flat}_{X}Y, db_{j})g_{F_{j}}(V_{j}, W_{j})\nonumber\\
&+b^{2}_{j}g_{B}(X, P)g_{B}(Y, P)g_{F_{j}}(V_{j}, W_{j})-b_{j}P(b_{j})g_{B}(X, Y)g_{F_{j}}(V_{j}, W_{j})\nonumber\\
&-b^{2}_{j}g_{B}(X, Y)g_{B}(P, P)g_{F_{j}}(V_{j}, W_{j})-b^{2}_{j}g_{F_{j}}(V_{j}, W_{j})\mathcal{K}_{B}(X, P, Y).\nonumber
\end{align}
(9) From (7) of the Theorem 5.5 in \cite{Wang1} we obtain that
\begin{align}
R(U_{j}, V_{j}, W_{j}, Q_{j})=&b^{2}_{j}R_{F_{j}}(U_{j}, V_{j}, W_{j}, Q_{j})+b^{2}_{j}g^{*}_{B}(db_{j}, db_{j})\nonumber\\
&\times(g_{F_{j}}(U_{j}, W_{j})g_{F_{j}}(V_{j}, Q_{j})-g_{F_{j}}(V_{j}, W_{j})g_{F_{j}}(U_{j}, Q_{j})).\nonumber
\end{align}
From (3) of the Proposition 5.2 in \cite{Wang1} we obtain that
\begin{align}
\mathcal{K}(X, Y, Z)= b_{j}X(b_{j})g_{F_{j}}(V_{j}, W_{j}).\nonumber
\end{align}
\begin{align}
\overline{R}(U_{j}, V_{j}, W_{j}, Q_{j})&=R(U_{j}, V_{j}, W_{j}, Q_{j})-g(V_{j}, P)g(U_{j}, W_{j})g(P, Q_{j})+g(U_{j}, P)g(V_{j}, W_{j})g(P, Q_{j})\nonumber\\
&+\overline{\mathcal{K}}(U_{j}, P, W_{j})g(V_{j}, Q_{j})-\overline{\mathcal{K}}(V_{j}, P, W_{j})g(U_{j}, Q_{j})\nonumber\\
&-\mathcal{K}(U_{j}, P, Q_{j})g(V_{j}, W_{j})+\mathcal{K}(V_{j}, P, Q_{j})g(U_{j}, W_{j})\nonumber\\
&=R(U_{j}, V_{j}, W_{j}, Q_{j})+\overline{\mathcal{K}}(U_{j}, P, W_{j})g(V_{j}, Q_{j})-\overline{\mathcal{K}}(V_{j}, P, W_{j})g(U_{j}, Q_{j})\nonumber\\
&-\mathcal{K}(U_{j}, P, Q_{j})g(V_{j}, W_{j})+\mathcal{K}(V_{j}, P, Q_{j})g(U_{j}, W_{j})\nonumber\\
&=b^{2}_{j}R_{F_{j}}(U_{j}, V_{j}, W_{j}, Q_{j})+(b^{2}_{j}g^{*}_{B}(db_{j}, db_{j})+2b^{3}_{j}P(b_{j})+b^{4}_{j}g_{B}(P, P))\nonumber\\
&\times(g_{F_{j}}(U_{j}, W_{j})g_{F_{j}}(V_{j}, Q_{j})-g_{F_{j}}(V_{j}, W_{j})g_{F_{j}}(U_{j}, Q_{j})).\nonumber
\end{align}
(11)By (9) from Theorem 5.5 in \cite{Wang1} and (3) from Proposition 5.2 in \cite{Wang1} and (4) from Proposition 2.15, we have
\begin{align}
\overline{R}(U_{j}, V_{i}, W_{j}, Q_{i})&=R(U_{j}, V_{i}, W_{j}, Q_{i})-g(V_{i}, P)g(U_{j}, W_{j})g(P, Q_{i})+g(U_{j}, P)g(V_{i}, W_{j})g(P, Q_{i})\nonumber\\
&+\overline{\mathcal{K}}(U_{j}, P, W_{j})g(V_{i}, Q_{i})-\overline{\mathcal{K}}(V_{i}, P, W_{j})g(U_{j}, Q_{i})\nonumber\\
&-\mathcal{K}(U_{j}, P, Q_{i})g(V_{i}, W_{j})+\mathcal{K}(V_{i}, P, Q_{i})g(U_{j}, W_{j})\nonumber\\
&=R(U_{j}, V_{i}, W_{j}, Q_{i})+\overline{\mathcal{K}}(U_{j}, P, W_{j})g(V_{i}, Q_{i})+\mathcal{K}(V_{i}, P, Q_{i})g(U_{j}, W_{j})\nonumber\\
&=(b_{i}b_{j}g^{*}_{B}(db_{i}, db_{j})+b_{i}b^{2}_{j}P(b_{i})+b_{j}b^{2}_{i}P(b_{j})+b^{2}_{i}b^{2}_{j}g_{B}(P, P))\nonumber\\
&\times g_{F_{i}}(V_{i}, Q_{i})g_{F_{j}}(U_{j}, W_{j}).\nonumber
\end{align}
\end{proof}

\begin{thm}
Let $(B, g_{B})$ be a non-degenerate semi-Riemannian manifold, $(F_{j}, g_{F_j})$ be semi-regular semi-Riemannian manifolds and $b_{j}\in C^{\infty}(B),~ 1\leq j\leq m.$
Let the vector fields $X, Y, Z, T\in\Gamma(TB)$ and $U_{j}, V_{j}, W_{j}, Q_{j}\in\Gamma(TF_{j}).$  Let $P\in\Gamma(TF_{l}),$ then
\begin{align}
&(1)\overline{R}(X, Y, Z, T)=R_{B}(X, Y, Z, T)+b^{2}_{l}g_{B}(X, Z)g_{B}(Y, T)g_{F_{l}}(P, P)-b^{2}_{l}g_{B}(X, T)\nonumber\\
&\times g_{B}(Y, Z)g_{F_{l}}(P, P);\nonumber\\
&(2)\overline{R}(X, Y, Z, W_{j})=-\overline{R}(Z, W_{j}, X, Y)=-b_{j}X(b_{j})g_{B}(Y, Z)g_{F_{j}}(W_{j}, P)+b_{j}Y(b_{j})\nonumber\\
&\times g_{B}(X, Z)g_{F_{j}}(W_{j}, P),~if~j=l;\nonumber\\
&(3)\overline{R}(X, Y, Z, W_{j})=\overline{R}(Z, W_{j}, X, Y)=0,~if~j\neq l;\nonumber\\
&(4)\overline{R}(X, Y,  V_{i}, W_{j})=\overline{R}(V_{i}, W_{j}, X, Y)=0;\nonumber\\
&(5)\overline{R}(X, V_{i}, W_{j}, Y)=b_{j}g^{*}_{B}(\nabla^{\flat}_{X}Y, db_{j})g_{F_{j}}(V_{j}, W_{j})-b_{j}XY(b_{j})g_{F_{j}}(V_{j}, W_{j})-b^{2}_{j}g_{B}(X, Y)\nonumber\\
&\times(\mathcal{K}_{F_{j}}(V_{j}, P, W_{j})+b^{2}_{j}g_{F_{j}}(V_{j}, W_{j})g_{F_{j}}(P, P)-b^{2}_{j}g_{F_{j}}(V_{j}, P)g_{F_{j}}(W_{j}, P)),~if~i=j=l;\nonumber
\end{align}
\begin{align}
&(6)\overline{R}(X, V_{i}, W_{j}, Y)=b_{j}g^{*}_{B}(\nabla^{\flat}_{X}Y, db_{j})g_{F_{j}}(V_{j}, W_{j})-b_{j}XY(b_{j})g_{F_{j}}(V_{j}, W_{j})\nonumber\\
&-b^{2}_{j}b^{2}_{l}g_{B}(X, Y)g_{F_{j}}(V_{j}, W_{j})g_{F_{j}}(P, P),~if~i=j\neq l;\nonumber\\
&(7)\overline{R}(X, V_{i}, W_{j}, Y)=0,~other~cases;\nonumber\\
&(8)\overline{R}(X, V_{i}, W_{j}, U_{k})=-\overline{R}(W_{j}, U_{k}, X, V_{i})=b^{3}_{j}X(b_{j})(g_{F_{j}}(V_{j}, U_{j})g_{F_{j}}(W_{j}, P)-g_{F_{j}}(V_{j}, W_{j})\nonumber\\
&\times g_{F_{j}}(U_{j}, P)),~if~i=j=k=l;\nonumber\\
&(9)\overline{R}(X, V_{i}, W_{j}, U_{k})=-\overline{R}(W_{j}, U_{k}, X, V_{i})=-b_{l}b^{2}_{j}X(b_{l})g_{F_{l}}(U_{l}, P)g_{F_{j}}(V_{j}, W_{j}),~if~i=j\neq k=l;\nonumber\\
&(10)\overline{R}(X, V_{i}, W_{j}, U_{k})=0,~other~cases;\nonumber\\
&(11)\overline{R}(U_{k}, V_{i}, W_{j}, Q_{s})=b^{2}_{j}R_{F_{j}}(U_{j}, V_{j}, W_{j}, Q_{j})+(b^{2}_{j}g^{*}_{B}(db_{j}, db_{j})+b^{6}_{j}g_{F_{j}}(P, P))\nonumber\\
&\times(g_{F_{j}}(U_{j}, W_{j})g_{F_{j}}(V_{j}, Q_{j})-g_{F_{j}}(V_{j}, W_{j})g_{F_{j}}(U_{j}, Q_{j}))+b^{4}_{j}g_{F_{j}}(V_{j}, Q_{j})\mathcal{K}_{F_{j}}(U_{j}, P, W_{j})\nonumber\\
&-b^{4}_{j}g_{F_{j}}(U_{j}, Q_{j})\mathcal{K}_{F_{j}}(V_{j}, P, W_{j})-b^{4}_{j}g_{F_{j}}(V_{j}, W_{j})\mathcal{K}_{F_{j}}(U_{j}, P, Q_{j})+b^{4}_{j}g_{F_{j}}(U_{j}, W_{j})\mathcal{K}_{F_{j}}(V_{j}, P, Q_{j})\nonumber\\
&+b^{6}_{j}g_{F_{j}}(U_{j}, P)(g_{F_{j}}(V_{j}, W_{j})g_{F_{j}}(Q_{j}, P)-g_{F_{j}}(V_{j}, Q_{j})g_{F_{j}}(W_{j}, P))-b^{6}_{j}g_{F_{j}}(V_{j}, P)(g_{F_{j}}(U_{j}, W_{j})\nonumber\\
&\times g_{F_{j}}(Q_{j}, P)-g_{F_{j}}(U_{j}, Q_{j})g_{F_{j}}(W_{j}, P)),~if~i=j=k=s=l;\nonumber\\
&(12)\overline{R}(U_{k}, V_{i}, W_{j}, Q_{s})=b^{2}_{j}R_{F_{j}}(U_{j}, V_{j}, W_{j}, Q_{j})+(b^{2}_{j}g^{*}_{B}(db_{j}, db_{j})+b^{2}_{l}b^{4}_{j}g_{F_{l}}(P, P))\nonumber\\
&\times(g_{F_{j}}(U_{j}, W_{j})g_{F_{j}}(V_{j}, Q_{j})-g_{F_{j}}(V_{j}, W_{j})g_{F_{j}}(U_{j}, Q_{j}))~if~i=j=k=s\neq l;\nonumber\\
&(13)\overline{R}(U_{k}, V_{i}, W_{j}, Q_{s})=b_{i}b_{j}g^{*}_{B}(db_{i}, db_{j})g_{F_{i}}(V_{i}, Q_{i})g_{F_{j}}(U_{j}, W_{j})+b^{2}_{i}b^{2}_{j}g_{F_{j}}(U_{j}, W_{j})(\mathcal{K}_{F_{i}}(V_{i}, P, Q_{i})\nonumber\\
&+b^{2}_{i}g_{F_{i}}(P, P)g_{F_{i}}(V_{i}, Q_{i})-b^{2}_{i}g_{F_{i}}(V_{i}, P)g_{F_{i}}(Q_{i}, P)),~if~j=k\neq i=s=l;\nonumber\\
&(14)\overline{R}(U_{k}, V_{i}, W_{j}, Q_{s})=b_{i}b_{j}g^{*}_{B}(db_{i}, db_{j})g_{F_{i}}(V_{i}, Q_{i})g_{F_{j}}(U_{j}, W_{j})+b^{2}_{i}b^{2}_{j}g_{F_{i}}(V_{i}, Q_{i})(\mathcal{K}_{F_{j}}(U_{j}, P, W_{j})\nonumber\\
&+b^{2}_{j}g_{F_{j}}(P, P)g_{F_{j}}(U_{j}, W_{j})-b^{2}_{j}g_{F_{j}}(U_{j}, P)g_{F_{j}}(W_{j}, P)),~if~l=j=k\neq i=s;\nonumber\\
&(15)\overline{R}(U_{k}, V_{i}, W_{j}, Q_{s})=b_{i}b_{j}g^{*}_{B}(db_{i}, db_{j})g_{F_{i}}(V_{i}, Q_{i})g_{F_{j}}(U_{j}, W_{j})+b^{2}_{i}b^{2}_{j}b^{2}_{l}g_{F_{i}}(V_{i}, Q_{i})\nonumber\\
&\times g_{F_{j}}(U_{j}, W_{j})g_{F_{l}}(P, P),~where~k=j, i=s, l~are~different;\nonumber\\
&(16)\overline{R}(U_{k}, V_{i}, W_{j}, Q_{s})=0,~other~cases.\nonumber
\end{align}
\end{thm}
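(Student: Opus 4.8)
The plan is to substitute directly into the general curvature identity of Proposition 2.8, which expresses $\overline{R}(X,Y,Z,T)$ as $R(X,Y,Z,T)$ plus three triple products of the metric with $P$, two terms built from $\overline{\mathcal{K}}(\cdot,P,\cdot)$, and two terms built from $\mathcal{K}(\cdot,P,\cdot)$, and which is valid for an arbitrary vector field $P$. For $P\in\Gamma(TF_{l})$ the four ingredients to be fed in are the ordinary Riemann tensor $R$ (Theorem 5.5 in \cite{Wang1}), the ordinary Koszul form $\mathcal{K}$ (Proposition 5.2 in \cite{Wang1}), the semi-symmetric metric Koszul form $\overline{\mathcal{K}}$ (Proposition 2.7), and the metric $g$ in the explicit form (2.2). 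I would then run through the cases according to the factors in which $X,Y,Z,T$ (respectively $U,V,W,Q$) lie and according to whether the relevant fibre index equals $l$, using the antisymmetries (2.11) together with the pair-interchange relations recorded in the statement (e.g.\ $\overline{R}(X,Y,Z,W_{j})=-\overline{R}(Z,W_{j},X,Y)$) to cut the number of genuinely independent computations roughly in half.

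The key structural simplification is that, since the metric (2.2) is an orthogonal sum, any $g(\cdot,\cdot)$ pairing vectors from different factors vanishes, and $g(\cdot,P)$ survives only against $\Gamma(TF_{l})$, where it produces a factor $b_{l}^{2}g_{F_{l}}(\cdot,P)$. The one point that must be handled with care is the middle slot of the Koszul forms: expanding $\overline{\mathcal{K}}(A,P,C)=\mathcal{K}(A,P,C)+g(P,P)g(A,C)-g(A,P)g(P,C)$ and using $g(P,P)=b_{l}^{2}g_{F_{l}}(P,P)$ shows that even when $A,C\in\Gamma(TB)$ the term $g(P,P)g(A,C)$ is nonzero while $\mathcal{K}(A,P,C)$ vanishes (here $g(P,\text{base})=0$, $P$ annihilates functions on $B$, and the lift brackets of base and fibre fields commute). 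This is exactly what injects the $b_{l}^{2}g_{F_{l}}(P,P)$ correction into the otherwise purely base block, giving case (1), and it is the reason the answer differs qualitatively from the $P\in\Gamma(TB)$ theorem, where that block reduced to $\overline{R}_{B}$. The mixed base/fibre cases (2)--(10) are then obtained by recording which of the surviving $R$, $\mathcal{K}$, $\overline{\mathcal{K}}$ and $g$ factors are nonzero for the given placement of arguments and reading off their values from Proposition 2.7 and the cited results of \cite{Wang1}.

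The main obstacle is the fully intra-fibre case (11), where $U,V,W,Q$ and $P$ all lie in $\Gamma(TF_{l})$. There every term of the identity is active: $R$ contributes $b_{l}^{2}R_{F_{l}}$ together with the warping term $b_{l}^{2}g_{B}^{*}(db_{l},db_{l})(\cdots)$; the two $\overline{\mathcal{K}}$ and two $\mathcal{K}$ middle-slot terms each produce an intra-fibre Koszul form $\mathcal{K}_{F_{l}}$ scaled by $b_{l}^{4}$ together with metric corrections; and the triple-$g$ terms produce the $b_{l}^{6}g_{F_{l}}(P,P)$ pieces. Collecting these into the stated combination without sign or coefficient errors is the delicate step. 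I would organize and check it by observing that, at a fixed point, $b_{l}$ is constant, so the intra-fibre part of $\overline{R}$ must reproduce the semi-symmetric metric curvature of the fibre $F_{l}$ computed with respect to its induced rescaled metric $b_{l}^{2}g_{F_{l}}$: the powers $b_{l}^{2},b_{l}^{4},b_{l}^{6}$ attached respectively to $R_{F_{l}}$, to the $\mathcal{K}_{F_{l}}$ terms, and to the $g_{F_{l}}(P,P)$ terms are precisely those forced by replacing $g_{F_{l}}$ by $b_{l}^{2}g_{F_{l}}$ in the $\overline{R}$-identity applied to $F_{l}$, while the only piece not accounted for by this rescaling is the genuine warping contribution $b_{l}^{2}g_{B}^{*}(db_{l},db_{l})(\cdots)$ coming from base derivatives of $b_{l}$ inside $R$. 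Matching both halves against the stated formula, and using (2.11) as a final consistency test, completes the argument. The remaining items are either the graduated but structurally identical cases (12)--(15) (one or two fibres carrying the arguments, with the $b_{l}^{2}g_{F_{l}}(P,P)$ correction entering exactly as in case (1)) or the explicitly vanishing cases (7), (10), (16), which are immediate from the orthogonality of the factors.
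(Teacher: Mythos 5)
Your proposal follows exactly the route the paper intends: Theorem 2.12's proof is declared "similar to Theorem 2.11," whose proof is precisely the substitution of the known expressions for $R$, $\mathcal{K}$, $\overline{\mathcal{K}}$ and $g$ (from Theorem 5.5 and Proposition 5.2 of \cite{Wang1}, Proposition 2.7, and (2.2)) into the curvature identity of Proposition 2.8, followed by a case-by-case evaluation. Your additional rescaling consistency check for the intra-fibre case (11) is a sensible supplementary verification but does not change the method; the proposal is correct and matches the paper's approach.
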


\begin{proof}
The proof is similar to Theorem 2.11, so that we omit it.
\end{proof}

\vskip 1 true cm
\section{ Semi-Symmetric Non-Metric Koszul Forms and Their Curvature of Singular Multiply Warped Products}
We now prepare some basic facts of semi-symmetric non-metric Koszul forms.

\begin{defn}{\rm\cite{Wang1} }
Semi-symmetric non-metric Koszul forms $\widehat{\mathcal{K}_{p}}: \Gamma(TM)^{3}\rightarrow C^{\infty}(M)$ on $(M, g)$ is defined as
\begin{equation}
\widehat{\mathcal{K}_{p}}(X, Y, Z):=\mathcal{K}(X, Y, Z)+g(Y, P)g(X, Z).
\end{equation}
Usually, we write $\widehat{\mathcal{K}}$ instead of $\widehat{\mathcal{K}_{p}}.$
\end{defn}

\begin{defn}{\rm\cite{Wang1} }
Let $X, Y\in\Gamma(TM).$ The semi-symmetric non-metric lower covariant derivative of $Y$ in the direction of $X$ as the differential $1$-form $\widehat{\nabla}^{\flat}_{X}Y\in A^{1}(M)$
\begin{equation}
(\widehat{\nabla}^{\flat}_{X}Y)(Z):=\widehat{\mathcal{K}}(X, Y, Z),
\end{equation}
for any $Z\in\Gamma(TM).$
\end{defn}

So we can claim the following propositions.
\begin{prop}
Let $B\times_{b_{1}}F_{1}\times\cdots\times_{b_{m}}F_{m}$ be a degenerate multiply warped product and let the vector fields $X, Y, Z\in\Gamma(TB)$ and $U_{j}, V_{j}, W_{j}\in\Gamma(TF_{j}).$ Let $\widehat{\mathcal{K}}$ be the semi-symmetric non-metric Koszul form on $B\times_{b_{1}}F_{1}\times\cdots\times_{b_{m}}F_{m}$ and $\widehat{\mathcal{K}}_{B}$, $\widehat{\mathcal{K}}_{F_{j}}$ be the lifts of the semi-symmetric non-metric Koszul form on $B,$ $F_{j},$ respectively. Let $P\in\Gamma(TB),$ then
\begin{align}
&(1)\widehat{\mathcal{K}}(X, Y, Z)=\widehat{\mathcal{K}}_{B}(X, Y, Z);\nonumber\\
&(2)\widehat{\mathcal{K}}(X, Y, W_{j})=\widehat{\mathcal{K}}(X, W_{j}, Y)=\widehat{\mathcal{K}}(W_{j}, X, Y)=0;\nonumber\\
&(3)\widehat{\mathcal{K}}(X, V_{i}, W_{j})=-\widehat{\mathcal{K}}(V_{i}, W_{j}, X)=b_{j}X(b_{j})g_{F_{j}}(V_{j}, W_{j}),~if~i=j;\nonumber\\
&(4)\widehat{\mathcal{K}}(V_{i}, X, W_{j})=b_{j}X(b_{j})g_{F_{j}}(V_{j}, W_{j})+b^{2}_{j}g_{B}(X, P)g_{F_{j}}(V_{j}, W_{j}),~if~i=j;\nonumber\\
&(5)\widehat{\mathcal{K}}(X, V_{i}, W_{j})=\widehat{\mathcal{K}}(V_{i}, X, W_{j})=\widehat{\mathcal{K}}(V_{i}, W_{j}, X)=0,~if~i\neq j;\nonumber\\
&(6)\widehat{\mathcal{K}}(U_{i}, V_{j}, W_{k})=b^{2}_{j}\mathcal{K}_{F_{j}}(U_{j}, V_{j}, W_{j}),~if~i=j=k;\nonumber\\
&(7)\widehat{\mathcal{K}}(U_{i}, V_{j}, W_{k})=0,~other~cases.\nonumber
\end{align}
\end{prop}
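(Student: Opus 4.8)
The plan is to reduce every identity to the values of the plain Koszul form $\mathcal{K}$ on a degenerate multiply warped product—obtained exactly as in \cite{Wang1} and already used in the proof of Theorem 2.11, or equivalently read off by setting $P=0$ in Proposition 2.6—and then to add the single correction term prescribed by Definition 3.1. By that definition,
\[
\widehat{\mathcal{K}}(X,Y,Z)=\mathcal{K}(X,Y,Z)+g(Y,P)g(X,Z),
\]
so for each of the seven argument patterns one records the corresponding value of $\mathcal{K}$ and evaluates the correction $g(Y,P)g(X,Z)$. The whole proof then consists of deciding, pattern by pattern, when this correction is nonzero.

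The decisive observation is that, because $P\in\Gamma(TB)$ and the warped metric $g$ carries no mixed $B$--$F_j$ nor $F_i$--$F_j$ ($i\neq j$) blocks, the factor $g(Y,P)$ vanishes whenever the middle slot $Y$ lies in some $\Gamma(TF_j)$ and equals $g_B(Y,P)$ when $Y\in\Gamma(TB)$, while $g(X,Z)$ is nonzero only if $X$ and $Z$ share a common factor, where it equals $g_B(X,Z)$ or $b_j^2 g_{F_j}(X,Z)$. Hence the correction survives in exactly one configuration of interest: the middle argument in $\Gamma(TB)$ with the two outer arguments in a single fibre $\Gamma(TF_j)$. This is precisely item (4), where the correction adds $b_j^2 g_B(X,P)g_{F_j}(V_j,W_j)$ to the plain value $b_j X(b_j)g_{F_j}(V_j,W_j)$, and it is what separates (4) from (3): in (3) the middle slot $V_i$ sits in a fibre, killing the correction and leaving $\widehat{\mathcal{K}}(X,V_i,W_j)=\mathcal{K}(X,V_i,W_j)=b_j X(b_j)g_{F_j}(V_j,W_j)$. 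In item (1) the surviving correction $g_B(Y,P)g_B(X,Z)$ is exactly the non-metric term built into $\widehat{\mathcal{K}}_B$, giving $\widehat{\mathcal{K}}(X,Y,Z)=\widehat{\mathcal{K}}_B(X,Y,Z)$.

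For all remaining items the correction vanishes and $\widehat{\mathcal{K}}$ collapses to $\mathcal{K}$: items (2), (5) and (7) are zero because both $\mathcal{K}$ and the correction are zero, while item (6) is inherited verbatim as $b_j^2\mathcal{K}_{F_j}(U_j,V_j,W_j)$; the skew identity $\widehat{\mathcal{K}}(X,V_i,W_j)=-\widehat{\mathcal{K}}(V_i,W_j,X)$ in (3) holds because $\widehat{\mathcal{K}}(V_i,W_j,X)$ again has its middle slot $W_j$ in a fibre, so it reduces to $\mathcal{K}(V_i,W_j,X)$ and the antisymmetry is the one already present for $\mathcal{K}$. I anticipate no genuine obstacle: the computation runs exactly parallel to the semi-symmetric metric case of Proposition 2.6, the sole difference being that the non-metric correction omits the term $-g(X,Y)g(P,Z)$ carried by $\overline{\mathcal{K}}$. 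The only point requiring care is the bookkeeping of which slot pairs with $P$ and which pair carries the warping factor $b_j^2$, since it is exactly this that produces the extra $b_j^2 g_B(X,P)$ contribution in item (4).
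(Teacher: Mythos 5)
Your proposal is correct and is essentially the computation the paper intends (the paper states this proposition without writing out a proof, relying on the definition $\widehat{\mathcal{K}}=\mathcal{K}+g(\cdot,P)g(\cdot,\cdot)$ together with the known block values of $\mathcal{K}$ on a multiply warped product). Your case analysis of when the correction term $g(Y,P)g(X,Z)$ survives — only when the middle slot lies in $\Gamma(TB)$ and the outer slots share a factor, which isolates items (1) and (4) — is exactly the right bookkeeping and verifies each item as stated.
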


\begin{prop}
Let $B\times_{b_{1}}F_{1}\times\cdots\times_{b_{m}}F_{m}$ be a degenerate multiply warped product and let the vector fields $X, Y, Z\in\Gamma(TB)$ and $U_{j}, V_{j}, W_{j}\in\Gamma(TF_{j}).$ Let $\widehat{\mathcal{K}}$ be the semi-symmetric non-metric Koszul form on $B\times_{b_{1}}F_{1}\times\cdots\times_{b_{m}}F_{m}$ and $\widehat{\mathcal{K}}_{B}$, $\widehat{\mathcal{K}}_{F_{j}}$ be the lifts of the semi-symmetric non-metric Koszul form on $B,$ $F_{j},$ respectively. Let $P\in\Gamma(TF_{l}),$ then
\begin{align}
&(1)\widehat{\mathcal{K}}(X, Y, Z)=\mathcal{K}_{B}(X, Y, Z);\nonumber\\
&(2)\widehat{\mathcal{K}}(X, Y, W_{j})=\widehat{\mathcal{K}}(W_{j}, X, Y)=0,~if~j=l;\nonumber\\
&(3)\widehat{\mathcal{K}}(X, W_{j}, Y)=b^{2}_{j}g_{B}(X, Y)g_{F_{j}}(W_{j}, P),~if~j=l;\nonumber\\
&(4)\widehat{\mathcal{K}}(X, Y, W_{j})=\widehat{\mathcal{K}}(X, W_{j}, Y)=\widehat{\mathcal{K}}(W_{j}, X, Y)=0,~if~j\neq l;\nonumber\\
&(5)\widehat{\mathcal{K}}(X, V_{i}, W_{j})=\widehat{\mathcal{K}}(V_{i}, X, W_{j})=-\widehat{\mathcal{K}}(V_{i}, W_{j}, X)=b_{j}X(b_{j})g_{F_{j}}(V_{j}, W_{j}),~if~i=j;\nonumber\\
&(6)\widehat{\mathcal{K}}(X, V_{i}, W_{j})=\widehat{\mathcal{K}}(V_{i}, X, W_{j})=\widehat{\mathcal{K}}(V_{i}, W_{j}, X)=0,~if~i\neq j;\nonumber\\
&(7)\widehat{\mathcal{K}}(U_{i}, V_{j}, W_{k})=b^{2}_{j}\mathcal{K}_{F_{j}}(U_{j}, V_{j}, W_{j})+b^{4}_{j}g_{F_{j}}(U_{j}, W_{j})g_{F_{j}}(V_{j}, P),~if~i=j=k=l;\nonumber\\
&(8)\widehat{\mathcal{K}}(U_{i}, V_{j}, W_{k})=b^{2}_{j}\mathcal{K}_{F_{j}}(U_{j}, V_{j}, W_{j}),~if~i=j= k\neq l;\nonumber\\
&(9)\widehat{\mathcal{K}}(U_{i}, W_{k}, V_{j})=b^{2}_{j}b^{2}_{k}g_{F_{j}}(U_{j}, V_{j})g_{F_{k}}(W_{k}, P),~if~i=j\neq k=l;\nonumber\\
&(10)\widehat{\mathcal{K}}(U_{i}, V_{j}, W_{k})=0,~other~cases.\nonumber
\end{align}
\end{prop}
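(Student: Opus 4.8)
The plan is to work directly from the defining identity
\[
\widehat{\mathcal{K}}(X,Y,Z)=\mathcal{K}(X,Y,Z)+g(Y,P)g(X,Z)
\]
and to handle its two summands separately. The first summand is the ordinary Koszul form of the multiply warped product, whose values on fields supported in single factors are the $P=0$ specialization of the preceding proposition (where the correction $g(Y,P)g(X,Z)$ drops out): they give $\mathcal{K}(X,Y,Z)=\mathcal{K}_B(X,Y,Z)$, the vanishing of all mixed base-fibre forms, $\mathcal{K}(X,V_i,W_j)=\mathcal{K}(V_i,X,W_j)=-\mathcal{K}(V_i,W_j,X)=b_jX(b_j)g_{F_j}(V_j,W_j)$ for $i=j$, and $\mathcal{K}(U_i,V_j,W_k)=b_j^2\mathcal{K}_{F_j}(U_j,V_j,W_j)$ for $i=j=k$, all else zero. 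Thus the only genuinely new content is the correction term, and here the decisive fact is that $P\in\Gamma(TF_l)$ is vertical in the $l$-th fibre: by the explicit form of the warped metric, $g(Y,P)=0$ unless $Y\in\Gamma(TF_l)$, in which case $g(Y,P)=b_l^2 g_{F_l}(Y,P)$.

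First I would record the pairing rule forced by the warped metric on pure fields: $g(X,Z)=g_B(X,Z)$ when both lie in $B$, $g(X,Z)=b_j^2 g_{F_j}(X,Z)$ when both lie in the same fibre $F_j$, and $g(X,Z)=0$ otherwise. Combining this with the vanishing criterion for $g(Y,P)$ shows that $g(Y,P)g(X,Z)$ is nonzero precisely when the middle slot $Y$ lies in $\Gamma(TF_l)$ and the outer pair $(X,Z)$ pairs nontrivially. Scanning the ten configurations, this isolates exactly cases (3), (7) and (9); in every other case either $Y$ is not an $F_l$-field or $g(X,Z)=0$, so the correction vanishes and $\widehat{\mathcal{K}}$ reduces to the plain Koszul form, which at once gives the asserted values (all the listed zeros, as well as the $b_jX(b_j)$ entries of case (5)).

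It then remains to substitute in the three surviving cases. In case (3), with $j=l$, the middle field contributes $g(W_j,P)=b_l^2 g_{F_l}(W_j,P)$ and the outer base fields contribute $g(X,Y)=g_B(X,Y)$, while $\mathcal{K}(X,W_j,Y)=0$, producing $b_j^2 g_B(X,Y)g_{F_j}(W_j,P)$. In case (7), with all four indices equal to $l$, the correction is $b_l^2 g_{F_l}(V,P)\cdot b_l^2 g_{F_l}(U,W)$, which added to $b_l^2\mathcal{K}_{F_l}(U,V,W)$ yields the stated $b_j^4 g_{F_j}(U_j,W_j)g_{F_j}(V_j,P)$ term. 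In case (9), with $i=j\neq k=l$ and the $F_l$-field $W_k$ in the middle, the plain Koszul term vanishes (distinct fibres) and the correction is $g(W_k,P)g(U_i,V_j)=b_k^2 b_j^2 g_{F_k}(W_k,P)g_{F_j}(U_j,V_j)$.

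The work is organizational rather than conceptual: the one real subtlety is that the correction term is asymmetric in its three arguments, entering only through the middle slot, so one must track not only which factor each index occupies but also which ordering of a given triple is being evaluated. This is exactly what separates the cyclic arrangements in cases (2)--(4) and distinguishes case (7) from cases (8)--(9). Once the pairing rule and the vanishing criterion for $g(Y,P)$ are fixed, each line reduces to a single substitution, and the argument runs in parallel with the corresponding metric statement for $P\in\Gamma(TF_l)$, the only difference being that the extra summand $-g(X,Y)g(P,Z)$ present there is absent here.
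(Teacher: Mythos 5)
Your proposal is correct and follows essentially the same route the paper intends: expand $\widehat{\mathcal{K}}=\mathcal{K}+g(\cdot,P)g(\cdot,\cdot)$, use the known values of the plain Koszul form on the multiply warped product together with the block-diagonal pairing rule of the metric, and observe that the correction survives only when the middle argument lies in $\Gamma(TF_l)$ and the outer pair lies in a common factor, which singles out cases (3), (7) and (9). The three substitutions you carry out in those cases match the stated formulas, so nothing further is needed.
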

On a semi-symmetric non-metric semi-regular semi-Riemannian manifold, we define the Riemann curvature tensor of the semi-symmetric non-metric covariant derivative
\begin{equation}
\widehat{R}(X, Y, Z, T):=(\widehat{\nabla}_{X}\widehat{\nabla}^{\flat}_{Y}Z)(T)-(\widehat{\nabla}_{Y}\widehat{\nabla}^{\flat}_{X}Z)(T)-(\widehat{\nabla}^{\flat}_{[X, Y]}Z)(T).
\end{equation}

By some computations, we can get
\begin{prop}{\rm\cite{Wang1} }
For any vector fields $X, Y, Z, T\in\Gamma(TM)$ on a semi-symmetric non-metric semi-regular semi-Riemannian manifold $(M, g)$
\begin{align}
\widehat{R}(X, Y, Z, T)&=R(X, Y, Z, T)+X(g(Z, P))g(Y, T)-Y(g(Z, P))g(X, T)\\
&-\widehat{\mathcal{K}}(Y, Z, X)g(P, T)+\widehat{\mathcal{K}}(X, Z, Y)g(P, T).\nonumber
\end{align}
\end{prop}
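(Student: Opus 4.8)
The plan is to work directly from the definition of $\widehat R$ together with the defining relation $\widehat{\mathcal K}(X,Y,Z)=\mathcal K(X,Y,Z)+g(Y,P)g(X,Z)$, which at the level of the associated connections reads $\widehat\nabla_X Y=\nabla_X Y+g(Y,P)X$. The goal is to expand $\widehat R$, peel off the Levi-Civita curvature $R$ (the case $P=0$), and show that all the remaining terms collapse to the four correction terms in the statement. First I would record the two facts about $\nabla$ that do the work: it is torsion-free and metric, i.e. $\nabla_X Y-\nabla_Y X=[X,Y]$ and $\nabla g=0$.

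The key computational step is the expansion of $(\widehat\nabla_X\widehat\nabla^\flat_Y Z)(T)$. Because $\widehat\nabla^\flat_Y Z$ is the $1$-form $W\mapsto\widehat{\mathcal K}(Y,Z,W)=g(\widehat\nabla_Y Z,W)$, applying $\widehat\nabla_X$ and evaluating on $T$ produces, besides the ``operator'' term $g(\widehat\nabla_X\widehat\nabla_Y Z,T)$, a non-metricity correction
\[
(\widehat\nabla_X g)(\widehat\nabla_Y Z,T),\qquad (\widehat\nabla_X g)(A,T)=-g(A,P)g(X,T)-g(T,P)g(X,A),
\]
the latter being computed once and for all from $\widehat\nabla_X Y=\nabla_X Y+g(Y,P)X$. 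Thus $\widehat R(X,Y,Z,T)=g(\widehat R^{\mathrm{op}}(X,Y)Z,T)+(\widehat\nabla_X g)(\widehat\nabla_Y Z,T)-(\widehat\nabla_Y g)(\widehat\nabla_X Z,T)$, and each piece is then expanded in terms of $\nabla$ and $P$.

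Substituting $\widehat\nabla_X Y=\nabla_X Y+g(Y,P)X$ into $\widehat R^{\mathrm{op}}(X,Y)Z=\widehat\nabla_X\widehat\nabla_Y Z-\widehat\nabla_Y\widehat\nabla_X Z-\widehat\nabla_{[X,Y]}Z$ and using torsion-freeness to kill the $g(Z,P)\bigl(\nabla_X Y-\nabla_Y X-[X,Y]\bigr)$ term, one finds that $g(\widehat R^{\mathrm{op}}(X,Y)Z,T)$ equals $R(X,Y,Z,T)+X(g(Z,P))g(Y,T)-Y(g(Z,P))g(X,T)$ plus several terms of the shape $g(\nabla_\bullet Z,P)g(\bullet,T)$ and $g(Z,P)g(\bullet,P)g(\bullet,T)$. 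The non-metricity correction from the previous step contributes exactly the negatives of these, so they cancel in pairs; what survives from it is $-g(P,T)g(X,\nabla_Y Z)+g(P,T)g(Y,\nabla_X Z)$. Rewriting $g(X,\nabla_Y Z)=\mathcal K(Y,Z,X)$ and $g(Y,\nabla_X Z)=\mathcal K(X,Z,Y)$, and observing that the two symmetric pieces $g(Z,P)g(X,Y)g(P,T)$ that one would add to promote $\mathcal K$ to $\widehat{\mathcal K}$ cancel against each other, yields precisely $-\widehat{\mathcal K}(Y,Z,X)g(P,T)+\widehat{\mathcal K}(X,Z,Y)g(P,T)$, which is the claim.

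The main obstacle, and the reason the formula is not merely $g(\widehat R^{\mathrm{op}}\,\cdot\,,T)$, is the non-metricity of $\widehat\nabla$: unlike the Levi-Civita case, lowering an index does not commute with $\widehat\nabla$, so the term $(\widehat\nabla_X g)$ must be carried throughout and shown to annihilate the unwanted operator-curvature terms. A second, structural subtlety is that on a singular (only semi-regular) manifold the vector field $\widehat\nabla_X Y$ need not exist, so the heuristic connection-level computation above must ultimately be re-expressed entirely in terms of the Koszul forms $\mathcal K$ and $\widehat{\mathcal K}$ and the known Koszul-form expression for $R$; it is the semi-regularity hypothesis that guarantees these contractions are well defined and the manipulations legitimate.
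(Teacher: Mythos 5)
Your derivation is correct: the decomposition $(\widehat{\nabla}_X\widehat{\nabla}^{\flat}_Y Z)(T)=g(\widehat{\nabla}_X\widehat{\nabla}_Y Z,T)+(\widehat{\nabla}_X g)(\widehat{\nabla}_Y Z,T)$ with $(\widehat{\nabla}_X g)(A,T)=-g(A,P)g(X,T)-g(T,P)g(X,A)$ is right, the advertised cancellations do occur, and the surviving terms $-g(P,T)\mathcal{K}(Y,Z,X)+g(P,T)\mathcal{K}(X,Z,Y)$ do coincide with $-g(P,T)\widehat{\mathcal{K}}(Y,Z,X)+g(P,T)\widehat{\mathcal{K}}(X,Z,Y)$ because the two promotion terms $g(Z,P)g(X,Y)g(P,T)$ cancel. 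Note that the present paper gives no proof at all of this proposition --- it is imported verbatim from \cite{Wang1} with the remark ``by some computations'' --- so there is nothing to compare line by line; the intended argument there is the direct Koszul-form expansion $\widehat{R}(X,Y,Z,T)=X(\widehat{\mathcal{K}}(Y,Z,T))-Y(\widehat{\mathcal{K}}(X,Z,T))-\widehat{\mathcal{K}}([X,Y],Z,T)-\widehat{\mathcal{K}}(X,T,\bullet)\widehat{\mathcal{K}}(Y,Z,\bullet)+\widehat{\mathcal{K}}(Y,T,\bullet)\widehat{\mathcal{K}}(X,Z,\bullet)$, into which one substitutes $\widehat{\mathcal{K}}=\mathcal{K}+g(\cdot,P)g(\cdot,\cdot)$ and uses the metric and torsion-free identities $X(g(Y,T))=\mathcal{K}(X,Y,T)+\mathcal{K}(X,T,Y)$ and $\mathcal{K}(X,Y,T)-\mathcal{K}(Y,X,T)=g([X,Y],T)$. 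Your connection-level computation is exactly the non-degenerate shadow of that expansion, and your closing caveat is the right one: on a merely semi-regular manifold the fields $\widehat{\nabla}_X Y$ need not exist, so every inner product $g(\widehat{\nabla}_Y Z,\,\cdot\,)$ must be replaced by the contraction $\widehat{\mathcal{K}}(Y,Z,\bullet)$, which semi-regularity guarantees is smooth; once that substitution is made your cancellation scheme goes through verbatim.
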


It is easy to check that
\begin{align}
\widehat{R}(X, Y, Z, T)=-\widehat{R}(Y, X, Z, T),~~~~~~~ \widehat{R}(X, Y, Z, T)\neq-\widehat{R}(X, Y, T, Z).
\end{align}

We review the following proposition in \cite{Wang1}.
\begin{prop}
A singular semi-Riemannian manifold $(M, g)$ is a semi-regular semi-Riemannian manifold if and only if it is a semi-symmetric non-metric semi-regular semi-Riemannian manifold.
\end{prop}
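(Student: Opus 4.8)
The plan is to exploit the explicit first-order relation between the two lower covariant derivatives. From Definitions 3.1 and 3.2, for all $X,Y\in\Gamma(TM)$,
\begin{equation}
\widehat{\nabla}^{\flat}_{X}Y=\nabla^{\flat}_{X}Y+g(Y,P)\,X^{\flat},\nonumber
\end{equation}
where $X^{\flat}:=g(X,\cdot)$. Recall that, in Stoica's framework \cite{OC1}, both notions of semi-regularity are built from two requirements: (i) a radical-stationary condition, namely that the relevant lower covariant derivative annihilates the radical $\mathrm{Rad}(TM)=\ker g$ so that it lies in the domain of the induced contraction $g^{*}$; and (ii) a smoothness condition, namely that the $g^{*}$-product of any two such lower covariant derivatives is a smooth function. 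I would verify that each of these two requirements transfers between $\nabla^{\flat}$ and $\widehat{\nabla}^{\flat}$, treating the radical-stationary condition first and the smoothness condition second.

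For the radical-stationary condition, the essential observation is that $X^{\flat}$ always annihilates $\mathrm{Rad}(TM)$, since $g(X,W)=0$ whenever $W\in\mathrm{Rad}(TM)$; moreover $g(Y,P)\in C^{\infty}(M)$. Hence the correction term $g(Y,P)\,X^{\flat}$ always annihilates the radical, and the displayed identity shows that $\widehat{\nabla}^{\flat}_{X}Y$ annihilates $\mathrm{Rad}(TM)$ if and only if $\nabla^{\flat}_{X}Y$ does. This proves that $(M,g)$ is radical-stationary in the ordinary sense exactly when it is so in the semi-symmetric non-metric sense, which handles requirement (i) in both directions.

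For the smoothness condition, assuming radical-stationarity I would expand, for $X,Y,Z,T\in\Gamma(TM)$,
\begin{align}
g^{*}(\widehat{\nabla}^{\flat}_{X}Y,\widehat{\nabla}^{\flat}_{Z}T)&=g^{*}(\nabla^{\flat}_{X}Y,\nabla^{\flat}_{Z}T)+g(T,P)\,g^{*}(\nabla^{\flat}_{X}Y,Z^{\flat})\nonumber\\
&\quad+g(Y,P)\,g^{*}(X^{\flat},\nabla^{\flat}_{Z}T)+g(Y,P)g(T,P)\,g^{*}(X^{\flat},Z^{\flat}).\nonumber
\end{align}
The key tool is the reproducing property $g^{*}(\omega,X^{\flat})=\omega(X)$, which holds pointwise for any $1$-form $\omega$ annihilating the radical: writing $\omega=V^{\flat}$ one has $g^{*}(V^{\flat},X^{\flat})=g(V,X)=\omega(X)$. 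Applying this gives $g^{*}(\nabla^{\flat}_{X}Y,Z^{\flat})=\mathcal{K}(X,Y,Z)$, $g^{*}(X^{\flat},\nabla^{\flat}_{Z}T)=\mathcal{K}(Z,T,X)$ and $g^{*}(X^{\flat},Z^{\flat})=g(X,Z)$, each of which is a smooth function. Therefore all three correction terms are smooth, so $g^{*}(\widehat{\nabla}^{\flat}_{X}Y,\widehat{\nabla}^{\flat}_{Z}T)$ is smooth precisely when $g^{*}(\nabla^{\flat}_{X}Y,\nabla^{\flat}_{Z}T)$ is. The reverse implication follows symmetrically by solving the displayed identity for $\nabla^{\flat}_{X}Y=\widehat{\nabla}^{\flat}_{X}Y-g(Y,P)X^{\flat}$ and using $g^{*}(\widehat{\nabla}^{\flat}_{X}Y,Z^{\flat})=\widehat{\mathcal{K}}(X,Y,Z)$, which is again smooth. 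Combining the two requirements yields the stated equivalence.

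I expect the main obstacle to be bookkeeping the domain of $g^{*}$: the contraction $g^{*}$ is only defined on $1$-forms annihilating the radical, so the smoothness step cannot even be stated until radical-stationarity has been secured. This is why the argument must be ordered as above, establishing the radical-stationary equivalence first and then the smoothness equivalence, and why the reproducing property $g^{*}(\omega,X^{\flat})=\omega(X)$, which converts the unknown $g^{*}$-values into manifestly smooth Koszul-form and metric contractions, is the decisive ingredient rather than any genuinely delicate estimate.
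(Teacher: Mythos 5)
Your proposal is correct, and it follows the approach this statement is meant to have: the paper itself states the proposition without proof (it is quoted from \cite{Wang1}), and the intended argument there is precisely your decomposition $\widehat{\nabla}^{\flat}_{X}Y=\nabla^{\flat}_{X}Y+g(Y,P)X^{\flat}$, with the correction term annihilating the radical and the cross terms in $g^{*}(\widehat{\nabla}^{\flat}_{X}Y,\widehat{\nabla}^{\flat}_{Z}T)$ reduced to the smooth quantities $\mathcal{K}(X,Y,Z)$, $\mathcal{K}(Z,T,X)$ and $g(X,Z)$ via $g^{*}(\omega,X^{\flat})=\omega(X)$. Your ordering of the two requirements (radical-stationarity before smoothness of the $g^{*}$-contraction) is also the right one, since otherwise the contraction is not even defined.
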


Then we can conclude the following facts.
\begin{thm}
Let $(B, g_{B})$ be a non-degenerate semi-Riemannian manifold, $(F_{j}, g_{F_j})$ be semi-regular semi-Riemannian manifolds and $b_{j}\in C^{\infty}(B),~ 1\leq j\leq m.$
Let the vector fields $X, Y, Z, T\in\Gamma(TB)$ and $U_{j}, V_{j}, W_{j}, Q_{j}\in\Gamma(TF_{j}).$  Let $P\in\Gamma(TB),$ then
\begin{align}
&(1)\widehat{R}(X, Y, Z, T)=\widehat{R}_{B}(X, Y, Z, T);\nonumber\\
&(2)\widehat{R}(X, Y, Z, W_{j})=\widehat{R}(X, Y, W_{j}, Z)=\widehat{R}(Z, W_{j}, X, Y)=0;\nonumber\\
&(3)\widehat{R}(X, Y,  V_{i}, W_{j})=\widehat{R}(V_{i}, W_{j}, X, Y)=0,~if~i=j;\nonumber\\
&(4)\widehat{R}(V_{i}, X, W_{j}, Y)=-\widehat{R}(X, V_{i}, W_{j}, Y)=b_{j}XY(b_{j})g_{F_{j}}(V_{j}, W_{j})-b_{j}g^{*}_{B}(\nabla^{\flat}_{X}Y, db_{j})g_{F_{j}}(V_{j}, W_{j})\nonumber\\
&-2b_{j}X(b_{j})g_{B}(Y, P)g_{F_{j}}(V_{j}, W_{j}),~if~i=j;\nonumber\\
&(5)\widehat{R}(V_{i}, X, Y, W_{j})=-\widehat{R}(X, V_{i}, Y, W_{j})=-b_{j}XY(b_{j})g_{F_{j}}(V_{j}, W_{j})+b_{j}g^{*}_{B}(\nabla^{\flat}_{X}Y, db_{j})g_{F_{j}}(V_{j}, W_{j})\nonumber\\
&-b^{2}_{j}X(g_{B}(Y, P))g_{F_{j}}(V_{j}, W_{j}),~if~i=j;\nonumber\\
&(6)\widehat{R}(X, Y,  V_{i}, W_{j})=\widehat{R}(V_{i}, W_{j}, X, Y)=\widehat{R}(X, V_{i}, W_{j}, Y)=\widehat{R}(X, V_{i}, Y, W_{j})=0,~if~i\neq j;\nonumber\\
&(7)\widehat{R}(X, V_{i}, W_{j}, U_{k})=\widehat{R}(W_{j}, U_{k}, X, V_{i})=0,~if~i=j=k;\nonumber\\
&(8)\widehat{R}(W_{j}, U_{k}, V_{i}, X)=b^{2}_{j}g_{B}(X, P)(\mathcal{K}_{F_{j}}(W_{j}, V_{j}, U_{j})-\mathcal{K}_{F_{j}}(U_{j}, V_{j}, W_{j})),~if~i=j=k;\nonumber\\
&(9)\widehat{R}(X, V_{i}, W_{j}, U_{k})=\widehat{R}(W_{j}, U_{k}, X, V_{i})=\widehat{R}(W_{j}, U_{k}, V_{i}, X)=0,~other~cases;\nonumber
\end{align}
\begin{align}
&(10)\widehat{R}(U_{k}, V_{i}, W_{j}, Q_{s})=b^{2}_{j}R_{F_{j}}(U_{j}, V_{j}, W_{j}, Q_{j})+b^{2}_{j}g^{*}_{B}(db_{j}, db_{j})(g_{F_{j}}(U_{j}, W_{j})g_{F_{j}}(V_{j}, Q_{j})\nonumber\\
&-g_{F_{j}}(V_{j}, W_{j})g_{F_{j}}(U_{j}, Q_{j})),~if~i=j=k=s;\nonumber\\
&(11)\widehat{R}(U_{k}, V_{i}, W_{j}, Q_{s})=b_{i}b_{j}g^{*}_{B}(db_{i}, db_{j})g_{F_{i}}(V_{i}, Q_{i})g_{F_{j}}(U_{j}, W_{j}),~if~k=j\neq i=s;\nonumber\\
&(12)\widehat{R}(U_{k}, V_{i}, W_{j}, Q_{s})=-b_{j}b_{k}g^{*}_{B}(db_{j}, db_{k})g_{F_{j}}(V_{j}, W_{j})g_{F_{k}}(U_{k}, Q_{k}),~if~k=s\neq j=i;\nonumber\\
&(13)\widehat{R}(U_{k}, V_{i}, W_{j}, Q_{s})=0,~other~cases.\nonumber
\end{align}
\end{thm}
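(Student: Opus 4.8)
The plan is to obtain every one of the thirteen formulas by feeding the data already known on the warped product into the master identity of Proposition 3.5,
\begin{align*}
\widehat{R}(X, Y, Z, T)&=R(X, Y, Z, T)+X(g(Z, P))g(Y, T)-Y(g(Z, P))g(X, T)\\
&\quad-\widehat{\mathcal{K}}(Y, Z, X)g(P, T)+\widehat{\mathcal{K}}(X, Z, Y)g(P, T).
\end{align*}
The four ingredients I would draw on are all available: the ordinary Riemann curvature $R$ of the warped product (Theorem 5.5 in \cite{Wang1}), the ordinary Koszul form $\mathcal{K}$ (Proposition 5.2 in \cite{Wang1}), the semi-symmetric non-metric Koszul form $\widehat{\mathcal{K}}$ (Proposition 3.3), and the block-diagonal structure of $g$. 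Thus the proof is a matter of organized substitution, exactly parallel to Theorem 2.11.

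The first step is to use the hypothesis $P\in\Gamma(TB)$ to discard most of the correction terms. Since $g$ is block diagonal and $P$ lives on the base, one has $g(P, T)=0$ whenever $T$ is a fiber vector, and $g(Z, P)\equiv0$ (hence $X(g(Z,P))=0$) whenever $Z$ is a fiber vector; moreover $g_{B}(\cdot,P)$ is a pullback of a function on $B$, so its derivative along any fiber field also vanishes. The upshot is that the two derivative terms survive only when the third slot $Z$ carries a base component, while the two $\widehat{\mathcal{K}}$-terms survive only when the fourth slot $T$ carries a base component. Running through the index patterns, this kills all corrections in cases (1), (2), (3), (6), (7), (9), (10), (11), (12) and (13), so that each of these either vanishes or equals the corresponding ordinary curvature $R$ read off from Theorem 5.5 in \cite{Wang1}; in particular (1) gives $\widehat{R}_{B}$ as in the metric computation, and (10), (11), (12) reduce verbatim to the fiber-curvature entries of $R$.

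What is left are the genuinely mixed cases (4), (5) and (8), where exactly one of $Z, T$ is a base vector and the other a fiber vector. For these I would insert the matching value of $\widehat{\mathcal{K}}$ from Proposition 3.3 --- using the antisymmetry $\widehat{\mathcal{K}}(X, V_i, W_j)=-\widehat{\mathcal{K}}(V_i, W_j, X)$ of item (3) and the value $\widehat{\mathcal{K}}(U_i,V_j,W_k)=b_j^{2}\mathcal{K}_{F_j}(U_j,V_j,W_j)$ of item (6) --- together with $g(V_i, W_j)=b_j^{2}g_{F_j}(V_j, W_j)$ and the curvature value $R(X, V_j, W_j, Y)=-b_j(XY(b_j)-g^{*}_{B}(\nabla^{\flat}_{X}Y, db_j))g_{F_j}(V_j, W_j)$ from Theorem 5.5 in \cite{Wang1}. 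In case (4) the two $\widehat{\mathcal{K}}$-terms combine to the extra $-2b_jX(b_j)g_B(Y,P)g_{F_j}(V_j,W_j)$; in case (5) the single surviving derivative term, expanded by the product rule, produces $-b_j^{2}X(g_B(Y,P))g_{F_j}(V_j,W_j)$; and in case (8) the two $\widehat{\mathcal{K}}$-terms yield $b_j^{2}g_B(X,P)(\mathcal{K}_{F_j}(W_j,V_j,U_j)-\mathcal{K}_{F_j}(U_j,V_j,W_j))$.

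The one point demanding genuine care --- and the essential difference from Theorem 2.11 --- is that $\widehat{R}$ is antisymmetric only in its first two arguments, while $\widehat{R}(X,Y,Z,T)\neq-\widehat{R}(X,Y,T,Z)$ in general. One therefore may not collapse the $Z\leftrightarrow T$ orderings, and must treat $\widehat{R}(V_i, X, W_j, Y)$ and $\widehat{R}(V_i, X, Y, W_j)$ as independent computations; this is precisely why (4) is governed by the Koszul correction and (5) by the derivative correction, even though they involve the same four vector fields. Keeping straight which correction term survives in each ordering, and the index bookkeeping $i=j$ versus $i\neq j$, is the only real obstacle; everything else is routine expansion, so I expect to invoke the analogy with Theorem 2.11 and carry out in full detail only cases (4), (5) and (8).
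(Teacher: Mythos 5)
Your proposal is correct and is essentially the proof the paper intends: the paper omits the details of this theorem, but its proof of the analogous Theorem 2.11 proceeds exactly as you describe, by substituting the Koszul-form values (here Proposition 3.3) and the ordinary curvature of the multiply warped product from Theorem 5.5 and Proposition 5.2 of \cite{Wang1} into the master identity of Proposition 3.5, with the block-diagonal structure of $g$ and $P\in\Gamma(TB)$ killing or cancelling the correction terms case by case. Your computations of the surviving corrections in cases (4), (5) and (8), and your observation that the lack of antisymmetry in the last two slots forces (4) and (5) to be treated separately, all check out against the stated formulas.
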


\begin{thm}
Let $(B, g_{B})$ be a non-degenerate semi-Riemannian manifold, $(F_{j}, g_{F_j})$ be semi-regular semi-Riemannian manifolds and $b_{j}\in C^{\infty}(B),~ 1\leq j\leq m.$
Let the vector fields $X, Y, Z, T\in\Gamma(TB)$ and $U_{j}, V_{j}, W_{j}, Q_{j}\in\Gamma(TF_{j}).$  Let $P\in\Gamma(TF_{l}),$ then
\begin{align}
&(1)\widehat{R}(X, Y, Z, T)=R_{B}(X, Y, Z, T);\nonumber\\
&(2)\widehat{R}(X, Y, Z, W_{j})=b^{2}_{j}g_{F_{j}}(W_{j}, P)(\mathcal{K}_{B}(X, Z, Y)-\mathcal{K}_{B}(Y, Z, X)),~if~j=l;\nonumber\\
&(3)\widehat{R}(X, Y, W_{j}, Z)=2b_{j}g_{F_{j}}(W_{j}, P)(X(b_{j})g_{B}(Y, Z)-Y(b_{j})g_{B}(X, Z)),~if~j=l;\nonumber\\
&(4)\widehat{R}(X, W_{j}, Y, Z)=\widehat{R}(W_{j}, X, Y, Z)=0,~if~j=l;\nonumber\\
&(5)\widehat{R}(X, Y, Z, W_{j})=\widehat{R}(X, Y, W_{j}, Z)=\widehat{R}(X, W_{j}, Y, Z)=\widehat{R}(W_{j}, X, Y, Z)=0,~if~j\neq l;\nonumber\\
&(6)\widehat{R}(X, Y,  V_{i}, W_{j})=\widehat{R}(V_{i}, W_{j}, X, Y)=0;\nonumber\\
&(7)\widehat{R}(V_{i}, X, W_{j}, Y)=-\widehat{R}(X, V_{i}, W_{j}, Y)=b_{j}XY(b_{j})g_{F_{j}}(V_{j}, W_{j})-b_{j}g^{*}_{B}(\nabla^{\flat}_{X}Y, db_{j})g_{F_{j}}(V_{j}, W_{j})\nonumber\\
&+b^{2}_{j}V_{j}(g_{F_{j}}(W_{j}, P))g_{B}(X, Y),~if~i=j=l;\nonumber\\
&(8)\widehat{R}(V_{i}, X, W_{j}, Y)=-\widehat{R}(X, V_{i}, W_{j}, Y)=b_{j}XY(b_{j})g_{F_{j}}(V_{j}, W_{j})-b_{j}g^{*}_{B}(\nabla^{\flat}_{X}Y, db_{j})g_{F_{j}}(V_{j}, W_{j}),\nonumber\\
&~if~i=j\neq l;\nonumber\\
&(9)\widehat{R}(V_{i}, X, Y, W_{j})=-\widehat{R}(X, V_{i}, Y, W_{j})=-b_{j}XY(b_{j})g_{F_{j}}(V_{j}, W_{j})+b_{j}g^{*}_{B}(\nabla^{\flat}_{X}Y, db_{j})g_{F_{j}}(V_{j}, W_{j}),\nonumber\\
&~if~i=j=l~or~i=j\neq l;\nonumber\\
&(10)\widehat{R}(V_{i}, X, W_{j}, Y)=\widehat{R}(V_{i}, X, Y, W_{j})=0,~other~cases;\nonumber\\
&(11)\widehat{R}(X, V_{i}, W_{j}, U_{k})=2b^{3}_{j}X(b_{j})(g_{F_{j}}(V_{j}, U_{j})g_{F_{j}}(W_{j}, P)+g_{F_{j}}(V_{j}, W_{j})g_{F_{j}}(U_{j}, P)),\nonumber\\
&~if~i=j=k=l;\nonumber\\
&(12)\widehat{R}(X, V_{i}, W_{j}, U_{k})=2b_{j}b^{2}_{k}X(b_{j})g_{F_{j}}(V_{j}, W_{j})g_{F_{k}}(U_{k}, P),~if~i=j\neq k=l;\nonumber\\
&(13)\widehat{R}(X, V_{i}, U_{k}, W_{j})=2b_{k}b^{2}_{j}X(b_{k})g_{F_{j}}(V_{j}, W_{j})g_{F_{k}}(U_{k}, P),~if~i=j\neq k=l;\nonumber\\
&(14)\widehat{R}(X, V_{i}, W_{j}, U_{k})=\widehat{R}(W_{j}, U_{k}, X, V_{i})=\widehat{R}(W_{j}, U_{k}, V_{i}, X)=0,~other~cases;\nonumber\\
&(15)\widehat{R}(U_{k}, V_{i}, W_{j}, Q_{s})=b^{2}_{j}R_{F_{j}}(U_{j}, V_{j}, W_{j}, Q_{j})+b^{2}_{j}g^{*}_{B}(db_{j}, db_{j})(g_{F_{j}}(U_{j}, W_{j})g_{F_{j}}(V_{j}, Q_{j})\nonumber\\
&-g_{F_{j}}(V_{j}, W_{j})g_{F_{j}}(U_{j}, Q_{j}))+b^{4}_{j}g_{F_{j}}(Q_{j}, P)(\mathcal{K}_{F_{j}}(U_{j}, W_{j}, V_{j})-\mathcal{K}_{F_{j}}(V_{j}, W_{j}, U_{j}))\nonumber\\
&+b^{4}_{j}U_{j}(g_{F_{j}}(W_{j}, P))g_{F_{j}}(V_{j}, Q_{j})-b^{4}_{j}V_{j}(g_{F_{j}}(W_{j}, P))g_{F_{j}}(U_{j}, Q_{j}),~if~i=j=k=s=l;\nonumber
\end{align}
\begin{align}
&(16)\widehat{R}(U_{k}, V_{i}, W_{j}, Q_{s})=b^{2}_{j}R_{F_{j}}(U_{j}, V_{j}, W_{j}, Q_{j})+b^{2}_{j}g^{*}_{B}(db_{j}, db_{j})(g_{F_{j}}(U_{j}, W_{j})g_{F_{j}}(V_{j}, Q_{j})\nonumber\\
&-g_{F_{j}}(V_{j}, W_{j})g_{F_{j}}(U_{j}, Q_{j})),~if~i=j=k=s\neq l;\nonumber\\
&(17)\widehat{R}(U_{k}, V_{i}, W_{j}, Q_{s})=b_{i}b_{j}g^{*}_{B}(db_{i}, db_{j})g_{F_{i}}(V_{i}, Q_{i})g_{F_{j}}(U_{j}, W_{j}),~if~j=k\neq i=s=l\nonumber\\
&~or~k=j, i=s, l~are~different;\nonumber\\
&(18)\widehat{R}(U_{k}, V_{i}, W_{j}, Q_{s})=b_{i}b_{j}g^{*}_{B}(db_{i}, db_{j})g_{F_{i}}(V_{i}, Q_{i})g_{F_{j}}(U_{j}, W_{j})+b^{2}_{i}b^{2}_{j}U_{j}(g_{F_{j}}(W_{j}, P))\nonumber\\
&\times g_{F_{i}}(V_{i}, Q_{i}),~if~l=j=k\neq i=s;\nonumber\\
&(19)\widehat{R}(U_{k}, V_{i}, W_{j}, Q_{s})=-b_{j}b_{k}g^{*}_{B}(db_{j}, db_{k})g_{F_{j}}(V_{j}, W_{j})g_{F_{k}}(U_{k}, Q_{k})-b^{2}_{j}b^{2}_{k}V_{j}(g_{F_{j}}(W_{j}, P))\nonumber\\
&\times g_{F_{k}}(U_{k}, Q_{k}),~if~k=s\neq i=j=l;\nonumber\\
&(20)\widehat{R}(U_{k}, V_{i}, W_{j}, Q_{s})=-b_{j}b_{k}g^{*}_{B}(db_{j}, db_{k})g_{F_{j}}(V_{j}, W_{j})g_{F_{k}}(U_{k}, Q_{k}),~if~l=k=s\neq i=j\nonumber\\
&~or~k=s, i=j, l~are~different;\nonumber\\
&(21)\widehat{R}(V_{i}, W_{j}, Q_{s}, U_{k})=b^{2}_{j}b^{2}_{k}g_{F_{k}}(U_{k}, P)(\mathcal{K}_{F_{j}}(V_{j}, Q_{j}, W_{j})-\mathcal{K}_{F_{j}}(W_{j}, Q_{j}, V_{j})),\nonumber\\
&~if~l=k\neq i=j=s;\nonumber\\
&(22)\widehat{R}(U_{k}, V_{i}, W_{j}, Q_{s})=0,~other~cases.\nonumber
\end{align}
\end{thm}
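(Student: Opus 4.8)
The plan is to substitute directly into the master identity of Proposition 3.5,
\begin{multline*}
\widehat{R}(X, Y, Z, T)=R(X, Y, Z, T)+X(g(Z, P))g(Y, T)-Y(g(Z, P))g(X, T)\\
-\widehat{\mathcal{K}}(Y, Z, X)g(P, T)+\widehat{\mathcal{K}}(X, Z, Y)g(P, T),
\end{multline*}
and to evaluate its three ingredients --- the unperturbed curvature $R$, the coupled metric $g$, and the non-metric Koszul form $\widehat{\mathcal{K}}$ --- on each admissible assignment of the four slots to the base $B$ and the fibers $F_{j}$. The values of $R(X, Y, Z, T)$ for the multiply warped product are those drawn from Theorem 5.5 in \cite{Wang1}, the values of $\widehat{\mathcal{K}}$ for $P\in\Gamma(TF_{l})$ are supplied by Proposition 3.4, and $g$ splits as in the warped metric recalled at the start of Section 2: it is the $g_{B}$-pairing on base inputs, $b_{j}^{2}$ times the $g_{F_{j}}$-pairing on two inputs from a common fiber $F_{j}$, and zero on inputs lying in distinct factors.

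First I would record the building blocks tied to $P$. Since $P\in\Gamma(TF_{l})$, the contraction $g(P, T)$ vanishes unless $T$ lies in $F_{l}$, where $g(P, W_{l})=b_{l}^{2}g_{F_{l}}(P, W_{l})$, and likewise $g(Z, P)$ is supported on $Z\in\Gamma(TF_{l})$. The delicate ingredient is the directional derivative $X(g(Z, P))$, which must be split according to the type of the differentiating field. A base field acts only on the warping factor, producing $X(b_{l}^{2})g_{F_{l}}(W_{l}, P)=2b_{l}X(b_{l})g_{F_{l}}(W_{l}, P)$ while annihilating the pulled-back fiber function $g_{F_{l}}(W_{l}, P)$; a fiber field $U_{k}$ instead acts only on $g_{F_{l}}(W_{l}, P)$, producing $b_{l}^{2}U_{l}(g_{F_{l}}(W_{l}, P))$ and leaving $b_{l}$ untouched. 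Distributing the two $\widehat{\mathcal{K}}$-terms of the master formula (read off from Proposition 3.4) and the two derivative terms in this way is what converts the right-hand side into the explicit forms (1)--(22).

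I would then run through the configurations in the order of the statement, grouped by the number of arguments lying in the base, from four down to zero: all four in $B$ gives case (1); three in $B$ and one in a fiber gives (2)--(5); two in $B$ and two in fibers gives (6)--(10); one in $B$ gives (11)--(14); and all four in fibers gives (15)--(22). At each stage I insert the matching $R$-value, the matching $\widehat{\mathcal{K}}$-values, and the nonvanishing metric contractions, and collect terms, taking care throughout to record whether the active fiber index equals $l$, since only then do the $P$-dependent derivative and Koszul contributions survive. A convenient running check is the one symmetry that persists for the non-metric curvature, $\widehat{R}(X, Y, Z, T)=-\widehat{R}(Y, X, Z, T)$ as recorded after Proposition 3.5, which the final expressions must respect.

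The main obstacle is not any individual computation but the bookkeeping forced by the failure of antisymmetry in the last pair of slots: because $\widehat{R}(X, Y, Z, T)\neq-\widehat{R}(X, Y, T, Z)$ in general, one cannot fold the $Z\leftrightarrow T$ configurations into one another, so almost every placement of arguments must be verified on its own, which is precisely why this list is longer than its metric counterpart, Theorem 2.12. The point where errors are most likely to creep in is the interplay between the index $l$ carrying $P$ and the derivative term $X(g(Z, P))$, so I would cross-check every line against the corresponding entry of Proposition 3.4 and against Theorem 5.5 in \cite{Wang1} before declaring the case closed.
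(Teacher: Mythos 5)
Your proposal is correct and follows the same route the paper takes for the analogous results (e.g.\ the proof of Theorem 2.11): substitute into the identity of Proposition 3.5, feed in the Koszul-form values of Proposition 3.4 and the curvature values from Theorem 5.5 of \cite{Wang1}, and split the derivative term $X(g(Z,P))$ according to whether the differentiating field lives in the base (acting on $b_l^2$) or in a fiber (acting on $g_{F_l}(\cdot,P)$). The paper omits the case-by-case verification for this theorem, and your outline supplies exactly the bookkeeping it leaves implicit.
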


\vskip 1 true cm
\section{ Koszul Forms Associated with the Almost Product Structure and Their Curvature of Singular Multiply Warped Products}
We review some of the standard facts on almost product Koszul forms, more details are found in the book \cite{Wang1}.

\begin{defn}{\rm\cite{Wang1} }
A singular semi-Riemannian manifold $(M, g)$ is called a almost product singular semi-Riemannian manifold if there exists a almost product structure $J : TM \rightarrow TM$ satisfying $J^{2}=\texttt{id}$ and $g(JX, JY)=g(X, Y)$ for any $X, Y\in \Gamma(TM).$
\end{defn}

\begin{defn}{\rm\cite{Wang1} }
Almost product Koszul forms $\widetilde{\mathcal{K}}: \Gamma(TM)^{3}\rightarrow C^{\infty}(M)$ on $(M, g, J)$ is defined as
\begin{equation}
\widetilde{\mathcal{K}}(X, Y, Z):=\frac{1}{2}[\mathcal{K}(X, Y, Z)+\mathcal{K}(X, JY, JZ)].
\end{equation}
\end{defn}

\begin{defn}{\rm\cite{Wang1} }
Let $X, Y\in\Gamma(TM).$ The almost product lower covariant derivative of $Y$ in the direction of $X$ as the differential $1$-form $\widetilde{\nabla}^{\flat}_{X}Y\in A^{1}(M)$
\begin{equation}
(\widetilde{\nabla}^{\flat}_{X}Y)(Z):=\widetilde{\mathcal{K}}(X, Y, Z),
\end{equation}
for any $Z\in\Gamma(TM).$
\end{defn}

We let $J = (J_{B}, J_{F_{j}})$ be the almost product structure on $B\times_{b_{1}}F_{1}\times\cdots\times_{b_{m}}F_{m},$ $J_{B}: TB\rightarrow TB,$ $J_{F_{j}}: TF_{j}\rightarrow TF_{j},$ $J^{2}_{B}=\texttt{id}_{TB}$ and $J^{2}_{F_{j}}=\texttt{id}_{TF_{j}}.$ One sees immediately that

\begin{prop}
Let $B\times_{b_{1}}F_{1}\times\cdots\times_{b_{m}}F_{m}$ be a degenerate multiply warped product and let the vector fields $X, Y, Z\in\Gamma(TB)$ and $U_{j}, V_{j}, W_{j}\in\Gamma(TF_{j}).$ Let $\widetilde{\mathcal{K}}$ be the almost product Koszul form on $B\times_{b_{1}}F_{1}\times\cdots\times_{b_{m}}F_{m}$ and $\widetilde{\mathcal{K}}_{B}$, $\widetilde{\mathcal{K}}_{F_{j}}$ be the lifts of the almost product Koszul form on $B,$ $F_{j},$ respectively. Then
\begin{align}
&(1)\widetilde{\mathcal{K}}(X, Y, Z)=\widetilde{\mathcal{K}}_{B}(X, Y, Z);\nonumber\\
&(2)\widetilde{\mathcal{K}}(X, Y, W_{j})=\widetilde{\mathcal{K}}(X, W_{j}, Y)=\widetilde{\mathcal{K}}(W_{j}, X, Y)=0;\nonumber\\
&(3)\widetilde{\mathcal{K}}(X, V_{i}, W_{j})=b_{j}X(b_{j})g_{F_{j}}(V_{j}, W_{j}),~if~i=j;\nonumber\\
&(4)\widetilde{\mathcal{K}}(V_{i}, X, W_{j})=-\widetilde{\mathcal{K}}(V_{i}, W_{j}, X)=\frac{1}{2}(b_{j}X(b_{j})g_{F_{j}}(V_{j}, W_{j})+b_{j}J_{B}X(b_{j})g_{F_{j}}(V_{j}, J_{F_{j}}W_{j})),\nonumber\\
&~if~i=j;\nonumber\\
&(5)\widetilde{\mathcal{K}}(X, V_{i}, W_{j})=\widetilde{\mathcal{K}}(V_{i}, X, W_{j})=\widetilde{\mathcal{K}}(V_{i}, W_{j}, X)=0,~if~i\neq j;\nonumber\\
&(6)\widetilde{\mathcal{K}}(U_{i}, V_{j}, W_{k})=b^{2}_{j}\widetilde{\mathcal{K}}_{F_{j}}(U_{j}, V_{j}, W_{j}),~if~i=j=k;\nonumber\\
&(7)\widetilde{\mathcal{K}}(U_{i}, V_{j}, W_{k})=\widetilde{\mathcal{K}}(U_{i}, W_{k}, V_{j})=\widetilde{\mathcal{K}}(W_{k}, U_{i}, V_{j})=0,~if~i=j\neq k;\nonumber\\
&(8)\widetilde{\mathcal{K}}(U_{i}, V_{j}, W_{k})=0,~where~i, j, k~are~different.\nonumber
\end{align}
\end{prop}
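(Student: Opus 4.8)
The plan is to expand the defining average $\widetilde{\mathcal{K}}(X,Y,Z)=\frac{1}{2}[\mathcal{K}(X,Y,Z)+\mathcal{K}(X,JY,JZ)]$ term by term, reducing each case to the ordinary Koszul form $\mathcal{K}$ on the warped product. The needed formulas for $\mathcal{K}$ cost nothing: setting $P=0$ in Proposition 2.5 gives $\overline{\mathcal{K}}=\mathcal{K}$, so one reads off that $\mathcal{K}=\mathcal{K}_B$ on $\Gamma(TB)$, that $\mathcal{K}=b_j^2\mathcal{K}_{F_j}$ on a single fiber $TF_j$, that $\mathcal{K}(X,V_j,W_j)=\mathcal{K}(V_j,X,W_j)=-\mathcal{K}(V_j,W_j,X)=b_jX(b_j)g_{F_j}(V_j,W_j)$, and that $\mathcal{K}$ vanishes in every remaining incompatible arrangement of base and fiber arguments.

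The second ingredient is that the product structure $J=(J_B,J_{F_j})$ preserves the splitting: $JX=J_BX\in\Gamma(TB)$ when $X\in\Gamma(TB)$ and $JW_j=J_{F_j}W_j\in\Gamma(TF_j)$ when $W_j\in\Gamma(TF_j)$. Consequently, in the second summand $\mathcal{K}(X,JY,JZ)$ each argument remains in the factor it started in, so the substitution $Y\mapsto JY,\ Z\mapsto JZ$ does not change which vanishing rule applies. This disposes of the zero cases (2), (5), (7) and (8) at once: both summands vanish for the same reason, hence so does their average.

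For the surviving cases I would substitute and simplify. Case (1) follows because $J$ acts as $J_B$ throughout, giving directly $\widetilde{\mathcal{K}}_B(X,Y,Z)$; case (6) is the identical manipulation inside one fiber, producing $b_j^2\widetilde{\mathcal{K}}_{F_j}(U_j,V_j,W_j)$. In case (3) the last two arguments both lie in $TF_j$, so $J$ acts by $J_{F_j}$ on each; the fiberwise isometry $g_{F_j}(J_{F_j}V_j,J_{F_j}W_j)=g_{F_j}(V_j,W_j)$ --- the restriction of $g(JX,JY)=g(X,Y)$ to the fiber --- makes the two summands equal, the factor $\frac{1}{2}$ cancels, and one is left with $b_jX(b_j)g_{F_j}(V_j,W_j)$.

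The only case that does not collapse is (4), where the middle slot holds the base field $X$: here $JX=J_BX$ is a genuinely different derivation, so the second summand equals $b_j(J_BX)(b_j)g_{F_j}(V_j,J_{F_j}W_j)$ and remains separate, yielding the stated $\frac{1}{2}(b_jX(b_j)g_{F_j}(V_j,W_j)+b_jJ_BX(b_j)g_{F_j}(V_j,J_{F_j}W_j))$; the antisymmetry $\widetilde{\mathcal{K}}(V_i,X,W_j)=-\widetilde{\mathcal{K}}(V_i,W_j,X)$ is inherited from the same antisymmetry of $\mathcal{K}$ in its last two slots, applied to both summands. I anticipate no serious obstacle, since every step is bookkeeping; the only point requiring care is tracking which factor each argument lands in after applying $J$ and invoking the isometry property of $J_{F_j}$ at exactly the right moment.
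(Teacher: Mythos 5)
Your proposal is correct and follows exactly the route the paper intends: the paper states this proposition without written proof (``One sees immediately that''), and the implicit argument is precisely your expansion of $\widetilde{\mathcal{K}}(X,Y,Z)=\frac{1}{2}[\mathcal{K}(X,Y,Z)+\mathcal{K}(X,JY,JZ)]$ using the Koszul-form formulas for the multiply warped product (obtained, as you note, by setting $P=0$ in the semi-symmetric metric case) together with the facts that $J=(J_B,J_{F_j})$ preserves the splitting and that $J_{F_j}$ is a $g_{F_j}$-isometry. All case checks, including the only non-collapsing case (4), are accurate; the only cosmetic slip is citing ``Proposition 2.5'' where the relevant statement is Proposition 2.6.
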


On a almost product semi-regular semi-Riemannian manifold, we define the Riemann curvature tensor of the almost product covariant derivative
\begin{equation}
\widetilde{R}(X, Y, Z, T):=(\widetilde{\nabla}_{X}\widetilde{\nabla}^{\flat}_{Y}Z)(T)-(\widetilde{\nabla}_{Y}\widetilde{\nabla}^{\flat}_{X}Z)(T)-(\widetilde{\nabla}^{\flat}_{[X, Y]}Z)(T).
\end{equation}

Similarly to Proposition 8.1 in \cite{OC1}, we have the following.
\begin{prop}{\rm\cite{Wang1} }
For any vector fields $X, Y, Z, T\in\Gamma(TM)$ on a almost product semi-regular semi-Riemannian manifold $(M, g, J)$
\begin{align}
\widetilde{R}(X, Y, Z, T)&=X(\widetilde{\mathcal{K}}(Y, Z, T))-Y(\widetilde{\mathcal{K}}(X, Z, T))-\widetilde{\mathcal{K}}([X, Y], Z, T)\\
&+\widetilde{\mathcal{K}}(X, Z, \bullet)\widetilde{\mathcal{K}}(Y, T, \bullet)-\widetilde{\mathcal{K}}(Y, Z, \bullet)\widetilde{\mathcal{K}}(X, T, \bullet).\nonumber
\end{align}
\end{prop}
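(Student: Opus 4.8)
The plan is to imitate Stoica's derivation of the Koszul-form expression for the Riemann curvature (the analogous statement for $\mathcal{K}$, Proposition 8.1 in \cite{OC1}), replacing the Koszul form by the almost product Koszul form $\widetilde{\mathcal{K}}$ throughout and reading off the identity from the defining formula for $\widetilde{R}$. First I would unwind the definition $\widetilde{R}(X,Y,Z,T)=(\widetilde{\nabla}_{X}\widetilde{\nabla}^{\flat}_{Y}Z)(T)-(\widetilde{\nabla}_{Y}\widetilde{\nabla}^{\flat}_{X}Z)(T)-(\widetilde{\nabla}^{\flat}_{[X,Y]}Z)(T)$ and treat the three summands separately. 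The bracket term is immediate: by the definition of the almost product lower covariant derivative, $(\widetilde{\nabla}^{\flat}_{[X,Y]}Z)(T)=\widetilde{\mathcal{K}}([X,Y],Z,T)$, which is already the last term of the claimed right-hand side.

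The crux is a product (Leibniz) rule for the almost product covariant derivative acting on a $1$-form that itself arises as a lower covariant derivative. Writing $\omega:=\widetilde{\nabla}^{\flat}_{Y}Z$, so that $\omega(T)=\widetilde{\mathcal{K}}(Y,Z,T)$, I would establish $(\widetilde{\nabla}_{X}\omega)(T)=X(\omega(T))-\widetilde{\mathcal{K}}(Y,Z,\bullet)\,\widetilde{\mathcal{K}}(X,T,\bullet)$, where $\bullet$ denotes the contraction of the two $1$-forms through the radical-annihilator inner product $g^{*}$; concretely the subtracted piece is the pairing of $\widetilde{\nabla}^{\flat}_{Y}Z$ with $\widetilde{\nabla}^{\flat}_{X}T$, each viewed in the image of the flat map. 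Applied to the first summand this gives $(\widetilde{\nabla}_{X}\widetilde{\nabla}^{\flat}_{Y}Z)(T)=X(\widetilde{\mathcal{K}}(Y,Z,T))-\widetilde{\mathcal{K}}(Y,Z,\bullet)\widetilde{\mathcal{K}}(X,T,\bullet)$, and swapping $X\leftrightarrow Y$ produces the analogous expression for the second summand. Subtracting the three computed pieces then collects verbatim into the stated formula, with the two cross terms $+\widetilde{\mathcal{K}}(X,Z,\bullet)\widetilde{\mathcal{K}}(Y,T,\bullet)-\widetilde{\mathcal{K}}(Y,Z,\bullet)\widetilde{\mathcal{K}}(X,T,\bullet)$ appearing with exactly the signs above.

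The main obstacle, and the only place the hypotheses genuinely enter, is justifying this Leibniz identity and the well-definedness of the contracted products $\widetilde{\mathcal{K}}(\cdot,\cdot,\bullet)\widetilde{\mathcal{K}}(\cdot,\cdot,\bullet)$ in the degenerate setting, where the naive inverse of $g$ does not exist. Here I would invoke the \emph{almost product semi-regular} assumption on $(M,g,J)$: it guarantees that each lower covariant derivative $\widetilde{\nabla}^{\flat}_{X}T$ lands in the radical-annihilator, on which the induced inner product $g^{*}$ is defined and the contracted products are smooth functions, and that $\widetilde{\nabla}_{X}\omega$ is again a bona fide $1$-form. I would also note that $\widetilde{\mathcal{K}}$ inherits from $\mathcal{K}$ the formal derivation and symmetry properties the argument needs, since $J$ is a $g$-isometry with $J^{2}=\mathrm{id}$, so the averaging $\widetilde{\mathcal{K}}(X,Y,Z)=\tfrac12[\mathcal{K}(X,Y,Z)+\mathcal{K}(X,JY,JZ)]$ behaves well under the flat map. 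Once these structural facts are secured, the remaining computation is purely formal and mirrors the non-degenerate Levi-Civita case, in which $\widetilde{\mathcal{K}}(X,Y,\bullet)$ is simply the metric dual of the corresponding covariant derivative.
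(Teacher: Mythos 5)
Your proposal is correct and follows essentially the same route the paper (implicitly) takes: the paper states this proposition by citing \cite{Wang1} and remarking that it is obtained ``similarly to Proposition 8.1 in \cite{OC1}'', i.e.\ by unwinding the definition of $\widetilde{R}$, applying the Leibniz rule $(\widetilde{\nabla}_{X}\omega)(T)=X(\omega(T))-\langle\omega,\widetilde{\nabla}^{\flat}_{X}T\rangle_{g^{*}}$ to $\omega=\widetilde{\nabla}^{\flat}_{Y}Z$, and using semi-regularity to make the contracted products $\widetilde{\mathcal{K}}(\cdot,\cdot,\bullet)\widetilde{\mathcal{K}}(\cdot,\cdot,\bullet)$ well defined and smooth. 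Your identification of where the almost product semi-regular hypothesis enters matches the intended argument, so no gap remains.
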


From this
\begin{align}
\widetilde{R}(X, Y, Z, T)=-\widetilde{R}(Y, X, Z, T),~~~~~~~ \widetilde{R}(X, Y, Z, T)=-\widetilde{R}(X, Y, T, Z).
\end{align}

\begin{thm}
Let $(B, g_{B}, J_{B})$ and $(F_{j}, g_{F_{j}}, J_{F_{j}})$ be two almost product radical-stationary semi-Riemannian manifolds and $b_{j}\in C^{\infty}(B),~ 1\leq j\leq m,$ so that $db_{j}\in A^{\bullet}(B).$ Then the  multiply warped product manifold $(B\times_{b_{1}}F_{1}\times\cdots\times_{b_{m}}F_{m}, J)$ is a almost product radical-stationary semi-Riemannian manifold.
\end{thm}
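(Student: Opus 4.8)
The plan is to establish two properties of $M:=B\times_{b_1}F_1\times\cdots\times_{b_m}F_m$ equipped with $J=(J_B,J_{F_j})$: that it is an almost product singular semi-Riemannian manifold in the sense of Definition 4.1, and that it is radical-stationary for the almost product Koszul form, i.e. that $\widetilde{\nabla}^\flat_XY\in A^\bullet(M)$ for all $X,Y\in\Gamma(TM)$, which amounts to $\widetilde{\mathcal{K}}(X,Y,Z)=0$ whenever $Z\in\Gamma(\mathrm{Rad}(TM))$. The first property is immediate: $J^2=(J_B^2,J_{F_j}^2)=\texttt{id}$, and from the explicit metric (2.2) one gets $g(JX,JY)=g_B(J_BX,J_BY)+\sum_j b_j^2 g_{F_j}(J_{F_j}X,J_{F_j}Y)=g(X,Y)$ because each factor structure is an isometry of its own metric. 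I would record here one consequence used repeatedly below: an isometry with $J^2=\texttt{id}$ preserves the radical, since $Z\in\mathrm{Rad}$ gives $g(JZ,\cdot)=g(Z,J\cdot)=0$; in particular $J_B$ preserves $\mathrm{Rad}(TB)$ and $J_{F_j}$ preserves $\mathrm{Rad}(TF_j)$.

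Next I would pin down the radical. From (2.2), $x=(x_B,x_{F_1},\dots,x_{F_m})$ lies in $\mathrm{Rad}(T_pM)$ iff $x_B\in\mathrm{Rad}(T_pB)$ and $b_j(p)^2 g_{F_j}(x_{F_j},\cdot)=0$ for each $j$; hence the $F_j$-slot contributes $\mathrm{Rad}(T_pF_j)$ at points where $b_j(p)\neq0$ but the whole of $T_pF_j$ where $b_j(p)=0$. By multilinearity it then suffices to test $\widetilde{\mathcal{K}}(X,Y,Z)=0$ on pure triples, with the third entry either a base field valued in $\mathrm{Rad}(TB)$ or a fibre field $W_k$ that is radical in this sense (genuinely radical in $F_k$ where $b_k\neq0$, unrestricted where $b_k=0$).

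The core of the proof is then a direct reading of the eight formulas of Proposition 4.4 (the computation of $\widetilde{\mathcal{K}}$ on the warped product). The items (2), (5), (7), (8) are identically zero, so only (1), (3), (4), (6) need attention. In (1) the value is $\widetilde{\mathcal{K}}_B(X,Y,Z)$ with $Z\in\mathrm{Rad}(TB)$, which vanishes because $B$ is almost product radical-stationary; in (6) it is $b_j^2\widetilde{\mathcal{K}}_{F_j}(U_j,V_j,W_j)$, killed by radical-stationarity of $F_j$ where $b_j\neq0$ and by the coefficient $b_j^2$ where $b_j=0$; in (3) the factor $g_{F_j}(V_j,W_j)$ vanishes for $W_j$ radical while the coefficient $b_jX(b_j)$ vanishes where $b_j=0$. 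Case (4), $\widetilde{\mathcal{K}}(V_j,X,W_j)=-\widetilde{\mathcal{K}}(V_j,W_j,X)=\frac{1}{2}(b_jX(b_j)g_{F_j}(V_j,W_j)+b_jJ_BX(b_j)g_{F_j}(V_j,J_{F_j}W_j))$, splits into two subcases: for third entry $W_j$ radical I use $g_{F_j}(V_j,W_j)=0$ and $g_{F_j}(V_j,J_{F_j}W_j)=0$ (the latter because $J_{F_j}W_j$ is again radical), or else the vanishing of $b_j$; for third entry $X\in\mathrm{Rad}(TB)$ I use the hypothesis $db_j\in A^\bullet(B)$, which gives $X(b_j)=db_j(X)=0$ and, since $J_BX$ is again radical, $J_BX(b_j)=db_j(J_BX)=0$.

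The hard part is not any individual identity but the simultaneous bookkeeping of the two independent degeneracy mechanisms---the intrinsic radical of a fibre versus the total collapse $T_pF_j\subseteq\mathrm{Rad}(T_pM)$ at a zero of $b_j$---and verifying that each nonzero term of Proposition 4.4 carries either an explicit $b_j$-factor or a fibre-metric contraction to absorb them. The one place where the almost product structure genuinely intervenes is the $J$-twisted summand in case (4); it is precisely the hypothesis $db_j\in A^\bullet(B)$ together with the $J_B$- and $J_{F_j}$-invariance of the radicals, established in the first paragraph, that forces this summand to annihilate $\mathrm{Rad}(TM)$.
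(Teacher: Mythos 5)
Your proof is correct and takes essentially the same approach as the paper: both reduce the claim to showing $\widetilde{\mathcal{K}}(X,Y,W_{p})=0$ for $W_{p}$ in the radical, run through the cases of Proposition 4.4, and hinge on the observation that $db_{j}\in A^{\bullet}(B)$ forces $X(b_{j})=0$ and $J_{B}X(b_{j})=0$ for radical $X$ (since $J_{B}$ preserves the radical). Your write-up simply supplies in full the case analysis that the paper compresses into a reference to the analogous Theorem 4.15 of \cite{Wang1}.
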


\begin{proof}
By applying the Definition 3.1.3 shown in \cite{DK}, we only need to show that $\widetilde{\mathcal{K}}(X, Y, W_{p})\\=0$ for any $X, Y\in \Gamma(B\times_{b_{1}}F_{1}\times\cdots\times_{b_{m}}F_{m})$ and $W_{p}\in \Gamma_{0}(B\times_{b_{1}}F_{1}\times\cdots\times_{b_{m}}F_{m}).$
Since $db_{j}\in A^{\bullet}(B),$ for $X\in \Gamma_{0}(B),$ we see that $db_{j}(J_{B}X)=J_{B}X(db_{j})=0.$ The proof of Theorem 4.6 is analogous to the proof for Theorem 4.15 in \cite{Wang1}.
\end{proof}

\begin{thm}
Let $(B, g_{B}, J_{B})$ be a almost product non-degenerate semi-Riemannian manifold and $(F_{j}, g_{F_{j}}, J_{F_{j}})$ be almost product semi-regular semi-Riemannian manifolds and $b_{j}\in C^{\infty}(B),~ 1\leq j\leq m.$ Then the  multiply warped product manifold $(B\times_{b_{1}}F_{1}\times\cdots\times_{b_{m}}F_{m}, J)$ is a almost product semi-regular semi-Riemannian manifold.
\end{thm}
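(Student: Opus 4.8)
The plan is to verify directly the two conditions defining an almost product semi-regular semi-Riemannian manifold: first, radical-stationarity, namely that $\widetilde{\mathcal{K}}(X, Y, \bullet)\in A^{\bullet}(M)$ for all vector fields $X, Y$; and second, the smoothness of every contracted product $\widetilde{\mathcal{K}}(X, Y, \bullet)\,\widetilde{\mathcal{K}}(Z, T, \bullet)$ appearing in the curvature formula of Proposition 4.5, where the contraction is taken with respect to the induced co-metric $g^{\bullet}$ on the radical annihilator. Radical-stationarity is immediate from Theorem 4.6: since $(B, g_{B})$ is non-degenerate its radical is trivial, so $A^{\bullet}(B)=A^{1}(B)$ and in particular $db_{j}\in A^{\bullet}(B)$ holds automatically; as each $F_{j}$ is semi-regular it is a fortiori radical-stationary, so Theorem 4.6 applies and the product is radical-stationary. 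Thus the entire content lies in the smoothness of the contractions.

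The first reduction I would make is to observe, from parts (1)--(8) of Proposition 4.4, that every one-form $\widetilde{\mathcal{K}}(X, Y, \bullet)$ has its output concentrated in a single block of the splitting $TB\oplus\bigoplus_{j}TF_{j}$: the third slot is tensorial, and on basis-type arguments each value of $\widetilde{\mathcal{K}}$ is a single $g_{B}$- or $g_{F_{j}}$-pairing. Since $g^{\bullet}$ is block-diagonal with respect to this splitting, the contraction of two such one-forms vanishes unless both lie in the same block. The $B$-block contractions are then smooth because $g_{B}$ is non-degenerate, so $g_{B}^{*}$ is a smooth co-metric and $\widetilde{\mathcal{K}}_{B}$ is valued in smooth one-forms.

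It remains to treat the $F_{j}$-block contractions, where, writing the Koszul forms of Proposition 4.4 in terms of $g_{F_{j}}$, the co-metric acts as $b_{j}^{-2}g_{F_{j}}^{*}$ and degeneracy appears precisely on the zero locus of $b_{j}$. Here I would read off the power of $b_{j}$ carried by each Koszul form whose free slot lies in $TF_{j}$: cases (3) and (4) each carry one factor of $b_{j}$, while case (6) carries $b_{j}^{2}$ times the intrinsic form $\widetilde{\mathcal{K}}_{F_{j}}$. Contracting any two of these against $b_{j}^{-2}g_{F_{j}}^{*}$ lands on an expression $b_{j}^{\,p}\cdot(\text{smooth})$ with $p\geq 0$: the products (3)$\times$(3), (3)$\times$(4), (4)$\times$(4) give $b_{j}^{2}\cdot b_{j}^{-2}=1$ times a smooth $g_{F_{j}}$-pairing, while (6)$\times$(6) gives $b_{j}^{4}\cdot b_{j}^{-2}=b_{j}^{2}$ times $\widetilde{\mathcal{K}}_{F_{j}}(\cdot,\cdot,\bullet)\,\widetilde{\mathcal{K}}_{F_{j}}(\cdot,\cdot,\bullet)$, which is smooth exactly because $F_{j}$ is assumed semi-regular.

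The main obstacle I anticipate is the bookkeeping at the mixed contractions (3)$\times$(6) and (4)$\times$(6), and the verification that the $b_{j}^{-2}$ from the co-metric is always absorbed: this is where one must check that the intrinsic contraction $\widetilde{\mathcal{K}}_{F_{j}}(\cdot,\cdot,\bullet)$ paired with a $g_{F_{j}}$-lowered vector field reduces to a smooth value of $\widetilde{\mathcal{K}}_{F_{j}}$, so that here one genuinely needs the semi-regularity of $F_{j}$ and not merely its radical-stationarity. Once every case in Proposition 4.4 is checked against this power count, all contractions are smooth and the product is almost product semi-regular. Since the scheme parallels the analogous statement in \cite{Wang1}, only the power-counting in $b_{j}$ differs, and the argument can be organized exactly as there.
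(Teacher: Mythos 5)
Your overall strategy is the one the paper follows: reduce to showing that every contracted product $\widetilde{\mathcal{K}}(X,Y,\bullet)\,\widetilde{\mathcal{K}}(Z,T,\bullet)$ is smooth, split the contraction as $\bullet=\bullet_{B}+\sum_{l}b_{l}^{-2}\bullet_{F_{l}}$, and check that the explicit powers of $b_{j}$ carried by the Koszul forms of Proposition 4.4 absorb the $b_{l}^{-2}$, with the semi-regularity of $F_{j}$ supplying smoothness of the intrinsic contraction $\widetilde{\mathcal{K}}_{F_{j}}\bullet_{F_{j}}\widetilde{\mathcal{K}}_{F_{j}}$. The paper simply tabulates all seventeen resulting products; your power count is the right way to see why that table closes up.

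There is, however, one incorrect step: the claim that every one-form $\widetilde{\mathcal{K}}(X,Y,\bullet)$ is concentrated in a single block of $TB\oplus\bigoplus_{j}TF_{j}$. This fails when both of the first two arguments are tangent to the same fiber: by Proposition 4.4(4), $\widetilde{\mathcal{K}}(V_{j},W_{j},X)=-\widetilde{\mathcal{K}}(V_{j},X,W_{j})=-\tfrac{1}{2}\bigl(b_{j}X(b_{j})g_{F_{j}}(V_{j},W_{j})+b_{j}J_{B}X(b_{j})g_{F_{j}}(V_{j},J_{F_{j}}W_{j})\bigr)\neq 0$, so $\widetilde{\mathcal{K}}(V_{j},W_{j},\bullet)$ has a nontrivial $B$-component in addition to the $F_{j}$-component $b_{j}^{2}\widetilde{\mathcal{K}}_{F_{j}}(V_{j},W_{j},\cdot)$ coming from (6). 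Your dichotomy would then wrongly predict that $\widetilde{\mathcal{K}}(V_{i},W_{i},\bullet)\widetilde{\mathcal{K}}(U_{k},Q_{k},\bullet)=0$ for $i\neq k$, whereas the $B$-block contraction contributes the nonzero term $\tfrac{1}{4}b_{i}b_{k}\,g^{*}_{B}(db_{i},db_{k})\,g_{F_{i}}(W_{i},V_{i})g_{F_{k}}(U_{k},Q_{k})+\cdots$ (case (15) of the paper's computation), and likewise case (13) acquires the $\tfrac{1}{4}b_{j}^{2}g^{*}_{B}(db_{j},db_{j})\cdots$ terms on top of $b_{j}^{2}\widetilde{\mathcal{K}}_{F_{j}}\bullet_{F_{j}}\widetilde{\mathcal{K}}_{F_{j}}$. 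None of these omitted terms threatens the theorem---they are all $b_{i}b_{j}$ times smooth quantities---but your case list is incomplete as written; the repair is to let each $\widetilde{\mathcal{K}}(\cdot,\cdot,\bullet)$ carry components in several blocks and run the power count blockwise. A smaller point: the mixed products (3)$\times$(6) and (4)$\times$(6) reduce to evaluations $\widetilde{\mathcal{K}}_{F_{j}}(\cdot,\cdot,V_{j})$, which need only the radical-stationarity of $F_{j}$; semi-regularity is genuinely used only in the product (6)$\times$(6).
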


\begin{proof}
In the notation of \cite{OC1}, it is sufficient to prove that $\widetilde{\mathcal{K}}(X, Y, \bullet)\widetilde{\mathcal{K}}(Z, T, \bullet)\in C^{\infty}(B\times_{b_{1}}F_{1}\times\cdots\times_{b_{m}}F_{m}).$
Set $\bullet$ for the covariant contraction on $B\times_{b_{1}}F_{1}\times\cdots\times_{b_{m}}F_{m},$ $\bullet_{B}$ and $\bullet_{F_{j}}$ for the covariant contraction on $B,$ $F_{j},$ respectively.
Let $X, Y, Z, T\in\Gamma(TB)$ and $U_{j}, V_{j}, W_{j}, Q_{j}\in\Gamma(TF_{j}).$
From now on we tacitly assume that $b_{j}>0,$ we obtain
\begin{align}
&(1)\widetilde{\mathcal{K}}(X, Y, \bullet)\widetilde{\mathcal{K}}(Z, T, \bullet)=\widetilde{\mathcal{K}}(X, Y, \bullet_{B})\widetilde{\mathcal{K}}(Z, T, \bullet_{B})+\sum^{m}_{l=1}\frac{1}{b^{2}_{l}}\widetilde{\mathcal{K}}(X, Y, \bullet_{F_{l}})\widetilde{\mathcal{K}}(Z, T, \bullet_{F_{l}})\nonumber\\
&=\widetilde{\mathcal{K}}_{B}(X, Y, \bullet_{B})\widetilde{\mathcal{K}}_{B}(Z, T, \bullet_{B});\nonumber\\
&(2)\widetilde{\mathcal{K}}(X, Y, \bullet)\widetilde{\mathcal{K}}(Z, W_{j}, \bullet)=\widetilde{\mathcal{K}}(X, Y, \bullet)\widetilde{\mathcal{K}}(W_{j}, Z, \bullet)=0;\nonumber\\
&(3)\widetilde{\mathcal{K}}(X, Y, \bullet)\widetilde{\mathcal{K}}(V_{i}, W_{j}, \bullet)=-\frac{1}{2}b_{j}g^{*}_{B}(\widetilde{\nabla}^{\flat}_{X}Y, db_{j})g_{F_{j}}(W_{j}, V_{j})-\frac{1}{2}b_{j}g^{*}_{B}(\widetilde{\nabla}^{\flat}_{X}Y, db_{j}\circ J_{B})\nonumber\\
&\times g_{F_{j}}(W_{j}, J_{F_{j}}V_{j}),~for~i=j;\nonumber\\
&(4)\widetilde{\mathcal{K}}(X, V_{i}, \bullet)\widetilde{\mathcal{K}}(Y, W_{j}, \bullet)=X(b_{j})Y(b_{j})g_{F_{j}}(W_{j}, V_{j}),~for~i=j;\nonumber\\
&(5)\widetilde{\mathcal{K}}(X, V_{i}, \bullet)\widetilde{\mathcal{K}}(W_{j}, Y, \bullet)=\frac{1}{2}X(b_{j})Y(b_{j})g_{F_{j}}(W_{j}, V_{j})+\frac{1}{2}X(b_{j})J_{B}Y(b_{j})g_{F_{j}}(W_{j}, J_{F_{j}}V_{j}),\nonumber\\
&~for~i=j;\nonumber\\
&(6)\widetilde{\mathcal{K}}(V_{i}, X, \bullet)\widetilde{\mathcal{K}}(W_{j}, Y, \bullet)=\frac{1}{4}X(b_{j})Y(b_{j})g_{F_{j}}(W_{j}, V_{j})+\frac{1}{4}X(b_{j})J_{B}Y(b_{j})g_{F_{j}}(W_{j}, J_{F_{j}}V_{j})\nonumber\\
&+\frac{1}{4}J_{B}X(b_{j})Y(b_{j})g_{F_{j}}(W_{j}, J_{F_{j}}V_{j})+\frac{1}{4}J_{B}X(b_{j})J_{B}Y(b_{j})g_{F_{j}}(W_{j}, V_{j}),~for~i=j;\nonumber\\
&(7)\widetilde{\mathcal{K}}(X, Y, \bullet)\widetilde{\mathcal{K}}(V_{i}, W_{j}, \bullet)=\widetilde{\mathcal{K}}(X, V_{i}, \bullet)\widetilde{\mathcal{K}}(Y, W_{j}, \bullet)=\widetilde{\mathcal{K}}(X, V_{i}, \bullet)\widetilde{\mathcal{K}}(W_{j}, Y, \bullet)\nonumber\\
&=\widetilde{\mathcal{K}}(V_{i}, X, \bullet)\widetilde{\mathcal{K}}(Y, W_{j}, \bullet)=\widetilde{\mathcal{K}}(V_{i}, X, \bullet)\widetilde{\mathcal{K}}(W_{j}, Y, \bullet)=0,~for~i\neq j;\nonumber\\
&(8)\widetilde{\mathcal{K}}(X, V_{i}, \bullet)\widetilde{\mathcal{K}}(W_{j}, U_{k}, \bullet)=b_{j}X(b_{j})\widetilde{\mathcal{K}}_{F_{j}}(W_{j}, U_{j}, V_{j}),~for~i=j=k;\nonumber\\
&(9)\widetilde{\mathcal{K}}(X, V_{i}, \bullet)\widetilde{\mathcal{K}}(W_{j}, U_{k}, \bullet)=\widetilde{\mathcal{K}}(X, V_{i}, \bullet)\widetilde{\mathcal{K}}(U_{k}, W_{j}, \bullet)=\widetilde{\mathcal{K}}(X, U_{k}, \bullet)\widetilde{\mathcal{K}}(V_{i}, W_{j}, \bullet)=0,\nonumber\\
&~for~i=j\neq k;\nonumber\\
&(10)\widetilde{\mathcal{K}}(V_{i}, X, \bullet)\widetilde{\mathcal{K}}(W_{j}, U_{k}, \bullet)=\frac{1}{2}b_{j}X(b_{j})\widetilde{\mathcal{K}}_{F_{j}}(W_{j}, U_{j}, V_{j})+\frac{1}{2}b_{j}J_{B}X(b_{j})\widetilde{\mathcal{K}}_{F_{j}}(W_{j}, U_{j}, J_{F_{j}}V_{j}),\nonumber\\
&~for~i=j=k;\nonumber\\
&(11)\widetilde{\mathcal{K}}(V_{i}, X, \bullet)\widetilde{\mathcal{K}}(W_{j}, U_{k}, \bullet)=\widetilde{\mathcal{K}}(V_{i}, X, \bullet)\widetilde{\mathcal{K}}(U_{k}, W_{j}, \bullet)=\widetilde{\mathcal{K}}(U_{k}, X, \bullet)\widetilde{\mathcal{K}}(V_{i}, W_{j}, \bullet)=0,\nonumber\\
&~for~i=j\neq k;\nonumber\\
&(12)\widetilde{\mathcal{K}}(X, V_{i}, \bullet)\widetilde{\mathcal{K}}(W_{j}, U_{k}, \bullet)=\widetilde{\mathcal{K}}(V_{i}, X, \bullet)\widetilde{\mathcal{K}}(W_{j}, U_{k}, \bullet)=0,~where~i, j, k~are~different;\nonumber
\end{align}
\begin{align}
&(13)\widetilde{\mathcal{K}}(V_{i}, W_{j}, \bullet)\widetilde{\mathcal{K}}(U_{k}, Q_{s}, \bullet)=\frac{1}{4}b^{2}_{j}(g^{*}_{B}(db_{j}, db_{j})g_{F_{j}}(W_{j}, V_{j})g_{F_{j}}(U_{j}, Q_{j})+g^{*}_{B}(db_{j}\circ J_{B}, db_{j})\nonumber\\
&\times g_{F_{j}}(W_{j}, J_{F_{j}}V_{j})g_{F_{j}}(U_{j}, Q_{j})+g^{*}_{B}(db_{j}, db_{j}\circ J_{B})g_{F_{j}}(W_{j}, V_{j})g_{F_{j}}(U_{j}, J_{F_{j}}Q_{j})\nonumber\\
&+g^{*}_{B}(db_{j}\circ J_{B}, db_{j}\circ J_{B})g_{F_{j}}(W_{j}, J_{F_{j}}V_{j})g_{F_{j}}(U_{j}, J_{F_{j}}Q_{j}))+b^{2}_{j}\widetilde{\mathcal{K}}_{F_{j}}(V_{j}, W_{j}, \bullet_{F_{j}})\widetilde{\mathcal{K}}_{F_{j}}(U_{j}, Q_{j}, \bullet_{F_{j}}),\nonumber\\
&~for~i=j=k=s;\nonumber\\
&(14)\widetilde{\mathcal{K}}(V_{i}, W_{j}, \bullet)\widetilde{\mathcal{K}}(U_{k}, Q_{s}, \bullet)=\widetilde{\mathcal{K}}(V_{i}, W_{j}, \bullet)\widetilde{\mathcal{K}}( Q_{s}, U_{k}, \bullet)=0,~for~i=j=s\neq k;\nonumber\\
&(15)\widetilde{\mathcal{K}}(V_{i}, W_{j}, \bullet)\widetilde{\mathcal{K}}(U_{k}, Q_{s}, \bullet)=\frac{1}{4}b_{j}b_{k}(g^{*}_{B}(db_{j}, db_{k})g_{F_{j}}(W_{j}, V_{j})g_{F_{k}}(U_{k}, Q_{k})\nonumber\\
&+g^{*}_{B}(db_{j}\circ J_{B}, db_{k})g_{F_{j}}(W_{j}, J_{F_{j}}V_{j})g_{F_{k}}(U_{k}, Q_{k})+g^{*}_{B}(db_{j}, db_{k}\circ J_{B})g_{F_{j}}(W_{j}, V_{j})\nonumber\\
&\times g_{F_{k}}(U_{k}, J_{F_{k}}Q_{k})+g^{*}_{B}(db_{j}\circ J_{B}, db_{k}\circ J_{B})g_{F_{j}}(W_{j}, J_{F_{j}}V_{j})g_{F_{k}}(U_{k}, J_{F_{k}}Q_{k})),\nonumber\\
&~for~i=j\neq k=s;\nonumber\\
&(16)\widetilde{\mathcal{K}}(V_{i}, W_{j}, \bullet)\widetilde{\mathcal{K}}(U_{k}, Q_{s}, \bullet)=0,~for~i=k\neq j=s~or~i=s\neq j=k;\nonumber\\
&(17)\widetilde{\mathcal{K}}(V_{i}, W_{j}, \bullet)\widetilde{\mathcal{K}}(U_{k}, Q_{s}, \bullet)=0,~other~cases.\nonumber
\end{align}
Clearly $(1)-(17)$ are smooth and smoothly extend to the region $b_{j}=0,$ and, in consequence, $(B\times_{b_{1}}F_{1}\times\cdots\times_{b_{m}}F_{m}, J)$ is a almost product semi-regular semi-Riemannian manifold.
\end{proof}

By using Proposition 4.4, Proposition 4.5 and making tedious calculations, we state the following propositions.
\begin{thm}
Let $(B, g_{B}, J_{B})$ be a  almost product non-degenerate semi-Riemannian manifold, $(F_{j}, g_{F_{j}}, J_{F_{j}})$ be almost product semi-regular semi-Riemannian manifolds and $b_{j}\in C^{\infty}(B),$ $1\leq j\leq m.$
Let the vector fields $X, Y, Z, T\in\Gamma(TB)$ and $U_{j}, V_{j}, W_{j}, Q_{j}\in\Gamma(TF_{j}),$ then
\begin{align}
&(1)\widetilde{R}(X, Y, Z, T)=\widetilde{R}_{B}(X, Y, Z, T);\nonumber\\
&(2)\widetilde{R}(X, Y, Z, W_{j})=\widetilde{R}(Z, W_{j}, X, Y)=0;\nonumber\\
&(3)\widetilde{R}(X, Y,  V_{i}, W_{j})=\widetilde{R}(V_{i}, W_{j}, X, Y)=0;\nonumber\\
&(4)\widetilde{R}(X, V_{i}, Y, W_{j})=\frac{1}{2}b_{j}XY(b_{j})g_{F_{j}}(V_{j}, W_{j})-\frac{1}{2}b_{j}g^{*}_{B}(\widetilde{\nabla}^{\flat}_{X}Y, db_{j})g_{F_{j}}(V_{j}, W_{j})\nonumber\\
&+\frac{1}{2}b_{j}XJ_{B}Y(b_{j})g_{F_{j}}(V_{j}, J_{F_{j}}W_{j})-\frac{1}{2}b_{j}g^{*}_{B}(\widetilde{\nabla}^{\flat}_{X}Y, db_{j}\circ J_{B})g_{F_{j}}(V_{j}, J_{F_{j}}W_{j}),~if~i=j;\nonumber\\
&(5)\widetilde{R}(X, V_{i}, Y, W_{j})=0,~if~i\neq j;\nonumber\\
&(6)\widetilde{R}(X, V_{i}, W_{j}, U_{k})=0;\nonumber
\end{align}
\begin{align}
&(7)\widetilde{R}(U_{k}, V_{i}, X, W_{j})=\frac{1}{4}b_{j}X(b_{j})(-\mathcal{K}_{F_{j}}(V_{j}, W_{j}, U_{j})+\mathcal{K}_{F_{j}}(U_{j}, W_{j}, V_{j})+\mathcal{K}_{F_{j}}(V_{j}, J_{F_{j}}W_{j}, J_{F_{j}}U_{j})\nonumber\\
&-\mathcal{K}_{F_{j}}(U_{j}, J_{F_{j}}W_{j}, J_{F_{j}}V_{j}))-\frac{1}{4}b_{j}J_{B}X(b_{j})(-\mathcal{K}_{F_{j}}(V_{j}, W_{j}, J_{F_{j}}U_{j})+\mathcal{K}_{F_{j}}(U_{j}, W_{j}, J_{F_{j}}V_{j})\nonumber\\
&+\mathcal{K}_{F_{j}}(V_{j}, J_{F_{j}}W_{j}, U_{j})-\mathcal{K}_{F_{j}}(U_{j}, J_{F_{j}}W_{j}, V_{j})),~if~i=j=k;\nonumber\\
&(8)\widetilde{R}(U_{k}, V_{i}, X, W_{j})=0,~other~cases;\nonumber\\
&(9)\widetilde{R}(U_{k}, V_{i}, W_{j}, Q_{s})=b^{2}_{j}\widetilde{R}_{F_{j}}(U_{j}, V_{j}, W_{j}, Q_{j})+\frac{1}{4}b^{2}_{j}[g^{*}_{B}(db_{j}, db_{j})(g_{F_{j}}(U_{j}, W_{j})g_{F_{j}}(V_{j}, Q_{j})\nonumber\\
&-g_{F_{j}}(V_{j}, W_{j})g_{F_{j}}(U_{j}, Q_{j}))+g^{*}_{B}(db_{j}\circ J_{B}, db_{j})(g_{F_{j}}(U_{j}, J_{F_{j}}W_{j})g_{F_{j}}(V_{j}, Q_{j})\nonumber\\
&-g_{F_{j}}(V_{j}, J_{F_{j}}W_{j})g_{F_{j}}(U_{j}, Q_{j})+g_{F_{j}}(U_{j}, W_{j})g_{F_{j}}(V_{j}, J_{F_{j}}Q_{j})-g_{F_{j}}(V_{j}, W_{j})g_{F_{j}}(U_{j}, J_{F_{j}}Q_{j}))\nonumber\\
&+g^{*}_{B}(db_{j}\circ J_{B}, db_{j}\circ J_{B})(g_{F_{j}}(U_{j}, J_{F_{j}}W_{j})g_{F_{j}}(V_{j}, J_{F_{j}}Q_{j})-g_{F_{j}}(V_{j}, J_{F_{j}}W_{j})g_{F_{j}}(U_{j}, J_{F_{j}}Q_{j}))],\nonumber\\
&~if~i=j=k=s;\nonumber\\
&(10)\widetilde{R}(U_{k}, V_{i}, W_{j}, Q_{s})=\frac{1}{4}b_{i}b_{j}(g^{*}_{B}(db_{i}, db_{j})g_{F_{i}}(V_{i}, Q_{i})g_{F_{j}}(U_{j}, W_{j})+g^{*}_{B}(db_{i}\circ J_{B}, db_{j})\nonumber\\
&\times g_{F_{i}}(V_{i}, J_{F_{i}}Q_{i})g_{F_{j}}(U_{j}, W_{j})+g^{*}_{B}(db_{i}, db_{j}\circ J_{B})g_{F_{i}}(V_{i}, Q_{i})g_{F_{j}}(U_{j}, J_{F_{j}}W_{j})\nonumber\\
&+g^{*}_{B}(db_{i}\circ J_{B}, db_{j}\circ J_{B})g_{F_{i}}(V_{i}, J_{F_{j}}Q_{i})g_{F_{j}}(U_{j}, J_{F_{j}}W_{j})),~if~j=k\neq i=s;\nonumber\\
&(11)\widetilde{R}(U_{k}, V_{i}, W_{j}, Q_{s})=0,~other~cases.\nonumber
\end{align}
\end{thm}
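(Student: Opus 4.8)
The plan is to use Proposition 4.5 as the master identity and to read off each component of $\widetilde R$ by feeding into it the explicit Koszul values of Proposition 4.4 together with the contracted products $\widetilde{\mathcal{K}}(\cdot,\cdot,\bullet)\widetilde{\mathcal{K}}(\cdot,\cdot,\bullet)$ already tabulated in Theorem 4.7, items (1)--(17). By Theorem 4.6 and Theorem 4.7 the space $(B\times_{b_1}F_1\times\cdots\times_{b_m}F_m,J)$ is almost product radical-stationary and semi-regular, so $\widetilde R$ is well defined and Proposition 4.5 applies verbatim. Before substituting I would record the bookkeeping that makes the expressions collapse: the bracket of two horizontal lifts is horizontal, the bracket of a horizontal and a vertical lift vanishes, brackets of lifts from distinct fibres vanish, a horizontal field kills functions pulled back from a fibre while a vertical field kills functions pulled back from $B$ (so $U_j(b_k)=0$ and $X(g_{F_j}(V_j,W_j))=0$), and the double contraction $\widetilde{\mathcal{K}}(A,B,\bullet)\widetilde{\mathcal{K}}(C,D,\bullet)$ is symmetric under interchange of its two factor-pairs, so every product occurring in Proposition 4.5 can be matched, after relabelling, to one of the items (1)--(17). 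I would also invoke the antisymmetries $\widetilde R(X,Y,Z,T)=-\widetilde R(Y,X,Z,T)=-\widetilde R(X,Y,T,Z)$ recorded after Proposition 4.5 to halve the number of genuinely independent index patterns.

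Granting this, most items are immediate. In (1) all four arguments lie in $TB$: Proposition 4.4(1) turns each Koszul form into its $B$-lift, item (1) of Theorem 4.7 does the same for the contractions, and the five terms reassemble into $\widetilde R_B$. The vanishing items (2),(3),(5),(6),(8) hold because the relevant forms in Proposition 4.4 and the matching contracted products in Theorem 4.7 are all zero. For (4) I would insert Proposition 4.4(3),(4) into the derivative terms, use $X(g_{F_j}(\cdot,\cdot))=0$ to push the horizontal derivative onto $b_j$ and $J_BY(b_j)$, and take the two contraction terms from the appropriate items (3)--(6) of Theorem 4.7. For (7), where three arguments are vertical and one horizontal, the derivative and bracket terms (a vertical field now differentiating fibre metric quantities) together with the contraction terms recombine, through the definition of the Koszul form and the identity $\widetilde{\mathcal{K}}_{F_j}(X,Y,Z)=\frac12[\mathcal{K}_{F_j}(X,Y,Z)+\mathcal{K}_{F_j}(X,J_{F_j}Y,J_{F_j}Z)]$, into the plain fibre Koszul expressions $\mathcal{K}_{F_j}$ printed in the statement.

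The hard part will be item (9), where all four arguments lie in a single fibre $F_j$. Since every vertical field annihilates $b_j^2$, Proposition 4.4(6) makes the three derivative/bracket terms equal to $b_j^2$ times the corresponding derivative/bracket terms of $\widetilde R_{F_j}$; meanwhile item (13) of Theorem 4.7 splits each contracted product into a purely fibre part $b_j^2\,\widetilde{\mathcal{K}}_{F_j}(\cdot,\cdot,\bullet_{F_j})\widetilde{\mathcal{K}}_{F_j}(\cdot,\cdot,\bullet_{F_j})$ and base parts carrying $g^*_B(db_j,db_j)$, $g^*_B(db_j\circ J_B,db_j)$ and $g^*_B(db_j\circ J_B,db_j\circ J_B)$, the latter arising precisely because a Koszul form with two vertical arguments from the same fibre does not vanish when contracted against horizontal directions. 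The delicate step is to recognise, by a second application of Proposition 4.5 now on $F_j$, that the $b_j^2$-scaled derivative terms together with the fibre parts of the contractions are exactly $b_j^2\,\widetilde R_{F_j}(U_j,V_j,W_j,Q_j)$, leaving the $g^*_B$-terms as the stated warping correction; throughout one must carry the $J_{F_j}$-twisted companion of every metric contraction, which is where essentially all the algebra lives. Item (10), the two-fibre pattern $j=k\neq i=s$, is shorter: the derivative and bracket terms vanish (each inner Koszul form has vertical arguments from two different fibres), and only the mixed contracted product of item (15) survives, yielding the four $g^*_B$-terms of the statement. Verifying that every remaining index pattern falls under item (11)'s vanishing assertion then completes the proof.
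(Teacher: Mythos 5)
Your proposal is correct and follows essentially the same route as the paper: the authors likewise compute each component by substituting the Koszul values of Proposition 4.4 and the contracted products of Theorem 4.7 into the curvature formula of Proposition 4.5, writing out only the representative cases (1), (4) and (10) and leaving the rest to the same mechanism you describe.
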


\begin{proof}
This follows by the same method as in \cite{Wang1}.\\
\begin{align}
(1)\widetilde{R}(X, Y, Z, T)&=X(\widetilde{\mathcal{K}}(Y, Z, T))-Y(\widetilde{\mathcal{K}}(X, Z, T))-\widetilde{\mathcal{K}}([X, Y], Z, T)\nonumber\\
&+\widetilde{\mathcal{K}}(X, Z, \bullet)\widetilde{\mathcal{K}}(Y, T, \bullet)-\widetilde{\mathcal{K}}(Y, Z, \bullet)\widetilde{\mathcal{K}}(X, T, \bullet)\nonumber\\
&=X(\widetilde{\mathcal{K}}_{B}(Y, Z, T))-Y(\widetilde{\mathcal{K}}_{B}(X, Z, T))-\widetilde{\mathcal{K}}_{B}([X, Y], Z, T)\nonumber\\
&+\widetilde{\mathcal{K}}_{B}(X, Z, \bullet_{B})\widetilde{\mathcal{K}}_{B}(Y, T, \bullet_{B})-\widetilde{\mathcal{K}}_{B}(Y, Z, \bullet_{B})\widetilde{\mathcal{K}}_{B}(X, T, \bullet_{B})\nonumber\\
&=\widetilde{R}_{B}(X, Y, Z, T),\nonumber
\end{align}
where we applied (1) from Proposition 4.4 and Theorem 4.7.
\begin{align}
(4)\widetilde{R}(X, V_{j}, Y, W_{j})&=X(\widetilde{\mathcal{K}}(V_{j}, Y, W_{j}))-V_{j}(\widetilde{\mathcal{K}}(X, Y, W_{j}))-\widetilde{\mathcal{K}}([X, V_{j}], Y, W_{j})\nonumber\\
&+\widetilde{\mathcal{K}}(X, Y, \bullet)\widetilde{\mathcal{K}}(V_{j}, W_{j}, \bullet)-\widetilde{\mathcal{K}}(V_{j}, Y, \bullet)\widetilde{\mathcal{K}}(X, W_{j}, \bullet)\nonumber\\
&=X[\frac{1}{2}(b_{j}Y(b_{j})g_{F_{j}}(V_{j}, W_{j})+b_{j}J_{B}Y(b_{j})g_{F_{j}}(V_{j}, J_{F_{j}}W_{j}))]\nonumber\\
&+(-\frac{1}{2}b_{j}g^{*}_{B}(\widetilde{\nabla}^{\flat}_{X}Y, db_{j})g_{F_{j}}(W_{j}, V_{j})-\frac{1}{2}b_{j}g^{*}_{B}(\widetilde{\nabla}^{\flat}_{X}Y, db_{j}\circ J_{B})\nonumber\\
&\times g_{F_{j}}(W_{j}, J_{F_{j}}V_{j}))-(\frac{1}{2}X(b_{j})Y(b_{j})g_{F_{j}}(W_{j}, V_{j})+\frac{1}{2}X(b_{j})\nonumber\\
&\times J_{B}Y(b_{j})g_{F_{j}}(W_{j}, J_{F_{j}}V_{j}))\nonumber
\end{align}
\begin{align}
&=\frac{1}{2}b_{j}XY(b_{j})g_{F_{j}}(V_{j}, W_{j})-\frac{1}{2}b_{j}g^{*}_{B}(\widetilde{\nabla}^{\flat}_{X}Y, db_{j})g_{F_{j}}(V_{j}, W_{j})\nonumber\\
&+\frac{1}{2}b_{j}XJ_{B}Y(b_{j})g_{F_{j}}(V_{j}, J_{F_{j}}W_{j})-\frac{1}{2}b_{j}g^{*}_{B}(\widetilde{\nabla}^{\flat}_{X}Y, db_{j}\circ J_{B})\nonumber\\
&\times g_{F_{j}}(V_{j}, J_{F_{j}}W_{j}).\nonumber
\end{align}
\begin{align}
(10)\widetilde{R}(U_{j}, V_{i}, W_{j}, Q_{i})&=U_{j}(\widetilde{\mathcal{K}}(V_{i}, W_{j}, Q_{i}))-V_{i}(\widetilde{\mathcal{K}}(U_{j}, W_{j}, Q_{i}))-\widetilde{\mathcal{K}}([U_{j}, V_{i}], W_{j}, Q_{i})\nonumber\\
&+\widetilde{\mathcal{K}}(U_{j}, W_{j}, \bullet)\widetilde{\mathcal{K}}(V_{i}, Q_{i}, \bullet)-\widetilde{\mathcal{K}}(V_{i}, W_{j}, \bullet)\widetilde{\mathcal{K}}(U_{j}, Q_{i}, \bullet)\nonumber\\
&=\frac{1}{4}b_{i}b_{j}(g^{*}_{B}(db_{i}, db_{j})g_{F_{i}}(V_{i}, Q_{i})g_{F_{j}}(U_{j}, W_{j})+g^{*}_{B}(db_{i}\circ J_{B}, db_{j})\nonumber\\
&\times g_{F_{i}}(V_{i}, J_{F_{i}}Q_{i})g_{F_{j}}(U_{j}, W_{j})+g^{*}_{B}(db_{i}, db_{j}\circ J_{B})g_{F_{i}}(V_{i}, Q_{i})g_{F_{j}}(U_{j}, J_{F_{j}}W_{j})\nonumber\\
&+g^{*}_{B}(db_{i}\circ J_{B}, db_{j}\circ J_{B})g_{F_{i}}(V_{i}, J_{F_{j}}Q_{i})g_{F_{j}}(U_{j}, J_{F_{j}}W_{j})).\nonumber
\end{align}

\end{proof}
\vskip 1 true cm
\section{ Semi-Symmetric Metric Koszul Forms and Their Curvature of Singular Twisted Products}
We introduce the notion of twisted product that is useful for the study in this section.

\begin{defn}{\rm\cite{MF} }
Let $(B, g_{B})$ and $(F, g_{F})$  be semi-Riemannian manifolds, and let, $\pi_{B}: B\times F\rightarrow B$ and $\pi_{F}: B\times F\rightarrow F$ be the canonical
projections. Also let $f: B\times F\rightarrow(0, \infty)$ and $b: B\times F\rightarrow(0, \infty)$ be smooth functions.
Then the doubly twisted product of $(B, g_{B})$ and $(F, g_{F})$ with twisting functions $f$ and $b$ is defined to be the product manifold $M=B\times F$ with metric
tensor $g=f^{2}g_{B}\oplus b^{2}g_{F}$ given by $g=f^{2}\pi^{*}_{B}(g_{B})+b^{2}\pi^{*}_{F}(g_{F}).$ For brevity in notation, we denote this semi-Riemannian manifold $(M, g)$ by ${}_{f~}\!B\times_{b}F.$ In particular, if $f = 1$ then ${}_{1~}\!B\times_{b}F=B\times_{b}F$ is called the twisted product of $(B, g_{B})$ and $(F, g_{F})$ with twisting function $b.$
\end{defn}

As in Definition 3.1 of \cite{OC2}, we can generalize the twisted product to singular semi-Riemannian manifolds.

\begin{defn}
Let $(B, g_{B})$ and $(F, g_{F})$  be singular semi-Riemannian manifolds, and $b: B\times F\rightarrow \mathbb{R}$ a smooth function. The twisted product of $B$ and $F$ with twisting function $b$ is the semi-Riemannian manifold
\begin{equation}
B\times_{b}F:=(B\times F, \pi^{*}_{B}(g_{B})+b^{2}\pi^{*}_{F}(g_{F})),
\end{equation}
where $\pi_{B}: B\times F\rightarrow B$ and $\pi_{F}: B\times F\rightarrow F$ are the canonical projections.
\end{defn}

By computations, we have the following proposition.
\begin{prop}
Let $B\times_{b}F$ be a degenerate twisted product and let the vector fields $X, Y, Z\in\Gamma(TB)$ and $U, V, W\in\Gamma(TF).$ Let $\mathcal{K}$ be the Koszul form on $B\times_{b}F$ and $\mathcal{K}_{B}$, $\mathcal{K}_{F}$ be the lifts of the Koszul form on $B,$ $F,$ respectively. Then
\begin{align}
&(1)\mathcal{K}(X, Y, Z)=\mathcal{K}_{B}(X, Y, Z);\nonumber\\
&(2)\mathcal{K}(X, Y, W)=\mathcal{K}(X, W, Y)=\mathcal{K}(W, X, Y)=0;\nonumber\\
&(3)\mathcal{K}(X, V, W)=\mathcal{K}(V, X, W)=-\mathcal{K}(V, W, X)=bX(b)g_{F}(V, W);\nonumber\\
&(4)\mathcal{K}(U, V, W)=bU(b)g_{F}(V, W)+bV(b)g_{F}(W, U)-bW(b)g_{F}(U, V)+b^{2}\mathcal{K}_{F}(U, V, W).\nonumber
\end{align}
\end{prop}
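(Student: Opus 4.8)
The plan is to expand the Koszul form directly from its defining formula in each of the four cases, substituting the structural properties of the twisted metric $g=\pi^{*}_{B}(g_{B})+b^{2}\pi^{*}_{F}(g_{F})$. First I would record the facts used repeatedly: for horizontal lifts $X,Y\in\Gamma(TB)$ and vertical lifts $U,V\in\Gamma(TF)$ one has $g(X,Y)=g_{B}(X,Y)$, $g(U,V)=b^{2}g_{F}(U,V)$ and $g(X,V)=0$; the brackets satisfy $[X,Y]\in\Gamma(TB)$, $[U,V]\in\Gamma(TF)$ and $[X,V]=0$; and a horizontal field annihilates functions pulled back from $F$ while a vertical field annihilates functions pulled back from $B$. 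The single feature distinguishing the twisted product from the multiply warped products of Section 2 is that $b$ now depends on \emph{both} factors, so $U(b)$ need not vanish for vertical $U$; this is precisely what produces the extra terms in part (4).

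For part (1), since all three arguments are horizontal, every metric pairing reduces to $g_{B}$, every bracket stays in $TB$, and every directional derivative is a $B$-derivative of a $B$-function, so the definition collapses termwise to $\mathcal{K}_{B}(X,Y,Z)$. For part (2), I would observe that each of the three expressions involves only mixed pairings $g(X,V)=0$, a $B$-function $g_{B}(X,Y)$ differentiated by a vertical field (which gives zero), and brackets that are either zero or lie in the factor that pairs trivially against the remaining argument; hence all six terms of each Koszul expansion vanish.

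The substantive computations are parts (3) and (4). For part (3) the only surviving contribution comes from differentiating $g(V,W)=b^{2}g_{F}(V,W)$ by the horizontal field $X$; using $X(g_{F}(V,W))=0$ this gives $X(b^{2})g_{F}(V,W)=2b\,X(b)\,g_{F}(V,W)$, and the factor $\tfrac12$ in the definition yields $b\,X(b)\,g_{F}(V,W)$. Permuting the arguments and tracking signs then establishes the chain $\mathcal{K}(X,V,W)=\mathcal{K}(V,X,W)=-\mathcal{K}(V,W,X)$.

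The step requiring the most careful bookkeeping is part (4), where all three arguments are vertical. Here each derivative $U(g(V,W))$ splits as $U(b^{2})g_{F}(V,W)+b^{2}U(g_{F}(V,W))$, and all three bracket pairings survive as $b^{2}g_{F}(U,[V,W])$, and so on. I would group the $b^{2}$-coefficients to recognize exactly the defining combination $b^{2}\mathcal{K}_{F}(U,V,W)$, and collect the remaining $2b\,U(b)$-type terms; after applying the factor $\tfrac12$ these give $b\,U(b)g_{F}(V,W)+b\,V(b)g_{F}(W,U)-b\,W(b)g_{F}(U,V)$, which together with $b^{2}\mathcal{K}_{F}(U,V,W)$ is the claimed formula. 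There is no genuine obstacle beyond carefully tracking which derivatives of $b$ survive; the essential point, once more, is that the vertical derivatives $U(b),V(b),W(b)$ are nonzero precisely because the twisting function depends on $F$.
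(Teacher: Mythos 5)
Your proposal is correct, and it is essentially the computation the paper has in mind: the paper offers no written proof for this proposition (it only says ``By computations, we have the following proposition''), and your direct termwise expansion of the Koszul form using $g(X,V)=0$, $[X,V]=0$, and the fact that $b$ now depends on both factors (so $U(b)\neq 0$) is exactly the intended argument, with the extra $bU(b)$, $bV(b)$, $bW(b)$ terms in part (4) correctly accounted for.
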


Through the analyses above and propositions in \cite{OC1}, we get the following results.

\begin{thm}
Let $(B, g_{B})$ and $(F, g_{F})$ be two radical-stationary semi-Riemannian manifolds and $b\in C^{\infty}(B\times F),$ so that $db\in A^{\bullet}(B\times_{b} F).$ Then the twisted product manifold $B\times_{b} F$ is a radical-stationary semi-Riemannian manifold.
\end{thm}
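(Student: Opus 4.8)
The plan is to verify the radical-stationary property through its characterization in terms of the Koszul form. By definition (cf. \cite{OC1}), the twisted product $B\times_{b}F$ is radical-stationary if and only if $\mathcal{K}(X,Y,\cdot)\in A^{\bullet}(B\times_{b}F)$ for all $X,Y\in\Gamma(T(B\times_{b}F))$, which is the same as requiring $\mathcal{K}(X,Y,W)=0$ for every radical vector field $W\in\Gamma_{0}(B\times_{b}F)$. Since $\mathcal{K}$ is additive in each of its three arguments, I would first decompose all arguments into their horizontal ($TB$) and vertical ($TF$) parts, reducing the verification to the four families of terms recorded in Proposition 5.3.

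Before substituting, I would describe the radical of the metric $g=\pi^{*}_{B}(g_{B})+b^{2}\pi^{*}_{F}(g_{F})$. A pointwise computation shows that at points where $b\neq0$ one has $\mathrm{rad}(g)=\mathrm{rad}(g_{B})\oplus\mathrm{rad}(g_{F})$, while at points where $b=0$ the vertical factor degenerates completely, so that $\mathrm{rad}(g)=\mathrm{rad}(g_{B})\oplus TF$. In all cases a radical vector splits as $W=W_{B}+W_{F}$ with $W_{B}\in\mathrm{rad}(g_{B})$, and with $W_{F}\in\mathrm{rad}(g_{F})$ away from the zero locus of $b$. In particular $\mathrm{rad}(g_{B})\subseteq\Gamma_{0}(B\times_{b}F)$ everywhere, so the hypothesis $db\in A^{\bullet}(B\times_{b}F)$ gives $W_{B}(b)=db(W_{B})=0$, and likewise $W_{F}(b)=0$ for vertical radical directions.

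I would then feed $W$ into the formulas of Proposition 5.3 and check each term. For the horizontal component $W_{B}$: the purely horizontal contribution is $\mathcal{K}_{B}(X,Y,W_{B})$, which vanishes since $B$ is radical-stationary; the mixed contributions vanish by part (2); and the only remaining term, $\mathcal{K}(U,V,W_{B})=-b\,W_{B}(b)\,g_{F}(U,V)$ coming from part (3), vanishes because $W_{B}(b)=0$. For the vertical component $W_{F}$: each term appearing in parts (3) and (4) either carries a factor $g_{F}(\cdot,W_{F})$, which is zero when $W_{F}\in\mathrm{rad}(g_{F})$, or a factor $\mathcal{K}_{F}(\cdot,\cdot,W_{F})$, which is zero since $F$ is radical-stationary, or a factor $W_{F}(b)=0$.

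The single delicate point — and the place I expect the main obstacle — is the jump in the rank of the radical along the zero locus $\{b=0\}$, where $W_{F}$ need no longer lie in $\mathrm{rad}(g_{F})$ but may be any vertical vector. The key observation that resolves this is that every vertical and every mixed component of $\mathcal{K}$ in Proposition 5.3 carries an explicit factor of $b$ (or $b^{2}$), so all such terms vanish identically on $\{b=0\}$ irrespective of $W_{F}$. This is precisely what allows radical-stationarity to pass to the degenerate twisted product, and the argument runs parallel to the proof of Theorem 4.6; the remaining checks are routine bookkeeping.
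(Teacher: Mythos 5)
Your proposal is correct and follows essentially the same route as the paper's proof: decompose the arguments into horizontal and vertical parts, observe that a radical vector $W_{p}$ has horizontal component in $\mathrm{rad}(g_{B})$ and vertical component either in $\mathrm{rad}(g_{F})$ or located over the locus $b=0$, and then kill every term of Proposition 5.3 using radical-stationarity of the factors, the hypothesis $db\in A^{\bullet}(B\times_{b}F)$, and the explicit factors of $b$. The only difference is one of presentation: you spell out the rank jump of the radical along $\{b=0\}$ and why the $b$-factors absorb it, which the paper leaves implicit in the phrase ``$b=0$ or $g_{F}(\overline{h}_{F}W_{p,F},Z_{F})=0$.''
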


\begin{proof}
The proof is completed by showing that $\mathcal{K}(X, Y, W_{p})=0$ for any $X, Y\in \Gamma(B\times_{b}F)$ and $W_{p}\in \Gamma_{0}(B\times_{b}F).$ According to $X\in \Gamma(B\times_{b}F),$ we have $X=h_{B}X_{B}+h_{F}X_{F},$ where $h_{B}, h_{F}\in C^{\infty}(B\times F),$ $X_{B}\in \Gamma(B)$ and $X_{F}\in \Gamma(F).$ Similarly, we obtain $W_{p}=\overline{h}_{B}W_{p,B}+\overline{h}_{F}W_{p,F},$ where $\overline{h}_{B}, \overline{h}_{F}\in C^{\infty}(B\times F),$ $W_{p,B}\in \Gamma(B)$ and $W_{p,F}\in \Gamma(F).$ Since $W_{p}\in \Gamma_{0}(B\times_{b}F),$ we can get $g(W_{p}, Z)=0,$ for all $Z\in \Gamma(B\times_{b}F),$ that is, $g_{B}(\overline{h}_{B}W_{p,B}, Z_{B})=0$ and  $b^{2}g_{F}(\overline{h}_{F}W_{p,F}, Z_{F})=0.$ In this way  $b=0$ or $g_{F}(\overline{h}_{F}W_{p,F}, Z_{F})=0.$ By the Proposition 5.3 above and $db\in A^{\bullet}(B\times_{b} F),$ the theorem is proved.
\end{proof}

\begin{thm}
Let $(B, g_{B})$ and $(F, g_{F})$ be two non-degenerate semi-Riemannian manifolds, $b\in C^{\infty}(B\times F).$ Suppose that $db|_{b=0}=0,$ then the twisted product manifold $B\times_{b}F$ is a semi-regular semi-Riemannian manifold.
\end{thm}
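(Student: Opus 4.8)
The plan is to verify the two defining properties of a semi-regular semi-Riemannian manifold in the sense of Stoica \cite{OC1}: that $B\times_{b}F$ is radical-stationary, and that the covariant contraction $\mathcal{K}(X,Y,\bullet)\mathcal{K}(Z,T,\bullet)$ extends to a smooth function on all of $B\times_{b}F$, including the degeneracy locus $\{b=0\}$. This mirrors the treatment of the multiply warped case in Theorem 2.7 and of the almost product case in Theorem 4.7, the difference being that here the twisting function $b$ depends on the fiber as well as the base.

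First I would dispose of radical-stationarity. Since $(B,g_{B})$ and $(F,g_{F})$ are non-degenerate, they are trivially radical-stationary, so by Theorem 5.4 it suffices to check that $db\in A^{\bullet}(B\times_{b}F)$. Away from $\{b=0\}$ the metric is non-degenerate and the condition is vacuous. On $\{b=0\}$ the radical is exactly the fiber distribution $TF$, because $g_{B}$ stays non-degenerate on $TB$ while $b^{2}g_{F}$ collapses; thus $db$ lies in $A^{\bullet}$ there precisely when $W(b)=0$ for every $W\in\Gamma(TF)$ with $b=0$, which is guaranteed by the hypothesis $db|_{b=0}=0$. Hence Theorem 5.4 applies and $B\times_{b}F$ is radical-stationary. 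This is the step for which the fiber-dependence of $b$ forces the assumption $db|_{b=0}=0$: for a warping function living on $B$ alone the fiber derivatives $W(b)$ vanish automatically, which is why Theorem 2.7 needs no such hypothesis.

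Next I would establish smoothness of the contraction by the case analysis of Theorem 4.7, using Proposition 5.3. Writing the total contraction in the split form $\mathcal{K}(\cdot,\cdot,\bullet)\mathcal{K}(\cdot,\cdot,\bullet)=\mathcal{K}(\cdot,\cdot,\bullet_{B})\mathcal{K}(\cdot,\cdot,\bullet_{B})+\tfrac{1}{b^{2}}\mathcal{K}(\cdot,\cdot,\bullet_{F})\mathcal{K}(\cdot,\cdot,\bullet_{F})$, the $\bullet_{B}$-contractions are smooth because $(B,g_{B})$ is non-degenerate, so that the base co-metric $g^{*}_{B}$ is a smooth tensor and the relevant Koszul components from Proposition 5.3 are smooth functions. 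For the fiber part, the key observation is that every non-vanishing contraction $\mathcal{K}(\cdot,\cdot,\bullet_{F})$ read off from Proposition 5.3(3)--(4) carries an overall factor of $b$: the mixed terms produce $bX(b)g_{F}(V,\cdot)$, while the pure-fiber term produces $b\bigl(U(b)g_{F}(V,\cdot)+V(b)g_{F}(U,\cdot)-g_{F}(U,V)(db)^{\sharp_{F}}\bigr)+b^{2}\mathcal{K}_{F}(U,V,\bullet_{F})$. Consequently each product $\mathcal{K}(\cdot,\cdot,\bullet_{F})\mathcal{K}(\cdot,\cdot,\bullet_{F})$ carries a factor $b^{2}$, which cancels the $\tfrac{1}{b^{2}}$ and leaves an expression (e.g. $X(b)Y(b)g_{F}(V,W)$ in the mixed/mixed case) that is manifestly smooth and extends across $\{b=0\}$.

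The main obstacle is precisely the behavior of the $\tfrac{1}{b^{2}}$-weighted fiber contractions on the degeneracy locus; everything hinges on the factor-of-$b$ bookkeeping from Proposition 5.3, together with the radical-stationarity secured in the first step, so that the contraction is a genuine smooth function rather than one merely defined off $\{b=0\}$. Once the cases are checked --- mixed/mixed, mixed/fiber, and fiber/fiber, with one argument pair possibly lying in $TB$ --- the smoothness is uniform and $B\times_{b}F$ is semi-regular. These remaining cases are routine and would be organized exactly as in the proof of Theorem 4.7, so I would not grind through them individually.
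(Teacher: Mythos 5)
Your overall strategy is the paper's own: reduce the claim to the smoothness of the covariant contractions $\mathcal{K}(\cdot,\cdot,\bullet)\mathcal{K}(\cdot,\cdot,\bullet)$, split the contraction as $\bullet=\bullet_{B}+\tfrac{1}{b^{2}}\bullet_{F}$, and observe that every component of $\mathcal{K}$ from Proposition 5.3 whose third slot is a fiber vector carries an overall factor of $b$, so that the $b^{2}$ cancels against the $\tfrac{1}{b^{2}}$. The paper simply writes out the six resulting expressions (its formulas (1)--(6) in the proof), which is exactly the case analysis you defer as routine; your factor-of-$b$ bookkeeping is a correct account of why none of those expressions contains a negative power of $b$. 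Your explicit treatment of radical-stationarity via Theorem 5.4 is a reasonable addition that the paper leaves implicit.

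The one substantive divergence is where the hypothesis $db|_{b=0}=0$ is spent. You use it only for radical-stationarity and then assert that the post-cancellation expressions such as $X(b)Y(b)g_{F}(V,W)$ and $g^{*}_{F}(db,db)g_{F}(V,W)g_{F}(U,Q)$ are ``manifestly smooth and extend across $\{b=0\}$'' with no further appeal to the hypothesis. But radical-stationarity on $\{b=0\}$ is essentially automatic here: by Proposition 5.3 every component of $\mathcal{K}$ with a fiber third argument already carries a factor of $b$, hence vanishes on the degeneracy locus. On the other hand, at a point with $b=0$ the canonical contraction is computed from the degenerate metric, where only the $B$-parts of the one-forms survive and $\mathcal{K}(X,V,\cdot)$, $\mathcal{K}(V,W,\cdot)$ vanish identically, so the contraction equals zero there. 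For the contraction to be a smooth function on all of $B\times_{b}F$, the formulas valid for $b>0$ must therefore tend to zero on $\{b=0\}$, and that is precisely what $db|_{b=0}=0$ provides --- note that it constrains the base derivatives $X(b)$ just as much as the fiber derivatives $V(b)$, which is why the warped-product comparison you draw with Theorem 2.7 does not fully explain the hypothesis. This is also where the paper invokes it (``Because $db|_{b=0}=0$, we know that $(1)$--$(5)$ are smooth and smoothly extend to the region $b=0$''). As written, your smoothness step would go through verbatim with no hypothesis on $db$ at all, so the use of $db|_{b=0}=0$ needs to be relocated from the radical-stationarity step to the matching of the extended formulas with the (zero) value of the degenerate contraction on $\{b=0\}$.
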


\begin{proof}
We shall have established the theorem if we prove the following: $\mathcal{K}(X, Y, \bullet)\mathcal{K}(Z, T, \bullet)\\ \in C^{\infty}(B\times_{b}F).$
Let $X, Y, Z, T\in\Gamma(TB)$ and $U, V, W, Q\in\Gamma(TF).$ Assume that $b>0,$ we have
\begin{align}
&(1)\mathcal{K}(X, Y, \bullet)\mathcal{K}(Z, T, \bullet)=\mathcal{K}_{B}(X, Y, \bullet_{B})\mathcal{K}_{B}(Z, T, \bullet_{B});\nonumber\\
&(2)\mathcal{K}(X, Y, \bullet)\mathcal{K}(Z, W, \bullet)=\mathcal{K}(X, Y, \bullet)\mathcal{K}(W, Z, \bullet)=0;\nonumber\\
&(3)\mathcal{K}(X, Y, \bullet)\mathcal{K}(V, W, \bullet)=-bg^{*}_{B}(\nabla^{\flat}_{X}Y, db)g_{F}(V, W);\nonumber\\
&(4)\mathcal{K}(X, V, \bullet)\mathcal{K}(Y, W, \bullet)=\mathcal{K}(X, V, \bullet)\mathcal{K}(W, Y, \bullet)=\mathcal{K}(V, X, \bullet)\mathcal{K}(Y, W, \bullet)\nonumber\\
&=\mathcal{K}(V, X, \bullet)\mathcal{K}(W, Y, \bullet)=X(b)Y(b)g_{F}(V, W);\nonumber\\
&(5)\mathcal{K}(X, V, \bullet)\mathcal{K}(W, U, \bullet)=\mathcal{K}(V, X, \bullet)\mathcal{K}(W, U, \bullet)=X(b)W(b)g_{F}(U, V)+X(b)U(b)g_{F}(V, W)\nonumber\\
&-X(b)V(b)g_{F}(W, U)+bX(b)\mathcal{K}_{F}(W, U, V);\nonumber\\
&(6)\mathcal{K}(V, W, \bullet)\mathcal{K}(U, Q, \bullet)=b^{2}g^{*}_{B}(db, db)g_{F}(V, W)g_{F}(U, Q)+g^{*}_{F}(db, db)g_{F}(V, W)g_{F}(U, Q)\nonumber\\
&-bg^{*}_{F}(\nabla^{\flat}_{V}W, db)g_{F}(U, Q)-bg^{*}_{F}(\nabla^{\flat}_{U}Q, db)g_{F}(V, W)+
b^{2}\mathcal{K}_{F}(V, W, \bullet_{F})\mathcal{K}_{F}(U, Q, \bullet_{F})\nonumber\\
&+bV(b)\mathcal{K}_{F}(U, Q, W)+bW(b)\mathcal{K}_{F}(U, Q, V)+bU(b)\mathcal{K}_{F}(V, W, Q)+bQ(b)\mathcal{K}_{F}(V, W, U)\nonumber\\
&+V(b)U(b)g_{F}(Q, W)+V(b)Q(b)g_{F}(U, W)+W(b)U(b)g_{F}(Q, V)+W(b)Q(b)g_{F}(U, V)\nonumber\\
&-2U(b)Q(b)g_{F}(V, W)-2V(b)W(b)g_{F}(U, Q).\nonumber
\end{align}
Because $db|_{b=0}=0,$ we know that $(1)-(5)$ are smooth and smoothly extend to the region $b=0.$ From the above it follows that $B\times_{b}F$ is a semi-regular semi-Riemannian manifold.
\end{proof}

Using the formulae (2.7), we calculate
\begin{thm}
Let $(B, g_{B})$ and $(F, g_{F})$ be two non-degenerate semi-Riemannian manifolds, $b\in C^{\infty}(B\times F)$ and $db|_{b=0}=0.$
Let the vector fields $X, Y, Z, T\in\Gamma(TB)$ and $U, V, W, Q\in\Gamma(TF).$ Then
\begin{align}
&(1)R(X, Y, Z, T)=R_{B}(X, Y, Z, T);\nonumber\\
&(2)R(X, Y, Z, W)=R(Z, W, X, Y)=0;\nonumber\\
&(3)R(X, Y,  V, W)=0;\nonumber\\
&(4)R(X, V, Y, W)=R(V, X, W, Y)=bXY(b)g_{F}(V, W)-bg^{*}_{B}(\nabla^{\flat}_{X}Y, db)g_{F}(V, W);\nonumber\\
&(5)R(X, U, V, W)=bXV(b)g_{F}(U, W)-X(b)V(b)g_{F}(U, W)-bXW(b)g_{F}(U, V)\nonumber\\
&+X(b)W(b)g_{F}(U, V);\nonumber\\
&(6)R(V, W, U, Q)=b^{2}R_{F}(V, W, U, Q)+(b^{2}g^{*}_{B}(db, db)+g^{*}_{F}(db, db))(g_{F}(V, U)g_{F}(W, Q)\nonumber\\
&-g_{F}(W, U)g_{F}(V, Q))+(bVU(b)-2V(b)U(b)-bg^{*}_{F}(\nabla^{\flat}_{V}U, db))g_{F}(W, Q)\nonumber\\
&+(bWQ(b)-2W(b)Q(b)-bg^{*}_{F}(\nabla^{\flat}_{W}Q, db))g_{F}(V, U)-(bWU(b)-2W(b)U(b)\nonumber\\
&-bg^{*}_{F}(\nabla^{\flat}_{W}U, db))g_{F}(V, Q)-(bVQ(b)-2V(b)Q(b)-bg^{*}_{F}(\nabla^{\flat}_{V}Q, db))g_{F}(W, U).\nonumber
\end{align}
\end{thm}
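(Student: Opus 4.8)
The plan is to compute $R$ from its definition (2.7) by means of the Koszul-form expression of the Riemann curvature tensor due to Stoica in \cite{OC1}, that is, the unbarred analogue of the identity used in the proof of Theorem 7.5:
\begin{align}
R(X, Y, Z, T) &= X(\mathcal{K}(Y, Z, T)) - Y(\mathcal{K}(X, Z, T)) - \mathcal{K}([X, Y], Z, T) \nonumber \\
&\quad + \mathcal{K}(X, Z, \bullet)\mathcal{K}(Y, T, \bullet) - \mathcal{K}(Y, Z, \bullet)\mathcal{K}(X, T, \bullet). \nonumber
\end{align}
Every ingredient on the right is already available: the single Koszul forms are listed in Proposition 5.3, and the contracted products $\mathcal{K}(\cdot,\cdot,\bullet)\mathcal{K}(\cdot,\cdot,\bullet)$ are precisely the six expressions computed in Theorem 5.5. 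So the proof reduces to a case-by-case substitution followed by collection of terms.

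Throughout I would exploit the structural facts that for horizontal and vertical lifts $[X, V] = 0$, that a horizontal field annihilates any function pulled back from $F$ (so $X(g_F(V,W)) = 0$) and, symmetrically, $V(g_B(Y,Z)) = 0$; these make most bracket and cross-differentiation terms drop out. Cases (1)--(3) are then immediate: (1) reduces to $R_B$ via Proposition 5.3(1) and Theorem 5.5(1), while (2) and (3) vanish because the relevant Koszul forms and products (Proposition 5.3(2), Theorem 5.5(2)) are zero. In cases (4) and (5) one differentiates a single horizontal--vertical Koszul form of the shape $bX(b)g_F$, and the first-order piece $X(b)Y(b)g_F$ produced by the Leibniz rule cancels exactly against the matching contraction term from Theorem 5.5(4)--(5), leaving the second-order combination $bXY(b) - bg^*_B(\nabla^\flat_X Y, db)$ in (4) and the corresponding two-term fiber expression in (5).

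The main obstacle is case (6), $R(V, W, U, Q)$ with all four arguments vertical. Here the dependence of $b$ on the fiber is fully felt: by Proposition 5.3(4) every vertical Koszul form carries three first-order $b$-derivative terms on top of $b^2\mathcal{K}_F$, and the product $\mathcal{K}(V,W,\bullet)\mathcal{K}(U,Q,\bullet)$ of Theorem 5.5(6) already contains a dozen-odd summands. The real work is to reorganize the total into the three advertised groups: the intrinsic fiber curvature $b^2 R_F(V,W,U,Q)$; the conformal factor $(b^2 g^*_B(db,db) + g^*_F(db,db))$ multiplying the antisymmetrized metric product $g_F(V,U)g_F(W,Q) - g_F(W,U)g_F(V,Q)$; and the four remaining first-order terms of the form $(bVU(b) - 2V(b)U(b) - bg^*_F(\nabla^\flat_V U, db))g_F(W,Q)$ together with their index permutations. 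The cancellations here hinge on the Leibniz rule applied to $V(b^2\mathcal{K}_F(\cdots))$ and on the identity converting $g^*_F(\nabla^\flat_V U, db)$ into the appropriate second derivative of $b$; this is the same bookkeeping Stoica performs in \cite{OC1}, now carried out with the base--fiber splitting of $db$ kept explicit. I expect this single case to account for essentially all the difficulty of the theorem.
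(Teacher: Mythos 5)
Your plan is correct and coincides with the route the paper itself (implicitly) takes: Theorem 5.6 is stated with only the remark ``Using the formulae (2.7), we calculate,'' and the intended computation is exactly the Koszul-form curvature identity of Stoica fed with Proposition 5.3 and the six contractions from the proof of Theorem 5.5, with the cancellations you describe in cases (4)--(6). One small caveat: case (3) is not term-by-term zero --- the terms $X(\mathcal{K}(Y,V,W))-Y(\mathcal{K}(X,V,W))$ and $\mathcal{K}([X,Y],V,W)$ are individually nonzero (equal to $b[X,Y](b)g_F(V,W)$ plus cancelling first-order pieces) and only vanish after the same kind of cancellation you invoke in (4), but this does not affect the validity of the argument.
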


The following propositions can be stated:
\begin{prop}
Let $B\times_{b}F$ be a degenerate twisted product and let the vector fields $X, Y, Z\in\Gamma(TB)$ and $U, V, W\in\Gamma(TF).$ Let $\overline{\mathcal{K}}$ be the semi-symmetric metric Koszul form on $B\times_{b}F$ and $\overline{\mathcal{K}}_{B}$, $\overline{\mathcal{K}}_{F}$ be the lifts of the semi-symmetric metric
Koszul form on $B,$ $F,$ respectively. Let $P\in\Gamma(TB),$ then
\begin{align}
&(1)\overline{\mathcal{K}}(X, Y, Z)=\overline{\mathcal{K}}_{B}(X, Y, Z);\nonumber\\
&(2)\overline{\mathcal{K}}(X, Y, W)=\overline{\mathcal{K}}(X, W, Y)=\overline{\mathcal{K}}(W, X, Y)=0;\nonumber\\
&(3)\overline{\mathcal{K}}(X, V, W)=bX(b)g_{F}(V, W);\nonumber\\
&(4)\overline{\mathcal{K}}(V, X, W)=-\overline{\mathcal{K}}(V, W, X)=bX(b)g_{F}(V, W)+b^{2}g_{B}(X, P)g_{F}(V, W);\nonumber\\
&(5)\overline{\mathcal{K}}(U, V, W)=bU(b)g_{F}(V, W)+bV(b)g_{F}(W, U)-bW(b)g_{F}(U, V)+b^{2}\mathcal{K}_{F}(U, V, W).\nonumber
\end{align}
\end{prop}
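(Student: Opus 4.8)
The plan is to reduce every item to the ordinary Koszul form already computed in Proposition 5.3 by substituting directly into the defining identity
\[
\overline{\mathcal{K}}(X,Y,Z)=\mathcal{K}(X,Y,Z)+g(Y,P)\,g(X,Z)-g(X,Y)\,g(P,Z)
\]
of Definition 2.6, and then simplifying the two correction terms using the product form of the twisted metric. The one structural fact driving all five cases is that, for $g=\pi_B^{*}(g_B)+b^{2}\pi_F^{*}(g_F)$, the horizontal and vertical distributions are orthogonal: $g(X,W)=0$ whenever $X\in\Gamma(TB)$ and $W\in\Gamma(TF)$, while $g(X,Y)=g_B(X,Y)$ on $\Gamma(TB)$ and $g(V,W)=b^{2}g_F(V,W)$ on $\Gamma(TF)$. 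Thus the entire proof is a bookkeeping exercise in which the block structure of $g$ forces most correction terms to vanish.

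Before splitting into cases I would record the consequences of the hypothesis $P\in\Gamma(TB)$, i.e. that $P$ is horizontal: on the one hand $g(Y,P)=g_B(Y,P)$ for $Y\in\Gamma(TB)$, and on the other hand $g(W,P)=0$ for every $W\in\Gamma(TF)$, since the vertical part of $W$ is annihilated by $d\pi_B$ and the horizontal part of $P$ by $d\pi_F$. With these two observations the argument becomes mechanical. For item (1) all three arguments lie in $\Gamma(TB)$, so $\mathcal{K}=\mathcal{K}_B$ by Proposition 5.3(1) and both correction terms restrict to $g_B$; the outcome is exactly the definition of $\overline{\mathcal{K}}_B$. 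For item (2) the Koszul part vanishes by Proposition 5.3(2), and every surviving correction term contains a factor of the form $g(X,W)$, $g(P,W)$, or $g(W,\cdot)$ pairing one vertical with one horizontal entry, hence is zero by orthogonality.

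Items (3)--(5) are the substantive ones, and they are distinguished solely by where the horizontal argument sits. In (3) and (4) exactly one horizontal argument $X$ and two vertical arguments occupy the slots; the Koszul part is read off from Proposition 5.3(3), and the only correction term that can survive is $g(X,P)\,g(V,W)=b^{2}g_B(X,P)\,g_F(V,W)$. This term appears precisely when $X$ occupies the \emph{middle} slot, because there it enters as the factor $g(Y,P)$ paired with $g(V,W)\neq0$; this yields the extra summand in (4). When $X$ is in the first slot the analogous factor $g(V,P)$ vanishes, which is why no correction appears in (3). For item (5) all three arguments are vertical, so $g(V,P)=g(P,W)=0$ and both correction terms drop out, leaving $\overline{\mathcal{K}}(U,V,W)=\mathcal{K}(U,V,W)$ of Proposition 5.3(4).

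I do not anticipate any genuine obstacle: there is no analytic subtlety, since we work with the Koszul form rather than with an inverse metric, and degeneracy of $g$ plays no role in these algebraic manipulations. The only point requiring care is the placement of the single surviving term $b^{2}g_B(X,P)\,g_F(V,W)$ in item (4), which hinges on tracking which of the two correction summands $g(Y,P)\,g(X,Z)$ and $-g(X,Y)\,g(P,Z)$ persists once one argument is horizontal and the other two vertical. This is exactly what produces the antisymmetry $\overline{\mathcal{K}}(V,X,W)=-\overline{\mathcal{K}}(V,W,X)$ recorded in the statement, so that relation can serve as a consistency check on the computation.
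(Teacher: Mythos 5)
Your proposal is correct and is essentially the argument the paper intends: the paper states this proposition without proof, but the evident route is exactly yours, namely substituting Definition 2.4 of $\overline{\mathcal{K}}$ and evaluating the Koszul part via Proposition 5.3 while the block-orthogonality of $g=\pi_B^{*}(g_B)+b^{2}\pi_F^{*}(g_F)$ kills all mixed correction terms. Your tracking of the single surviving term $b^{2}g_{B}(X,P)g_{F}(V,W)$ in item (4), and the observation that the two correction summands produce the antisymmetry $\overline{\mathcal{K}}(V,X,W)=-\overline{\mathcal{K}}(V,W,X)$, both check out against Proposition 5.3(3).
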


\begin{prop}
Let $B\times_{b}F$ be a degenerate twisted product and let the vector fields $X, Y, Z\in\Gamma(TB)$ and $U, V, W\in\Gamma(TF).$ Let $\overline{\mathcal{K}}$ be the semi-symmetric metric Koszul form on $B\times_{b}F$ and $\overline{\mathcal{K}}_{B}$, $\overline{\mathcal{K}}_{F}$ be the lifts of the semi-symmetric metric
Koszul form on $B,$ $F,$ respectively. Let $P\in\Gamma(TF),$ then
\begin{align}
&(1)\overline{\mathcal{K}}(X, Y, Z)=\mathcal{K}_{B}(X, Y, Z);\nonumber\\
&(2)\overline{\mathcal{K}}(X, Y, W)=-\overline{\mathcal{K}}(X, W, Y)=-b^{2}g_{B}(X, Y)g_{F}(P, W);\nonumber\\
&(3)\overline{\mathcal{K}}(W, X, Y)=0;\nonumber\\
&(4)\overline{\mathcal{K}}(X, V, W)=\overline{\mathcal{K}}(V, X, W)=-\overline{\mathcal{K}}(V, W, X)=bX(b)g_{F}(V, W);\nonumber\\
&(5)\overline{\mathcal{K}}(U, V, W)=bU(b)g_{F}(V, W)+bV(b)g_{F}(W, U)-bW(b)g_{F}(U, V)+b^{2}\mathcal{K}_{F}(U, V, W)\nonumber\\
&+b^{4}g_{F}(V, P)g_{F}(U, W)-b^{4}g_{F}(W, P)g_{F}(U, V).\nonumber
\end{align}
\end{prop}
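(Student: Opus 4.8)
The plan is to substitute directly into the defining relation~(2.5),
\[
\overline{\mathcal{K}}(X,Y,Z)=\mathcal{K}(X,Y,Z)+g(Y,P)g(X,Z)-g(X,Y)g(P,Z),
\]
with its three slots filled in turn by base fields from $\Gamma(TB)$ and fiber fields from $\Gamma(TF)$. The ordinary Koszul terms $\mathcal{K}$ are read off from Proposition~5.3, and the two correction terms are evaluated through the explicit inner product attached to the twisted metric~(5.1), namely $g(x,y)=g_{B}(d\pi_{B}x,d\pi_{B}y)+b^{2}g_{F}(d\pi_{F}x,d\pi_{F}y)$. No new geometric input is required: the argument is the same bookkeeping already carried out for Proposition~2.7 in the multiply warped case with $P\in\Gamma(TF_{l})$, the only difference being that the mixed dependence of $b$ on both factors has already been absorbed into Proposition~5.3.

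The computation hinges on three elementary rules coming from the form of $g$. Since $P\in\Gamma(TF)$, every pairing of $P$ against a base field vanishes, $g(X,P)=0$ for $X\in\Gamma(TB)$, because the base--fiber cross terms of $g$ are zero, whereas $g(V,P)=b^{2}g_{F}(V,P)$ for a fiber field $V$. Together with $g(X,Y)=g_{B}(X,Y)$, $g(V,W)=b^{2}g_{F}(V,W)$ and $g(X,V)=0$, these make the two correction terms collapse in most orderings.

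Concretely: in~(1) both corrections vanish since $g(Y,P)=g(P,Z)=0$, leaving $\mathcal{K}_{B}$. In~(2) the first correction dies but $-g(X,Y)g(P,W)=-b^{2}g_{B}(X,Y)g_{F}(P,W)$ survives, and with $\mathcal{K}(X,Y,W)=0$ this gives the stated value; interchanging the last two slots flips the sign. In~(3) both corrections and $\mathcal{K}(W,X,Y)$ vanish. In~(4) the one nonzero pairing $g(V,P)=b^{2}g_{F}(V,P)$ is always multiplied by a vanishing cross term $g(X,W)$ or $g(X,V)$, so only the Koszul value $bX(b)g_{F}(V,W)$ remains, reproducing the antisymmetry recorded in Proposition~5.3(3). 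Case~(5) is the only one in which the corrections genuinely contribute: $g(V,P)g(U,W)=b^{4}g_{F}(V,P)g_{F}(U,W)$ and $-g(U,V)g(P,W)=-b^{4}g_{F}(U,V)g_{F}(P,W)$ add to the four--term expression for $\mathcal{K}(U,V,W)$ from Proposition~5.3(4) to yield the formula.

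The computation presents no genuine obstacle; it is entirely mechanical. The one point demanding care is the \emph{bookkeeping of warping powers}: each fiber inner product carries a factor $b^{2}$, so a correction term built from two fiber pairings carries $b^{4}$ (case~5) while one built from a single fiber pairing carries $b^{2}$ (case~2). Keeping these powers straight, together with the antisymmetrization signs across all slot orderings, is the whole of the work.
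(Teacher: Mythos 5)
Your proposal is correct and matches the paper's (omitted) argument: the paper presents this proposition as an immediate consequence of Definition 2.4 applied with the block-diagonal twisted metric and the ordinary Koszul form values from Proposition 5.3, which is exactly the substitution you carry out. All five cases, including the $b^{2}$ versus $b^{4}$ power bookkeeping in cases (2) and (5), check out.
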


Analysis similar to that in the proof of Theorem 2.11 shows that
\begin{thm}
Let $(B, g_{B})$ and $(F, g_{F})$ be two non-degenerate semi-Riemannian manifolds, $b\in C^{\infty}(B\times F)$ and $db|_{b=0}=0.$
Let the vector fields $X, Y, Z, T\in\Gamma(TB)$ and $U, V, W, Q\in\Gamma(TF).$  Let $P\in\Gamma(TB),$ then
\begin{align}
&(1)\overline{R}(X, Y, Z, T)=\overline{R}_{B}(X, Y, Z, T);\nonumber\\
&(2)\overline{R}(X, Y, Z, W)=\overline{R}(Z, W, X, Y)=0;\nonumber\\
&(3)\overline{R}(X, Y,  V, W)=\overline{R}(V, W, X, Y)=0;\nonumber\\
&(4)\overline{R}(X, V, Y, W)=bXY(b)g_{F}(V, W)-bg^{*}_{B}(\nabla^{\flat}_{X}Y, db)g_{F}(V, W)+b^{2}\mathcal{K}_{B}(X, P, Y)g_{F}(V, W)\nonumber\\
&+b^{2}g_{B}(P, P)g_{B}(X, Y)g_{F}(V, W)-b^{2}g_{B}(X, P)g_{B}(P, Y)g_{F}(V, W)+bP(b)g_{B}(X, Y)g_{F}(V, W);\nonumber\\
&(5)\overline{R}(X, V, U, W)=\overline{R}(U, W, X, V)=bXU(b)g_{F}(W, V)-X(b)U(b)g_{F}(W, V)\nonumber\\
&-bXW(b)g_{F}(U, V)+X(b)W(b)g_{F}(U, V);\nonumber\\
&(6)\overline{R}(U, V, W, Q)=b^{2}R_{F}(U, V, W, Q)+(b^{2}g^{*}_{B}(db, db)-g^{*}_{F}(db, db)+2b^{3}P(b)+b^{4}g_{B}(P, P))\nonumber\\
&\times (g_{F}(U, W)g_{F}(V, Q)-g_{F}(V, W)g_{F}(U, Q))
+(bUW(b)-2U(b)W(b)-bg^{*}_{F}(\nabla^{\flat}_{U}W, db))\nonumber\\
&\times g_{F}(V, Q)+(bVQ(b)-2V(b)Q(b)-bg^{*}_{F}(\nabla^{\flat}_{V}Q, db))g_{F}(U, W)
-(bVW(b)-2V(b)W(b)\nonumber\\&-bg^{*}_{F}(\nabla^{\flat}_{V}W, db))g_{F}(U, Q)
-(bUQ(b)-2U(b)Q(b)-bg^{*}_{F}(\nabla^{\flat}_{U}Q, db))g_{F}(V, W).\nonumber
\end{align}
\end{thm}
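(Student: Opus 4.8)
My plan is to reduce everything to the master identity of Proposition 2.8, which expresses the semi-symmetric metric curvature $\overline{R}$ of any semi-symmetric metric semi-regular semi-Riemannian manifold in terms of the ordinary Koszul curvature $R$ and the two Koszul forms $\mathcal{K}$ and $\overline{\mathcal{K}}$. First I would check that the identity is applicable: the hypothesis $db|_{b=0}=0$ makes $B\times_b F$ semi-regular by Theorem 5.5, and Proposition 2.10 upgrades it to a semi-symmetric metric semi-regular manifold. The six assertions then correspond to the six essentially distinct ways of distributing $X,Y,Z,T$ between $\Gamma(TB)$ and $\Gamma(TF)$, so the task becomes, case by case, the evaluation of the seven terms on the right of Proposition 2.8; the permuted placements and the ``$=$'' chains inside each item I would obtain from the symmetries (2.10) or by reading off the symmetry of the computed expression.

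The decisive simplification is that $P\in\Gamma(TB)$ while $g$ is block-diagonal, so every mixed pairing $g(\text{horizontal},\text{vertical})$ vanishes; in particular $g(V,P)=g(P,W)=0$ for $V,W\in\Gamma(TF)$. This annihilates the two cubic terms $-g(Y,P)g(X,Z)g(P,T)$ and $+g(X,P)g(Y,Z)g(P,T)$ in every mixed case, and removes further terms whenever a surviving inner product couples a horizontal vector to a vertical one. For example, in case (4) only
\[
\overline{R}(X,V,Y,W)=R(X,V,Y,W)+\overline{\mathcal{K}}(X,P,Y)\,g(V,W)+\mathcal{K}(V,P,W)\,g(X,Y)
\]
survives, and in case (6) I would be left with
\[
\overline{R}(U,V,W,Q)=R(U,V,W,Q)+\overline{\mathcal{K}}(U,P,W)g(V,Q)-\overline{\mathcal{K}}(V,P,W)g(U,Q)-\mathcal{K}(U,P,Q)g(V,W)+\mathcal{K}(V,P,Q)g(U,W).
\]

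What then remains is substitution. I would take the ordinary curvature $R$ from Theorem 5.6 (relabeling the vertical arguments when needed, as in passing from Theorem 5.6(6) to $R(U,V,W,Q)$), the plain Koszul values $\mathcal{K}$ from Proposition 5.3 (e.g.\ $\mathcal{K}(V,P,W)=bP(b)g_F(V,W)$), and the semi-symmetric metric values $\overline{\mathcal{K}}$ from Proposition 5.7 (e.g.\ $\overline{\mathcal{K}}(V,P,W)=bP(b)g_F(V,W)+b^2g_B(P,P)g_F(V,W)$, which one may also expand straight from Definition 2.4 since the mixed term drops). Each vertical slot contributes a factor $g(V,W)=b^2g_F(V,W)$, which explains the powers of $b$ in front of the terms. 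In case (6), collecting the $P(b)$- and $g_B(P,P)$-pieces of the four Koszul corrections produces the antisymmetrized coefficient $\bigl(2b^3P(b)+b^4g_B(P,P)\bigr)\bigl(g_F(U,W)g_F(V,Q)-g_F(V,W)g_F(U,Q)\bigr)$, which adds to the curvature coefficient already carried by Theorem 5.6(6).

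The hard part will be case (6), where all four arguments are vertical. There $R(U,V,W,Q)$ is the most intricate expression of Theorem 5.6, carrying $R_F$, the genuinely twisted pieces $bg_F^*(\nabla^\flat_U W,db)$, and the products of first derivatives of $b$; and because $b$ depends on \emph{both} factors the vertical derivatives $U(b),V(b),W(b),Q(b)$ do not vanish and must be carried through the antisymmetrization. The only real bookkeeping is to decide which of the four $\overline{\mathcal{K}}$ and $\mathcal{K}$ corrections feeds the index pattern $g_F(U,W)g_F(V,Q)$ and which feeds $g_F(V,W)g_F(U,Q)$; once that is organized each piece matches a term of the stated formula. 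Finally, since $db|_{b=0}=0$, all the resulting expressions extend smoothly across $\{b=0\}$, so the identities hold throughout $B\times_b F$.
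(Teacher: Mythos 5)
Your proposal follows exactly the paper's route: the paper proves this theorem only by pointing to the proof of Theorem 2.11, i.e.\ by specializing the master identity of Proposition 2.8, using block-diagonality of the metric to kill every mixed pairing $g(\mathrm{horizontal},\mathrm{vertical})$, and then substituting $R$ from Theorem 5.6, $\mathcal{K}$ from Proposition 5.3 and $\overline{\mathcal{K}}$ from Proposition 5.7, which is precisely what you describe. One caveat worth recording: since for $P\in\Gamma(TB)$ the correction terms contribute only $P(b)$- and $g_{B}(P,P)$-pieces and no $g^{*}_{F}(db,db)$, your computation necessarily inherits the coefficient $+g^{*}_{F}(db,db)$ from Theorem 5.6(6), whereas item (6) of the statement displays $-g^{*}_{F}(db,db)$ (at odds with the analogous Theorem 5.10(7)); this appears to be a sign typo in the stated formula rather than a defect in your argument, but you should flag it rather than silently reproduce the printed sign.
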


\begin{thm}
Let $(B, g_{B})$ and $(F, g_{F})$ be two non-degenerate semi-Riemannian manifolds, $b\in C^{\infty}(B\times F)$ and $db|_{b=0}=0.$
Let the vector fields $X, Y, Z, T\in\Gamma(TB)$ and $U, V, W, Q\in\Gamma(TF).$  Let $P\in\Gamma(TF),$ then
\begin{align}
&(1)\overline{R}(X, Y, Z, T)=R_{B}(X, Y, Z, T)+b^{2}g_{B}(X, Z)g_{B}(Y, T)g_{F}(P, P)\nonumber\\
&-b^{2}g_{B}(X, T)g_{B}(Y, Z)g_{F}(P, P);\nonumber\\
&(2)\overline{R}(X, Y, Z, W)=-\overline{R}(Z, W, X, Y)=-bX(b)g_{B}(Y, Z)g_{F}(P, W)+bY(b)g_{B}(X, Z)g_{F}(P, W);\nonumber\\
&(3)\overline{R}(X, Y,  V, W)=\overline{R}(V, W, X, Y)=0;\nonumber
\end{align}
\begin{align}
&(4)\overline{R}(X, V, Y, W)=bXY(b)g_{F}(V, W)-bg^{*}_{B}(\nabla^{\flat}_{X}Y, db)g_{F}(V, W)-b^{4}g_{B}(X, Y)g_{F}(V, P)\nonumber\\
&\times g_{F}(P, W)+b^{4}g_{B}(X, Y)g_{F}(V, W)g_{F}(P, P)+bV(b)g_{B}(X, Y)g_{F}(P, W)+bP(b)g_{B}(X, Y)\nonumber\\
&\times g_{F}(W, V)-bW(b)g_{B}(X, Y)g_{F}(V, P)+b^{2}g_{B}(X, Y)\mathcal{K}_{F}(V, P, W);\nonumber\\
&(5)\overline{R}(X, V, U, W)=bXU(b)g_{F}(W, V)-X(b)U(b)g_{F}(W, V)-bXW(b)g_{F}(U, V)\nonumber\\
&+X(b)W(b)g_{F}(U, V)+b^{3}X(b)g_{F}(U, P)g_{F}(W, V)-b^{3}X(b)g_{F}(W, P)g_{F}(U, V);\nonumber\\
&(6)\overline{R}(U, W, X, V)=bXU(b)g_{F}(W, V)-X(b)U(b)g_{F}(W, V)-bXW(b)g_{F}(U, V)\nonumber\\
&+X(b)W(b)g_{F}(U, V)-b^{3}X(b)g_{F}(U, P)g_{F}(W, V)+b^{3}X(b)g_{F}(W, P)g_{F}(U, V);\nonumber\\
&(7)\overline{R}(U, V, W, Q)=b^{2}R_{F}(U, V, W, Q)+(b^{2}g^{*}_{B}(db, db)+g^{*}_{F}(db, db)+2b^{3}P(b)+b^{6}g_{F}(P, P))\nonumber\\
&\times (g_{F}(U, W)g_{F}(V, Q)-g_{F}(V, W)g_{F}(U, Q))
+(bUW(b)-2U(b)W(b)+b^{4}\mathcal{K}_{F}(U, P, W)\nonumber\\
&-bg^{*}_{F}(\nabla^{\flat}_{U}W, db))g_{F}(V, Q)+(bVQ(b)-2V(b)Q(b)+b^{4}\mathcal{K}_{F}(V, P, Q)\nonumber\\
&-bg^{*}_{F}(\nabla^{\flat}_{V}Q, db))g_{F}(U, W)-(bVW(b)-2V(b)W(b)+b^{4}\mathcal{K}_{F}(V, P, W)\nonumber\\
&-bg^{*}_{F}(\nabla^{\flat}_{V}W, db))g_{F}(U, Q)-(bUQ(b)-2U(b)Q(b)+b^{4}\mathcal{K}_{F}(U, P, Q)\nonumber\\
&-bg^{*}_{F}(\nabla^{\flat}_{U}Q, db))g_{F}(V, W)+b^{3}Q(b)(g_{F}(U, P)g_{F}(V, W)-g_{F}(V, P)g_{F}(U, W))\nonumber\\
&+(b^{6}g_{F}(U, P)-b^{3}U(b))(g_{F}(V, W)g_{F}(P, Q)-g_{F}(P, W)g_{F}(V, Q))\nonumber\\
&-(b^{6}g_{F}(V, P)-b^{3}V(b))(g_{F}(U, W)g_{F}(P, Q)-g_{F}(P, W)g_{F}(U, Q))\nonumber\\
&-b^{3}W(b)(g_{F}(U, P)g_{F}(V, Q)-g_{F}(V, P)g_{F}(U, Q)).\nonumber
\end{align}
\end{thm}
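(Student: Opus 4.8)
The plan is to obtain every case by direct substitution into the master identity of Proposition 2.6, which expresses the semi-symmetric metric curvature $\overline{R}$ through the ordinary curvature $R$, the Koszul form $\mathcal{K}$, the semi-symmetric Koszul form $\overline{\mathcal{K}}$, and the metric $g$. Because $P\in\Gamma(TF)$, the three ingredients I would feed in are: the twisted-product curvature $R$ from Theorem 5.6, the Koszul form from Proposition 5.3, and the semi-symmetric Koszul form from Proposition 5.8 (the fiber version). Before starting the casework I would record the metric data that is used repeatedly: $g(X,Y)=g_{B}(X,Y)$ for base fields, $g(U,V)=b^{2}g_{F}(U,V)$ for fiber fields, $g(X,U)=0$ in mixed slots, and, since $P$ is a fiber field, $g(X,P)=0$ while $g(U,P)=b^{2}g_{F}(U,P)$.

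I would then organise the proof by the base/fiber type of the four arguments, matching the seven listed cases. In the pure case (1) all arguments are base fields, so the $P$-dependence of Proposition 2.6 enters only through factors such as $g(P,P)=b^{2}g_{F}(P,P)$, and with $R=R_{B}$ one lands directly on the stated $b^{2}g_{F}(P,P)$ correction. Each mixed case (2)--(6) then collapses quickly, since any summand of Proposition 2.6 containing a base field paired with a fiber field in a metric slot vanishes; typically only two or three of the seven summands survive, and the surviving $\mathcal{K}$ and $\overline{\mathcal{K}}$ factors are read off from Propositions 5.3 and 5.8. I would use the relations (2.10) both as a consistency check and to justify the stated equalities and sign reversals inside a case (for instance the ``$=-\overline{R}(Z,W,X,Y)$'' in case (2)). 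It is essential to note that the block interchange is \emph{not} a symmetry of $\overline{R}$, which is precisely why cases (5) and (6) coincide only up to the sign of their $b^{3}X(b)$ terms; I would therefore compute those two separately rather than deduce one from the other. Throughout, the derivative factors $X(b)$, $XY(b)$, $V(b)$, $U(b)$ arise both from expanding $\mathcal{K}$, $\overline{\mathcal{K}}$ and from differentiating the products $g(Z,P)$, and because $b$ depends on both factors of $B\times F$ one must keep $X(b)$ and $V(b)$ simultaneously, discarding neither.

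The main obstacle will be the all-fiber case (7). There Proposition 2.6 supplies two $\overline{\mathcal{K}}(\cdot,P,\cdot)$ terms, two $\mathcal{K}(\cdot,P,\cdot)$ terms, and three triple-metric terms, and each $\overline{\mathcal{K}}(\cdot,P,\cdot)$ expands via Proposition 5.8(5), with one argument set equal to $P$, into six summands: a $\mathcal{K}_{F}$ piece, two $b^{4}g_{F}(\cdot,P)g_{F}(\cdot,\cdot)$ pieces, and three lower-order $b\,\partial b$ pieces. Assembling these against the $b^{2}R_{F}$ and twisting contributions from Theorem 5.6(6) produces a long expression quadratic in $g_{F}$, and the delicate step is to reorganise it into the compact antisymmetrised form displayed in (7). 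I would manage this by grouping terms according to their power of $b$ and according to whether a factor $g_{F}(\cdot,P)$ is present, and then verify the outcome by checking that it is antisymmetric under the interchange $U\leftrightarrow V$ of the first two slots, as demanded by the first relation in (2.10).
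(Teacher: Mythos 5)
Your proposal is correct and follows essentially the same route the paper intends for this theorem: substitution of the twisted-product data (Theorem 5.6 for $R$, Propositions 5.3 and 5.8 for $\mathcal{K}$ and $\overline{\mathcal{K}}$) into the master identity $\overline{R}=R-g(Y,P)g(X,Z)g(P,T)+\cdots$, which is Proposition 2.8 in the paper's numbering (you cite it as Proposition 2.6), exactly as in the proof of Theorem 2.11 to which the paper defers. The only minor slips are that reference number and your remark about ``differentiating the products $g(Z,P)$,'' which pertains to the semi-symmetric non-metric identity of Proposition 3.5 rather than to the purely algebraic Proposition 2.8; neither affects the argument, and your case analysis (including treating (5) and (6) separately and using the antisymmetry in the first two slots only as a check) is sound.
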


\vskip 1 true cm
\section{ Semi-Symmetric Non-Metric Koszul Forms and Their Curvature of Singular Twisted Products}
By the analysis above, we can immediately state the following propositions.
\begin{prop}
Let $B\times_{b}F$ be a degenerate twisted product and let the vector fields $X, Y, Z\in\Gamma(TB)$ and $U, V, W\in\Gamma(TF).$ Let $\widehat{\mathcal{K}}$ be the semi-symmetric non-metric Koszul form on $B\times_{b}F$ and $\widehat{\mathcal{K}}_{B}$, $\widehat{\mathcal{K}}_{F}$ be the lifts of the semi-symmetric non-metric
Koszul form on $B,$ $F,$ respectively. Let $P\in\Gamma(TB),$ then
\begin{align}
&(1)\widehat{\mathcal{K}}(X, Y, Z)=\widehat{\mathcal{K}}_{B}(X, Y, Z);\nonumber\\
&(2)\widehat{\mathcal{K}}(X, Y, W)=\widehat{\mathcal{K}}(X, W, Y)=\widehat{\mathcal{K}}(W, X, Y)=0;\nonumber\\
&(3)\widehat{\mathcal{K}}(X, V, W)=-\widehat{\mathcal{K}}(V, W, X)=bX(b)g_{F}(V, W);\nonumber\\
&(4)\widehat{\mathcal{K}}(V, X, W)=bX(b)g_{F}(V, W)+b^{2}g_{B}(X, P)g_{F}(V, W);\nonumber\\
&(5)\widehat{\mathcal{K}}(U, V, W)=bU(b)g_{F}(V, W)+bV(b)g_{F}(W, U)-bW(b)g_{F}(U, V)+b^{2}\mathcal{K}_{F}(U, V, W).\nonumber
\end{align}
\end{prop}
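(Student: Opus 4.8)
The plan is to read off each of the five identities directly from the defining relation $\widehat{\mathcal{K}}(X,Y,Z)=\mathcal{K}(X,Y,Z)+g(Y,P)g(X,Z)$ of Definition 3.1, substituting for the ordinary Koszul form $\mathcal{K}$ the values already computed on the degenerate twisted product in Proposition 5.3, and simplifying the correction term $g(Y,P)g(X,Z)$ by means of the block structure of the twisted metric $g=\pi^{*}_{B}(g_{B})+b^{2}\pi^{*}_{F}(g_{F})$. Under this metric one has $g(X,Y)=g_{B}(X,Y)$ on horizontal pairs, $g(V,W)=b^{2}g_{F}(V,W)$ on vertical pairs, and no mixed terms.

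The one structural fact that drives every simplification is the hypothesis $P\in\Gamma(TB)$. Because the metric has no cross terms, $g(W,P)=0$ for every vertical field $W\in\Gamma(TF)$, while $g(Y,P)=g_{B}(Y,P)$ for horizontal $Y$. In particular the factor $g(Y,P)$ appearing in the correction term vanishes precisely when the \emph{middle} argument $Y$ of $\widehat{\mathcal{K}}$ is vertical. Since the correction $g(Y,P)g(X,Z)$ treats the three slots asymmetrically, pairing $P$ only against the middle argument, this is what will break the symmetry between the various permutations and explain why cases (3) and (4) differ.

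I would then dispatch the cases in order. For (1) the horizontal piece of $\mathcal{K}$ is $\mathcal{K}_{B}$ by Proposition 5.3(1), and the correction contributes $g_{B}(Y,P)g_{B}(X,Z)$, so the two assemble into $\widehat{\mathcal{K}}_{B}$ by its own definition. For (2) each entry vanishes: the first two because $g(X,W)=0$ and the third because $g(W,P)=0$, combined with the vanishing of the corresponding $\mathcal{K}$ terms from Proposition 5.3(2). For (3) and (5) the middle argument is vertical, so $g(Y,P)=0$ and the correction drops out entirely, leaving $\widehat{\mathcal{K}}$ equal to $\mathcal{K}$ as given by Proposition 5.3(3) and 5.3(4). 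The sole surviving correction appears in (4): there the middle slot holds the horizontal field $X$, so $g(X,P)=g_{B}(X,P)$ survives and multiplies $g(V,W)=b^{2}g_{F}(V,W)$, producing exactly the extra summand $b^{2}g_{B}(X,P)g_{F}(V,W)$.

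No genuine obstacle arises; the argument is pure bookkeeping, and it is in fact the horizontal specialization $P\in\Gamma(TB)$ that makes it short. The only point demanding attention is tracking which of the three arguments is vertical versus horizontal in each expression, since only the middle argument is contracted against $P$; getting that bookkeeping right is what distinguishes the vanishing cases from the single nonzero correction in (4).
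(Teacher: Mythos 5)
Your proposal is correct and is exactly the route the paper takes: the paper states this proposition without proof as an immediate consequence of Definition 3.1, the values of $\mathcal{K}$ on the twisted product from Proposition 5.3, and the block structure of $g=\pi^{*}_{B}(g_{B})+b^{2}\pi^{*}_{F}(g_{F})$ together with $P\in\Gamma(TB)$. One tiny bookkeeping remark: in case (2) the correction for $\widehat{\mathcal{K}}(X,W,Y)$ is $g(W,P)g(X,Y)$, which is killed by $g(W,P)=0$, while that for $\widehat{\mathcal{K}}(W,X,Y)$ is $g(X,P)g(W,Y)$, which is killed by the mixed term $g(W,Y)=0$ --- you have these two reasons interchanged, though all three entries do vanish as claimed.
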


\begin{prop}
Let $B\times_{b}F$ be a degenerate twisted product and let the vector fields $X, Y, Z\in\Gamma(TB)$ and $U, V, W\in\Gamma(TF).$ Let $\widehat{\mathcal{K}}$ be the semi-symmetric non-metric Koszul form on $B\times_{b}F$ and $\widehat{\mathcal{K}}_{B}$, $\widehat{\mathcal{K}}_{F}$ be the lifts of the semi-symmetric non-metric
Koszul form on $B,$ $F,$ respectively. Let $P\in\Gamma(TF),$ then
\begin{align}
&(1)\widehat{\mathcal{K}}(X, Y, Z)=\mathcal{K}_{B}(X, Y, Z);\nonumber\\
&(2)\widehat{\mathcal{K}}(X, Y, W)=\widehat{\mathcal{K}}(W, X, Y)=0;\nonumber\\
&(3)\widehat{\mathcal{K}}(X, W, Y)=b^{2}g_{B}(X, Y)g_{F}(W, P);\nonumber\\
&(4)\widehat{\mathcal{K}}(X, V, W)=\widehat{\mathcal{K}}(V, X, W)=-\widehat{\mathcal{K}}(V, W, X)=bX(b)g_{F}(V, W);\nonumber\\
&(5)\widehat{\mathcal{K}}(U, V, W)=bU(b)g_{F}(V, W)+bV(b)g_{F}(W, U)-bW(b)g_{F}(U, V)+b^{2}\mathcal{K}_{F}(U, V, W)\nonumber\\
&+b^{4}g_{F}(V, P)g_{F}(U, W).\nonumber
\end{align}
\end{prop}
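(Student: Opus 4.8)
The plan is to read every identity directly off the defining formula for the semi-symmetric non-metric Koszul form, namely $\widehat{\mathcal{K}}(X,Y,Z)=\mathcal{K}(X,Y,Z)+g(Y,P)g(X,Z)$ from Definition 3.1, feeding in the values of the ordinary Koszul form $\mathcal{K}$ already tabulated in Proposition 5.3 together with the block structure of the twisted metric $g=\pi_B^{*}(g_B)+b^{2}\pi_F^{*}(g_F)$. First I would record the three metric facts used throughout: on a pair of base fields $g$ agrees with $g_B$; on a pair of fibre fields $g$ equals $b^{2}g_F$; and any mixed base--fibre pairing under $g$ vanishes. Because here $P\in\Gamma(TF)$, the correction term $g(Y,P)g(X,Z)$ survives only when the middle argument $Y$ is itself a fibre field, in which case $g(Y,P)=b^{2}g_F(Y,P)$, and only when the outer pairing $g(X,Z)$ is also non-mixed.

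With these rules each item reduces to a single substitution. In (1) and (2) the middle slot is a base field, so $g(\cdot,P)=0$ and $\widehat{\mathcal{K}}$ collapses to $\mathcal{K}$, which is $\mathcal{K}_B$ or zero by Proposition 5.3. In (3) the middle slot is the fibre field $W$, producing the correction $g(W,P)g(X,Y)=b^{2}g_F(W,P)g_B(X,Y)$ while $\mathcal{K}(X,W,Y)=0$. In (4) each of the three correction terms contains a vanishing mixed pairing---$g(X,W)$, $g(X,P)$ and $g(V,X)$ respectively---so the expressions reduce to the $\mathcal{K}$-values $bX(b)g_F(V,W)$ and $-bX(b)g_F(V,W)$ from Proposition 5.3(3). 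In (5) all arguments are fibre fields, so the correction is $g(V,P)g(U,W)=b^{4}g_F(V,P)g_F(U,W)$, added to the pure-fibre Koszul value of Proposition 5.3(4).

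No genuine obstacle is expected; the computation is entirely bookkeeping. The one point demanding care is tracking against which slot the auxiliary field $P$ contracts and remembering that the warping factor enters once per fibre pairing, so that a contraction between two fibre fields carries $b^{2}$ and a product of two such pairings carries $b^{4}$ rather than $b^{2}$. Once the middle-slot rule and the $b$-bookkeeping are applied consistently, the five formulae follow verbatim.
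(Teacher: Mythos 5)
Your proof is correct and is precisely the route the paper takes: the paper gives no explicit argument for this proposition (it is stated as an immediate consequence "by the analysis above"), meaning direct substitution of the tabulated Koszul values of Proposition 5.3 into Definition 3.1 together with the block structure of $g=\pi_B^{*}(g_B)+b^{2}\pi_F^{*}(g_F)$. Your middle-slot rule for when $g(\cdot,P)$ survives and your $b^{2}$-versus-$b^{4}$ bookkeeping reproduce each of the five identities exactly as stated.
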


Likewise, we have the following.
\begin{thm}
Let $(B, g_{B})$ and $(F, g_{F})$ be two non-degenerate semi-Riemannian manifolds, $b\in C^{\infty}(B\times F)$ and $db|_{b=0}=0.$
Let the vector fields $X, Y, Z, T\in\Gamma(TB)$ and $U, V, W, Q\in\Gamma(TF).$  Let $P\in\Gamma(TB),$ then
\begin{align}
&(1)\widehat{R}(X, Y, Z, T)=\widehat{R}_{B}(X, Y, Z, T);\nonumber\\
&(2)\widehat{R}(X, Y, Z, W)=\widehat{R}(X, Y, W, Z)=\widehat{R}(Z, W, X, Y)=0;\nonumber\\
&(3)\widehat{R}(X, Y, V, W)=\widehat{R}(V, W, X, Y)=0;\nonumber\\
&(4)\widehat{R}(V, X, W, Y)=-\widehat{R}(X, V, W, Y)=bXY(b)g_{F}(V, W)-bg^{*}_{B}(\nabla^{\flat}_{X}Y, db)g_{F}(V, W)\nonumber\\
&-2bX(b)g_{B}(Y, P)g_{F}(V, W);\nonumber\\
&(5)\widehat{R}(V, X, Y, W)=-\widehat{R}(X, V, Y, W)=-bXY(b)g_{F}(V, W)+bg^{*}_{B}(\nabla^{\flat}_{X}Y, db)g_{F}(V, W)\nonumber\\
&-b^{2}X(g_{B}(Y, P))g_{F}(V, W);\nonumber\\
&(6)\widehat{R}(X, U, V, W)=\widehat{R}(V, W, X, U)=bXV(b)g_{F}(W, U)-X(b)V(b)g_{F}(W, U)\nonumber\\
&-bXW(b)g_{F}(U, V)+X(b)W(b)g_{F}(U, V);\nonumber\\
&(7)\widehat{R}(V, W, U, X)=-bXV(b)g_{F}(W, U)+X(b)V(b)g_{F}(W, U)+bXW(b)g_{F}(U, V)\nonumber\\
&-X(b)W(b)g_{F}(U, V)-2bW(b)g_{B}(X, P)g_{F}(U, V)+2bV(b)g_{B}(X, P)g_{F}(U, W)\nonumber\\
&+b^{2}g_{B}(X, P)(\mathcal{K}_{F}(W, U, V)-\mathcal{K}_{F}(V, U, W));\nonumber\\
&(8)\widehat{R}(U, V, W, Q)=b^{2}R_{F}(U, V, W, Q)+(b^{2}g^{*}_{B}(db, db)+g^{*}_{F}(db, db))(g_{F}(U, W)g_{F}(V, Q)\nonumber\\
&-g_{F}(V, W)g_{F}(U, Q))+(bUW(b)-2U(b)W(b)-bg^{*}_{F}(\nabla^{\flat}_{U}W, db))g_{F}(V, Q)\nonumber\\
&+(bVQ(b)-2V(b)Q(b)-bg^{*}_{F}(\nabla^{\flat}_{V}Q, db))g_{F}(U, W)-(bVW(b)-2V(b)W(b)\nonumber\\
&-bg^{*}_{F}(\nabla^{\flat}_{V}W, db))g_{F}(U, Q)-(bUQ(b)-2U(b)Q(b)-bg^{*}_{F}(\nabla^{\flat}_{U}Q, db))g_{F}(V, W).\nonumber
\end{align}
\end{thm}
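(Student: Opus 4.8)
The plan is to read the statement off the master formula for the semi-symmetric non-metric curvature, Proposition 3.5, evaluated separately on each admissible combination of horizontal arguments $X,Y,Z,T\in\Gamma(TB)$ and vertical arguments $U,V,W,Q\in\Gamma(TF)$. The two inputs to be substituted into that formula are the semi-symmetric non-metric Koszul forms of the twisted product computed in Proposition 6.1 (which are valid precisely because $P\in\Gamma(TB)$) and the ordinary Koszul curvature $R$ of the twisted product tabulated in Theorem 5.6. For each choice of the four slots I would replace every occurrence of $\widehat{\mathcal{K}}$ by the relevant line of Proposition 6.1 and $R$ by the relevant line of Theorem 5.6, and then collect terms.

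First I would use the hypothesis $P\in\Gamma(TB)$ to prune the correction terms of Proposition 3.5. Because $P$ is horizontal, $g(P,W)=0$ and $g(V,P)=0$ for vertical $V,W$, while $g(P,T)=g_B(P,T)$ for horizontal $T$; moreover $g_B(Y,P)$ is the pullback of a function on $B$, hence constant along the fibres, so $V\!\left(g_B(Y,P)\right)=0$ for vertical $V$, whereas the horizontal derivative $X\!\left(g_B(Y,P)\right)$ in general survives. This one observation removes most of the correction terms: whenever a factor $g(P,\cdot)$, or a $g(\cdot,P)$ sitting inside a derivative, lands on a vertical slot the corresponding contribution drops out, and the only genuinely non-metric pieces that remain are those in which $g(P,T)$ is tested against a horizontal vector or in which $X\!\left(g_B(Y,P)\right)$ appears, as in item (5).

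With these reductions the case analysis becomes routine. For all-horizontal arguments every fibre factor is trivial and the formula collapses to $\widehat{R}_B$, which is (1). The mixed items (2), (3), (6) hold because each surviving correction term is proportional to $g(P,\cdot)$ or to a derivative of $g(\cdot,P)$ evaluated on a vertical vector, all of which vanish, so $\widehat{R}$ reduces to the matching entry of Theorem 5.6 after relabelling. Items (4) and (5) form a first-pair-antisymmetric pair; here I would substitute Proposition 6.1(3)--(4) together with the two-horizontal, two-vertical curvature Theorem 5.6(4), and the extra terms $-2bX(b)g_B(Y,P)g_F(V,W)$ in (4) and $-b^2 X\!\left(g_B(Y,P)\right)g_F(V,W)$ in (5) are exactly the non-metric corrections that fail to cancel. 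The all-vertical item (8) is immediate: since $P$ is horizontal, every correction term of Proposition 3.5 carries a factor that vanishes on vertical inputs, so $\widehat{R}(U,V,W,Q)=R(U,V,W,Q)$ and one copies Theorem 5.6(6) verbatim.

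The main obstacle is item (7), the three-vertical, one-horizontal curvature $\widehat{R}(V,W,U,X)$. Because $\widehat{R}$ is \emph{not} antisymmetric in its last two entries, (7) cannot be obtained from (6) by interchanging the last two slots and must be computed directly. Two points demand care. First, the surviving Koszul correction is $\bigl(\widehat{\mathcal{K}}(V,U,W)-\widehat{\mathcal{K}}(W,U,V)\bigr)g_B(P,X)$, and expanding these all-vertical Koszul forms via Proposition 6.1(5) yields, besides terms proportional to $bV(b)g_F(U,W)$ and $bW(b)g_F(U,V)$, an antisymmetrised fibre Koszul piece $b^2\bigl(\mathcal{K}_F(W,U,V)-\mathcal{K}_F(V,U,W)\bigr)$ whose sign must be tracked with care. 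Second, the ordinary curvature $R(V,W,U,X)$ is not listed directly in Theorem 5.6, so I would reduce it to the tabulated $R(X,U,V,W)$ of Theorem 5.6(5) using the pair symmetry and first-pair antisymmetry of the Koszul curvature, namely $R(V,W,U,X)=R(U,X,V,W)=-R(X,U,V,W)$. Assembling this with the Koszul correction gives the stated expression, and keeping the signs consistent through this single reduction is where essentially all the genuine work of the theorem lies.
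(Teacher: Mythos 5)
Your proposal follows exactly the route the paper intends (the paper itself omits the proof, saying only ``Likewise''): substitute the Koszul forms of Proposition 6.1 and the curvature table of Theorem 5.6 into the master formula of Proposition 3.5, prune the correction terms using $g(P,\cdot)=0$ on vertical slots and the fibre-constancy of $g_B(\cdot,P)$, and handle $(7)$ separately because $\widehat R$ is not antisymmetric in its last pair. All of that is right, including the reductions $R(V,X,Y,W)=-R(X,V,Y,W)$ and $R(V,W,U,X)=-R(X,U,V,W)$ and the identification of the surviving corrections in $(4)$, $(5)$ and $(7)$.

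There is, however, one concrete sign inconsistency in your treatment of $(7)$, precisely at the spot you flag as delicate. You correctly write the surviving correction as $\bigl(\widehat{\mathcal{K}}(V,U,W)-\widehat{\mathcal{K}}(W,U,V)\bigr)g_B(P,X)$, but expanding via Proposition 6.1(5) gives
\begin{equation*}
\widehat{\mathcal{K}}(V,U,W)-\widehat{\mathcal{K}}(W,U,V)=2bV(b)g_F(U,W)-2bW(b)g_F(U,V)+b^{2}\bigl(\mathcal{K}_F(V,U,W)-\mathcal{K}_F(W,U,V)\bigr),
\end{equation*}
whereas you state the fibre Koszul piece as $b^{2}\bigl(\mathcal{K}_F(W,U,V)-\mathcal{K}_F(V,U,W)\bigr)$. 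The two derivative terms you obtain carry the signs displayed in the theorem, so the antisymmetrised $\mathcal{K}_F$ piece must carry the \emph{same} overall sign as they do; you cannot have the first two terms with one sign and the $\mathcal{K}_F$ term with the other, since all three come from the single factor above. As written, your expansion reproduces the theorem's displayed $\mathcal{K}_F$ term but contradicts your own (correct) identification of the correction; the direct computation actually yields $+b^{2}g_B(X,P)\bigl(\mathcal{K}_F(V,U,W)-\mathcal{K}_F(W,U,V)\bigr)$, which suggests the sign in the paper's item $(7)$ is itself a typo (compare the analogous, consistent Theorem 3.7(8)). You should redo this one expansion and either correct your sign or note the discrepancy with the stated formula; everything else in the argument goes through.
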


\begin{thm}
Let $(B, g_{B})$ and $(F, g_{F})$ be two non-degenerate semi-Riemannian manifolds, $b\in C^{\infty}(B\times F)$ and $db|_{b=0}=0.$
Let the vector fields $X, Y, Z, T\in\Gamma(TB)$ and $U, V, W, Q\in\Gamma(TF).$  Let $P\in\Gamma(TF),$ then
\begin{align}
&(1)\widehat{R}(X, Y, Z, T)=R_{B}(X, Y, Z, T);\nonumber\\
&(2)\widehat{R}(X, Y, Z, W)=b^{2}g_{F}(W, P)(\mathcal{K}_{B}(X, Z, Y)-\mathcal{K}_{B}(Y, Z, X));\nonumber\\
&(3)\widehat{R}(X, Y, W, Z)=2bg_{F}(W, P)(X(b)g_{B}(Y, Z)-Y(b)g_{B}(X, Z));\nonumber\\
&(4)\widehat{R}(Z, W, X, Y)=0;\nonumber\\
&(5)\widehat{R}(X, Y, V, W)=\widehat{R}(V, W, X, Y)=0;\nonumber\\
&(6)\widehat{R}(X, V, W, Y)=-\widehat{R}(V, X, W, Y)=-bXY(b)g_{F}(V, W)+bg^{*}_{B}(\nabla^{\flat}_{X}Y, db)g_{F}(V, W)\nonumber\\
&-2bV(b)g_{B}(X, Y)g_{F}(W, P)-b^{2}V(g_{F}(W, P))g_{B}(X, Y);\nonumber\\
&(7)\widehat{R}(X, V, Y, W)=-\widehat{R}(V, X, Y, W)=bXY(b)g_{F}(V, W)-bg^{*}_{B}(\nabla^{\flat}_{X}Y, db)g_{F}(V, W);\nonumber\\
&(8)\widehat{R}(X, V, U, W)=-\widehat{R}(V, X, U, W)=bXU(b)g_{F}(W, V)-X(b)U(b)g_{F}(W, V)\nonumber\\
&-bXW(b)g_{F}(V, U)+X(b)W(b)g_{F}(V, U)+2b^{3}X(b)(g_{F}(U, P)g_{F}(V, W)\nonumber\\
&+g_{F}(W, P)g_{F}(U, V));\nonumber\\
&(9)\widehat{R}(U, W, X, V)=-\widehat{R}(U, W, V, X)=bXU(b)g_{F}(W, V)-X(b)U(b)g_{F}(W, V)\nonumber\\
&-bXW(b)g_{F}(V, U)+X(b)W(b)g_{F}(V, U);\nonumber\\
&(10)\widehat{R}(U, V, W, Q)=b^{2}R_{F}(U, V, W, Q)+(b^{2}g^{*}_{B}(db, db)+g^{*}_{F}(db, db))(g_{F}(U, W)g_{F}(V, Q)\nonumber\\
&-g_{F}(V, W)g_{F}(U, Q))+(bUW(b)-2U(b)W(b)-bg^{*}_{F}(\nabla^{\flat}_{U}W, db))g_{F}(V, Q)\nonumber\\
&+(bVQ(b)-2V(b)Q(b)-bg^{*}_{F}(\nabla^{\flat}_{V}Q, db))g_{F}(U, W)-(bVW(b)-2V(b)W(b)\nonumber\\
&-bg^{*}_{F}(\nabla^{\flat}_{V}W, db))g_{F}(U, Q)-(bUQ(b)-2U(b)Q(b)-bg^{*}_{F}(\nabla^{\flat}_{U}Q, db))g_{F}(V, W)\nonumber\\
&+2b^{3}U(b)(g_{F}(W, P)g_{F}(V, Q)+g_{F}(Q, P)g_{F}(V, W))-2b^{3}V(b)(g_{F}(W, P)g_{F}(U, Q)\nonumber\\
&+g_{F}(Q, P)g_{F}(U, W))+b^{4}U(g_{F}(W, P))g_{F}(V, Q)-b^{4}V(g_{F}(W, P))g_{F}(U, Q)\nonumber\\
&+b^{4}g_{F}(P, Q)(\mathcal{K}_{F}(U, W, V)-\mathcal{K}_{F}(V, W, U)).\nonumber
\end{align}
\end{thm}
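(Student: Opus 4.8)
The plan is to obtain each of the ten identities by specializing the master curvature formula for the semi-symmetric non-metric connection (Proposition 3.5),
\begin{align}
\widehat{R}(X,Y,Z,T)&=R(X,Y,Z,T)+X(g(Z,P))g(Y,T)-Y(g(Z,P))g(X,T)\nonumber\\
&\quad-\widehat{\mathcal{K}}(Y,Z,X)g(P,T)+\widehat{\mathcal{K}}(X,Z,Y)g(P,T),\nonumber
\end{align}
to the appropriate choice of base fields $X,Y,Z,T\in\Gamma(TB)$ and fibre fields $U,V,W,Q\in\Gamma(TF)$, and then simplifying with three inputs: the ordinary twisted-product curvature $R$ from Theorem 5.6, the semi-symmetric non-metric Koszul forms for $P\in\Gamma(TF)$ from Proposition 6.2, and the block form $g=\pi^*_B(g_B)+b^2\pi^*_F(g_F)$. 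Because the factors are orthogonal and $P\in\Gamma(TF)$, one has $g(P,Z)=0$ for every $Z\in\Gamma(TB)$ and $g(Z,P)=b^2g_F(Z,P)$ for $Z\in\Gamma(TF)$; these two facts annihilate the bulk of the correction terms and are exactly what produces the vanishing entries in the statement.

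The one feature genuinely new relative to the warped-product setting is the pair of derivative terms $X(g(Z,P))$, which I would treat first. Regarding the arguments as lifts of fields on $F$, the function $g_F(\cdot,P)$ is constant along $B$, so for a base field one has $X(g(W,P))=2bX(b)g_F(W,P)$, whereas for a fibre field the full product rule gives $U(g(W,P))=2bU(b)g_F(W,P)+b^2U(g_F(W,P))$. Since $b\in C^\infty(B\times F)$ depends on both factors, the derivatives $X(b)$ and $U(b)$ are all generically nonzero; carrying them through is what generates the terms with prefactors $b,b^2,b^3,b^4$ and the surviving fibre-derivative terms $V(g_F(W,P))$ appearing in items (6) and (10).

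With these preparations the rest is organized bookkeeping. In the all-base case (1) every correction vanishes by orthogonality and the formula collapses to $R_B$. The mixed cases (2)--(9) involve between one and three fibre directions; here I would read the $R$-input from Theorem 5.6(2)--(5), and note that the two Koszul forms $\widehat{\mathcal{K}}(Y,Z,X),\widehat{\mathcal{K}}(X,Z,Y)$ occurring in the master formula always retain at least one base argument, so they are supplied by Proposition 6.2(1)--(4) rather than by the full fibre formula. The first-slot antisymmetry $\widehat{R}(X,Y,Z,T)=-\widehat{R}(Y,X,Z,T)$ of (3.6) then furnishes the paired identities such as $\widehat{R}(X,V,Y,W)=-\widehat{R}(V,X,Y,W)$ in (6)--(8) for free, while the relation in (9) follows from the antisymmetry of the ordinary curvature in its last two slots together with the vanishing of the symmetric Koszul difference $\widehat{\mathcal{K}}(U,X,W)-\widehat{\mathcal{K}}(W,X,U)$.

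The main obstacle is the all-fibre case (10), $\widehat{R}(U,V,W,Q)$, the only case calling on the full fibre Koszul form of Proposition 6.2(5). Here I must combine the long curvature expression of Theorem 5.6(6) with that Koszul form evaluated at the permuted triples $\widehat{\mathcal{K}}(U,W,V)$ and $\widehat{\mathcal{K}}(V,W,U)$, and add the derivative terms $U(g(W,P))g(V,Q)-V(g(W,P))g(U,Q)$. The delicate point is that the degree-$b^4$ pieces $b^4g_F(W,P)g_F(\cdot,\cdot)$ of the two Koszul forms cancel in the antisymmetric difference $\widehat{\mathcal{K}}(U,W,V)-\widehat{\mathcal{K}}(V,W,U)$ (so no degree-$b^6$ terms survive), while the remaining $b^2\mathcal{K}_F$ and first-order $bU(b)$ pieces, after multiplication by $g(P,Q)=b^2g_F(P,Q)$, must be matched against the $2b^3U(b)$ and $b^4U(g_F(W,P))$ contributions coming from the derivative terms and against the $b^2R_F$ block from $R$. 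Confirming that everything regroups into the stated expression is the only step requiring real care, and I would use the first-slot antisymmetry $\widehat{R}(U,V,W,Q)=-\widehat{R}(V,U,W,Q)$ as an independent check on the many terms.
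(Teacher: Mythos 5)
Your proposal is correct and follows essentially the same route the paper intends: specialize the curvature identity of Proposition 3.5 to each block configuration, feed in the twisted-product curvature from Theorem 5.6 and the Koszul forms of Proposition 6.2, and track the extra derivative terms $X(g(W,P))=2bX(b)g_F(W,P)$ versus $U(g(W,P))=2bU(b)g_F(W,P)+b^2U(g_F(W,P))$ arising because $b$ depends on both factors. Your observations on the cancellation of the $b^4g_F(W,P)$ pieces in the antisymmetric Koszul difference for case (10) and on the source of the paired identities via the first-slot antisymmetry are exactly the bookkeeping the paper's (omitted) computation requires.
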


\vskip 1 true cm
\section{ Koszul Forms Associated with the Almost Product Structure and Their Curvature of Singular Twisted Products}

It is evident that
\begin{prop}
Let $B\times_{b}F$ be a degenerate twisted product and let the vector fields $X, Y, Z\in\Gamma(TB)$ and $U, V, W\in\Gamma(TF).$ Let $\widetilde{\mathcal{K}}$ be the almost product Koszul form on $B\times_{b}F$ and $\widetilde{\mathcal{K}}_{B}$, $\widetilde{\mathcal{K}}_{F}$ be the lifts of the almost product Koszul form on $B,$ $F,$ respectively. Then
\begin{align}
&(1)\widetilde{\mathcal{K}}(X, Y, Z)=\widetilde{\mathcal{K}}_{B}(X, Y, Z);\nonumber\\
&(2)\widetilde{\mathcal{K}}(X, Y, W)=\widetilde{\mathcal{K}}(X, W, Y)=\widetilde{\mathcal{K}}(W, X, Y)=0;\nonumber\\
&(3)\widetilde{\mathcal{K}}(X, V, W)=bX(b)g_{F}(V, W);\nonumber\\
&(4)\widetilde{\mathcal{K}}(V, X, W)=-\widetilde{\mathcal{K}}(V, W, X)=\frac{1}{2}(bX(b)g_{F}(V, W)+bJ_{B}X(b)g_{F}(V, J_{F}W));\nonumber\\
&(5)\widetilde{\mathcal{K}}(U, V, W)=bU(b)g_{F}(V, W)+\frac{1}{2}bV(b)g_{F}(U, W)-\frac{1}{2}bW(b)g_{F}(U, V)\nonumber\\
&+\frac{1}{2}bJ_{F}V(b)g_{F}(U, J_{F}W)-\frac{1}{2}bJ_{F}W(b)g_{F}(U, J_{F}V)+b^{2}\widetilde{\mathcal{K}}_{F}(U, V, W).\nonumber
\end{align}
\end{prop}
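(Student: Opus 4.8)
The plan is to compute $\widetilde{\mathcal{K}}$ directly from its definition $\widetilde{\mathcal{K}}(X,Y,Z)=\frac{1}{2}[\mathcal{K}(X,Y,Z)+\mathcal{K}(X,JY,JZ)]$, feeding in the explicit values of the ordinary Koszul form $\mathcal{K}$ on the twisted product recorded in Proposition 5.3. The only structural inputs needed are that $J=(J_{B},J_{F})$ respects the splitting $T(B\times F)=TB\oplus TF$ — so applying $J$ to an argument never changes whether it lies in $\Gamma(TB)$ or $\Gamma(TF)$ — together with the fact that $J_{B}$, $J_{F}$ are pointwise isometries of $g_{B}$, $g_{F}$, in particular $g_{F}(J_{F}V,J_{F}W)=g_{F}(V,W)$. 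Because $J$ preserves types, the term $\mathcal{K}(X,JY,JZ)$ always falls into the same case of Proposition 5.3 as $\mathcal{K}(X,Y,Z)$, so each assertion reduces to substituting $Y\mapsto JY$, $Z\mapsto JZ$ into one line of that proposition and averaging.

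First I would dispose of the vanishing and purely-base cases. For (1), both $Y,Z$ are carried by $J_{B}$ into $\Gamma(TB)$, so the two summands are $\mathcal{K}_{B}(X,Y,Z)$ and $\mathcal{K}_{B}(X,J_{B}Y,J_{B}Z)$, whose half-sum is exactly $\widetilde{\mathcal{K}}_{B}(X,Y,Z)$ by Definition 4.2. For (2) every relevant triple is of mixed type $(B,B,F)$, $(B,F,B)$ or $(F,B,B)$, each of which gives $\mathcal{K}=0$ by part (2) of Proposition 5.3, and the twist does not alter the type, so $\widetilde{\mathcal{K}}$ vanishes as well.

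Next I would treat the genuinely mixed cases. In (3) both the second and third slots are filled from $\Gamma(TF)$, so after applying $J_{F}$ the identity $g_{F}(J_{F}V,J_{F}W)=g_{F}(V,W)$ makes the twisted summand equal the untwisted one, and the average returns $bX(b)g_{F}(V,W)$. In (4) only the third slot is in $\Gamma(TF)$ while the second is in $\Gamma(TB)$, so $J$ acts as $J_{B}$ on $X$ and as $J_{F}$ on $W$; the two summands no longer coincide and one is left with the asymmetric expression $\frac{1}{2}(bX(b)g_{F}(V,W)+bJ_{B}X(b)g_{F}(V,J_{F}W))$, with the companion antisymmetry $\widetilde{\mathcal{K}}(V,W,X)=-\widetilde{\mathcal{K}}(V,X,W)$ following from the sign pattern in part (3) of Proposition 5.3.

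The main effort is case (5), where all three arguments lie in $\Gamma(TF)$ and one substitutes $V\mapsto J_{F}V$, $W\mapsto J_{F}W$ into the four-term formula (4) of Proposition 5.3. The leading term $bU(b)g_{F}(V,W)$ is $J_{F}$-invariant and therefore survives with its full coefficient, while the derivative terms $bV(b)g_{F}(W,U)$ and $-bW(b)g_{F}(U,V)$ get averaged against their $J_{F}$-twisted partners $b(J_{F}V)(b)g_{F}(U,J_{F}W)$ and $-b(J_{F}W)(b)g_{F}(U,J_{F}V)$, each thus acquiring the factor $\frac{1}{2}$; finally the two curvature terms combine into $b^{2}\widetilde{\mathcal{K}}_{F}(U,V,W)$ by the definition of the almost-product Koszul form on $F$. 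The one place where a coefficient or sign could slip is precisely this bookkeeping of which of $V,W$ is struck by $J_{F}$ and which directional derivative $(J_{F}V)(b)$ or $(J_{F}W)(b)$ accompanies it; keeping the untwisted and twisted halves in two parallel columns before adding makes the factors of $\frac{1}{2}$ transparent and is the cleanest way to avoid error.
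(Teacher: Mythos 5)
Your proposal is correct and is exactly the argument the paper intends (the paper states Proposition 7.1 without proof, as "evident," but the computation is plainly meant to be Definition 4.2 applied to the case-by-case values of $\mathcal{K}$ from Proposition 5.3, which is what you carry out). All the bookkeeping checks out: $J$ acts only on the second and third slots, the type-preserving property keeps each term in the same case of Proposition 5.3, and the $J_{F}$-invariance of $g_{F}$ produces precisely the claimed coefficients $1$ versus $\tfrac{1}{2}$ in items (3)--(5).
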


\begin{thm}
Let $(B, g_{B}, J_{B})$ and $(F, g_{F}, J_{F})$ be two almost product radical-stationary semi-Riemannian manifolds and $b\in C^{\infty}(B\times F),$ so that $db\in A^{\bullet}(B\times_{b} F).$ Then the twisted product manifold $(B\times_{b} F, J)$ is a almost product radical-stationary semi-Riemannian manifold.
\end{thm}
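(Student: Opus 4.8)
The plan is to verify the defining condition of an almost product radical-stationary manifold (see \cite{DK}) directly for $B\times_{b}F$, namely that $\widetilde{\mathcal{K}}(X,Y,W_{p})=0$ for all $X,Y\in\Gamma(T(B\times_{b}F))$ and every $W_{p}\in\Gamma_{0}(B\times_{b}F)$; this runs parallel to Theorem 4.6 and to the non-almost-product twisted case of Theorem 5.4. The first observation I would record is that, when the third argument is radical, the form $\widetilde{\mathcal{K}}(\cdot,\cdot,W_{p})$ is $C^{\infty}(B\times F)$-linear in all three slots: it is tensorial in the first and third slots, and the Leibniz defect in the second slot, $X(f)\,g(Y,W_{p})$, vanishes because $g(Y,W_{p})=0$. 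Hence it suffices to establish the identity for $X,Y$ ranging over lifts of fields on $B$ and on $F$, and for $W_{p}$ decomposed accordingly.

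To set up the decomposition I would use the block form $g=\pi^{*}_{B}(g_{B})+b^{2}\pi^{*}_{F}(g_{F})$: writing $W_{p}=W_{p,B}+W_{p,F}$ with $W_{p,B}$ a lift from $B$ and $W_{p,F}$ a lift from $F$, the condition $g(W_{p},Z)=0$ for all $Z$ splits into $g_{B}(W_{p,B},Z_{B})=0$ and $b^{2}g_{F}(W_{p,F},Z_{F})=0$, so $W_{p,B}\in\Gamma_{0}(B)$ while at each point either $b=0$ or $W_{p,F}\in\Gamma_{0}(F)$. Two auxiliary facts are then used repeatedly. Since $J_{B}$ and $J_{F}$ are $g_{B}$- and $g_{F}$-compatible they preserve the radicals, so $J_{B}W_{p,B}\in\Gamma_{0}(B)$ and, where $b\neq0$, $J_{F}W_{p,F}\in\Gamma_{0}(F)$. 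And because the lift of any element of $\Gamma_{0}(B)$ is radical in $B\times_{b}F$, while where $b\neq0$ the lift of any element of $\Gamma_{0}(F)$ is radical too, the hypothesis $db\in A^{\bullet}(B\times_{b}F)$ yields $W_{p,B}(b)=(J_{B}W_{p,B})(b)=0$ everywhere and $W_{p,F}(b)=(J_{F}W_{p,F})(b)=0$ wherever $b\neq0$.

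With these in hand I would run through the formulas of Proposition 7.1 according to the slot occupied by the radical direction. If the third slot is $W_{p,B}$: the all-$B$ case $(1)$ equals $\widetilde{\mathcal{K}}_{B}(X,Y,W_{p,B})=0$ since $B$ is almost product radical-stationary; the mixed cases $(2)$ vanish identically; and the type $(F,F,B)$ falling under $(4)$ reduces to terms carrying the factors $W_{p,B}(b)$ and $(J_{B}W_{p,B})(b)$, both zero. If the third slot is $W_{p,F}$: case $(2)$ gives $0$, and cases $(3)$ and $(4)$ produce only the factors $g_{F}(\cdot,W_{p,F})$ and $g_{F}(\cdot,J_{F}W_{p,F})$ multiplied by an overall $b$, each killed either by $b=0$ or by $W_{p,F},J_{F}W_{p,F}\in\Gamma_{0}(F)$.

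The one genuinely laborious case, and the main obstacle, is the fully vertical term $(5)$, $\widetilde{\mathcal{K}}(U,V,W_{p,F})$, whose six summands split into those carrying a factor $g_{F}(\cdot,W_{p,F})$ or $g_{F}(\cdot,J_{F}W_{p,F})$, those carrying a factor $W_{p,F}(b)$ or $(J_{F}W_{p,F})(b)$, and the intrinsic term $b^{2}\widetilde{\mathcal{K}}_{F}(U,V,W_{p,F})$. At a point with $b=0$ every summand carries an explicit factor $b$, so all vanish; at a point with $b\neq0$ one has $W_{p,F}\in\Gamma_{0}(F)$ and hence $J_{F}W_{p,F}\in\Gamma_{0}(F)$, so the first two groups die through the metric and directional-derivative factors recorded above, while the third dies because $F$ is almost product radical-stationary. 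Assembling the cases gives $\widetilde{\mathcal{K}}(X,Y,W_{p})=0$ for all admissible $X,Y,W_{p}$, which is precisely the assertion; the remaining bookkeeping is routine and entirely parallel to the proof of Theorem 4.6.
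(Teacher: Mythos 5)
Your argument is correct and follows exactly the route the paper intends: the paper omits the proof of this theorem, and your verification that $\widetilde{\mathcal{K}}(X,Y,W_{p})=0$ via the decomposition of $W_{p}$ into horizontal and vertical parts, the preservation of radicals by $J_{B}$ and $J_{F}$, the hypothesis $db\in A^{\bullet}(B\times_{b}F)$, and the case-by-case use of Proposition 7.1 is precisely the combination of the paper's own proofs of Theorem 4.6 and Theorem 5.4. There are no gaps; if anything, your explicit remarks on tensoriality in the second slot when the third argument is radical, and your treatment of the fully vertical case (5), are more careful than the paper's analogous arguments.
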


A trivial verification shows that
\begin{thm}
Let $(B, g_{B}, J_{B})$ and $(F, g_{F}, J_{F})$ be two almost product non-degenerate semi-Riemannian manifolds, $b\in C^{\infty}(B\times F).$ Suppose that $db|_{b=0}=0,$ then the twisted product manifold $(B\times_{b}F, J)$ is a almost product semi-regular semi-Riemannian manifold.
\end{thm}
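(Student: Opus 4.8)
The plan is to verify the defining condition of an almost product semi-regular semi-Riemannian manifold in the sense of \cite{OC1}: that the contracted product of almost product Koszul forms $\widetilde{\mathcal{K}}(X, Y, \bullet)\widetilde{\mathcal{K}}(Z, T, \bullet)$ is a smooth function on $B\times_{b}F$ for all $X, Y, Z, T\in\Gamma(T(B\times_{b}F))$, and that it extends smoothly across the degenerate locus $b=0$. This is exactly the strategy used in the proofs of Theorem 4.7 (almost product, warped case) and Theorem 5.5 (ordinary, twisted case), now applied to the almost product Koszul form $\widetilde{\mathcal{K}}$ computed in Proposition 7.1. Away from $b=0$ the metric is nondegenerate, so the covariant contraction splits as $\bullet = \bullet_{B} + \frac{1}{b^{2}}\bullet_{F}$, and I would substitute this decomposition into $\widetilde{\mathcal{K}}(X, Y, \bullet)\widetilde{\mathcal{K}}(Z, T, \bullet)$.

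First I would run the case analysis on the types (tangent to $B$ or to $F$) of the four arguments, using the explicit values from Proposition 7.1. Because $J=(J_{B},J_{F})$ preserves each factor and the $F$-metric carries the conformal weight $b^{2}$, the mixed cases either vanish or collapse to products of two smooth functions. The key observation is that every occurrence of a degenerate ($F$-)direction inside a Koszul form comes paired with a factor of $b$: for instance $\widetilde{\mathcal{K}}(X, V, W)=bX(b)g_{F}(V,W)$ and $\widetilde{\mathcal{K}}(V, X, W)=\tfrac{1}{2}(bX(b)g_{F}(V,W)+bJ_{B}X(b)g_{F}(V,J_{F}W))$. Hence in any term in which the $\frac{1}{b^{2}}\bullet_{F}$ part of the contraction acts, the two factors of $b$ produced by the two Koszul forms cancel the $\frac{1}{b^{2}}$, leaving smooth expressions of the shape $X(b)Y(b)g_{F}(\cdot,\cdot)$; the $\bullet_{B}$ part never introduces negative powers of $b$.

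The main obstacle is the all-$F$ case $\widetilde{\mathcal{K}}(V, W, \bullet)\widetilde{\mathcal{K}}(U, Q, \bullet)$, which is genuinely harder than in the warped-product setting because here $b$ depends on the $F$-factor as well, so the $F$-derivatives $V(b)$ and $J_{F}V(b)$ enter through $\widetilde{\mathcal{K}}(U,V,W)$. In this case both the $\bullet_{B}$ and the $\frac{1}{b^{2}}\bullet_{F}$ contributions survive. I would isolate the contribution $b^{2}\,\widetilde{\mathcal{K}}_{F}(V,W,\bullet_{F})\widetilde{\mathcal{K}}_{F}(U,Q,\bullet_{F})$, which is smooth on $F$ since $(F,g_{F},J_{F})$ is assumed almost product semi-regular, and then collect the remaining cross terms. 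Each such cross term carries a coefficient that is either smooth outright or of the form $\frac{1}{b^{2}}$ times a product of two factors, each of which either vanishes like $b$ or carries a $b$-derivative.

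To close the argument I would invoke the hypothesis $db|_{b=0}=0$: this forces $X(b)$, $V(b)$, $J_{F}V(b)$ and the contractions of $db$ with itself to vanish on $\{b=0\}$, so that each remaining coefficient, although written with denominator $b^{2}$, extends smoothly across $b=0$ (by Taylor-expanding $b$ about its zero locus, as in Theorem 5.5). Once the finitely many cases of the analysis are all seen to be smooth and to extend smoothly to $b=0$, it follows that $\widetilde{\mathcal{K}}(X, Y, \bullet)\widetilde{\mathcal{K}}(Z, T, \bullet)\in C^{\infty}(B\times_{b}F)$, whence $(B\times_{b}F, J)$ is an almost product semi-regular semi-Riemannian manifold, as claimed.
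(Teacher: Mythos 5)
Your proposal is correct and follows essentially the same route as the paper: split the contraction as $\bullet=\bullet_{B}+\frac{1}{b^{2}}\bullet_{F}$ away from $b=0$, run the case analysis on argument types using Proposition 7.1, observe that every Koszul form with an $F$-direction in the contracted slot carries a factor of $b$ so the $\frac{1}{b^{2}}$ cancels, and conclude that $\widetilde{\mathcal{K}}(X,Y,\bullet)\widetilde{\mathcal{K}}(Z,T,\bullet)$ extends smoothly across $b=0$. The only cosmetic differences are that in the paper's explicit formulas the resulting expressions are already manifestly polynomial in $b$ and its derivatives (so the final appeal to $db|_{b=0}=0$ and Taylor expansion is not strictly needed for the cancellation), and that $F$ is assumed non-degenerate rather than merely semi-regular, which only strengthens your argument for the all-$F$ case.
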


\begin{proof}
Let $X, Y, Z, T\in\Gamma(TB)$ and $U, V, W, Q\in\Gamma(TF).$ If $b>0,$ we have
\begin{align}
&(1)\widetilde{\mathcal{K}}(X, Y, \bullet)\widetilde{\mathcal{K}}(Z, T, \bullet)=\widetilde{\mathcal{K}}_{B}(X, Y, \bullet_{B})\widetilde{\mathcal{K}}_{B}(Z, T, \bullet_{B});\nonumber\\
&(2)\widetilde{\mathcal{K}}(X, Y, \bullet)\widetilde{\mathcal{K}}(Z, W, \bullet)=\widetilde{\mathcal{K}}(X, Y, \bullet)\widetilde{\mathcal{K}}(W, Z, \bullet)=0;\nonumber\\
&(3)\widetilde{\mathcal{K}}(X, Y, \bullet)\widetilde{\mathcal{K}}(V, W, \bullet)=-\frac{1}{2}bg^{*}_{B}(\widetilde{\nabla}^{\flat}_{X}Y, db)g_{F}(W, V)-\frac{1}{2}bg^{*}_{B}(\widetilde{\nabla}^{\flat}_{X}Y, db\circ J_{B})g_{F}(W, J_{F}V);\nonumber\\
&(4)\widetilde{\mathcal{K}}(X, V, \bullet)\widetilde{\mathcal{K}}(Y, W, \bullet)=X(b)Y(b)g_{F}(W, V);\nonumber\\
&(5)\widetilde{\mathcal{K}}(X, V, \bullet)\widetilde{\mathcal{K}}(W, Y, \bullet)=\frac{1}{2}X(b)Y(b)g_{F}(W, V)+\frac{1}{2}X(b)J_{B}Y(b)g_{F}(W, J_{F}V);\nonumber\\
&(6)\widetilde{\mathcal{K}}(V, X, \bullet)\widetilde{\mathcal{K}}(W, Y, \bullet)=\frac{1}{4}X(b)Y(b)g_{F}(W, V)+\frac{1}{4}X(b)J_{B}Y(b)g_{F}(W, J_{F}V)\nonumber\\
&+\frac{1}{4}J_{B}X(b)Y(b)g_{F}(W, J_{F}V)+\frac{1}{4}J_{B}X(b)J_{B}Y(b)g_{F}(W, V);\nonumber\\
&(7)\widetilde{\mathcal{K}}(X, V, \bullet)\widetilde{\mathcal{K}}(W, U, \bullet)=bX(b)\widetilde{\mathcal{K}}_{F}(W, U, V)+X(b)W(b)g_{F}(U, V)+\frac{1}{2}X(b)U(b)g_{F}(W, V)\nonumber\\
&-\frac{1}{2}X(b)V(b)g_{F}(W, U)+\frac{1}{2}X(b)J_{F}U(b)g_{F}(W, J_{F}V)-\frac{1}{2}X(b)J_{F}V(b)g_{F}(W, J_{F}U);\nonumber
\end{align}
\begin{align}
&(8)\widetilde{\mathcal{K}}(V, X, \bullet)\widetilde{\mathcal{K}}(W, U, \bullet)=\frac{1}{2}bX(b)\widetilde{\mathcal{K}}_{F}(W, U, V)+\frac{1}{2}bJ_{B}X(b)\widetilde{\mathcal{K}}_{F}(W, U, J_{F}V)\nonumber\\
&+\frac{1}{2}X(b)W(b)g_{F}(U, V)+\frac{1}{2}J_{B}X(b)W(b)g_{F}(U, J_{F}V)+\frac{1}{4}(X(b)U(b)+J_{B}X(b)J_{F}U(b))\nonumber\\
&\times g_{F}(W, V)-\frac{1}{4}(X(b)V(b)+J_{B}X(b)J_{F}V(b))g_{F}(W, U)+\frac{1}{4}(X(b)J_{F}U(b)+J_{B}X(b)U(b))\nonumber\\
&\times g_{F}(W, J_{F}V)-\frac{1}{4}(X(b)J_{F}V(b)+J_{B}X(b)V(b))g_{F}(W, J_{F}U);\nonumber\\
&(9)\widetilde{\mathcal{K}}(V, W, \bullet)\widetilde{\mathcal{K}}(U, Q, \bullet)=\frac{1}{4}(b^{2}g^{*}_{B}(db, db)+g^{*}_{F}(db, db))g_{F}(W, V)g_{F}(U, Q)\nonumber\\
&+\frac{1}{4}(b^{2}g^{*}_{B}(db\circ J_{B}, db\circ J_{B})+g^{*}_{F}(db\circ J_{F}, db\circ J_{F}))g_{F}(W, J_{F}V)g_{F}(U, J_{F}Q)\nonumber\\
&+\frac{1}{4}(b^{2}g^{*}_{B}(db\circ J_{B}, db)+g^{*}_{F}(db\circ J_{F}, db))(g_{F}(W, V)g_{F}(U, J_{F}Q)+g_{F}(W, J_{F}V)g_{F}(U, Q))\nonumber\\
&+bV(b)\widetilde{\mathcal{K}}_{F}(U, Q, W)+\frac{1}{2}bW(b)\widetilde{\mathcal{K}}_{F}(U, Q, V)+bU(b)\widetilde{\mathcal{K}}_{F}(V, W, Q)\nonumber\\
&+\frac{1}{2}bQ(b)\widetilde{\mathcal{K}}_{F}(V, W, U)+\frac{1}{2}bJ_{F}W(b)\widetilde{\mathcal{K}}_{F}(U, Q, J_{F}V)+\frac{1}{2}bJ_{F}Q(b)\widetilde{\mathcal{K}}_{F}(V, W, J_{F}U)\nonumber\\
&+V(b)U(b)g_{F}(Q, W)+\frac{1}{2}V(b)Q(b)g_{F}(U, W)+\frac{1}{2}V(b)J_{F}Q(b)g_{F}(U, J_{F}W)\nonumber\\
&+\frac{1}{2}W(b)U(b)g_{F}(Q, V)+\frac{1}{2}J_{F}W(b)U(b)g_{F}(Q, J_{F}V)+b^{2}\widetilde{\mathcal{K}}_{F}(V, W, \bullet_{F})\widetilde{\mathcal{K}}_{F}(U, Q, \bullet_{F})\nonumber\\
&+\frac{1}{4}(W(b)Q(b)+J_{F}W(b)J_{F}Q(b))g_{F}(U, V)+\frac{1}{4}(W(b)J_{F}Q(b)+J_{F}W(b)Q(b))g_{F}(U, J_{F}V)\nonumber\\
&-\frac{1}{4}(3V(b)W(b)+J_{F}V(b)J_{F}W(b)+2bg^{*}_{F}(\widetilde{\nabla}^{\flat}_{V}W, db))g_{F}(U, Q)\nonumber\\
&-\frac{1}{4}(3V(b)J_{F}W(b)+J_{F}V(b)W(b)+2bg^{*}_{F}(\widetilde{\nabla}^{\flat}_{V}W, db\circ J_{F}))g_{F}(U, J_{F}Q)\nonumber\\
&-\frac{1}{4}(3U(b)Q(b)+J_{F}U(b)J_{F}Q(b)+2bg^{*}_{F}(\widetilde{\nabla}^{\flat}_{U}Q, db))g_{F}(V, W)\nonumber\\
&-\frac{1}{4}(3U(b)J_{F}Q(b)+J_{F}U(b)Q(b)+2bg^{*}_{F}(\widetilde{\nabla}^{\flat}_{U}Q, db\circ J_{F}))g_{F}(V, J_{F}W).\nonumber
\end{align}
Obviously $(B\times_{b}F, J)$ is a almost product semi-regular semi-Riemannian manifold.
\end{proof}

Computations show that
\begin{thm}
Let $(B, g_{B}, J_{B})$ and $(F, g_{F}, J_{F})$ be two almost product non-degenerate semi-Riemannian manifolds, $b\in C^{\infty}(B\times F)$ and $db|_{b=0}=0.$
Let the vector fields $X, Y, Z, T\in\Gamma(TB)$ and $U, V, W, Q\in\Gamma(TF),$ then
\begin{align}
&(1)\widetilde{R}(X, Y, Z, T)=\widetilde{R}_{B}(X, Y, Z, T);\nonumber\\
&(2)\widetilde{R}(X, Y, Z, W)=\widetilde{R}(Z, W, X, Y)=0;\nonumber\\
&(3)\widetilde{R}(X, Y,  V, W)=\widetilde{R}(V, W, X, Y)=0;\nonumber\\
&(4)\widetilde{R}(X, V, Y, W)=\frac{1}{2}bXY(b)g_{F}(V, W)-\frac{1}{2}bg^{*}_{B}(\widetilde{\nabla}^{\flat}_{X}Y, db)g_{F}(V, W)+\frac{1}{2}bXJ_{B}Y(b)\nonumber\\
&\times g_{F}(V, J_{F}W)-\frac{1}{2}bg^{*}_{B}(\widetilde{\nabla}^{\flat}_{X}Y, db\circ J_{B})g_{F}(V, J_{F}W);\nonumber\\
&(5)\widetilde{R}(X, V, W, U)=-\frac{1}{2}(X(b)W(b)-bXW(b))g_{F}(V, U)-\frac{1}{2}(X(b)J_{F}W(b)-bXJ_{F}W(b))\nonumber\\
&\times g_{F}(V, J_{F}U)+\frac{1}{2}(X(b)U(b)-bXU(b))g_{F}(V, W)+\frac{1}{2}(X(b)J_{F}U(b)-bXJ_{F}U(b))g_{F}(V, J_{F}W);\nonumber\\
&(6)\widetilde{R}(W, U, X, V)=\frac{1}{4}(2bWX(b)-X(b)W(b)-J_{B}X(b)J_{F}W(b))g_{F}(U, V)+\frac{1}{4}(2bWJ_{B}X(b)\nonumber\\
&-X(b)J_{F}W(b)-J_{B}X(b)W(b))g_{F}(U, J_{F}V)-\frac{1}{4}(2bUX(b)-X(b)U(b)-J_{B}X(b)J_{F}U(b))\nonumber\\
&\times g_{F}(W, V)-\frac{1}{4}(2bUJ_{B}X(b)-X(b)J_{F}U(b)-J_{B}X(b)U(b))g_{F}(W, J_{F}V)\nonumber\\
&-\frac{1}{4}bX(b)(\mathcal{K}_{F}(U, V, W)-\mathcal{K}_{F}(W, V, U)-\mathcal{K}_{F}(U, J_{F}V, J_{F}W)+\mathcal{K}_{F}(W, J_{F}V, J_{F}U))\nonumber\\
&+\frac{1}{4}bJ_{B}X(b)(\mathcal{K}_{F}(U, V, J_{F}W)-\mathcal{K}_{F}(W, V, J_{F}U)-\mathcal{K}_{F}(U, J_{F}V, W)+\mathcal{K}_{F}(W, J_{F}V, U));\nonumber\\
&(9)\widetilde{R}(U, V, W, Q)=b^{2}\widetilde{R}_{F}(U, V, W, Q)+\frac{1}{4}(b^{2}g^{*}_{B}(db, db)+g^{*}_{F}(db, db))(g_{F}(U, W)g_{F}(V, Q)\nonumber\\
&-g_{F}(V, W)g_{F}(U, Q))+\frac{1}{4}(b^{2}g^{*}_{B}(db\circ J_{B}, db\circ J_{B})+g^{*}_{F}(db\circ J_{F}, db\circ J_{F}))(g_{F}(U, J_{F}W)\nonumber\\
&\times g_{F}(V, J_{F}Q)-g_{F}(V, J_{F}W)g_{F}(U, J_{F}Q))+\frac{1}{4}(b^{2}g^{*}_{B}(db\circ J_{B}, db)+g^{*}_{F}(db\circ J_{F}, db))\nonumber\\
&\times (g_{F}(U, J_{F}W)g_{F}(V, Q)-g_{F}(V, J_{F}W)g_{F}(U, Q)+g_{F}(U, W)g_{F}(V, J_{F}Q)-g_{F}(V, W)\nonumber\\
&\times g_{F}(U, J_{F}Q))-\frac{1}{4}(3U(b)W(b)-2bUW(b)+J_{F}U(b)J_{F}W(b)+2bg^{*}_{F}(\widetilde{\nabla}^{\flat}_{U}W, db))g_{F}(V, Q)\nonumber\\
&-\frac{1}{4}(3U(b)J_{F}W(b)-2bUJ_{F}W(b)+J_{F}U(b)W(b)+2bg^{*}_{F}(\widetilde{\nabla}^{\flat}_{U}W, db\circ J_{F}))g_{F}(V, J_{F}Q)\nonumber\\
&-\frac{1}{4}(3V(b)Q(b)-2bVQ(b)+J_{F}V(b)J_{F}Q(b)+2bg^{*}_{F}(\widetilde{\nabla}^{\flat}_{V}Q, db))g_{F}(U, W)\nonumber\\
&-\frac{1}{4}(3V(b)J_{F}Q(b)-2bVJ_{F}Q(b)+J_{F}V(b)Q(b)+2bg^{*}_{F}(\widetilde{\nabla}^{\flat}_{V}Q, db\circ J_{F}))g_{F}(U, J_{F}W)\nonumber\\
&+\frac{1}{4}(3V(b)W(b)-2bVW(b)+J_{F}V(b)J_{F}W(b)+2bg^{*}_{F}(\widetilde{\nabla}^{\flat}_{V}W, db))g_{F}(U, Q)\nonumber
\end{align}
\begin{align}
&+\frac{1}{4}(3V(b)J_{F}W(b)-2bVJ_{F}W(b)+J_{F}V(b)W(b)+2bg^{*}_{F}(\widetilde{\nabla}^{\flat}_{V}W, db\circ J_{F}))g_{F}(U, J_{F}Q)\nonumber\\
&+\frac{1}{4}(3U(b)Q(b)-2bUQ(b)+J_{F}U(b)J_{F}Q(b)+2bg^{*}_{F}(\widetilde{\nabla}^{\flat}_{U}Q, db))g_{F}(V, W)\nonumber\\
&+\frac{1}{4}(3U(b)J_{F}Q(b)-2bUJ_{F}Q(b)+J_{F}U(b)Q(b)+2bg^{*}_{F}(\widetilde{\nabla}^{\flat}_{U}Q, db\circ J_{F}))g_{F}(V, J_{F}W)\nonumber\\
&+\frac{1}{4}bQ(b)(\mathcal{K}_{F}(V, W, U)-\mathcal{K}_{F}(U, W, V)-\mathcal{K}_{F}(V, J_{F}W, J_{F}U)+\mathcal{K}_{F}(U, J_{F}W, J_{F}V))\nonumber\\
&-\frac{1}{4}bJ_{F}Q(b)(\mathcal{K}_{F}(V, W, J_{F}U)-\mathcal{K}_{F}(U, W, J_{F}V)-\mathcal{K}_{F}(V, J_{F}W, U)+\mathcal{K}_{F}(U, J_{F}W, V))\nonumber\\
&-\frac{1}{4}bW(b)(\mathcal{K}_{F}(V, Q, U)-\mathcal{K}_{F}(U, Q, V)-\mathcal{K}_{F}(V, J_{F}Q, J_{F}U)+\mathcal{K}_{F}(U, J_{F}Q, J_{F}V))\nonumber\\
&+\frac{1}{4}bJ_{F}W(b)(\mathcal{K}_{F}(V, Q, J_{F}U)-\mathcal{K}_{F}(U, Q, J_{F}V)-\mathcal{K}_{F}(V, J_{F}Q, U)+\mathcal{K}_{F}(U, J_{F}Q, V)).\nonumber
\end{align}
\end{thm}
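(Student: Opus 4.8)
The plan is to feed the values of the almost product Koszul form on $B\times_{b}F$ recorded in Proposition 7.1, together with the contracted products $\widetilde{\mathcal{K}}(\cdot,\cdot,\bullet)\widetilde{\mathcal{K}}(\cdot,\cdot,\bullet)$ computed in the proof of Theorem 7.3, into the curvature identity of Proposition 4.5,
\begin{multline*}
\widetilde{R}(X,Y,Z,T)=X(\widetilde{\mathcal{K}}(Y,Z,T))-Y(\widetilde{\mathcal{K}}(X,Z,T))-\widetilde{\mathcal{K}}([X,Y],Z,T)\\
+\widetilde{\mathcal{K}}(X,Z,\bullet)\widetilde{\mathcal{K}}(Y,T,\bullet)-\widetilde{\mathcal{K}}(Y,Z,\bullet)\widetilde{\mathcal{K}}(X,T,\bullet),
\end{multline*}
and then to simplify. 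Since both factors are non-degenerate and $db|_{b=0}=0$ forces semi-regularity, the covariant contraction splits as $\bullet=\bullet_{B}+\sum\frac{1}{b^{2}}\bullet_{F}$ exactly as in Theorem 7.3, so each contracted product on the right is one of the entries $(1)$--$(9)$ of that proof, after matching the two $\bullet$ slots by the symmetry of the contraction under interchange of its factors and, where needed, relabeling the fibre arguments. The argument is then a case analysis on how many of the four slots are horizontal ($X,Y,Z,T\in\Gamma(TB)$) versus vertical ($U,V,W,Q\in\Gamma(TF)$), proceeding exactly as in the proof of Theorem 4.8 for the multiply warped case; I would display the representative computations and dispatch the rest by ``the same method.''

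Case $(1)$ is immediate: Proposition 7.1$(1)$ reduces every Koszul form to $\widetilde{\mathcal{K}}_{B}$ and every contraction to $\bullet_{B}$, so the right-hand side collapses to the defining expression for $\widetilde{R}_{B}(X,Y,Z,T)$. For the mixed case $(4)$ I would expand
\begin{multline*}
\widetilde{R}(X,V,Y,W)=X(\widetilde{\mathcal{K}}(V,Y,W))-V(\widetilde{\mathcal{K}}(X,Y,W))-\widetilde{\mathcal{K}}([X,V],Y,W)\\
+\widetilde{\mathcal{K}}(X,Y,\bullet)\widetilde{\mathcal{K}}(V,W,\bullet)-\widetilde{\mathcal{K}}(V,Y,\bullet)\widetilde{\mathcal{K}}(X,W,\bullet).
\end{multline*}
Here $\widetilde{\mathcal{K}}(X,Y,W)=0$ by Proposition 7.1$(2)$, the bracket $[X,V]$ of a horizontal and a vertical lift vanishes, $\widetilde{\mathcal{K}}(V,Y,W)$ is read off from Proposition 7.1$(4)$, and the two contracted products are entries $(3)$ and $(5)$ of the proof of Theorem 7.3. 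Applying the Leibniz rule to $X\bigl(\frac{1}{2}bY(b)g_{F}(V,W)+\frac{1}{2}bJ_{B}Y(b)g_{F}(V,J_{F}W)\bigr)$, and noting that $X$ annihilates the purely vertical factors $g_{F}(V,W)$ and $g_{F}(V,J_{F}W)$, produces the claimed $\frac{1}{2}bXY(b)$ and $\frac{1}{2}bXJ_{B}Y(b)$ terms; the $\frac{1}{2}X(b)Y(b)$-type pieces from the product rule cancel against $\widetilde{\mathcal{K}}(V,Y,\bullet)\widetilde{\mathcal{K}}(X,W,\bullet)$, while $\widetilde{\mathcal{K}}(X,Y,\bullet)\widetilde{\mathcal{K}}(V,W,\bullet)$ supplies the two $g^{*}_{B}(\widetilde{\nabla}^{\flat}_{X}Y,\cdot)$ terms. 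The remaining mixed cases $(2),(3),(5),(6),(7),(8)$ follow the same pattern, with the antisymmetries $\widetilde{R}(X,Y,Z,T)=-\widetilde{R}(Y,X,Z,T)=-\widetilde{R}(X,Y,T,Z)$ used to reduce the bookkeeping.

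The hard part will be case $(9)$, the fully vertical curvature $\widetilde{R}(U,V,W,Q)$, where all three ingredients are maximally loaded: the Koszul form $\widetilde{\mathcal{K}}(U,V,W)$ of Proposition 7.1$(5)$ carries six first-order $b$-derivative terms alongside $b^{2}\widetilde{\mathcal{K}}_{F}(U,V,W)$, and the contracted product $\widetilde{\mathcal{K}}(V,W,\bullet)\widetilde{\mathcal{K}}(U,Q,\bullet)$ is precisely the long entry $(9)$ of the proof of Theorem 7.3, whose $\bullet_{B}$ part furnishes the $b^{2}g^{*}_{B}(db,db)$, $b^{2}g^{*}_{B}(db\circ J_{B},db\circ J_{B})$ and mixed $J_{B}$ factors and whose $\bullet_{F}$ part furnishes $b^{2}\widetilde{\mathcal{K}}_{F}(V,W,\bullet_{F})\widetilde{\mathcal{K}}_{F}(U,Q,\bullet_{F})$. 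The genuine labour is to expand the vertical derivatives $U(\widetilde{\mathcal{K}}(V,W,Q))$ and $V(\widetilde{\mathcal{K}}(U,W,Q))$ by Leibniz, combine them with $-\widetilde{\mathcal{K}}([U,V],W,Q)$ and the two $\widetilde{\mathcal{K}}_{F}\widetilde{\mathcal{K}}_{F}$ products so that the fibre pieces reassemble into $b^{2}\widetilde{R}_{F}(U,V,W,Q)$, and then to match every surviving first-order $b$-derivative and every $\mathcal{K}_{F}$ term, in both its plain and its $J_{F}$-twisted form, against the remaining contracted products, leaving exactly the displayed combination of $g^{*}_{F}(db,db)$ factors, the eight $\mathcal{K}_{F}$ terms weighted by $\frac{1}{4}b(\cdot)(b)$, and $b^{2}\widetilde{R}_{F}$. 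As in Theorem 7.3, the hypothesis $db|_{b=0}=0$ guarantees that every $1/b^{2}$ arising from the vertical part of the contraction is absorbed, so each expression is smooth across the degeneracy locus $b=0$ and the formal manipulations are legitimate there. Granting this reorganization, $(9)$ follows and the theorem is proved.
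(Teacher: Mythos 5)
Your proposal is correct and follows exactly the route the paper intends: the paper itself gives no proof of this theorem (it is introduced only by ``Computations show that''), but the method you describe --- substituting Proposition 7.1 and the contracted products from the proof of Theorem 7.3 into the curvature formula of Proposition 4.5, with a case analysis on horizontal versus vertical slots --- is precisely the method used in the paper's partial proof of the analogous Theorem 4.8, and your worked case (4) reproduces the stated formula correctly. Your level of detail (one representative mixed case in full, the fully vertical case (9) reduced to an explicit bookkeeping plan) matches or exceeds what the paper provides.
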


\vskip 1 true cm
\section{ Special Singular Multiply Warped Products and Special Singular Twisted Products}

A generalized Kasner space-time $(M, g)$ is a Lorentzian multiply warped product of the form $M=I\times_{\phi^{p_{1}}}F_{1}\times_{\phi^{p_{2}}}F_{2}\times\cdots\times_{\phi^{p_{m}}}F_{m}$ with the metric tensor $g=-dt^{2}\oplus \phi^{2p_{1}}g_{F_{1}}\oplus \phi^{2p_{2}}g_{F_{2}}\cdots\oplus \phi^{2p_{m}}g_{F_{m}},$ where $I$ is an open interval in $\mathbb{R},$ $\phi: I\rightarrow \mathbb{R}$ is smooth and $p_{j}\in\mathbb{R}$ for any $j\in \{1, 2, \cdots, m\}.$
For simplicity of notations, we denote $M_{1}=I\times_{b}F$ to be a degenerate warped product with the metric tensor $g=-dt^{2}\oplus b^{2}g_{F}$ where $\dim F =3,$ denote $I\times_{\phi^{p_{1}}}F_{1}\times_{\phi^{p_{2}}}F_{2}$ to be $M_{2}$ where $\dim F_{1}=1,$ $\dim F_{2}=2,$ and denote $I\times_{\phi^{p_{1}}}F_{1}\times_{\phi^{p_{2}}}F_{2}\times_{\phi^{p_{3}}}F_{3}$ to be $M_{3}$ where $\dim F_{1}=1,$ $\dim F_{2}=1,$ $\dim F_{3}=1.$

\begin{prop}
Let $M_{1}=I\times_{b}F$ be a degenerate warped product, let the vector fields $\partial/ \partial t\in\Gamma(TI)$ and $U, V, W\in\Gamma(TF).$ Let $\overline{\mathcal{K}}$ be the semi-symmetric metric Koszul form on $I\times_{b}F$ and $\overline{\mathcal{K}}_{I}$, $\overline{\mathcal{K}}_{F}$ be the lifts of the semi-symmetric metric Koszul form on $I,$ $F,$ respectively. Let $P=\partial/ \partial t,$ then
\begin{align}
&(1)\overline{\mathcal{K}}(\frac{\partial}{\partial t}, \frac{\partial}{\partial t}, \frac{\partial}{\partial t})=\overline{\mathcal{K}}_{I}(\frac{\partial}{\partial t}, \frac{\partial}{\partial t}, \frac{\partial}{\partial t})=0;\nonumber\\
&(2)\overline{\mathcal{K}}(\frac{\partial}{\partial t}, \frac{\partial}{\partial t}, W)=\overline{\mathcal{K}}(\frac{\partial}{\partial t}, W, \frac{\partial}{\partial t})=\overline{\mathcal{K}}(W, \frac{\partial}{\partial t}, \frac{\partial}{\partial t})=0;\nonumber\\
&(3)\overline{\mathcal{K}}(\frac{\partial}{\partial t}, V, W)=b\frac{\partial b}{\partial t}g_{F}(V, W);\nonumber\\
&(4)\overline{\mathcal{K}}(V, \frac{\partial}{\partial t}, W)=-\overline{\mathcal{K}}(V, W, \frac{\partial}{\partial t})=b\frac{\partial b}{\partial t}g_{F}(V, W)-b^{2}g_{F}(V, W);\nonumber\\
&(5)\overline{\mathcal{K}}(U, V, W)=b^{2}\mathcal{K}_{F}(U, V, W).\nonumber
\end{align}
\end{prop}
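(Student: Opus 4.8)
The plan is to derive Proposition 8.1 as a direct specialization of Proposition 2.5, which already computes the semi-symmetric metric Koszul form of an arbitrary degenerate multiply warped product under the hypothesis $P \in \Gamma(TB)$. Indeed $M_{1} = I \times_{b} F$ is exactly the $m=1$ instance of that construction, with $B = I$, $F_{1} = F$ and $b_{1} = b$; and since $P = \partial/\partial t \in \Gamma(TI) = \Gamma(TB)$, every hypothesis of Proposition 2.5 is in force. The entire content is therefore bookkeeping: substitute $X = Y = Z = \partial/\partial t$ into the base slots and $U, V, W \in \Gamma(TF)$ into the fiber slots of the seven formulas of Proposition 2.5.

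First I would record the two metric inputs forced by $g = -dt^{2} \oplus b^{2} g_{F}$, namely $g_{I}(\partial/\partial t, \partial/\partial t) = -1$, whence $g_{B}(\partial/\partial t, P) = -1$, together with $X(b) = \partial b/\partial t$ for $X = \partial/\partial t$. Then items (2), (3) and (5) drop out immediately from items (2), (3) and (6) of Proposition 2.5: item (3) becomes $b(\partial b/\partial t) g_{F}(V, W)$, item (5) becomes $b^{2}\mathcal{K}_{F}(U, V, W)$ (the $P$-correction vanishes because $P$ has no fiber component), and item (2) is the triple of zeros inherited unchanged.

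The one verification worth displaying is item (1). By item (1) of Proposition 2.5, $\overline{\mathcal{K}}(\partial/\partial t, \partial/\partial t, \partial/\partial t) = \overline{\mathcal{K}}_{I}(\partial/\partial t, \partial/\partial t, \partial/\partial t)$, and Definition 2.4 expands this as
\[
\mathcal{K}_{I}\!\left(\tfrac{\partial}{\partial t}, \tfrac{\partial}{\partial t}, \tfrac{\partial}{\partial t}\right) + g_{I}\!\left(\tfrac{\partial}{\partial t}, P\right) g_{I}\!\left(\tfrac{\partial}{\partial t}, \tfrac{\partial}{\partial t}\right) - g_{I}\!\left(\tfrac{\partial}{\partial t}, \tfrac{\partial}{\partial t}\right) g_{I}\!\left(P, \tfrac{\partial}{\partial t}\right).
\]
The Koszul term vanishes since $g_{I}(\partial/\partial t, \partial/\partial t) \equiv -1$ is constant and $[\partial/\partial t, \partial/\partial t] = 0$, while the two remaining terms cancel because each equals $(-1)(-1) = 1$; hence the whole expression is $0$, as claimed.

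The only place where care is needed is item (4), and this is what I expect to be the (mild) crux. Substituting into item (4) of Proposition 2.5 gives $\overline{\mathcal{K}}(V, \partial/\partial t, W) = b(\partial b/\partial t) g_{F}(V, W) + b^{2} g_{B}(\partial/\partial t, P) g_{F}(V, W)$, and feeding in the Lorentzian value $g_{B}(\partial/\partial t, P) = -1$ produces the stated $b(\partial b/\partial t) g_{F}(V, W) - b^{2} g_{F}(V, W)$; the skew-symmetry $\overline{\mathcal{K}}(V, \partial/\partial t, W) = -\overline{\mathcal{K}}(V, W, \partial/\partial t)$ carries over verbatim from Proposition 2.5. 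There is no structural obstacle here, since the proposition is a worked example, so the single thing to guard against is mis-signing the factor $g_{I}(\partial/\partial t, \partial/\partial t) = -1$, which is precisely what supplies the extra $-b^{2} g_{F}(V, W)$ term distinguishing this Lorentzian computation from the Riemannian template.
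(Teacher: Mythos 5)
Your proposal is correct and is exactly the route the paper intends: Proposition 8.1 is a direct specialization of the general multiply warped product formula with $P\in\Gamma(TB)$ (which in this paper's numbering is Proposition 2.6, not 2.5), taking $m=1$, $B=I$, $b_1=b$, and using $g_I(\partial/\partial t,\partial/\partial t)=-1$ to produce the extra $-b^2 g_F(V,W)$ term in item (4) and the cancellation in item (1). The computation checks out in every item.
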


\begin{prop}
Let $M_{1}=I\times_{b}F$ be a degenerate warped product, let the vector fields $\partial/ \partial t\in\Gamma(TI)$ and $U, V, W\in\Gamma(TF).$ Let $\overline{\mathcal{K}}$ be the semi-symmetric metric Koszul form on $I\times_{b}F$ and $\overline{\mathcal{K}}_{I}$, $\overline{\mathcal{K}}_{F}$ be the lifts of the semi-symmetric metric Koszul form on $I,$ $F,$ respectively. Let $P\in\Gamma(TF),$ then
\begin{align}
&(1)\overline{\mathcal{K}}(\frac{\partial}{\partial t}, \frac{\partial}{\partial t}, \frac{\partial}{\partial t})=\mathcal{K}_{I}(\frac{\partial}{\partial t}, \frac{\partial}{\partial t}, \frac{\partial}{\partial t})=0;\nonumber\\
&(2)\overline{\mathcal{K}}(\frac{\partial}{\partial t}, \frac{\partial}{\partial t}, W)=-\overline{\mathcal{K}}(\frac{\partial}{\partial t}, W, \frac{\partial}{\partial t})=b^{2}g_{F}(W, P);\nonumber\\
&(3)\overline{\mathcal{K}}(W, \frac{\partial}{\partial t}, \frac{\partial}{\partial t})=0;\nonumber\\
&(4)\overline{\mathcal{K}}(\frac{\partial}{\partial t}, V, W)=\overline{\mathcal{K}}(V, \frac{\partial}{\partial t}, W)=-\overline{\mathcal{K}}(V, W, \frac{\partial}{\partial t})=b\frac{\partial b}{\partial t}g_{F}(V, W);\nonumber\\
&(5)\overline{\mathcal{K}}(U, V, W)=b^{2}\mathcal{K}_{F}(U, V, W)+b^{4}g_{F}(U, W)g_{F}(V, P)-b^{4}g_{F}(U, V)g_{F}(W, P).\nonumber
\end{align}
\end{prop}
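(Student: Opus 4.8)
The plan is to recognize this proposition as the one-factor, one-dimensional-base specialization of Proposition 2.7. Setting $m=1$ there, with $B=I$, $F_{1}=F$, $b_{1}=b\in C^{\infty}(I)$, and letting $P\in\Gamma(TF)$ play the role of $P\in\Gamma(TF_{l})$ with $l=1$, every index condition such as $i=j$ or $i=j=k=l$ is automatically met, and each item of the present proposition becomes the corresponding item of Proposition 2.7 evaluated on base vector fields equal to $\partial/\partial t$. No new structural computation is required; the content is the substitution together with two elementary facts about the base recorded below.

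The two inputs from the base are that $g_{I}=-dt^{2}$ gives $g_{B}(\partial/\partial t,\partial/\partial t)=-1$ and that $[\partial/\partial t,\partial/\partial t]=0$. First I would feed these into the definition of the Koszul form (Definition 2.2): since $g_{I}(\partial/\partial t,\partial/\partial t)=-1$ is constant and the bracket vanishes, every term of $\mathcal{K}_{I}(\partial/\partial t,\partial/\partial t,\partial/\partial t)$ is zero, so $\mathcal{K}_{I}(\partial/\partial t,\partial/\partial t,\partial/\partial t)=0$; together with part (1) of Proposition 2.7 this gives item (1). The same one-dimensionality identifies $\partial/\partial t(b)$ with $\partial b/\partial t$, the only base derivative that can occur, and since the warping function lives on $I$ all fibre derivatives of $b$ vanish.

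Next I would read off the remaining items by matching term by term: item (3) is part (4) of Proposition 2.7 verbatim; item (4) is part (5) with $X=\partial/\partial t$ and $X(b)$ rewritten as $\partial b/\partial t$; and item (5) is exactly part (7) with the collapsed indices $i=j=k=l=1$. Item (2) is the only place where the base metric sign intervenes: part (2) of Proposition 2.7 reads $\overline{\mathcal{K}}(X,Y,W)=-\overline{\mathcal{K}}(X,W,Y)=-b^{2}g_{B}(X,Y)g_{F}(W,P)$, and substituting $g_{B}(\partial/\partial t,\partial/\partial t)=-1$ converts the leading minus sign into the stated $+b^{2}g_{F}(W,P)$, the antisymmetric partner surviving unchanged.

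There is no genuine obstacle; the only point demanding care is bookkeeping the sign flip caused by the timelike base, i.e.\ tracking each occurrence of $g_{B}(\partial/\partial t,\partial/\partial t)=-1$ in item (2). Accordingly I would write this up by invoking Proposition 2.7 under the substitution $m=1$, $B=I$, $F=F_{1}$, $P\in\Gamma(TF)$, performing the one short computation showing $\mathcal{K}_{I}(\partial/\partial t,\partial/\partial t,\partial/\partial t)=0$, and noting the sign coming from $g_{I}=-dt^{2}$, rather than re-deriving any Koszul identity.
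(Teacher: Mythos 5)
Your proposal is correct and is exactly the route the paper intends: Section 8 states this proposition without proof as the $m=1$, $B=I$, $l=1$ specialization of Proposition 2.7, and your term-by-term matching (including the sign flip in item (2) from $g_{I}(\partial/\partial t,\partial/\partial t)=-1$ and the vanishing of $\mathcal{K}_{I}(\partial/\partial t,\partial/\partial t,\partial/\partial t)$ in item (1)) is precisely the required bookkeeping.
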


\begin{thm}
Let $M_{1}=I\times_{b}F$ be a degenerate warped product, let the vector fields $\partial/ \partial t\in\Gamma(TI)$ and $U, V, W, Q\in\Gamma(TF).$  Let $P=\partial/ \partial t,$ then
\begin{align}
&(1)\overline{R}(\frac{\partial}{\partial t}, \frac{\partial}{\partial t}, \frac{\partial}{\partial t}, \frac{\partial}{\partial t})=\overline{R}_{I}(\frac{\partial}{\partial t}, \frac{\partial}{\partial t}, \frac{\partial}{\partial t}, \frac{\partial}{\partial t})=0;\nonumber\\
&(2)\overline{R}(\frac{\partial}{\partial t}, \frac{\partial}{\partial t}, \frac{\partial}{\partial t}, W)=\overline{R}(\frac{\partial}{\partial t}, W, \frac{\partial}{\partial t}, \frac{\partial}{\partial t})=0;\nonumber\\
&(3)\overline{R}(\frac{\partial}{\partial t}, \frac{\partial}{\partial t},  V, W)=\overline{R}(V, W, \frac{\partial}{\partial t}, \frac{\partial}{\partial t})=0;\nonumber\\
&(4)\overline{R}(\frac{\partial}{\partial t}, V, W, \frac{\partial}{\partial t})=-b\frac{\partial^{2} b}{\partial t^{2}}g_{F}(V, W)+b\frac{\partial b}{\partial t}g_{F}(V, W);\nonumber\\
&(5)\overline{R}(\frac{\partial}{\partial t}, V, W, U)=\overline{R}(W, U, \frac{\partial}{\partial t}, V)=0;\nonumber\\
&(6)\overline{R}(U, V, W, Q)=b^{2}R_{F}(U, V, W, Q)+(b^{2}g^{*}_{I}(db, db)+2b^{3}\frac{\partial b}{\partial t}-b^{4})\nonumber\\
&\times(g_{F}(U, W)g_{F}(V, Q)-g_{F}(V, W)g_{F}(U, Q)).\nonumber
\end{align}
\end{thm}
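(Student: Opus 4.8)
The plan is to derive all of (1)--(6) as the $m=1$ specialization of Theorem 2.11, taking $B=I$, $F_1=F$, $b_1=b$, and $P=\partial/\partial t\in\Gamma(TI)$. The key point is that $(I,-dt^2)$ is one-dimensional and flat, so several ingredients of Theorem 2.11 collapse, and it is worth recording these first: by antisymmetry of $\overline{R}$ in its first two arguments we have $\overline{R}_{I}(\partial_t,\partial_t,\partial_t,\partial_t)=0$ and likewise $R_I=0$; the base Koszul form satisfies $\mathcal{K}_I(\partial_t,\partial_t,\partial_t)=\tfrac12\partial_t(g_I(\partial_t,\partial_t))=0$, so $\nabla^{\flat}_{\partial_t}\partial_t=0$ and $g^*_I(\nabla^{\flat}_{\partial_t}\partial_t,db)=0$; and finally $g_I(\partial_t,\partial_t)=-1$ together with $P(b)=\partial b/\partial t$. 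These four facts are exactly what turns the general formulas into the stated ones.

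Given them, (1)--(3) and (5) are immediate: (1) is $\overline{R}_I(\partial_t,\partial_t,\partial_t,\partial_t)=0$; (2) and (3) are the vanishing statements of Theorem 2.11(2),(3) with $X=Y=Z=\partial_t$; and (5) is Theorem 2.11(6) with a single fibre factor and $X=\partial_t$. The two substantive identities are (4) and (6). For (4) I would substitute $X=Y=\partial_t$ and $i=j$ into Theorem 2.11(4): the terms carrying $g^*_I(\nabla^{\flat}_{\partial_t}\partial_t,db)$ and $\mathcal{K}_I(\partial_t,\partial_t,\partial_t)$ drop out, the term $-bP(b)g_I(\partial_t,\partial_t)g_F(V,W)$ becomes $+b(\partial b/\partial t)g_F(V,W)$ since $g_I(\partial_t,\partial_t)=-1$, and the remaining two terms $b^2g_I(\partial_t,\partial_t)g_I(\partial_t,\partial_t)g_F(V,W)$ and $-b^2g_I(\partial_t,\partial_t)g_I(\partial_t,\partial_t)g_F(V,W)$ cancel, leaving precisely $-b(\partial^2 b/\partial t^2)g_F(V,W)+b(\partial b/\partial t)g_F(V,W)$. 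For (6) I would substitute $i=j=k=s$ and the single factor into Theorem 2.11(9); here only the scalar coefficient needs attention, and $b^2g^*_I(db,db)+2b^3P(b)+b^4g_I(\partial_t,\partial_t)$ simplifies to $b^2g^*_I(db,db)+2b^3(\partial b/\partial t)-b^4$, which gives the claimed expression.

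The main obstacle is purely the sign bookkeeping forced by the Lorentzian normalization $g_I(\partial_t,\partial_t)=-1$: one must verify that it is exactly this factor that converts $-bP(b)g_I(\partial_t,\partial_t)$ into the positive term $+b(\partial b/\partial t)$ in (4) and that supplies the $-b^4$ in the coefficient of (6), while at the same time producing the cancellation of the two quadratic terms in (4). As an independent check I could instead rederive (4) and (6) straight from the definition (2.8) of $\overline{R}$, feeding in the specialized semi-symmetric metric Koszul forms of Proposition 8.1 together with the relation of Proposition 2.8; but the specialization of Theorem 2.11 described above is the shortest route and I would present that.
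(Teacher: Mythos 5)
Your proposal is correct and follows exactly the route the paper intends: Theorem 8.3 is stated in the examples section as the $m=1$, $B=I$, $g_I(\partial_t,\partial_t)=-1$, $P=\partial/\partial t$ specialization of Theorem 2.11, and your sign bookkeeping (the cancellation of the two $b^2$ terms in (4), the conversion of $-bP(b)g_I(\partial_t,\partial_t)$ into $+b\,\partial b/\partial t$, and the $-b^4$ in (6)) checks out. No gaps.
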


\begin{thm}
Let $M_{1}=I\times_{b}F$ be a degenerate warped product, let the vector fields $\partial/ \partial t\in\Gamma(TI)$ and $U, V, W, Q\in\Gamma(TF).$  Let $P\in\Gamma(TF),$ then
\begin{align}
&(1)\overline{R}(\frac{\partial}{\partial t}, \frac{\partial}{\partial t}, \frac{\partial}{\partial t}, \frac{\partial}{\partial t})=0;\nonumber\\
&(2)\overline{R}(\frac{\partial}{\partial t}, \frac{\partial}{\partial t}, \frac{\partial}{\partial t}, W)=-\overline{R}(\frac{\partial}{\partial t}, W, \frac{\partial}{\partial t}, \frac{\partial}{\partial t})=0;\nonumber\\
&(3)\overline{R}(\frac{\partial}{\partial t}, \frac{\partial}{\partial t},  V, W)=\overline{R}(V, W, \frac{\partial}{\partial t}, \frac{\partial}{\partial t})=0;\nonumber\\
&(4)\overline{R}(\frac{\partial}{\partial t}, V, W, \frac{\partial}{\partial t})=-b\frac{\partial^{2} b}{\partial t^{2}}g_{F}(V, W)+b^{2}(\mathcal{K}_{F}(V, P, W)+b^{2}g_{F}(V, W)g_{F}(P, P)\nonumber\\
&-b^{2}g_{F}(V, P)g_{F}(W, P));\nonumber
\end{align}
\begin{align}
&(5)\overline{R}(\frac{\partial}{\partial t}, V, W, U)=-\overline{R}(W, U, \frac{\partial}{\partial t}, V)=b^{3}\frac{\partial b}{\partial t}(g_{F}(V, U)g_{F}(W, P)-g_{F}(V, W)g_{F}(U, P));\nonumber\\
&(6)\overline{R}(U, V, W, Q)=b^{2}R_{F}(U, V, W, Q)+b^{2}g^{*}_{I}(db, db)(g_{F}(U, W)g_{F}(V, Q)-g_{F}(V, W)g_{F}(U, Q))\nonumber\\
&+b^{4}g_{F}(V, Q)\mathcal{K}_{F}(U, P, W)-b^{4}g_{F}(U, Q)\mathcal{K}_{F}(V, P, W)-b^{4}g_{F}(V, W)\mathcal{K}_{F}(U, P, Q)\nonumber\\
&+b^{4}g_{F}(U, W)\mathcal{K}_{F}(V, P, Q)+b^{6}g_{F}(U, P)(g_{F}(V, W)g_{F}(Q, P)-g_{F}(V, Q)g_{F}(W, P))\nonumber\\
&-b^{6}g_{F}(V, P)(g_{F}(U, W)g_{F}(Q, P)-g_{F}(U, Q)g_{F}(W, P)).\nonumber
\end{align}
\end{thm}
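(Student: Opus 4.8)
The plan is to obtain the present statement by specializing the general twisted-product computation of Theorem 5.10 to the warped situation $M_1=I\times_b F$, in which the base $B=I$ is one-dimensional and the twisting function $b$ depends on the base coordinate $t$ alone. Indeed $I\times_b F$ is the twisted product $B\times_b F$ with $B=I$ and $b\in C^\infty(I)\subset C^\infty(I\times F)$, and the distinguished field satisfies $P\in\Gamma(TF)$ in both settings, so every component of $\overline R$ required here already occurs in Theorem 5.10 and is recovered once the warped constraints are imposed.

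First I would record the simplifications forced by $b\in C^\infty(I)$ and $\dim I=1$. Since $b$ is constant along the fibre, every fibrewise derivative of $b$ vanishes, $U(b)=V(b)=W(b)=Q(b)=P(b)=0$; hence $db\in A^\bullet(I)$, so that $g^*_F(db,db)=0$ and the contraction $g^*_F(\nabla^\flat_U W,db)=0$, and all mixed twisting terms of Theorem 5.10 drop out. On the base, the one-dimensional $I$ has $R_I=0$ and trivial lower covariant derivative $\nabla^\flat_{\partial/\partial t}(\partial/\partial t)=0$ (equivalently $\mathcal K_I(\partial/\partial t,\partial/\partial t,\partial/\partial t)=0$), while $g_I(\partial/\partial t,\partial/\partial t)=-1$, so the only surviving base data are $g^*_I(db,db)$ and the second derivative $\partial^2 b/\partial t^2=XY(b)$.

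Next I would substitute these into Theorem 5.10 term by term. Parts (1)--(3) and (5) are then immediate: in (1) the two $b^2 g_F(P,P)$ contributions cancel (here $g_I(\partial/\partial t,\partial/\partial t)^2=1$) and $R_I=0$ leaves nothing; in (2) the two terms proportional to $\partial b/\partial t$ cancel; (3) is already zero in Theorem 5.10(3); and (5) follows from Theorem 5.10(5)--(6), where after discarding the $U(b),W(b)$ terms only the two $b^3(\partial b/\partial t)$ blocks remain, regrouping as displayed. For (4) I would begin from $\overline R(\partial/\partial t,V,\partial/\partial t,W)$ in Theorem 5.10(4), set $g_B(X,Y)=-1$, delete the $V(b),W(b),P(b)$ and $g^*_B(\nabla^\flat_X Y,db)$ terms, and then pass to $\overline R(\partial/\partial t,V,W,\partial/\partial t)$ by the antisymmetry of $\overline R$ recorded in (2.10). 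As an independent check one may instead apply Proposition 2.8 directly, inserting the warped values of $R$ from Theorem 5.6, of $\mathcal K$ from Proposition 5.3, and of $\overline{\mathcal K}$ from Proposition 8.2.

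The one genuinely laborious step is the purely fibrewise component (6), which inherits the long expression of Theorem 5.10(7). Here the difficulty is only bookkeeping: after killing every $U(b),V(b),W(b),Q(b)$ and the residual $2b^3P(b)$ and $bg^*_F(\nabla^\flat_U Q,db)$ terms (and their analogues), one must confirm that the survivors are exactly $b^2R_F$, the four terms $b^4\mathcal K_F(\cdot,P,\cdot)$, the two $b^6$ blocks quadratic in $g_F(\cdot,P)$, and the coefficient of $\big(g_F(U,W)g_F(V,Q)-g_F(V,W)g_F(U,Q)\big)$. The point needing care is this last coefficient: the $b^6 g_F(P,P)$ piece left over from the leading bracket of Theorem 5.10(7) must be carried along next to $b^2 g^*_I(db,db)$, and \emph{not} merged with the quadratic $g_F(\cdot,P)$ blocks, since those encode a different contraction of $P$. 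Once this regrouping is verified, the six identities follow, and the same two constraints ($R_I=0$ and $b\in C^\infty(I)$) reduce the general formulae of Theorem 5.10 to the statement claimed here.
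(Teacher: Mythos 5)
Your route is genuinely different from the paper's: the paper obtains this theorem by specializing its multiply warped product result (Theorem 2.12 with $m=1$, $B=I$, $b_{1}=b$), whereas you specialize the twisted-product result (Theorem 5.10), using the fact that a warped product is the twisted product whose twisting function is constant on the fibres. Both reductions rest on exactly the two facts you isolate ($R_{I}=0$ and $\mathcal{K}_{I}(\partial_t,\partial_t,\partial_t)=0$ on the one-dimensional base; vanishing of all fibre derivatives of $b$, hence of $g^{*}_{F}(db,db)$ and of the $g^{*}_{F}(\nabla^{\flat}_{\cdot}\cdot,db)$ contractions), and your term-by-term eliminations in items (1)--(5) check out against both general theorems. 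The twisted route costs extra bookkeeping in (6), since Theorem 5.10(7) carries many terms that must first be killed, but it buys an independent confirmation of the warped formulas. One caveat: the antisymmetry $\overline{R}(X,Y,Z,T)=-\overline{R}(X,Y,T,Z)$ that you invoke to pass from $\overline{R}(\partial_t,V,\partial_t,W)$ to $\overline{R}(\partial_t,V,W,\partial_t)$ is not what display (2.10) literally asserts (as printed, its two identities would force symmetry in the last pair); the antisymmetry does hold, as one verifies from Proposition 2.8, but you should not cite (2.10) verbatim for it.

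On item (6) your computation does not reproduce the statement as printed, and you are right not to: carrying the $b^{6}g_{F}(P,P)$ piece of the leading bracket of Theorem 5.10(7) alongside $b^{2}g^{*}_{I}(db,db)$, as you insist, yields the coefficient $\bigl(b^{2}g^{*}_{I}(db,db)+b^{6}g_{F}(P,P)\bigr)$ in front of $g_{F}(U,W)g_{F}(V,Q)-g_{F}(V,W)g_{F}(U,Q)$. This agrees with the paper's own Theorem 2.12(11) specialized to $m=1$, and with a direct evaluation of Proposition 2.8 using $\overline{\mathcal{K}}(U,P,W)=b^{2}\mathcal{K}_{F}(U,P,W)+b^{4}g_{F}(U,W)g_{F}(P,P)-b^{4}g_{F}(U,P)g_{F}(W,P)$; yet the $b^{6}g_{F}(P,P)$ summand is absent from item (6) as stated. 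So your argument is sound and in fact exposes a dropped term in the statement; if you present it as a proof of the literal item (6), you must flag this discrepancy explicitly rather than leave the reader to reconcile your formula with the printed one.
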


\begin{prop}
Let $M_{1}=I\times_{b}F$ be a degenerate warped product, let the vector fields $\partial/ \partial t\in\Gamma(TI)$ and $U, V, W\in\Gamma(TF).$ Let $\widehat{\mathcal{K}}$ be the semi-symmetric non-metric Koszul form on $I\times_{b}F$ and $\widehat{\mathcal{K}}_{I}$, $\widehat{\mathcal{K}}_{F}$ be the lifts of the semi-symmetric non-metric Koszul form on $B,$ $F$ respectively. Let $P=\partial/ \partial t,$ then
\begin{align}
&(1)\widehat{\mathcal{K}}(\frac{\partial}{\partial t}, \frac{\partial}{\partial t}, \frac{\partial}{\partial t})=\widehat{\mathcal{K}}_{I}(\frac{\partial}{\partial t}, \frac{\partial}{\partial t}, \frac{\partial}{\partial t})=1;\nonumber\\
&(2)\widehat{\mathcal{K}}(\frac{\partial}{\partial t}, \frac{\partial}{\partial t}, W)=\widehat{\mathcal{K}}(\frac{\partial}{\partial t}, W, \frac{\partial}{\partial t})=\widehat{\mathcal{K}}(W, \frac{\partial}{\partial t}, \frac{\partial}{\partial t})=0;\nonumber\\
&(3)\widehat{\mathcal{K}}(\frac{\partial}{\partial t}, V, W)=-\widehat{\mathcal{K}}(V, W, \frac{\partial}{\partial t})=b\frac{\partial b}{\partial t}g_{F}(V, W);\nonumber\\
&(4)\widehat{\mathcal{K}}(V, \frac{\partial}{\partial t}, W)=b\frac{\partial b}{\partial t}g_{F}(V, W)-b^{2}g_{F}(V, W);\nonumber\\
&(5)\widehat{\mathcal{K}}(U, V, W)=b^{2}\mathcal{K}_{F}(U, V, W).\nonumber
\end{align}
\end{prop}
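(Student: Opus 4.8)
The plan is to recognize that $M_1=I\times_b F$ is simply the single-factor case ($m=1$) of the degenerate multiply warped products treated in Proposition 3.3, with base $B=I$ carrying the Lorentzian metric $g_I=-dt^2$, fiber $F$, warping function $b$, and the distinguished vector field $P=\partial/\partial t\in\Gamma(TI)$. Since $b=b(t)$ depends only on the base coordinate and $\partial/\partial t$ spans $TI$, each of the identities (1)--(5) is obtained by substituting this data into the corresponding item of Proposition 3.3 and simplifying. First I would record the only inputs from the Lorentzian signature that drive every simplification, namely $g_I(\partial/\partial t,\partial/\partial t)=-1$ and $(\partial/\partial t)(b)=\partial b/\partial t$.

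For items (2), (3) and (5) I would simply read off Proposition 3.3 (2), (3) and (6): the off-diagonal mixed forms vanish, $\widehat{\mathcal{K}}(\partial/\partial t,V,W)=-\widehat{\mathcal{K}}(V,W,\partial/\partial t)=b\,(\partial b/\partial t)\,g_F(V,W)$, and on the pure fiber directions $\widehat{\mathcal{K}}(U,V,W)=b^2\mathcal{K}_F(U,V,W)$. Item (4) is the first place where the signature matters: Proposition 3.3 (4) gives $\widehat{\mathcal{K}}(V,\partial/\partial t,W)=b\,(\partial b/\partial t)\,g_F(V,W)+b^2 g_I(\partial/\partial t,P)\,g_F(V,W)$, and substituting $g_I(\partial/\partial t,P)=g_I(\partial/\partial t,\partial/\partial t)=-1$ produces the stated $-b^2 g_F(V,W)$ term.

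The only identity requiring a genuine, if short, computation is (1). By Proposition 3.3 (1) we have $\widehat{\mathcal{K}}(\partial/\partial t,\partial/\partial t,\partial/\partial t)=\widehat{\mathcal{K}}_I(\partial/\partial t,\partial/\partial t,\partial/\partial t)$, so it suffices to evaluate the base form from its definition $\widehat{\mathcal{K}}_I(X,Y,Z)=\mathcal{K}_I(X,Y,Z)+g_I(Y,P)g_I(X,Z)$. The pure Koszul term $\mathcal{K}_I(\partial/\partial t,\partial/\partial t,\partial/\partial t)$ vanishes, since $g_I(\partial/\partial t,\partial/\partial t)\equiv-1$ is constant (killing the three derivative terms in the definition of $\mathcal{K}$) and the Lie bracket of $\partial/\partial t$ with itself is zero (killing the three bracket terms); the remaining semi-symmetric contribution is $g_I(\partial/\partial t,\partial/\partial t)^2=(-1)^2=1$, giving the asserted value $1$.

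There is essentially no deep obstacle here --- the statement is a specialization, not a new theorem. The one point that demands care, and the step I would double-check, is the bookkeeping of the sign coming from the timelike direction $g_I(\partial/\partial t,\partial/\partial t)=-1$: it is precisely this sign that makes the non-metric correction term $g(Y,P)g(X,Z)$ contribute $1$ in (1) (rather than $0$, as the plain Koszul form does) and $-b^2$ in (4), so I would verify these two signs against the general formulas before declaring the proof complete.
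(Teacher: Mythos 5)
Your proposal is correct and matches the intended derivation: the paper offers no separate proof of this proposition, presenting it as the $m=1$, $B=I$, $g_I=-dt^2$ specialization of Proposition 3.3, which is exactly what you carry out. Your sign bookkeeping is right in both delicate spots — $g_I(\partial/\partial t,\partial/\partial t)=-1$ yields the value $1$ in (1) via $(-1)^2$ and the $-b^2 g_F(V,W)$ term in (4) — so nothing further is needed.
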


\begin{prop}
Let $M_{1}=I\times_{b}F$ be a degenerate warped product, let the vector fields $\partial/ \partial t\in\Gamma(TI)$ and $U, V, W\in\Gamma(TF).$ Let $\widehat{\mathcal{K}}$ be the semi-symmetric non-metric Koszul form on $I\times_{b}F$ and $\widehat{\mathcal{K}}_{I}$, $\widehat{\mathcal{K}}_{F}$ be the lifts of the semi-symmetric non-metric Koszul form on $B,$ $F$ respectively. Let $P\in\Gamma(TF),$ then
\begin{align}
&(1)\widehat{\mathcal{K}}(\frac{\partial}{\partial t}, \frac{\partial}{\partial t}, \frac{\partial}{\partial t})=\mathcal{K}_{I}(\frac{\partial}{\partial t}, \frac{\partial}{\partial t}, \frac{\partial}{\partial t})=0;\nonumber\\
&(2)\widehat{\mathcal{K}}(\frac{\partial}{\partial t}, \frac{\partial}{\partial t}, W)=\widehat{\mathcal{K}}(W, \frac{\partial}{\partial t}, \frac{\partial}{\partial t})=0;\nonumber\\
&(3)\widehat{\mathcal{K}}(\frac{\partial}{\partial t}, W, \frac{\partial}{\partial t})=-b^{2}g_{F}(W, P);\nonumber\\
&(4)\widehat{\mathcal{K}}(\frac{\partial}{\partial t}, V, W)=\widehat{\mathcal{K}}(V, \frac{\partial}{\partial t}, W)=-\widehat{\mathcal{K}}(V, W, \frac{\partial}{\partial t})=b\frac{\partial b}{\partial t}g_{F}(V, W);\nonumber\\
&(5)\widehat{\mathcal{K}}(U, V, W)=b^{2}\mathcal{K}_{F}(U, V, W)+b^{4}g_{F}(U, W)g_{F}(V, P).\nonumber
\end{align}
\end{prop}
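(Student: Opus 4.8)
The plan is to recognize that $M_{1}=I\times_{b}F$ is simply the single warped product obtained from the general degenerate twisted product $B\times_{b}F$ of Section 6 by taking $B=I$ with the non-degenerate metric $g_{I}=-dt^{2}$ and by letting the twisting function $b$ depend only on the base variable $t$. Since here $P\in\Gamma(TF)$, the relevant input is Proposition 6.2 (the degenerate twisted product formula for $\widehat{\mathcal{K}}$ with $P\in\Gamma(TF)$), and the whole statement will follow by substituting $X=Y=Z=\partial/\partial t$ into its five identities and simplifying. Thus I would not recompute any Koszul form from scratch; I would only track how a few special features of the warped case collapse the general formulae.

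First I would record the three simplifications that drive every item. First, because $g_{I}=-dt^{2}$, one has $g_{I}(\partial/\partial t,\partial/\partial t)=-1$, which is the sole source of a sign. Second, because $b=b(t)$, the coordinate field acts by $(\partial/\partial t)(b)=\partial b/\partial t$, while every fibre field annihilates $b$, i.e. $U(b)=V(b)=W(b)=0$ for $U,V,W\in\Gamma(TF)$. Third, on the one-dimensional base $\mathcal{K}_{I}(\partial/\partial t,\partial/\partial t,\partial/\partial t)=0$, since in Definition 2.2 the bracket $[\partial/\partial t,\partial/\partial t]$ vanishes and $g_{I}(\partial/\partial t,\partial/\partial t)$ is constant. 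With these in hand, items (1), (2) and (4) are immediate specializations of parts (1), (2) and (4) of Proposition 6.2, using the third simplification for (1) and the second for (4). Item (3) follows from part (3) of Proposition 6.2, namely $\widehat{\mathcal{K}}(X,W,Y)=b^{2}g_{B}(X,Y)g_{F}(W,P)$, where the substitution $X=Y=\partial/\partial t$ together with $g_{I}(\partial/\partial t,\partial/\partial t)=-1$ produces the sign in $-b^{2}g_{F}(W,P)$.

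The one genuinely warped-specific step is item (5). Here I would start from part (5) of Proposition 6.2, $\widehat{\mathcal{K}}(U,V,W)=bU(b)g_{F}(V,W)+bV(b)g_{F}(W,U)-bW(b)g_{F}(U,V)+b^{2}\mathcal{K}_{F}(U,V,W)+b^{4}g_{F}(V,P)g_{F}(U,W)$, and invoke the vanishing of fibre derivatives of $b$: the three terms $bU(b)g_{F}(V,W)$, $bV(b)g_{F}(W,U)$ and $-bW(b)g_{F}(U,V)$ all drop out, leaving exactly $b^{2}\mathcal{K}_{F}(U,V,W)+b^{4}g_{F}(U,W)g_{F}(V,P)$. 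The main obstacle is therefore not any hard computation but bookkeeping: one must be careful that the warping function is constant along $F$, so that only the three derivative terms vanish while the curvature term $b^{2}\mathcal{K}_{F}$ and the non-metric term $b^{4}g_{F}(U,W)g_{F}(V,P)$ survive, and one must correctly carry the single sign coming from $g_{I}(\partial/\partial t,\partial/\partial t)=-1$ in item (3). Once these are handled, the proposition reads off directly from Proposition 6.2.
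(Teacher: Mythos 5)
Your proposal is correct. The paper states this proposition without proof, as a direct specialization of its general results; the natural source there is Proposition 3.4 (the multiply warped product formula for $\widehat{\mathcal{K}}$ with $P\in\Gamma(TF_{l})$) taken with $m=1$, which already yields items (1)--(5) verbatim upon setting $X=Y=\partial/\partial t$, $b_{1}=b$, and $g_{I}(\partial/\partial t,\partial/\partial t)=-1$. You instead specialize the twisted-product Proposition 6.2, which is also legitimate since a warped product is a twisted product whose twisting function factors through $\pi_{B}$; the price is the extra bookkeeping step of killing the fibre derivatives $U(b)=V(b)=W(b)=0$ in item (5), and you carry this out correctly, retaining exactly the terms $b^{2}\mathcal{K}_{F}(U,V,W)+b^{4}g_{F}(U,W)g_{F}(V,P)$. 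The sign in item (3) coming from $g_{I}(\partial/\partial t,\partial/\partial t)=-1$ and the vanishing of $\mathcal{K}_{I}(\partial/\partial t,\partial/\partial t,\partial/\partial t)$ in item (1) are both handled as the paper intends. The only thing your route buys is generality (it would survive a $t$- and fibre-dependent $b$ up to the discarded terms), at the cost of one additional observation that the warped-product route makes unnecessary.
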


\begin{thm}
Let $M_{1}=I\times_{b}F$ be a degenerate warped product, let the vector fields $\partial/ \partial t\in\Gamma(TI)$ and $U, V, W, Q\in\Gamma(TF).$  Let $P=\partial/ \partial t,$ then
\begin{align}
&(1)\widehat{R}(\frac{\partial}{\partial t}, \frac{\partial}{\partial t}, \frac{\partial}{\partial t}, \frac{\partial}{\partial t})=\widehat{R}_{I}(\frac{\partial}{\partial t}, \frac{\partial}{\partial t}, \frac{\partial}{\partial t}, \frac{\partial}{\partial t})=0;\nonumber\\
&(2)\widehat{R}(\frac{\partial}{\partial t}, \frac{\partial}{\partial t}, \frac{\partial}{\partial t}, W)=\widehat{R}(\frac{\partial}{\partial t}, \frac{\partial}{\partial t}, W, \frac{\partial}{\partial t})=\widehat{R}(\frac{\partial}{\partial t}, W, \frac{\partial}{\partial t}, \frac{\partial}{\partial t})=0;\nonumber\\
&(3)\widehat{R}(\frac{\partial}{\partial t}, \frac{\partial}{\partial t},  V, W)=\widehat{R}(V, W, \frac{\partial}{\partial t}, \frac{\partial}{\partial t})=0;\nonumber\\
&(4)\widehat{R}(V, \frac{\partial}{\partial t}, W, \frac{\partial}{\partial t})=-\widehat{R}(\frac{\partial}{\partial t}, V, W, \frac{\partial}{\partial t})=b\frac{\partial^{2} b}{\partial t^{2}}g_{F}(V, W)+2b\frac{\partial b}{\partial t}g_{F}(V, W);\nonumber\\
&(5)\widehat{R}(V, \frac{\partial}{\partial t}, \frac{\partial}{\partial t}, W)=-\widehat{R}(\frac{\partial}{\partial t}, V, \frac{\partial}{\partial t}, W)=-b\frac{\partial^{2} b}{\partial t^{2}}g_{F}(V, W);\nonumber\\
&(6)\widehat{R}(\frac{\partial}{\partial t}, V, W, U)=\widehat{R}(W, U, \frac{\partial}{\partial t}, V)=0;\nonumber\\
&(7)\widehat{R}(W, U, V, \frac{\partial}{\partial t})=-b^{2}(\mathcal{K}_{F}(W, V, U)-\mathcal{K}_{F}(U, V, W));\nonumber\\
&(8)\widehat{R}(U, V, W, Q)=b^{2}R_{F}(U, V, W, Q)+b^{2}g^{*}_{I}(db, db)(g_{F}(U, W)g_{F}(V, Q)-g_{F}(V, W)g_{F}(U, Q)).\nonumber
\end{align}
\end{thm}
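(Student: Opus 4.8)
The plan is to read off this theorem (Theorem 8.7) as the single-fibre specialization of Theorem 3.7, the computation of the semi-symmetric non-metric curvature $\widehat{R}$ of a singular multiply warped product in the case $P\in\Gamma(TB)$. Here the base is $B=I$ with the non-degenerate metric $-dt^{2}$, there is exactly one fibre ($m=1$, $F_{1}=F$, $b_{1}=b$), and the distinguished field is $P=\partial/\partial t\in\Gamma(TI)$. Well-definedness of $\widehat{R}$ follows because $M_{1}=I\times_{b}F$ is semi-regular by Theorem 2.9 (with $I$ non-degenerate and $F$ semi-regular) and hence semi-symmetric non-metric semi-regular by Proposition 3.6. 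Since $\dim I=1$, every base field is a multiple of $\partial/\partial t$, so I set $X=Y=Z=T=\partial/\partial t$ in each item of Theorem 3.7, working on $\{b>0\}$ and extending smoothly across $\{b=0\}$.

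First I record the simplifications that make the general formulas collapse. The base metric gives the constant $g_{I}(\partial/\partial t,\partial/\partial t)=-1$, so $\partial/\partial t\big(g_{I}(\partial/\partial t,\partial/\partial t)\big)=0$; this annihilates the term $-b_{j}^{2}X(g_{B}(Y,P))$ in item (5) of Theorem 3.7. Next, a one-dimensional manifold is flat and $-dt^{2}$ has constant coefficients, so $\mathcal{K}_{I}(\partial/\partial t,\partial/\partial t,\partial/\partial t)=0$, i.e.\ $\nabla^{\flat}_{\partial/\partial t}\partial/\partial t=0$ and therefore $g^{*}_{I}(\nabla^{\flat}_{\partial/\partial t}\partial/\partial t,db)=0$. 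Finally $R_{I}=\widehat{R}_{I}=0$, both because $\dim I=1$ and because $\widehat{R}$ is antisymmetric in its first two slots, which here both equal $\partial/\partial t$; this gives item (1), while the vanishing items (2), (3), (6) of the conclusion are precisely items (2), (3), (7) of Theorem 3.7 evaluated at $X=Y=Z=\partial/\partial t$.

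It then remains to match the surviving items, inserting the scalar $g_{B}(\,\cdot\,,P)=g_{I}(\partial/\partial t,\partial/\partial t)=-1$. Item (4) is item (4) of Theorem 3.7: the $\nabla^{\flat}$-term drops and $-2b_{j}X(b_{j})g_{B}(Y,P)$ becomes $+2b(\partial b/\partial t)$, giving $b(\partial^{2}b/\partial t^{2})g_{F}(V,W)+2b(\partial b/\partial t)g_{F}(V,W)$; item (5) is item (5) of Theorem 3.7 with both extra terms gone; item (7) is item (8) of Theorem 3.7, where the factor $g_{B}(\partial/\partial t,P)=-1$ supplies the overall sign; and item (8) is the all-equal-index item (10) of Theorem 3.7 with $g^{*}_{B}(db_{j},db_{j})$, $R_{F_{j}}$ replaced by $g^{*}_{I}(db,db)$, $R_{F}$. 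I expect no real obstacle: once Theorem 3.7 is granted the argument is purely mechanical, and the only point requiring care is the consistent bookkeeping of the Lorentzian sign $g_{I}(\partial/\partial t,\partial/\partial t)=-1$, which governs the signs in items (4), (5) and (7) and distinguishes the timelike base here from the Riemannian template. As a cross-check one may instead substitute the explicit Koszul values of Proposition 8.5 together with the ordinary curvature into the master formula of Proposition 3.5, which must reproduce the same list.
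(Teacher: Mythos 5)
Your proposal is correct and follows exactly the route the paper intends: Section 8 states this theorem without proof as the single-fibre specialization of Theorem 3.7 with $B=I$, $m=1$, $P=\partial/\partial t$, and your bookkeeping of $\mathcal{K}_{I}(\partial/\partial t,\partial/\partial t,\partial/\partial t)=0$ and of the Lorentzian sign $g_{I}(\partial/\partial t,\partial/\partial t)=-1$ (which produces the $+2b\,\partial b/\partial t$ term in item (4) and the overall minus sign in item (7)) reproduces each item of the statement. Nothing further is needed.
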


\begin{thm}
Let $M_{1}=I\times_{b}F$ be a degenerate warped product, let the vector fields $\partial/ \partial t\in\Gamma(TI)$ and $U, V, W, Q\in\Gamma(TF).$  Let $P\in\Gamma(TF),$ then
\begin{align}
&(1)\widehat{R}(\frac{\partial}{\partial t}, \frac{\partial}{\partial t}, \frac{\partial}{\partial t}, \frac{\partial}{\partial t})=R_{I}(\frac{\partial}{\partial t}, \frac{\partial}{\partial t}, \frac{\partial}{\partial t}, \frac{\partial}{\partial t})=0;\nonumber\\
&(2)\widehat{R}(\frac{\partial}{\partial t}, \frac{\partial}{\partial t}, \frac{\partial}{\partial t}, W)=\widehat{R}(\frac{\partial}{\partial t}, \frac{\partial}{\partial t}, W, \frac{\partial}{\partial t})=\widehat{R}(\frac{\partial}{\partial t}, W, \frac{\partial}{\partial t}, \frac{\partial}{\partial t})=0;\nonumber\\
&(3)\widehat{R}(\frac{\partial}{\partial t}, \frac{\partial}{\partial t},  V, W)=\widehat{R}(V, W, \frac{\partial}{\partial t}, \frac{\partial}{\partial t})=0;\nonumber\\
&(4)\widehat{R}(V, \frac{\partial}{\partial t}, W, \frac{\partial}{\partial t})=-\widehat{R}(\frac{\partial}{\partial t}, V, W, \frac{\partial}{\partial t})=b\frac{\partial^{2} b}{\partial t^{2}}g_{F}(V, W)-b^{2}V(g_{F}(W, P));\nonumber\\
&(5)\widehat{R}(V, \frac{\partial}{\partial t}, \frac{\partial}{\partial t}, W)=-\widehat{R}(\frac{\partial}{\partial t}, V, \frac{\partial}{\partial t}, W)=-b\frac{\partial^{2} b}{\partial t^{2}}g_{F}(V, W);\nonumber\\\
&(6)\widehat{R}(\frac{\partial}{\partial t}, V, W, U)=2b^{3}\frac{\partial b}{\partial t}(g_{F}(V, U)g_{F}(W, P)+g_{F}(V, W)g_{F}(U, P));\nonumber\\
&(7)\widehat{R}(W, U, \frac{\partial}{\partial t}, V)=\widehat{R}(W, U, V, \frac{\partial}{\partial t})=0;\nonumber\\
&(8)\widehat{R}(U, V, W, Q)=b^{2}R_{F}(U, V, W, Q)+b^{2}g^{*}_{I}(db, db)(g_{F}(U, W)g_{F}(V, Q)\nonumber\\
&-g_{F}(V, W)g_{F}(U, Q))+b^{4}g_{F}(Q, P)(\mathcal{K}_{F}(U, W, V)-\mathcal{K}_{F}(V, W, U))\nonumber\\
&+b^{4}U(g_{F}(W, P))g_{F}(V, Q)-b^{4}V(g_{F}(W, P))g_{F}(U, Q).\nonumber
\end{align}
\end{thm}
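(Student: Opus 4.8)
The plan is to deduce this theorem as the one-fibre, one-dimensional-base specialization of Theorem 3.8, the general formula for the semi-symmetric non-metric curvature $\widehat{R}$ of a multiply warped product with $P\in\Gamma(TF_l)$. Concretely, $M_1=I\times_b F$ is the multiply warped product with $m=1$, $B=I$, $F_1=F$, warping function $b_1=b\in C^\infty(I)$, and $P\in\Gamma(TF)=\Gamma(TF_l)$ for $l=1$; since $\widehat{R}$ is a $(0,4)$-tensor and $\dim I=1$, every base field is a function multiple of $\partial/\partial t$, so it suffices to evaluate the cases of Theorem 3.8 with all base slots equal to $\partial/\partial t$ and with the fibre indices collapsed to the single value $i=j=k=s=l=1$. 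The eight assertions (1)--(8) then match the surviving index patterns.

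First I would record the simplifications forced by the one-dimensional base $(I,-dt^2)$. Because $\dim I=1$ the base is flat, so $R_I=0$, which turns Theorem 3.8(1) into case (1). Because $g_I(\partial/\partial t,\partial/\partial t)\equiv-1$ is constant, $\mathcal{K}_I(\partial/\partial t,\partial/\partial t,\partial/\partial t)=\tfrac12\,\partial_t(-1)=0$, whence the lower covariant derivative $\nabla^\flat_{\partial/\partial t}\partial/\partial t$ vanishes as a $1$-form on $I$; consequently every occurrence of $\mathcal{K}_I$ and of $g^*_I(\nabla^\flat_XY,db)$ drops out of Theorem 3.8. I would then substitute $\partial_t(b)=\partial b/\partial t$ and $\partial_t\partial_t(b)=\partial^2 b/\partial t^2$, and keep the factor $g_I(\partial/\partial t,\partial/\partial t)=-1$; it is precisely this sign that converts the term $b^2V(g_F(W,P))\,g_B(X,Y)$ of Theorem 3.8(7) into the $-b^2V(g_F(W,P))$ of case (4).

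With these reductions the correspondence is mechanical: Theorem 3.8(1) gives (1); the vanishing statements 3.8(2)--(6), after killing $\mathcal{K}_I$ and using $R_I=0$, give (2) and (3); 3.8(7) gives (4); 3.8(9) gives (5); 3.8(11) gives (6); and 3.8(15), whose fibre part is untouched by the specialization, gives (8) verbatim. In (4) and (5) I would use the first-slot antisymmetry $\widehat{R}(X,Y,Z,T)=-\widehat{R}(Y,X,Z,T)$ to pass between $\widehat{R}(V,\partial/\partial t,\cdots)$ and $\widehat{R}(\partial/\partial t,V,\cdots)$.

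The one step needing real care --- and the main obstacle --- is case (7), namely $\widehat{R}(W,U,\partial/\partial t,V)=\widehat{R}(W,U,V,\partial/\partial t)=0$. Since the non-metric curvature does \emph{not} satisfy the last-pair antisymmetry $\widehat{R}(X,Y,Z,T)=-\widehat{R}(X,Y,T,Z)$ (as noted after Proposition 3.5), these cannot be read off from case (6) by exchanging the last two arguments; they are genuinely independent quantities. I would verify them by expanding via the curvature formula of Proposition 3.5. For $\widehat{R}(W,U,\partial/\partial t,V)$ the two derivative corrections involve $W(g(\partial/\partial t,P))$ and $U(g(\partial/\partial t,P))$, which vanish since $g(\partial/\partial t,P)\equiv 0$ (orthogonality of base and fibre), while the two Koszul corrections $-\widehat{\mathcal{K}}(U,\partial/\partial t,W)g(P,V)+\widehat{\mathcal{K}}(W,\partial/\partial t,U)g(P,V)$ cancel because $\widehat{\mathcal{K}}(U,\partial/\partial t,W)=\widehat{\mathcal{K}}(W,\partial/\partial t,U)=b\,\partial_t(b)\,g_F(U,W)$ by the preceding non-metric Koszul-form proposition (the one with $P\in\Gamma(TF)$); for $\widehat{R}(W,U,V,\partial/\partial t)$ all four corrections vanish through $g(U,\partial/\partial t)=g(W,\partial/\partial t)=g(P,\partial/\partial t)=0$. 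In both cases the remaining Levi-Civita term $R(W,U,\partial/\partial t,V)$ (resp.\ $R(W,U,V,\partial/\partial t)$) reduces, by the pair and first-slot symmetries of $R$, to a one-base/three-fibre component of the warped product, which vanishes because $b$ has no fibre dependence (every such component of $R$ in Theorem 5.6(5) is a sum of terms each carrying a factor $V(b)$, $W(b)$, or a mixed second derivative, all zero when $b\in C^\infty(I)$). This establishes (7), and with the mechanical substitutions above completes the proof.
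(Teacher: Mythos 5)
Your proposal is correct and follows the route the paper intends: Theorem 8.9 is stated without proof in the examples section precisely because it is the $m=1$, $B=I$, $i=j=k=s=l$ specialization of Theorem 3.8, with $R_I=0$, $\mathcal{K}_I(\partial_t,\partial_t,\partial_t)=0$, $\nabla^{\flat}_{\partial_t}\partial_t=0$ and $g_I(\partial_t,\partial_t)=-1$ supplying exactly the simplifications and the sign in case (4) that you identify. Your separate verification of case (7) via Proposition 3.5 is a sensible precaution, since that component is only implicitly covered by the ``other cases'' clause (14) of Theorem 3.8, and your computation (vanishing of $g(\partial_t,P)$, cancellation of the two equal Koszul corrections, and vanishing of the one-base/three-fibre component of $R$ because $b\in C^{\infty}(I)$) is correct.
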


\begin{prop}
Let $M_{1}=I\times_{b}F$ be a degenerate warped product, let the vector fields $\partial/ \partial t\in\Gamma(TI)$ and $U, V, W\in\Gamma(TF).$ Let  $\widetilde{\mathcal{K}}$ be the almost product Koszul form on $I\times_{b}F$ and $\widetilde{\mathcal{K}}_{I}$, $\widetilde{\mathcal{K}}_{F}$ be the lifts of the almost product Koszul form on $I,$ $F,$ respectively. Then
\begin{align}
&(1)\widetilde{\mathcal{K}}(\frac{\partial}{\partial t}, \frac{\partial}{\partial t}, \frac{\partial}{\partial t})=\widetilde{\mathcal{K}}_{I}(\frac{\partial}{\partial t}, \frac{\partial}{\partial t}, \frac{\partial}{\partial t})=0;\nonumber\\
&(2)\widetilde{\mathcal{K}}(\frac{\partial}{\partial t}, \frac{\partial}{\partial t}, W)=\widetilde{\mathcal{K}}(\frac{\partial}{\partial t}, W, \frac{\partial}{\partial t})=\widetilde{\mathcal{K}}(W, \frac{\partial}{\partial t}, \frac{\partial}{\partial t})=0;\nonumber\\
&(3)\widetilde{\mathcal{K}}(\frac{\partial}{\partial t}, V, W)=b\frac{\partial b}{\partial t}g_{F}(V, W);\nonumber\\
&(4)\widetilde{\mathcal{K}}(V, \frac{\partial}{\partial t}, W)=-\widetilde{\mathcal{K}}(V, W, \frac{\partial}{\partial t})=\frac{1}{2}(b\frac{\partial b}{\partial t}g_{F}(V, W)+bJ_{I}(\frac{\partial b}{\partial t})g_{F}(V, J_{F}W));\nonumber\\
&(5)\widetilde{\mathcal{K}}(U, V, W)=b^{2}\widetilde{\mathcal{K}}_{F}(U, V, W).\nonumber
\end{align}
\end{prop}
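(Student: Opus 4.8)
The plan is to read off all five identities by specializing Proposition 4.4 to the single-factor case $m=1$, with base $B=I$, fiber $F_{1}=F$, warping function $b_{1}=b$, and base directions $X=Y=Z=\partial/\partial t$. Under this dictionary each clause of the present proposition is exactly the restriction of the correspondingly numbered clause of Proposition 4.4: clause (1) here is clause (1) there with $\widetilde{\mathcal{K}}_{B}=\widetilde{\mathcal{K}}_{I}$; clause (2) is clause (2); clause (3) is clause (3) with $X(b_{j})=\partial b/\partial t$; clause (4) is clause (4) with $J_{B}X=J_{I}(\partial/\partial t)$; and clause (5) is clause (6), where the single fiber forces $i=j=k$ automatically so that no $i\neq j$ or mixed-fiber branches survive. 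Thus the bulk of the work is already done in Proposition 4.4, and what remains is bookkeeping plus one genuinely new evaluation.

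The one computation not packaged in Proposition 4.4 is the base term $\widetilde{\mathcal{K}}_{I}(\partial/\partial t,\partial/\partial t,\partial/\partial t)$ occurring in clause (1), which I claim vanishes. From the definition of the Koszul form, all three bracket terms are built from $[\partial/\partial t,\partial/\partial t]=0$, so $\mathcal{K}_{I}(\partial/\partial t,\partial/\partial t,\partial/\partial t)=\tfrac12\,\partial_{t}\bigl(g_{I}(\partial/\partial t,\partial/\partial t)\bigr)$; since $g_{I}(\partial/\partial t,\partial/\partial t)=-1$ is constant this is $0$. Next, because $I$ is one-dimensional and $J_{I}$ satisfies $J_{I}^{2}=\mathrm{id}$ together with $g_{I}(J_{I}\,\cdot,J_{I}\,\cdot)=g_{I}(\cdot,\cdot)$, one has $J_{I}(\partial/\partial t)=\pm\,\partial/\partial t$, so the twisted term $\mathcal{K}_{I}(\partial/\partial t,J_{I}\partial/\partial t,J_{I}\partial/\partial t)$ equals $\mathcal{K}_{I}(\partial/\partial t,\partial/\partial t,\partial/\partial t)=0$ as well, the two sign factors cancelling. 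Hence $\widetilde{\mathcal{K}}_{I}(\partial/\partial t,\partial/\partial t,\partial/\partial t)=\tfrac12[0+0]=0$, which settles clause (1).

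With the base term settled, the remaining clauses are immediate transcriptions: for clause (3) substitute $X=\partial/\partial t$ into $\widetilde{\mathcal{K}}(X,V_{i},W_{j})=b_{j}X(b_{j})g_{F_{j}}(V_{j},W_{j})$; for clause (4) substitute into $\widetilde{\mathcal{K}}(V_{i},X,W_{j})=-\widetilde{\mathcal{K}}(V_{i},W_{j},X)=\tfrac12\bigl(b_{j}X(b_{j})g_{F_{j}}(V_{j},W_{j})+b_{j}J_{B}X(b_{j})\,g_{F_{j}}(V_{j},J_{F_{j}}W_{j})\bigr)$, reading $J_{B}X(b_{j})$ as $\bigl(J_{I}(\partial/\partial t)\bigr)(b)$ and abbreviating it $J_{I}(\partial b/\partial t)$; and clause (5) is clause (6) of Proposition 4.4 collapsed to the single fiber. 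I expect essentially no obstacle beyond this bookkeeping, since the structural content resides in Proposition 4.4 and the only new line is the vanishing of $\widetilde{\mathcal{K}}_{I}$ on the degenerate one-dimensional base, verified above.
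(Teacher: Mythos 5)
Your proposal is correct and follows exactly the route the paper intends: Section 8 states these propositions as direct specializations of Proposition 4.4 (here with $m=1$, $B=I$, $b_{1}=b$, $X=Y=Z=\partial/\partial t$), and the paper supplies no further argument. Your one added verification --- that $\widetilde{\mathcal{K}}_{I}(\partial/\partial t,\partial/\partial t,\partial/\partial t)=0$ because $g_{I}(\partial/\partial t,\partial/\partial t)=-1$ is constant and $J_{I}\partial/\partial t=\pm\,\partial/\partial t$ with a locally constant sign that squares away --- is sound and fills in the only step not already packaged in Proposition 4.4.
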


\begin{thm}
Let $M_{1}=I\times_{b}F$ be a degenerate warped product, let the vector fields $\partial/ \partial t\in\Gamma(TI)$ and $U, V, W, Q\in\Gamma(TF).$  Then
\begin{align}
&(1)\widetilde{R}(\frac{\partial}{\partial t}, \frac{\partial}{\partial t}, \frac{\partial}{\partial t}, \frac{\partial}{\partial t})=\widetilde{R}_{I}(\frac{\partial}{\partial t}, \frac{\partial}{\partial t}, \frac{\partial}{\partial t}, \frac{\partial}{\partial t})=0;\nonumber\\
&(2)\widetilde{R}(\frac{\partial}{\partial t}, \frac{\partial}{\partial t}, \frac{\partial}{\partial t}, W)=\widetilde{R}(\frac{\partial}{\partial t}, W, \frac{\partial}{\partial t}, \frac{\partial}{\partial t})=0;\nonumber\\
&(3)\widetilde{R}(\frac{\partial}{\partial t}, \frac{\partial}{\partial t},  V, W)=\widetilde{R}(V, W, \frac{\partial}{\partial t}, \frac{\partial}{\partial t})=0;\nonumber\\
&(4)\widetilde{R}(\frac{\partial}{\partial t}, V, \frac{\partial}{\partial t}, W)=\frac{b}{2}\frac{\partial^{2} b}{\partial t^{2}}g_{F}(V, W)+\frac{b}{2}\frac{\partial}{\partial t}(J_{I}(\frac{\partial b}{\partial t}))g_{F}(V, J_{F}W);\nonumber\\
&(5)\widetilde{R}(\frac{\partial}{\partial t}, V, W, U)=0;\nonumber\\
&(6)\widetilde{R}(U, V, \frac{\partial}{\partial t}, W)=\frac{b}{4}\frac{\partial b}{\partial t}(-\mathcal{K}_{F}(V, W, U)+\mathcal{K}_{F}(U, W, V)+\mathcal{K}_{F}(V, J_{F}W, J_{F}U)\nonumber\\
&-\mathcal{K}_{F}(U, J_{F}W, J_{F}V))-\frac{b}{4}J_{I}(\frac{\partial b}{\partial t})(-\mathcal{K}_{F}(V, W, J_{F}U)+\mathcal{K}_{F}(U, W, J_{F}V)+\mathcal{K}_{F}(V, J_{F}W, U)\nonumber\\
&-\mathcal{K}_{F}(U, J_{F}W, V));\nonumber\\
&(7)\widetilde{R}(U, V, W, Q)=b^{2}\widetilde{R}_{F}(U, V, W, Q)+\frac{1}{4}b^{2}[g^{*}_{I}(db, db)(g_{F}(U, W)g_{F}(V, Q)\nonumber\\
&-g_{F}(V, W)g_{F}(U, Q))+g^{*}_{I}(db\circ J_{I}, db)(g_{F}(U, J_{F}W)g_{F}(V, Q)-g_{F}(V, J_{F}W)g_{F}(U, Q)\nonumber\\
&+g_{F}(U, W)g_{F}(V, J_{F}Q)-g_{F}(V, W)g_{F}(U, J_{F}Q))+g^{*}_{I}(db\circ J_{I}, db\circ J_{I})(g_{F}(U, J_{F}W)\nonumber\\
&\times g_{F}(V, J_{F}Q)-g_{F}(V, J_{F}W)g_{F}(U, J_{F}Q))].\nonumber
\end{align}
\end{thm}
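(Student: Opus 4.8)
The plan is to obtain this last theorem (Theorem 8.10) as the specialization of the general almost-product curvature formula of Theorem 4.8 to the single warped product $M_{1}=I\times_{b}F$, i.e. to the multiply warped case with $m=1$, base $B=I$, fibre $F_{1}=F$ and warping function $b_{1}=b$. Under this identification the only base vector field available is $\partial/\partial t$, so every slot that Theorem 4.8 fills with a field $X,Y,Z,T\in\Gamma(TB)$ is taken equal to $\partial/\partial t$, while $U,V,W,Q$ remain arbitrary fields on $F$. With $m=1$ all clauses of Theorem 4.8 requiring two distinct fibre indices $i\neq j$ become vacuous, and what survives are exactly the seven clauses (1)--(7) of Theorem 8.10, arising from clauses (1), (2), (3), (4), (6), (7) and (9) of Theorem 4.8 respectively.

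First I would record the simplifications forced by $\dim I=1$. Since a one-dimensional manifold is flat, $\widetilde{R}_{I}\equiv 0$, which gives clause (1). Because $g_{I}(\partial/\partial t,\partial/\partial t)=-1$ is constant and all Lie brackets on $I$ vanish, $\mathcal{K}_{I}(\partial/\partial t,\partial/\partial t,\partial/\partial t)=0$; and as $J_{I}^{2}=\mathrm{id}$ on the line forces $J_{I}(\partial/\partial t)=\pm\,\partial/\partial t$, the defining average in Definition 4.2 yields $\widetilde{\mathcal{K}}_{I}(\partial/\partial t,\partial/\partial t,\partial/\partial t)=0$ as well, hence $\widetilde{\nabla}^{\flat}_{\partial/\partial t}\partial/\partial t=0$. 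This is precisely what makes every $g^{*}_{B}(\widetilde{\nabla}^{\flat}_{X}Y,\cdot)$ correction term in Theorem 4.8 drop out upon specialization. It then remains only to translate the surviving quantities into the $t$-coordinate: $X(b)=\partial b/\partial t$, $(J_{B}X)(b)=J_{I}(\partial b/\partial t)$ in the paper's shorthand, $XY(b)=\partial^{2}b/\partial t^{2}$ and $X(J_{B}Y)(b)=\frac{\partial}{\partial t}\big(J_{I}(\partial b/\partial t)\big)$, while the coefficients $g^{*}_{I}(db,db)$, $g^{*}_{I}(db\circ J_{I},db)$ and $g^{*}_{I}(db\circ J_{I},db\circ J_{I})$ are left symbolic, exactly as in clause (7).

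Alternatively, and more self-containedly for this section, each clause can be computed from scratch via Proposition 4.5, namely $\widetilde{R}(X,Y,Z,T)=X(\widetilde{\mathcal{K}}(Y,Z,T))-Y(\widetilde{\mathcal{K}}(X,Z,T))-\widetilde{\mathcal{K}}([X,Y],Z,T)+\widetilde{\mathcal{K}}(X,Z,\bullet)\widetilde{\mathcal{K}}(Y,T,\bullet)-\widetilde{\mathcal{K}}(Y,Z,\bullet)\widetilde{\mathcal{K}}(X,T,\bullet)$, by substituting the explicit Koszul values of Proposition 8.9 and splitting the contraction $\bullet$ into its $\bullet_{I}$ and $\bullet_{F}$ parts (with the $1/b^{2}$ weight on the fibre contraction, as in the proof of Theorem 4.7). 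The antisymmetries $\widetilde{R}(X,Y,Z,T)=-\widetilde{R}(Y,X,Z,T)=-\widetilde{R}(X,Y,T,Z)$ halve the bookkeeping and fix the paired clauses (2) and (3).

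The main obstacle will be clause (7), $\widetilde{R}(U,V,W,Q)$, inherited from Theorem 4.8(9): here both Koszul factors in the contraction carry fibre arguments, so one must carefully isolate the intrinsic piece $b^{2}\widetilde{R}_{F}(U,V,W,Q)$ from the warping corrections, keeping precise track of the four $J_{F}$-twisted combinations produced by $\widetilde{\mathcal{K}}_{F}(V,W,\bullet_{F})\widetilde{\mathcal{K}}_{F}(U,Q,\bullet_{F})$ together with the $g^{*}_{I}(db\circ J_{I},\cdot)$ cross terms. Verifying that no spurious $\widetilde{\nabla}^{\flat}_{F}$-type residue remains, i.e. that everything reassembles into the symmetric and $J_{F}$-symmetric pattern displayed, is the only genuinely delicate check; the remaining clauses reduce to routine substitution.
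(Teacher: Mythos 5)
Your proposal is correct and follows essentially the route the paper intends: Theorem 8.10 is stated in the examples section without a written proof precisely because it is the specialization of Theorem 4.8 to $m=1$, $B=I$, $F_{1}=F$, $b_{1}=b$, with the clause correspondence you identify and with the $g^{*}_{B}(\widetilde{\nabla}^{\flat}_{X}Y,\cdot)$ terms killed by $\widetilde{\nabla}^{\flat}_{\partial/\partial t}\partial/\partial t=0$ on the one-dimensional base. Your observations that $\widetilde{R}_{I}\equiv 0$ and that $J_{I}(\partial/\partial t)=\pm\,\partial/\partial t$ forces $\widetilde{\mathcal{K}}_{I}(\partial/\partial t,\partial/\partial t,\partial/\partial t)=0$ are exactly the simplifications needed, so nothing further is required.
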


\begin{prop}
Let $M_{2}=I\times_{\phi^{p_{1}}}F_{1}\times_{\phi^{p_{2}}}F_{2}$ be a degenerate multiply warped product, let the vector fields $\partial/ \partial t\in\Gamma(TI)$ and $U_{j}, V_{j}, W_{j}\in\Gamma(TF_{j}),$ for $j\in \{1, 2\}.$ Let $\overline{\mathcal{K}}$ be the semi-symmetric metric Koszul form on $I\times_{\phi^{p_{1}}}F_{1}\times_{\phi^{p_{2}}}F_{2}$ and $\overline{\mathcal{K}}_{I}$, $\overline{\mathcal{K}}_{F_{j}}$ be the lifts of the semi-symmetric metric Koszul form on $I,$ $F_{j},$ respectively. Let $P=\partial/ \partial t,$ then
\begin{align}
&(1)\overline{\mathcal{K}}(\frac{\partial}{\partial t}, \frac{\partial}{\partial t}, \frac{\partial}{\partial t})=\overline{\mathcal{K}}_{I}(\frac{\partial}{\partial t}, \frac{\partial}{\partial t}, \frac{\partial}{\partial t})=0;\nonumber\\
&(2)\overline{\mathcal{K}}(\frac{\partial}{\partial t}, \frac{\partial}{\partial t}, W_{j})=\overline{\mathcal{K}}(\frac{\partial}{\partial t}, W_{j}, \frac{\partial}{\partial t})=\overline{\mathcal{K}}(W_{j}, \frac{\partial}{\partial t}, \frac{\partial}{\partial t})=0;\nonumber\\
&(3)\overline{\mathcal{K}}(\frac{\partial}{\partial t}, V_{i}, W_{j})=\phi^{p_{j}}\frac{\partial \phi^{p_{j}}}{\partial t}g_{F_{j}}(V_{j}, W_{j}),~if~i=j;\nonumber\\
&(4)\overline{\mathcal{K}}(V_{i}, \frac{\partial}{\partial t}, W_{j})=-\overline{\mathcal{K}}(V_{i}, W_{j}, \frac{\partial}{\partial t})=\phi^{p_{j}}\frac{\partial \phi^{p_{j}}}{\partial t}g_{F_{j}}(V_{j}, W_{j})-\phi^{2p_{j}}g_{F_{j}}(V_{j}, W_{j}),~if~i=j;\nonumber\\
&(5)\overline{\mathcal{K}}(\frac{\partial}{\partial t}, V_{i}, W_{j})=\overline{\mathcal{K}}(V_{i}, \frac{\partial}{\partial t}, W_{j})=\overline{\mathcal{K}}(V_{i}, W_{j}, \frac{\partial}{\partial t})=0,~if~i\neq j;\nonumber\\
&(6)\overline{\mathcal{K}}(U_{i}, V_{j}, W_{k})=\phi^{2p_{j}}\mathcal{K}_{F_{j}}(U_{j}, V_{j}, W_{j}),~if~i=j=k;\nonumber\\
&(7)\overline{\mathcal{K}}(U_{i}, V_{j}, W_{k})=0,~other~cases.\nonumber
\end{align}
\end{prop}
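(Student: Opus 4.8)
The plan is to read $M_{2}=I\times_{\phi^{p_{1}}}F_{1}\times_{\phi^{p_{2}}}F_{2}$ as a degenerate multiply warped product over the base $B=I$ with $m=2$ fibers and warping functions $b_{1}=\phi^{p_{1}},$ $b_{2}=\phi^{p_{2}},$ and then to specialize Proposition 2.6 to the choices $X=Y=Z=\partial/\partial t$ and $P=\partial/\partial t\in\Gamma(TI).$ Since the base metric is $g_{I}=-dt^{2},$ the two facts I will use repeatedly are that $g_{I}(\partial/\partial t,\partial/\partial t)=-1$ is constant and that $(\partial/\partial t)(b_{j})=\partial\phi^{p_{j}}/\partial t.$

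First I would dispose of item (1). Because $g_{I}(\partial/\partial t,\partial/\partial t)\equiv-1$ and $[\partial/\partial t,\partial/\partial t]=0,$ every term in the Koszul form of Definition 2.2 vanishes, so $\mathcal{K}_{I}(\partial/\partial t,\partial/\partial t,\partial/\partial t)=0;$ moreover, with $P=\partial/\partial t$ the two correction terms in the semi-symmetric metric Koszul form of Definition 2.4 are equal and cancel, giving $\overline{\mathcal{K}}_{I}(\partial/\partial t,\partial/\partial t,\partial/\partial t)=0.$ Item (1) then follows from part (1) of Proposition 2.6. Items (2) and (5) are inherited verbatim from parts (2) and (5) of Proposition 2.6, since those expressions vanish independently of the choice of $X$ and $P.$

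Next I would substitute into the remaining parts. Part (3) of Proposition 2.6 reads $b_{j}X(b_{j})g_{F_{j}}(V_{j},W_{j}),$ which becomes $\phi^{p_{j}}(\partial\phi^{p_{j}}/\partial t)g_{F_{j}}(V_{j},W_{j}),$ giving item (3); parts (6) and (7) pass over directly to items (6) and (7) after replacing $b_{j}^{2}$ by $\phi^{2p_{j}}.$ The only place where the Lorentzian signature intervenes is item (4): part (4) of Proposition 2.6 gives $\overline{\mathcal{K}}(V_{i},X,W_{j})=b_{j}X(b_{j})g_{F_{j}}(V_{j},W_{j})+b_{j}^{2}g_{B}(X,P)g_{F_{j}}(V_{j},W_{j}),$ and on substituting $X=P=\partial/\partial t$ the factor $g_{I}(\partial/\partial t,\partial/\partial t)=-1$ converts the second term into $-\phi^{2p_{j}}g_{F_{j}}(V_{j},W_{j}),$ reproducing exactly $\phi^{p_{j}}(\partial\phi^{p_{j}}/\partial t)g_{F_{j}}(V_{j},W_{j})-\phi^{2p_{j}}g_{F_{j}}(V_{j},W_{j}).$ The antisymmetry $\overline{\mathcal{K}}(V_{i},\partial/\partial t,W_{j})=-\overline{\mathcal{K}}(V_{i},W_{j},\partial/\partial t)$ is likewise inherited from part (4).

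The computation is otherwise mechanical; the single point that needs care---and the only potential source of a sign error---is tracking $g_{I}(\partial/\partial t,\partial/\partial t)=-1$ in item (4), which is precisely what produces the $-\phi^{2p_{j}}$ term distinguishing this timelike base case from the naive substitution one would write in the Riemannian setting.
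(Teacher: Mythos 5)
Your proof is correct and follows exactly the route the paper intends: Proposition 8.13 is stated without proof as a specialization of Proposition 2.6 to $B=I$, $g_I=-dt^2$, $b_j=\phi^{p_j}$, $P=\partial/\partial t$, and your substitution—including the sign $g_I(\partial/\partial t,\partial/\partial t)=-1$ that produces the $-\phi^{2p_j}$ term in item (4) and the cancellation of the two correction terms in item (1)—matches the paper's computation.
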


\begin{prop}
Let $M_{2}=I\times_{\phi^{p_{1}}}F_{1}\times_{\phi^{p_{2}}}F_{2}$ be a degenerate multiply warped product, let the vector fields $\partial/ \partial t\in\Gamma(TI)$ and $U_{j}, V_{j}, W_{j}\in\Gamma(TF_{j}),$ for $j\in \{1, 2\}.$ Let $\overline{\mathcal{K}}$ be the semi-symmetric metric Koszul form on $I\times_{\phi^{p_{1}}}F_{1}\times_{\phi^{p_{2}}}F_{2}$ and $\overline{\mathcal{K}}_{I}$, $\overline{\mathcal{K}}_{F_{j}}$ be the lifts of the semi-symmetric metric Koszul form on $I,$ $F_{j},$ respectively. Let $P\in\Gamma(TF_{l}),$ then
\begin{align}
&(1)\overline{\mathcal{K}}(\frac{\partial}{\partial t}, \frac{\partial}{\partial t}, \frac{\partial}{\partial t})=\mathcal{K}_{I}(\frac{\partial}{\partial t}, \frac{\partial}{\partial t}, \frac{\partial}{\partial t})=0;\nonumber\\
&(2)\overline{\mathcal{K}}(\frac{\partial}{\partial t}, \frac{\partial}{\partial t}, W_{j})=-\overline{\mathcal{K}}(\frac{\partial}{\partial t}, W_{j}, \frac{\partial}{\partial t})=\phi^{2p_{j}}g_{F_{j}}(W_{j}, P),~if~j=l;\nonumber\\
&(3)\overline{\mathcal{K}}(\frac{\partial}{\partial t}, \frac{\partial}{\partial t}, W_{j})=\overline{\mathcal{K}}(\frac{\partial}{\partial t}, W_{j}, \frac{\partial}{\partial t})=0,~if~j\neq l;\nonumber\\
&(4)\overline{\mathcal{K}}(W_{j}, \frac{\partial}{\partial t}, \frac{\partial}{\partial t})=0;\nonumber\\
&(5)\overline{\mathcal{K}}(\frac{\partial}{\partial t}, V_{i}, W_{j})=\overline{\mathcal{K}}(V_{i}, \frac{\partial}{\partial t}, W_{j})=-\overline{\mathcal{K}}(V_{i}, W_{j}, \frac{\partial}{\partial t})=\phi^{p_{j}}\frac{\partial \phi^{p_{j}}}{\partial t}g_{F_{j}}(V_{j}, W_{j}),~if~i=j;\nonumber\\
&(6)\overline{\mathcal{K}}(\frac{\partial}{\partial t}, V_{i}, W_{j})=\overline{\mathcal{K}}(V_{i}, \frac{\partial}{\partial t}, W_{j})=\overline{\mathcal{K}}(V_{i}, W_{j}, \frac{\partial}{\partial t})=0,~if~i\neq j;\nonumber\\
&(7)\overline{\mathcal{K}}(U_{i}, V_{j}, W_{k})=\phi^{2p_{j}}\mathcal{K}_{F_{j}}(U_{j}, V_{j}, W_{j})+\phi^{4p_{j}}g_{F_{j}}(U_{j}, W_{j})g_{F_{j}}(V_{j}, P)-\phi^{4p_{j}}g_{F_{j}}(U_{j}, V_{j})\nonumber\\
&\times g_{F_{j}}(W_{j}, P),~if~i=j=k=l;\nonumber\\
&(8)\overline{\mathcal{K}}(U_{i}, V_{j}, W_{k})=-\overline{\mathcal{K}}(U_{i}, W_{k}, V_{j})=-\phi^{2p_{j}}\phi^{2p_{k}}g_{F_{j}}(U_{j}, V_{j})g_{F_{k}}(W_{k}, P),~if~i=j\neq k=l;\nonumber\\
&(9)\overline{\mathcal{K}}(U_{i}, V_{j}, W_{k})=0,~other~cases.\nonumber
\end{align}
\end{prop}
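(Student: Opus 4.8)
The plan is to derive this proposition as a direct specialization of Proposition 2.7, which records the semi-symmetric metric Koszul form $\overline{\mathcal{K}}$ of an arbitrary degenerate multiply warped product $B\times_{b_{1}}F_{1}\times\cdots\times_{b_{m}}F_{m}$ in the case $P\in\Gamma(TF_{l})$. Here I would take $B=I$ equipped with the Lorentzian metric $g_{I}=-dt^{2}$, set $m=2$, and choose the warping functions $b_{j}=\phi^{p_{j}}$ for $j\in\{1,2\}$. Since $\dim I=1$, the only base direction is $\partial/\partial t$, so the substitution $X=Y=Z=\partial/\partial t$ collapses each clause of Proposition 2.7 into the corresponding clause of the present statement.

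First I would record the elementary facts that drive all the simplifications. From the generalized Kasner metric one has $g_{I}(\partial/\partial t,\partial/\partial t)=-1$, while the warping derivative is $(\partial/\partial t)(b_{j})=\partial\phi^{p_{j}}/\partial t$. Applying Definition 2.2 on $I$ moreover gives $\mathcal{K}_{I}(\partial/\partial t,\partial/\partial t,\partial/\partial t)=0$, because $g_{I}(\partial/\partial t,\partial/\partial t)=-1$ is constant and $[\partial/\partial t,\partial/\partial t]=0$, so every term in the Koszul expression vanishes. These observations turn clause (1) of Proposition 2.7 into clause (1) here, while clauses (4) and (9) transfer verbatim.

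Next I would match the remaining clauses term by term. Clauses (5), (7) and (8) follow at once by replacing $b_{j}$ with $\phi^{p_{j}}$ and $X(b_{j})$ with $\partial\phi^{p_{j}}/\partial t$; no sign subtleties arise there, since none of those formulas carries the factor $g_{B}(X,Y)$. The one place that demands genuine attention is clause (2): in Proposition 2.7 it reads $\overline{\mathcal{K}}(X,Y,W_{j})=-b_{j}^{2}g_{B}(X,Y)g_{F_{j}}(W_{j},P)$ for $j=l$, and inserting $g_{I}(\partial/\partial t,\partial/\partial t)=-1$ reverses the overall sign to give $+\phi^{2p_{j}}g_{F_{j}}(W_{j},P)$, exactly as claimed. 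This sign reversal, inherited from the timelike normalization of $\partial/\partial t$, is the only non-mechanical point of the argument; clause (3) for $j\neq l$ and all the remaining vanishing statements then follow directly from the corresponding clauses of Proposition 2.7, completing the proof.
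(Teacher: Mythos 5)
Your proposal is correct and is exactly the route the paper intends: Section 8 presents this proposition (and its companions) as direct specializations of the general results, here Proposition 2.7 with $B=I$, $g_{I}=-dt^{2}$, $m=2$ and $b_{j}=\phi^{p_{j}}$, with no separate proof given. Your verification of the one genuine subtlety --- the sign reversal in clause (2) coming from $g_{I}(\partial/\partial t,\partial/\partial t)=-1$ --- is accurate, and the remaining clauses transfer by substitution as you say.
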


\begin{thm}
Let $M_{2}=I\times_{\phi^{p_{1}}}F_{1}\times_{\phi^{p_{2}}}F_{2}$ be a degenerate multiply warped product, let the vector fields $\partial/ \partial t\in\Gamma(TI)$ and $U_{j}, V_{j}, W_{j}, Q_{j}\in\Gamma(TF_{j}).$  Let $P=\partial/ \partial t,$ then
\begin{align}
&(1)\overline{R}(\frac{\partial}{\partial t}, \frac{\partial}{\partial t}, \frac{\partial}{\partial t}, \frac{\partial}{\partial t})=\overline{R}_{I}(\frac{\partial}{\partial t}, \frac{\partial}{\partial t}, \frac{\partial}{\partial t}, \frac{\partial}{\partial t})=0;\nonumber\\
&(2)\overline{R}(\frac{\partial}{\partial t}, \frac{\partial}{\partial t}, \frac{\partial}{\partial t}, W_{j})=\overline{R}(\frac{\partial}{\partial t}, W_{j}, \frac{\partial}{\partial t}, \frac{\partial}{\partial t})=0;\nonumber\\
&(3)\overline{R}(\frac{\partial}{\partial t}, \frac{\partial}{\partial t},  V_{i}, W_{j})=\overline{R}(V_{i}, W_{j}, \frac{\partial}{\partial t}, \frac{\partial}{\partial t})=0,~if~i=j;\nonumber\\
&(4)\overline{R}(\frac{\partial}{\partial t}, V_{i}, W_{j}, \frac{\partial}{\partial t})=-\phi^{p_{j}}\frac{\partial^{2} \phi^{p_{j}}}{\partial t^{2}}g_{F_{j}}(V_{j}, W_{j})+\phi^{p_{j}}\frac{\partial \phi^{p_{j}}}{\partial t}g_{F_{j}}(V_{j}, W_{j}),~if~i=j;\nonumber\\
&(5)\overline{R}(\frac{\partial}{\partial t}, \frac{\partial}{\partial t},  V_{i}, W_{j})=\overline{R}(V_{i}, W_{j}, \frac{\partial}{\partial t}, \frac{\partial}{\partial t})=\overline{R}(\frac{\partial}{\partial t}, V_{i}, W_{j}, \frac{\partial}{\partial t})=0,~if~i\neq j;\nonumber\\
&(6)\overline{R}(\frac{\partial}{\partial t}, V_{i}, W_{j}, U_{k})=\overline{R}(W_{j}, U_{k}, \frac{\partial}{\partial t}, V_{i})=0,~if~i=j=k;\nonumber\\
&(7)\overline{R}(\frac{\partial}{\partial t}, V_{i}, W_{j}, U_{k})=\overline{R}(\frac{\partial}{\partial t},  U_{k}, V_{i}, W_{j})=\overline{R}(W_{j}, U_{k}, \frac{\partial}{\partial t}, V_{i})=\overline{R}(V_{i}, W_{j}, \frac{\partial}{\partial t},  U_{k})=0,\nonumber\\
&~if~i=j\neq k;\nonumber\\
&(8)\overline{R}(U_{k}, V_{i}, W_{j}, Q_{s})=\phi^{2p_{j}}R_{F_{j}}(U_{j}, V_{j}, W_{j}, Q_{j})+(\phi^{2p_{j}}g^{*}_{I}(d\phi^{p_{j}}, d\phi^{p_{j}})+2\phi^{3p_{j}}\frac{\partial \phi^{p_{j}}}{\partial t}-\phi^{4p_{j}})\nonumber\\
&\times(g_{F_{j}}(U_{j}, W_{j})g_{F_{j}}(V_{j}, Q_{j})-g_{F_{j}}(V_{j}, W_{j})g_{F_{j}}(U_{j}, Q_{j})),~if~i=j=k=s;\nonumber\\
&(9)\overline{R}(U_{k}, V_{i}, W_{j}, Q_{s})=\overline{R}(W_{j}, Q_{s}, U_{k}, V_{i},)=0,~if~i=j=s\neq k;\nonumber\\
&(10)\overline{R}(U_{k}, V_{i}, W_{j}, Q_{s})=(\phi^{p_{i}}\phi^{p_{j}}g^{*}_{I}(d\phi^{p_{i}}, d\phi^{p_{j}})+\phi^{p_{i}}\phi^{2p_{j}}\frac{\partial \phi^{p_{i}}}{\partial t}+\phi^{p_{j}}\phi^{2p_{i}}\frac{\partial \phi^{p_{j}}}{\partial t}-\phi^{2p_{i}}\phi^{2p_{j}})\nonumber\\
&\times g_{F_{i}}(V_{i}, Q_{i})g_{F_{j}}(U_{j}, W_{j}),~if~j=k\neq i=s;\nonumber\\
&(11)\overline{R}(U_{k}, V_{i}, W_{j}, Q_{s})=0,~other~cases.\nonumber
\end{align}
\end{thm}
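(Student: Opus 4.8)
The plan is to derive every item by specializing Theorem 2.11 to the configuration at hand, namely $B=I$ with metric $-dt^{2}$, $m=2$, warping functions $b_{j}=\phi^{p_{j}}$, and distinguished field $P=\partial/\partial t\in\Gamma(TI)$. First I would record the simplifications forced by $\dim I=1$: the base is flat, so $R_{I}=0$, and the Levi-Civita Koszul form vanishes on the single coordinate direction, giving $\mathcal{K}_{I}(\partial/\partial t,\partial/\partial t,\partial/\partial t)=0$ and $\nabla^{\flat}_{\partial/\partial t}(\partial/\partial t)=0$; moreover the metric yields $g_{I}(\partial/\partial t,\partial/\partial t)=-1$, hence $g_{I}(P,P)=-1$ and $g_{I}(X,P)=-1$ whenever $X=\partial/\partial t$. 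I would also replace $X(b_{j})$, $XY(b_{j})$, and $P(b_{j})$ by the $t$-derivatives $\partial\phi^{p_{j}}/\partial t$ and $\partial^{2}\phi^{p_{j}}/\partial t^{2}$.

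Next I would feed these data into the relevant branch of Theorem 2.11. For the mixed radical curvature (item~(4)) I would take item~(4) of Theorem 2.11 with $X=Y=\partial/\partial t$; the $\mathcal{K}_{I}$ and $\nabla^{\flat}$ contributions vanish by flatness, while the two quadratic terms $b_{j}^{2}g_{I}(X,P)g_{I}(Y,P)g_{F_{j}}$ and $-b_{j}^{2}g_{I}(X,Y)g_{I}(P,P)g_{F_{j}}$ reduce to $+\phi^{2p_{j}}g_{F_{j}}(V_{j},W_{j})$ and $-\phi^{2p_{j}}g_{F_{j}}(V_{j},W_{j})$ and cancel, leaving exactly $-\phi^{p_{j}}(\partial^{2}\phi^{p_{j}}/\partial t^{2})g_{F_{j}}(V_{j},W_{j})+\phi^{p_{j}}(\partial\phi^{p_{j}}/\partial t)g_{F_{j}}(V_{j},W_{j})$. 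For the all-equal fibre block (item~(8)) I would specialize item~(9) of Theorem 2.11: its scalar coefficient $b_{j}^{2}g^{*}_{I}(db_{j},db_{j})+2b_{j}^{3}P(b_{j})+b_{j}^{4}g_{I}(P,P)$ collapses to $\phi^{2p_{j}}g^{*}_{I}(d\phi^{p_{j}},d\phi^{p_{j}})+2\phi^{3p_{j}}(\partial\phi^{p_{j}}/\partial t)-\phi^{4p_{j}}$ after inserting $g_{I}(P,P)=-1$. For the cross-fibre block (item~(10)) I would specialize item~(11), whose coefficient $b_{i}b_{j}g^{*}_{I}(db_{i},db_{j})+b_{i}b_{j}^{2}P(b_{i})+b_{j}b_{i}^{2}P(b_{j})+b_{i}^{2}b_{j}^{2}g_{I}(P,P)$ reduces to the displayed four-term expression by the same substitution. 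The remaining items then vanish exactly as the corresponding vanishing cases of Theorem 2.11.

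The hard part, such as it is, will be nothing more than tracking the Lorentzian sign $g_{I}(\partial/\partial t,\partial/\partial t)=-1$, which enters quadratically through $g_{I}(X,P)$, $g_{I}(Y,P)$, and $g_{I}(P,P)$; it is precisely this sign that produces the cancellation in item~(4) and the minus signs in front of $\phi^{4p_{j}}$ and $\phi^{2p_{i}}\phi^{2p_{j}}$ in items~(8) and~(10). Since Theorem 2.11 already supplies each term in closed form, no further analysis is required and the verification is purely mechanical.
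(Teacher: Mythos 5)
Your proposal is correct and matches the paper's (implicit) argument: the theorem is stated in Section 8 as a direct specialization of Theorem 2.11 with $B=I$, $g_I=-dt^2$, $b_j=\phi^{p_j}$, $P=\partial/\partial t$, which is exactly what you carry out, including the sign bookkeeping from $g_I(\partial/\partial t,\partial/\partial t)=-1$ that produces the $-\phi^{4p_j}$ and $-\phi^{2p_i}\phi^{2p_j}$ terms and the cancellation in item (4). The only cosmetic remark is that in item (4) the surviving first-order term comes from $-b_jP(b_j)g_B(X,Y)g_{F_j}(V_j,W_j)$, which your narrative passes over silently even though your final expression accounts for it correctly.
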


\begin{thm}
Let $M_{2}=I\times_{\phi^{p_{1}}}F_{1}\times_{\phi^{p_{2}}}F_{2}$ be a degenerate multiply warped product, let the vector fields $\partial/ \partial t\in\Gamma(TI)$ and $U_{j}, V_{j}, W_{j}, Q_{j}\in\Gamma(TF_{j}).$  Let $P\in\Gamma(TF_{l}),$ then
\begin{align}
&(1)\overline{R}(\frac{\partial}{\partial t}, \frac{\partial}{\partial t}, \frac{\partial}{\partial t}, \frac{\partial}{\partial t})=0;\nonumber\\
&(2)\overline{R}(\frac{\partial}{\partial t}, \frac{\partial}{\partial t}, \frac{\partial}{\partial t}, W_{j})=\overline{R}(\frac{\partial}{\partial t}, W_{j}, \frac{\partial}{\partial t}, \frac{\partial}{\partial t})=0;\nonumber\\
&(3)\overline{R}(\frac{\partial}{\partial t}, \frac{\partial}{\partial t},  V_{i}, W_{j})=\overline{R}(V_{i}, W_{j}, \frac{\partial}{\partial t}, \frac{\partial}{\partial t})=0;\nonumber\\
&(4)\overline{R}(\frac{\partial}{\partial t}, V_{i}, W_{j}, \frac{\partial}{\partial t})=-\phi^{p_{j}}\frac{\partial^{2} \phi^{p_{j}}}{\partial t^{2}}g_{F_{j}}(V_{j}, W_{j})+\phi^{2p_{j}}(\mathcal{K}_{F_{j}}(V_{j}, P, W_{j})+\phi^{2p_{j}}g_{F_{j}}(V_{j}, W_{j})\nonumber\\
&\times g_{F_{j}}(P, P)-\phi^{2p_{j}}g_{F_{j}}(V_{j}, P)g_{F_{j}}(W_{j}, P)),~if~i=j=l;\nonumber
\end{align}
\begin{align}
&(5)\overline{R}(\frac{\partial}{\partial t}, V_{i}, W_{j}, \frac{\partial}{\partial t})=-\phi^{p_{j}}\frac{\partial^{2} \phi^{p_{j}}}{\partial t^{2}}g_{F_{j}}(V_{j}, W_{j})+\phi^{2p_{j}}\phi^{2p_{l}}g_{F_{j}}(V_{j}, W_{j})g_{F_{j}}(P, P),~if~i=j\neq l;\nonumber\\
&(6)\overline{R}(\frac{\partial}{\partial t}, V_{i}, W_{j}, \frac{\partial}{\partial t})=0,~other~cases;\nonumber\\
&(7)\overline{R}(\frac{\partial}{\partial t}, V_{i}, W_{j}, U_{k})=-\overline{R}(W_{j}, U_{k}, \frac{\partial}{\partial t}, V_{i})=\phi^{3p_{j}}\frac{\partial \phi^{p_{j}}}{\partial t}(g_{F_{j}}(V_{j}, U_{j})g_{F_{j}}(W_{j}, P)-g_{F_{j}}(V_{j}, W_{j})\nonumber\\
&\times g_{F_{j}}(U_{j}, P)),~if~i=j=k=l;\nonumber\\
&(8)\overline{R}(\frac{\partial}{\partial t}, V_{i}, W_{j}, U_{k})=-\overline{R}(W_{j}, U_{k}, \frac{\partial}{\partial t}, V_{i})=-\phi^{p_{l}}\phi^{2p_{j}}\frac{\partial \phi^{p_{l}}}{\partial t}g_{F_{l}}(U_{l}, P)g_{F_{j}}(V_{j}, W_{j}),\nonumber\\
&~if~i=j\neq k=l;\nonumber\\
&(9)\overline{R}(\frac{\partial}{\partial t}, V_{i}, W_{j}, U_{k})=0,~other~cases;\nonumber\\
&(10)\overline{R}(U_{k}, V_{i}, W_{j}, Q_{s})=\phi^{2p_{j}}R_{F_{j}}(U_{j}, V_{j}, W_{j}, Q_{j})+(\phi^{2p_{j}}g^{*}_{I}(d\phi^{p_{j}}, d\phi^{p_{j}})+\phi^{6p_{j}}g_{F_{j}}(P, P))\nonumber\\
&\times(g_{F_{j}}(U_{j}, W_{j})g_{F_{j}}(V_{j}, Q_{j})-g_{F_{j}}(V_{j}, W_{j})g_{F_{j}}(U_{j}, Q_{j}))+\phi^{4p_{j}}g_{F_{j}}(V_{j}, Q_{j})\mathcal{K}_{F_{j}}(U_{j}, P, W_{j})\nonumber\\
&-\phi^{4p_{j}}g_{F_{j}}(U_{j}, Q_{j})\mathcal{K}_{F_{j}}(V_{j}, P, W_{j})-\phi^{4p_{j}}g_{F_{j}}(V_{j}, W_{j})\mathcal{K}_{F_{j}}(U_{j}, P, Q_{j})+\phi^{4p_{j}}g_{F_{j}}(U_{j}, W_{j})\nonumber\\
&\times \mathcal{K}_{F_{j}}(V_{j}, P, Q_{j})+\phi^{6p_{j}}g_{F_{j}}(U_{j}, P)(g_{F_{j}}(V_{j}, W_{j})g_{F_{j}}(Q_{j}, P)-g_{F_{j}}(V_{j}, Q_{j})g_{F_{j}}(W_{j}, P))\nonumber\\
&-\phi^{6p_{j}}g_{F_{j}}(V_{j}, P)(g_{F_{j}}(U_{j}, W_{j})g_{F_{j}}(Q_{j}, P)-g_{F_{j}}(U_{j}, Q_{j})g_{F_{j}}(W_{j}, P)),~if~i=j=k=s=l;\nonumber\\
&(11)\overline{R}(U_{k}, V_{i}, W_{j}, Q_{s})=\phi^{2p_{j}}R_{F_{j}}(U_{j}, V_{j}, W_{j}, Q_{j})+(\phi^{2p_{j}}g^{*}_{I}(d\phi^{p_{j}}, d\phi^{p_{j}})+\phi^{2p_{l}}\phi^{4p_{j}}g_{F_{l}}(P, P))\nonumber\\
&\times(g_{F_{j}}(U_{j}, W_{j})g_{F_{j}}(V_{j}, Q_{j})-g_{F_{j}}(V_{j}, W_{j})g_{F_{j}}(U_{j}, Q_{j})),~if~i=j=k=s\neq l;\nonumber\\
&(12)\overline{R}(U_{k}, V_{i}, W_{j}, Q_{s})=\phi^{p_{i}}\phi^{p_{j}}g^{*}_{I}(d\phi^{p_{i}}, d\phi^{p_{j}})g_{F_{i}}(V_{i}, Q_{i})g_{F_{j}}(U_{j}, W_{j})+\phi^{2p_{i}}\phi^{2p_{j}}g_{F_{j}}(U_{j}, W_{j})\nonumber\\
&\times(\mathcal{K}_{F_{i}}(V_{i}, P, Q_{i})+\phi^{2p_{i}}g_{F_{i}}(P, P)g_{F_{i}}(V_{i}, Q_{i})-\phi^{2p_{i}}g_{F_{i}}(V_{i}, P)g_{F_{i}}(Q_{i}, P)),\nonumber\\
&~if~j=k\neq i=s=l;\nonumber\\
&(13)\overline{R}(U_{k}, V_{i}, W_{j}, Q_{s})=\phi^{p_{i}}\phi^{p_{j}}g^{*}_{I}(d\phi^{p_{i}}, d\phi^{p_{j}})g_{F_{i}}(V_{i}, Q_{i})g_{F_{j}}(U_{j}, W_{j})+\phi^{2p_{i}}\phi^{2p_{j}}g_{F_{i}}(V_{i}, Q_{i})\nonumber\\
&\times(\mathcal{K}_{F_{j}}(U_{j}, P, W_{j})+\phi^{2p_{j}}g_{F_{j}}(P, P)g_{F_{j}}(U_{j}, W_{j})-\phi^{2p_{j}}g_{F_{j}}(U_{j}, P)g_{F_{j}}(W_{j}, P)),\nonumber\\
&~if~l=j=k\neq i=s;\nonumber\\
&(14)\overline{R}(U_{k}, V_{i}, W_{j}, Q_{s})=0,~other~cases.\nonumber
\end{align}
\end{thm}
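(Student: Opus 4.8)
The plan is to deduce this theorem as the specialization of the general formula of Theorem 2.12 --- which computes $\overline{R}$ for an arbitrary multiply warped product $B\times_{b_{1}}F_{1}\times\cdots\times_{b_{m}}F_{m}$ with $P\in\Gamma(TF_{l})$ --- to the data $B=I$, $m=2$, and $b_{j}=\phi^{p_{j}}$. Since the metric on the interval is $-dt^{2}$ and $TI$ is spanned by $\partial/\partial t$, every base vector field appearing in Theorem 2.12 must be taken equal to $\partial/\partial t$.

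Before substituting I would record the three features of the one-dimensional Lorentzian factor $(I,-dt^{2})$ that drive all the simplifications: first, $\dim I=1$ forces $R_{I}=0$ and hence $\overline{R}_{I}=0$; second, the metric $-dt^{2}$ is flat, so its Koszul form is trivial, giving $\mathcal{K}_{I}(\partial/\partial t,\partial/\partial t,\partial/\partial t)=0$ and $\nabla^{\flat}_{\partial/\partial t}\partial/\partial t=0$, which annihilates every occurrence of $b_{j}\,g^{*}_{B}(\nabla^{\flat}_{X}Y,db_{j})$ in Theorem 2.12; and third, $g_{I}(\partial/\partial t,\partial/\partial t)=-1$, which supplies the sign converting the term $-b_{j}^{2}g_{B}(X,Y)$ of Theorem 2.12 into the $+\phi^{2p_{j}}$ that appears in case $(4)$ here.

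With these reductions I would run through the cases of Theorem 2.12 in order, in each case inserting $X=Y=Z=T=\partial/\partial t$, $b_{j}=\phi^{p_{j}}$, $X(b_{j})=\partial\phi^{p_{j}}/\partial t$, $XY(b_{j})=\partial^{2}\phi^{p_{j}}/\partial t^{2}$, and $g^{*}_{B}(db_{i},db_{j})=g^{*}_{I}(d\phi^{p_{i}},d\phi^{p_{j}})$. The entries of Theorem 2.12 involving only base directions collapse onto the vanishing cases $(1)$--$(3)$: cases $(2)$ and $(3)$ of Theorem 2.12 merge into a single statement because, once $X=Y=\partial/\partial t$, the two surviving terms of case $(2)$ cancel. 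The mixed and purely fibre entries reproduce cases $(4)$--$(13)$; for instance case $(5)$ of Theorem 2.12 with $i=j=l$ yields exactly the displayed $\overline{R}(\partial/\partial t,V_{i},W_{j},\partial/\partial t)$ after the two facts $\nabla^{\flat}_{\partial/\partial t}\partial/\partial t=0$ and $g_{I}(\partial/\partial t,\partial/\partial t)=-1$ are applied.

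The only genuinely structural point --- as opposed to mechanical substitution --- is the index bookkeeping forced by $m=2$. Several case distinctions of Theorem 2.12 that require three mutually distinct fibre indices (in particular the case ``$k=j$, $i=s$, $l$ all different'') become vacuous, since one cannot pick three different labels out of $\{1,2\}$; this, together with the merging of cases $(2)$--$(3)$, is exactly why the present statement has fourteen cases rather than sixteen. The part deserving care is therefore to confirm that every admissible tuple $(i,j,k,s,l)\in\{1,2\}^{5}$ falls under precisely one of $(1)$--$(14)$; once that partition is checked, the remaining work is the routine algebraic simplification sketched above.
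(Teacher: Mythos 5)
Your proposal is correct and is exactly the route the paper (implicitly) takes: Theorem 8.13 is the specialization of Theorem 2.12 to $B=I$, $m=2$, $b_{j}=\phi^{p_{j}}$, with every base field equal to $\partial/\partial t$, and the paper offers no further argument. Your three reductions ($R_{I}=0$, $\nabla^{\flat}_{\partial/\partial t}\partial/\partial t=0$, $g_{I}(\partial/\partial t,\partial/\partial t)=-1$) and your accounting for why sixteen cases collapse to fourteen (cases (2)--(3) of Theorem 2.12 merge after the cancellation at $X=Y=\partial/\partial t$, and the three-distinct-fibre-index case is vacuous when $m=2$) are all accurate.
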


\begin{prop}
Let $M_{2}=I\times_{\phi^{p_{1}}}F_{1}\times_{\phi^{p_{2}}}F_{2}$ be a degenerate multiply warped product, let the vector fields $\partial/ \partial t\in\Gamma(TI)$ and $U_{j}, V_{j}, W_{j}\in\Gamma(TF_{j}),$ for $j\in \{1, 2\}.$ Let $\widehat{\mathcal{K}}$ be the semi-symmetric non-metric Koszul form on $I\times_{\phi^{p_{1}}}F_{1}\times_{\phi^{p_{2}}}F_{2}$ and $\widehat{\mathcal{K}}_{I}$, $\widehat{\mathcal{K}}_{F_{j}}$ be the lifts of the semi-symmetric non-metric Koszul form on $I,$ $F_{j},$ respectively. Let $P=\partial/ \partial t,$ then
\begin{align}
&(1)\widehat{\mathcal{K}}(\frac{\partial}{\partial t}, \frac{\partial}{\partial t}, \frac{\partial}{\partial t})=\widehat{\mathcal{K}}_{I}(\frac{\partial}{\partial t}, \frac{\partial}{\partial t}, \frac{\partial}{\partial t})=1;\nonumber\\
&(2)\widehat{\mathcal{K}}(\frac{\partial}{\partial t}, \frac{\partial}{\partial t}, W_{j})=\widehat{\mathcal{K}}(\frac{\partial}{\partial t}, W_{j}, \frac{\partial}{\partial t})=\widehat{\mathcal{K}}(W_{j}, \frac{\partial}{\partial t}, \frac{\partial}{\partial t})=0;\nonumber
\end{align}
\begin{align}
&(3)\widehat{\mathcal{K}}(\frac{\partial}{\partial t}, V_{i}, W_{j})=-\widehat{\mathcal{K}}(V_{i}, W_{j}, \frac{\partial}{\partial t})=\phi^{p_{j}}\frac{\partial \phi^{p_{j}}}{\partial t}g_{F_{j}}(V_{j}, W_{j}),~if~i=j;\nonumber\\
&(4)\widehat{\mathcal{K}}(V_{i}, \frac{\partial}{\partial t}, W_{j})=\phi^{p_{j}}\frac{\partial \phi^{p_{j}}}{\partial t}g_{F_{j}}(V_{j}, W_{j})-\phi^{2p_{j}}g_{F_{j}}(V_{j}, W_{j}),~if~i=j;\nonumber\\
&(5)\widehat{\mathcal{K}}(\frac{\partial}{\partial t}, V_{i}, W_{j})=\widehat{\mathcal{K}}(V_{i}, \frac{\partial}{\partial t}, W_{j})=\widehat{\mathcal{K}}(V_{i}, W_{j}, \frac{\partial}{\partial t})=0,~if~i\neq j;\nonumber\\
&(6)\widehat{\mathcal{K}}(U_{i}, V_{j}, W_{k})=\phi^{2p_{j}}\mathcal{K}_{F_{j}}(U_{j}, V_{j}, W_{j}),~if~i=j=k;\nonumber\\
&(7)\widehat{\mathcal{K}}(U_{i}, V_{j}, W_{k})=0,~if~i=j\neq k.\nonumber
\end{align}
\end{prop}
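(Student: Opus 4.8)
The plan is to read off this proposition as a direct specialization of Proposition 3.3, which already records the semi-symmetric non-metric Koszul form $\widehat{\mathcal{K}}$ of a general degenerate multiply warped product $B\times_{b_1}F_1\times\cdots\times_{b_m}F_m$ in the case $P\in\Gamma(TB)$. Here the base is the one-dimensional interval $B=I$ with coordinate $t$, the warping functions are $b_j=\phi^{p_j}$ (with $m=2$), and the distinguished field is $P=\partial/\partial t\in\Gamma(TI)=\Gamma(TB)$. Consequently each identity in the statement is obtained from the corresponding item of Proposition 3.3 by substituting $X=Y=Z=\partial/\partial t$ and $b_j=\phi^{p_j}$, so that $b_jX(b_j)=\phi^{p_j}\,\partial\phi^{p_j}/\partial t$ and $b_j^{2}=\phi^{2p_j}$.

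The single ingredient not already furnished by Proposition 3.3 is the behaviour of the base factor $I$, and this is where the Lorentzian sign of the Kasner metric enters. Since $\dim I=1$ and $g_I(\partial/\partial t,\partial/\partial t)=-1$ is constant, every derivative term and every bracket term in the Koszul form of Definition 2.2 vanishes, whence $\mathcal{K}_I(\partial/\partial t,\partial/\partial t,\partial/\partial t)=0$. Combining this with item (1) of Proposition 3.3 and the defining formula $\widehat{\mathcal{K}}_I(X,Y,Z)=\mathcal{K}_I(X,Y,Z)+g_I(Y,P)g_I(X,Z)$ from Definition 3.1, I would obtain
\[
\widehat{\mathcal{K}}\left(\frac{\partial}{\partial t},\frac{\partial}{\partial t},\frac{\partial}{\partial t}\right)=\widehat{\mathcal{K}}_I\left(\frac{\partial}{\partial t},\frac{\partial}{\partial t},\frac{\partial}{\partial t}\right)=g_I\left(\frac{\partial}{\partial t},\frac{\partial}{\partial t}\right)^{2}=1,
\]
which is the stated value in item (1) and the one place where the outcome differs qualitatively from the Riemannian sign convention.

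The remaining items then follow by substitution. Items (2), (5) and (7) are precisely the vanishing cases (2), (5) and (7) of Proposition 3.3 read with $X=\partial/\partial t$, and so require no computation; item (3) is item (3) of Proposition 3.3 with $X=\partial/\partial t$, the antisymmetry $\widehat{\mathcal{K}}(\partial/\partial t,V_i,W_j)=-\widehat{\mathcal{K}}(V_i,W_j,\partial/\partial t)$ being inherited verbatim; and item (6) is item (6) of Proposition 3.3 with $b_j^{2}=\phi^{2p_j}$. The only item demanding a moment's care is (4): item (4) of Proposition 3.3 reads $\widehat{\mathcal{K}}(V_i,X,W_j)=b_jX(b_j)g_{F_j}(V_j,W_j)+b_j^{2}g_B(X,P)g_{F_j}(V_j,W_j)$, and with $X=P=\partial/\partial t$ the second term becomes $\phi^{2p_j}g_I(\partial/\partial t,\partial/\partial t)g_{F_j}(V_j,W_j)=-\phi^{2p_j}g_{F_j}(V_j,W_j)$, which reproduces the displayed minus sign.

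There is no genuine obstacle here: the entire content is that of Proposition 3.3, and the proof is a bookkeeping substitution into its items. The only point that repays attention is the consistent use of $g_I(\partial/\partial t,\partial/\partial t)=-1$, which simultaneously produces the value $1$ in item (1) and the $-\phi^{2p_j}$ correction in item (4), while all other items descend verbatim from the general proposition.
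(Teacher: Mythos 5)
Your proposal is correct and follows exactly the route the paper intends: the proposition is the specialization of Proposition 3.3 to $B=I$, $b_j=\phi^{p_j}$, $X=Y=Z=P=\partial/\partial t$, with the Lorentzian sign $g_I(\partial/\partial t,\partial/\partial t)=-1$ producing the value $1$ in item (1) and the $-\phi^{2p_j}$ term in item (4). The paper states this result without proof precisely because it is this bookkeeping substitution, so your write-up matches the intended argument.
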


\begin{prop}
Let $M_{2}=I\times_{\phi^{p_{1}}}F_{1}\times_{\phi^{p_{2}}}F_{2}$ be a degenerate multiply warped product, let the vector fields $\partial/ \partial t\in\Gamma(TI)$ and $U_{j}, V_{j}, W_{j}\in\Gamma(TF_{j}),$ for $j\in \{1, 2\}.$ Let $\widehat{\mathcal{K}}$ be the semi-symmetric non-metric Koszul form on $I\times_{\phi^{p_{1}}}F_{1}\times_{\phi^{p_{2}}}F_{2}$ and $\widehat{\mathcal{K}}_{I}$, $\widehat{\mathcal{K}}_{F_{j}}$ be the lifts of the semi-symmetric non-metric Koszul form on $I,$ $F_{j},$ respectively. Let $P\in\Gamma(TF_{l}),$ then
\begin{align}
&(1)\widehat{\mathcal{K}}(\frac{\partial}{\partial t}, \frac{\partial}{\partial t}, \frac{\partial}{\partial t})=\mathcal{K}_{I}(\frac{\partial}{\partial t}, \frac{\partial}{\partial t}, \frac{\partial}{\partial t})=0;\nonumber\\
&(2)\widehat{\mathcal{K}}(\frac{\partial}{\partial t}, \frac{\partial}{\partial t}, W_{j})=\widehat{\mathcal{K}}(W_{j}, \frac{\partial}{\partial t}, \frac{\partial}{\partial t})=0,~if~j=l;\nonumber\\
&(3)\widehat{\mathcal{K}}(\frac{\partial}{\partial t}, W_{j}, \frac{\partial}{\partial t})=-\phi^{2p_{j}}g_{F_{j}}(W_{j}, P),~if~j=l;\nonumber\\
&(4)\widehat{\mathcal{K}}(\frac{\partial}{\partial t}, \frac{\partial}{\partial t}, W_{j})=\widehat{\mathcal{K}}(\frac{\partial}{\partial t}, W_{j}, \frac{\partial}{\partial t})=\widehat{\mathcal{K}}(W_{j}, \frac{\partial}{\partial t}, \frac{\partial}{\partial t})=0,~if~j\neq l;\nonumber\\
&(5)\widehat{\mathcal{K}}(\frac{\partial}{\partial t}, V_{i}, W_{j})=\widehat{\mathcal{K}}(V_{i}, \frac{\partial}{\partial t}, W_{j})=-\widehat{\mathcal{K}}(V_{i}, W_{j}, \frac{\partial}{\partial t})=\phi^{p_{j}}\frac{\partial \phi^{p_{j}}}{\partial t}g_{F_{j}}(V_{j}, W_{j}),~if~i=j;\nonumber\\
&(6)\widehat{\mathcal{K}}(\frac{\partial}{\partial t}, V_{i}, W_{j})=\widehat{\mathcal{K}}(V_{i}, \frac{\partial}{\partial t}, W_{j})=\widehat{\mathcal{K}}(V_{i}, W_{j}, \frac{\partial}{\partial t})=0,~if~i\neq j;\nonumber\\
&(7)\widehat{\mathcal{K}}(U_{i}, V_{j}, W_{k})=\phi^{2p_{j}}\mathcal{K}_{F_{j}}(U_{j}, V_{j}, W_{j})+\phi^{4p_{j}}g_{F_{j}}(U_{j}, W_{j})g_{F_{j}}(V_{j}, P),~if~i=j=k=l;\nonumber\\
&(8)\widehat{\mathcal{K}}(U_{i}, V_{j}, W_{k})=\phi^{2p_{j}}\mathcal{K}_{F_{j}}(U_{j}, V_{j}, W_{j}),~if~i=j= k\neq l;\nonumber\\
&(9)\widehat{\mathcal{K}}(U_{i}, W_{k}, V_{j})=\phi^{2p_{j}}\phi^{2p_{k}}g_{F_{j}}(U_{j}, V_{j})g_{F_{k}}(W_{k}, P),~if~i=j\neq k=l;\nonumber\\
&(10)\widehat{\mathcal{K}}(U_{i}, V_{j}, W_{k})=0,~other~cases.\nonumber
\end{align}
\end{prop}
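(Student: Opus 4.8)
The plan is to deduce this proposition directly from Proposition 3.4 (the computation of $\widehat{\mathcal{K}}$ on a general degenerate multiply warped product $B\times_{b_{1}}F_{1}\times\cdots\times_{b_{m}}F_{m}$ in the case $P\in\Gamma(TF_{l})$) by specializing its hypotheses. First I would put $B=I$, $m=2$, and $b_{j}=\phi^{p_{j}}$ for $j\in\{1,2\}$, and take every base-direction vector field $X,Y,Z$ appearing in Proposition 3.4 to be the coordinate field $\partial/\partial t$, which is legitimate because $I\subseteq\mathbb{R}$ is one-dimensional and $\partial/\partial t$ spans $\Gamma(TI)$. Under this dictionary the listed items $(1)$--$(10)$ are exactly the items of Proposition 3.4 rewritten, so no new identity needs to be established.

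To carry this out I would record the two base-specific data that feed into the general formulas. Since the Kasner metric restricts to $g_{I}=-dt^{2}$, one has $g_{I}(\partial/\partial t,\partial/\partial t)=-1$, and since $X=\partial/\partial t$ one has $X(b_{j})=\frac{\partial}{\partial t}(\phi^{p_{j}})=\partial\phi^{p_{j}}/\partial t$. Item $(1)$ then follows from item $(1)$ of Proposition 3.4 together with $\mathcal{K}_{I}(\partial/\partial t,\partial/\partial t,\partial/\partial t)=\tfrac12\,\partial/\partial t\big(g_{I}(\partial/\partial t,\partial/\partial t)\big)=\tfrac12\,\partial/\partial t(-1)=0$ (the three Lie-bracket terms of the Koszul form vanish). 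Items $(5)$--$(6)$ come from substituting $b_{j}X(b_{j})=\phi^{p_{j}}\,\partial\phi^{p_{j}}/\partial t$, and items $(7)$--$(9)$ from the power substitutions $b_{j}^{2}=\phi^{2p_{j}}$, $b_{j}^{4}=\phi^{4p_{j}}$ and $b_{j}^{2}b_{k}^{2}=\phi^{2p_{j}}\phi^{2p_{k}}$. Item $(3)$ is the only one that carries the base metric explicitly: its general form $b_{j}^{2}g_{B}(X,Y)g_{F_{j}}(W_{j},P)$ becomes $\phi^{2p_{j}}\cdot(-1)\cdot g_{F_{j}}(W_{j},P)=-\phi^{2p_{j}}g_{F_{j}}(W_{j},P)$. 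Items $(2)$ and $(4)$, together with the residual cases collected in $(10)$, are the vanishing entries of Proposition 3.4 and transfer unchanged.

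I expect no genuine obstacle; the content is bookkeeping. The single point demanding care is the sign convention: because the base $I$ is timelike, $g_{I}(\partial/\partial t,\partial/\partial t)=-1$ rather than $+1$, and this is precisely what produces the minus sign in item $(3)$ (and what makes item $(1)$ vanish), so tracking it consistently is the only place an error could creep in. A secondary check is to verify that the index conditions $j=l$, $j\neq l$, $i=j$, $i\neq j$, together with the three-index patterns $i=j=k=l$, $i=j=k\neq l$ and $i=j\neq k=l$, match one-to-one with the corresponding cases of Proposition 3.4 after restricting to $m=2$, so that the ``other cases'' clause $(10)$ accounts for every remaining combination.
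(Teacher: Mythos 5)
Your proposal is correct and is exactly the route the paper intends: these Section~8 propositions are stated without proof precisely because they are obtained by specializing Proposition~3.4 with $B=I$, $m=2$, $b_{j}=\phi^{p_{j}}$, $X=Y=Z=\partial/\partial t$, and $g_{I}(\partial/\partial t,\partial/\partial t)=-1$. You correctly identify the one nontrivial point, namely that the Lorentzian sign of the base metric is what produces the minus sign in item $(3)$ and the vanishing in item $(1)$.
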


\begin{thm}
Let $M_{2}=I\times_{\phi^{p_{1}}}F_{1}\times_{\phi^{p_{2}}}F_{2}$ be a degenerate multiply warped product, let the vector fields $\partial/ \partial t\in\Gamma(TI)$ and $U_{j}, V_{j}, W_{j}, Q_{j}\in\Gamma(TF_{j}).$ Let $P=\partial/ \partial t,$ then
\begin{align}
&(1)\widehat{R}(\frac{\partial}{\partial t}, \frac{\partial}{\partial t}, \frac{\partial}{\partial t}, \frac{\partial}{\partial t})=\widehat{R}_{I}(\frac{\partial}{\partial t}, \frac{\partial}{\partial t}, \frac{\partial}{\partial t}, \frac{\partial}{\partial t})=0;\nonumber\\
&(2)\widehat{R}(\frac{\partial}{\partial t}, \frac{\partial}{\partial t}, \frac{\partial}{\partial t}, W_{j})=\widehat{R}(\frac{\partial}{\partial t}, \frac{\partial}{\partial t}, W_{j}, \frac{\partial}{\partial t})=\widehat{R}(\frac{\partial}{\partial t}, W_{j}, \frac{\partial}{\partial t}, \frac{\partial}{\partial t})=0;\nonumber\\
&(3)\widehat{R}(\frac{\partial}{\partial t}, \frac{\partial}{\partial t},  V_{i}, W_{j})=\widehat{R}(V_{i}, W_{j}, \frac{\partial}{\partial t}, \frac{\partial}{\partial t})=0,~if~i=j;\nonumber
\end{align}
\begin{align}
&(4)\widehat{R}(V_{i}, \frac{\partial}{\partial t}, W_{j}, \frac{\partial}{\partial t})=-\widehat{R}(\frac{\partial}{\partial t}, V_{i}, W_{j}, \frac{\partial}{\partial t})=\phi^{p_{j}}\frac{\partial^{2} \phi^{p_{j}}}{\partial t^{2}}g_{F_{j}}(V_{j}, W_{j})+2\phi^{p_{j}}\frac{\partial \phi^{p_{j}}}{\partial t}g_{F_{j}}(V_{j}, W_{j}),\nonumber\\
&~if~i=j;\nonumber\\
&(5)\widehat{R}(V_{i}, \frac{\partial}{\partial t}, \frac{\partial}{\partial t}, W_{j})=-\widehat{R}(\frac{\partial}{\partial t}, V_{i}, \frac{\partial}{\partial t}, W_{j})=-\phi^{p_{j}}\frac{\partial^{2} \phi^{p_{j}}}{\partial t^{2}}g_{F_{j}}(V_{j}, W_{j}),~if~i=j;\nonumber\\
&(6)\widehat{R}(\frac{\partial}{\partial t}, \frac{\partial}{\partial t},  V_{i}, W_{j})=\widehat{R}(V_{i}, W_{j}, \frac{\partial}{\partial t}, \frac{\partial}{\partial t})=\widehat{R}(\frac{\partial}{\partial t}, V_{i}, W_{j}, \frac{\partial}{\partial t})=\widehat{R}(\frac{\partial}{\partial t}, V_{i}, \frac{\partial}{\partial t}, W_{j})=0,\nonumber\\
&~if~i\neq j;\nonumber\\
&(7)\widehat{R}(\frac{\partial}{\partial t}, V_{i}, W_{j}, U_{k})=\widehat{R}(W_{j}, U_{k}, \frac{\partial}{\partial t}, V_{i})=0,~if~i=j=k;\nonumber\\
&(8)\widehat{R}(W_{j}, U_{k}, V_{i}, \frac{\partial}{\partial t})=-\phi^{2p_{j}}(\mathcal{K}_{F_{j}}(W_{j}, V_{j}, U_{j})-\mathcal{K}_{F_{j}}(U_{j}, V_{j}, W_{j})),~if~i=j=k;\nonumber\\
&(9)\widehat{R}(\frac{\partial}{\partial t}, V_{i}, W_{j}, U_{k})=\widehat{R}(W_{j}, U_{k}, \frac{\partial}{\partial t}, V_{i})=\widehat{R}(W_{j}, U_{k}, V_{i}, \frac{\partial}{\partial t})=0,~other~cases;\nonumber\\
&(10)\widehat{R}(U_{k}, V_{i}, W_{j}, Q_{s})=\phi^{2p_{j}}R_{F_{j}}(U_{j}, V_{j}, W_{j}, Q_{j})+\phi^{2p_{j}}g^{*}_{I}(d\phi^{p_{j}}, d\phi^{p_{j}})(g_{F_{j}}(U_{j}, W_{j})g_{F_{j}}(V_{j}, Q_{j})\nonumber\\
&-g_{F_{j}}(V_{j}, W_{j})g_{F_{j}}(U_{j}, Q_{j})),~if~i=j=k=s;\nonumber\\
&(11)\widehat{R}(U_{k}, V_{i}, W_{j}, Q_{s})=\phi^{p_{i}}\phi^{p_{j}}g^{*}_{I}(d\phi^{p_{i}}, d\phi^{p_{j}})g_{F_{i}}(V_{i}, Q_{i})g_{F_{j}}(U_{j}, W_{j}),~if~k=j\neq i=s;\nonumber\\
&(12)\widehat{R}(U_{k}, V_{i}, W_{j}, Q_{s})=-\phi^{p_{j}}\phi^{p_{k}}g^{*}_{I}(d\phi^{p_{j}}, d\phi^{p_{k}})g_{F_{j}}(V_{j}, W_{j})g_{F_{k}}(U_{k}, Q_{k}),~if~k=s\neq j=i;\nonumber\\
&(13)\widehat{R}(U_{k}, V_{i}, W_{j}, Q_{s})=0,~other~cases.\nonumber
\end{align}
\end{thm}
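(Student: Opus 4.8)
The plan is to derive the present theorem as the specialization of the general semi-symmetric non-metric curvature formula (Theorem 3.7, the case $P\in\Gamma(TB)$ for a singular multiply warped product) to the data $B=I$, $m=2$, $b_{1}=\phi^{p_{1}}$, $b_{2}=\phi^{p_{2}}$, and $P=\partial/\partial t\in\Gamma(TI)=\Gamma(TB)$. Under this substitution each item here is simply the image of the correspondingly numbered item of Theorem 3.7, by the same scheme that produces the analogous $M_{1}$ statement (Theorem 3.7 specialized to a single fibre). So the argument reduces to recording a handful of structural simplifications and then reading off each case.

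First I would list the facts forced by $\dim I=1$ and by the Lorentzian metric $g=-dt^{2}\oplus\phi^{2p_{1}}g_{F_{1}}\oplus\phi^{2p_{2}}g_{F_{2}}$. Since $I$ is one-dimensional it is flat, so $R_{I}\equiv 0$ and $\widehat{R}_{I}\equiv 0$; this at once gives items (1)--(3). The base metric is constant along $\partial/\partial t$, hence $\mathcal{K}_{I}(\partial/\partial t,\partial/\partial t,\partial/\partial t)=0$ and $\nabla^{\flat}_{\partial/\partial t}\partial/\partial t=0$, so every contraction of the form $g^{*}_{I}(\nabla^{\flat}_{\partial_{t}}\partial_{t},d\phi^{p_{j}})$ vanishes. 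I would also record the sign $g_{I}(\partial/\partial t,\partial/\partial t)=-1$ together with the elementary identities $\partial_{t}(b_{j})=\partial\phi^{p_{j}}/\partial t$ and $\partial_{t}\partial_{t}(b_{j})=\partial^{2}\phi^{p_{j}}/\partial t^{2}$.

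Next I would run through the cases mechanically. In item (4) the term $-2b_{j}(\partial_{t}b_{j})\,g_{B}(\partial_{t},P)$ of Theorem 3.7(4) becomes $+2\phi^{p_{j}}(\partial\phi^{p_{j}}/\partial t)$ once $g_{B}(\partial_{t},P)=g_{I}(\partial/\partial t,\partial/\partial t)=-1$ is inserted, while the $g^{*}_{B}$-term drops out, giving exactly the stated sum. In item (5) the factor $\partial_{t}(g_{B}(\partial_{t},P))=\partial_{t}(-1)=0$ kills the last term of Theorem 3.7(5), leaving only $-\phi^{p_{j}}(\partial^{2}\phi^{p_{j}}/\partial t^{2})g_{F_{j}}(V_{j},W_{j})$. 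In item (8) the factor $b_{j}^{2}\,g_{B}(\partial_{t},P)$ of Theorem 3.7(8) becomes $-\phi^{2p_{j}}$, producing the displayed antisymmetrized combination of $\mathcal{K}_{F_{j}}$'s. The purely fibre cases (10)--(12) inherit the base contraction $g^{*}_{B}(db_{i},db_{j})$ directly as $g^{*}_{I}(d\phi^{p_{i}},d\phi^{p_{j}})$, with $R_{F_{j}}$ passing through unchanged and with no sign subtlety, since there $P$ enters only through the already-eliminated $g_{B}(\cdot,P)$ factors; the remaining configurations give the zeros of item (13).

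The only real obstacle is bookkeeping: one must keep the index conditions ($i=j=k=s$, $k=j\neq i=s$, $k=s\neq j=i$, and so on) synchronized between the general statement and its specialization, and one must not lose the Lorentzian minus signs that convert the several occurrences of $g_{B}(\cdot,\partial/\partial t)=-1$ into the correct coefficients in (4) and (8). Once these are accounted for, the argument is a line-by-line transcription of Theorem 3.7, entirely parallel to the single-fibre case, so I would simply note this analogy and omit the routine substitutions.
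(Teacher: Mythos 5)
Your proposal is correct and coincides with the paper's (implicit) approach: the paper states this theorem in Section~8 without a written proof, intending exactly the specialization of Theorem~3.7 to $B=I$, $b_j=\phi^{p_j}$, $P=\partial/\partial t$, with the sign $g_I(\partial/\partial t,\partial/\partial t)=-1$ accounting for the coefficients in items (4) and (8) just as you describe.
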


\begin{thm}
Let $M_{2}=I\times_{\phi^{p_{1}}}F_{1}\times_{\phi^{p_{2}}}F_{2}$ be a degenerate multiply warped product, let the vector fields $\partial/ \partial t\in\Gamma(TI)$ and $U_{j}, V_{j}, W_{j}, Q_{j}\in\Gamma(TF_{j}).$ Let $P\in\Gamma(TF_{l}),$ then
\begin{align}
&(1)\widehat{R}(\frac{\partial}{\partial t}, \frac{\partial}{\partial t}, \frac{\partial}{\partial t}, \frac{\partial}{\partial t})=R_{I}(\frac{\partial}{\partial t}, \frac{\partial}{\partial t}, \frac{\partial}{\partial t}, \frac{\partial}{\partial t})=0;\nonumber\\
&(2)\widehat{R}(\frac{\partial}{\partial t}, \frac{\partial}{\partial t}, \frac{\partial}{\partial t}, W_{j})=\widehat{R}(\frac{\partial}{\partial t}, \frac{\partial}{\partial t}, W_{j}, \frac{\partial}{\partial t})=\widehat{R}(\frac{\partial}{\partial t}, W_{j}, \frac{\partial}{\partial t}, \frac{\partial}{\partial t})=\widehat{R}(W_{j}, \frac{\partial}{\partial t}, \frac{\partial}{\partial t}, \frac{\partial}{\partial t})=0;\nonumber\\
&(3)\widehat{R}(\frac{\partial}{\partial t}, \frac{\partial}{\partial t},  V_{i}, W_{j})=\widehat{R}(V_{i}, W_{j}, \frac{\partial}{\partial t}, \frac{\partial}{\partial t})=0;\nonumber\\
&(4)\widehat{R}(V_{i}, \frac{\partial}{\partial t}, W_{j}, \frac{\partial}{\partial t})=-\widehat{R}(\frac{\partial}{\partial t}, V_{i}, W_{j}, \frac{\partial}{\partial t})=\phi^{p_{j}}\frac{\partial^{2} \phi^{p_{j}}}{\partial t^{2}}g_{F_{j}}(V_{j}, W_{j})-\phi^{2p_{j}}V_{j}(g_{F_{j}}(W_{j}, P)),\nonumber\\
&~if~i=j=l;\nonumber\\
&(5)\widehat{R}(V_{i}, \frac{\partial}{\partial t}, W_{j}, \frac{\partial}{\partial t})=-\widehat{R}(\frac{\partial}{\partial t}, V_{i}, W_{j}, \frac{\partial}{\partial t})=\phi^{p_{j}}\frac{\partial^{2} \phi^{p_{j}}}{\partial t^{2}}g_{F_{j}}(V_{j}, W_{j}),~if~i=j\neq l;\nonumber\\
&(6)\widehat{R}(V_{i}, \frac{\partial}{\partial t}, \frac{\partial}{\partial t}, W_{j})=-\widehat{R}(\frac{\partial}{\partial t}, V_{i}, \frac{\partial}{\partial t}, W_{j})=-\phi^{p_{j}}\frac{\partial^{2} \phi^{p_{j}}}{\partial t^{2}}g_{F_{j}}(V_{j}, W_{j}),~if~i=j=l\nonumber\\
&~or~i=j\neq l;\nonumber
\end{align}
\begin{align}
&(7)\widehat{R}(V_{i}, \frac{\partial}{\partial t}, W_{j}, \frac{\partial}{\partial t})=\widehat{R}(V_{i}, \frac{\partial}{\partial t}, \frac{\partial}{\partial t}, W_{j})=0,~other~cases;\nonumber\\
&(8)\widehat{R}(\frac{\partial}{\partial t}, V_{i}, W_{j}, U_{k})=2\phi^{3p_{j}}\frac{\partial \phi^{p_{j}}}{\partial t}(g_{F_{j}}(V_{j}, U_{j})g_{F_{j}}(W_{j}, P)+g_{F_{j}}(V_{j}, W_{j})g_{F_{j}}(U_{j}, P)),\nonumber\\
&~if~i=j=k=l;\nonumber\\
&(9)\widehat{R}(\frac{\partial}{\partial t}, V_{i}, W_{j}, U_{k})=\widehat{R}(\frac{\partial}{\partial t}, V_{i}, U_{k}, W_{j})=2\phi^{p_{j}}\phi^{2p_{k}}\frac{\partial \phi^{p_{j}}}{\partial t}g_{F_{j}}(V_{j}, W_{j})g_{F_{k}}(U_{k}, P),\nonumber\\
&~if~i=j\neq k=l;\nonumber\\
&(10)\widehat{R}(\frac{\partial}{\partial t}, V_{i}, W_{j}, U_{k})=\widehat{R}(W_{j}, U_{k}, \frac{\partial}{\partial t}, V_{i})=\widehat{R}(W_{j}, U_{k}, V_{i}, \frac{\partial}{\partial t})=0,~other~cases;\nonumber\\
&(11)\widehat{R}(U_{k}, V_{i}, W_{j}, Q_{s})=\phi^{2p_{j}}R_{F_{j}}(U_{j}, V_{j}, W_{j}, Q_{j})+\phi^{2p_{j}}g^{*}_{I}(d\phi^{p_{j}}, d\phi^{p_{j}})(g_{F_{j}}(U_{j}, W_{j})g_{F_{j}}(V_{j}, Q_{j})\nonumber\\
&-g_{F_{j}}(V_{j}, W_{j})g_{F_{j}}(U_{j}, Q_{j}))+\phi^{4p_{j}}g_{F_{j}}(Q_{j}, P)(\mathcal{K}_{F_{j}}(U_{j}, W_{j}, V_{j})-\mathcal{K}_{F_{j}}(V_{j}, W_{j}, U_{j}))\nonumber\\
&+\phi^{4p_{j}}U_{j}(g_{F_{j}}(W_{j}, P))g_{F_{j}}(V_{j}, Q_{j})-\phi^{4p_{j}}V_{j}(g_{F_{j}}(W_{j}, P))g_{F_{j}}(U_{j}, Q_{j}),~if~i=j=k=s=l;\nonumber\\
&(12)\widehat{R}(U_{k}, V_{i}, W_{j}, Q_{s})=\phi^{2p_{j}}R_{F_{j}}(U_{j}, V_{j}, W_{j}, Q_{j})+\phi^{2p_{j}}g^{*}_{I}(db_{j}, db_{j})(g_{F_{j}}(U_{j}, W_{j})g_{F_{j}}(V_{j}, Q_{j})\nonumber\\
&-g_{F_{j}}(V_{j}, W_{j})g_{F_{j}}(U_{j}, Q_{j}))~if~i=j=k=s\neq l;\nonumber\\
&(13)\widehat{R}(U_{k}, V_{i}, W_{j}, Q_{s})=\phi^{p_{i}}\phi^{p_{j}}g^{*}_{I}(d\phi^{p_{i}}, d\phi^{p_{j}})g_{F_{i}}(V_{i}, Q_{i})g_{F_{j}}(U_{j}, W_{j}),~if~j=k\neq i=s=l;\nonumber\\
&(14)\widehat{R}(U_{k}, V_{i}, W_{j}, Q_{s})=\phi^{p_{i}}\phi^{p_{j}}g^{*}_{I}(d\phi^{p_{i}}, d\phi^{p_{j}})g_{F_{i}}(V_{i}, Q_{i})g_{F_{j}}(U_{j}, W_{j})+\phi^{2p_{i}}\phi^{2p_{j}}U_{j}(g_{F_{j}}(W_{j}, P))\nonumber\\
&\times g_{F_{i}}(V_{i}, Q_{i}),~if~l=j=k\neq i=s;\nonumber\\
&(15)\widehat{R}(U_{k}, V_{i}, W_{j}, Q_{s})=-\phi^{p_{j}}\phi^{p_{k}}g^{*}_{I}(d\phi^{p_{j}}, d\phi^{p_{k}})g_{F_{j}}(V_{j}, W_{j})g_{F_{k}}(U_{k}, Q_{k})-\phi^{2p_{j}}\phi^{2p_{k}}V_{j}(g_{F_{j}}(W_{j}, P))\nonumber\\
&\times g_{F_{k}}(U_{k}, Q_{k}),~if~k=s\neq i=j=l;\nonumber\\
&(16)\widehat{R}(U_{k}, V_{i}, W_{j}, Q_{s})=-\phi^{p_{j}}\phi^{p_{k}}g^{*}_{I}(d\phi^{p_{j}}, d\phi^{p_{k}})g_{F_{j}}(V_{j}, W_{j})g_{F_{k}}(U_{k}, Q_{k}),~if~l=k=s\neq i=j;\nonumber\\
&(17)\widehat{R}(U_{k}, V_{i}, W_{j}, Q_{s})=\phi^{2p_{j}}\phi^{2p_{k}}g_{F_{k}}(U_{k}, P)(\mathcal{K}_{F_{j}}(V_{j}, Q_{j}, W_{j})-\mathcal{K}_{F_{j}}(W_{j}, Q_{j}, V_{j})),\nonumber\\
&~if~l=k\neq i=j=s;\nonumber\\
&(18)\widehat{R}(U_{k}, V_{i}, W_{j}, Q_{s})=0,~other~cases.\nonumber
\end{align}
\end{thm}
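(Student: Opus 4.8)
The plan is to read off the statement as the specialization of the general semi-symmetric non-metric curvature formula for singular multiply warped products, Theorem~3.8, to the data $B=I$, $m=2$, and $b_j=\phi^{p_j}$. Because each warping function $\phi^{p_j}$ depends on $t$ alone, I would first fix the dictionary $X=\partial/\partial t$, $X(b_j)=\partial\phi^{p_j}/\partial t$, $XY(b_j)=\partial^2\phi^{p_j}/\partial t^2$, and $g^*_B(db_i,db_j)=g^*_I(d\phi^{p_i},d\phi^{p_j})$; the values of the constituent Koszul forms are then exactly those already recorded in Proposition~8.16, so the whole statement follows by inserting these into Theorem~3.8 one index pattern at a time.

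The real content is carried by three features of the one-dimensional Lorentzian base $(I,-dt^2)$. First, since the metric coefficient is the constant $-1$, the interval is flat, so $\mathcal{K}_I=0$ and $R_I=0$; this annihilates every term of Theorem~3.8 built from $R_B$ or from the explicit base Koszul form, and in particular forces cases~(1)--(3) here to vanish. Second, $\nabla^\flat_{\partial_t}\partial_t=0$, so each factor $g^*_B(\nabla^\flat_XY,db_j)$ with $X=Y=\partial/\partial t$ drops out of the source terms for cases~(4)--(6). Third, and crucially for the signs, $g_I(\partial/\partial t,\partial/\partial t)=-1$: this is precisely what turns the summand $+b_j^2V_j(g_{F_j}(W_j,P))g_B(X,Y)$ of Theorem~3.8(7) into the $-\phi^{2p_j}V_j(g_{F_j}(W_j,P))$ of case~(4) here, and it is responsible for every sign that differs between the general and the specialized statements.

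I would then match the cases in order, using the first-slot antisymmetry $\widehat{R}(X,Y,Z,T)=-\widehat{R}(Y,X,Z,T)$ together with suitable relabelings to recover the quoted partners. The fully fibre-bound patterns give cases~(11)--(17): the single-factor pattern $i=j=k=s=l$ yields the long formula~(11), the pattern $i=j=k=s\neq l$ yields~(12), and the two-factor patterns branch according to which pair of indices coincides and where the factor $l$ carrying $P$ lies, producing~(13)--(17). The main obstacle is not any individual computation but the bookkeeping: one must verify that each of the twenty-two cases of Theorem~3.8 either collapses onto one of the eighteen listed here or becomes vacuous because $\dim I=1$ admits no second independent base direction, and one must treat with care the patterns in which two distinct fibre factors interact (as in case~(9)), where keeping $g_I(\partial_t,\partial_t)=-1$ and the correct warping exponents in view is essential for the signs and the powers of $\phi$ to come out as stated.
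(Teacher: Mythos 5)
Your proposal is correct and is essentially the paper's own (implicit) argument: the paper states this theorem without proof precisely because it is the direct specialization of Theorem 3.8 to $B=I$, $g_B=-dt^2$, $b_j=\phi^{p_j}$, using the Koszul-form values of Proposition 8.16 and the facts $R_I=0$, $\mathcal{K}_I=0$, $\nabla^{\flat}_{\partial_t}\partial_t=0$, $g_I(\partial_t,\partial_t)=-1$ that you identify. Your attention to the sign coming from $g_I(\partial_t,\partial_t)=-1$ and to the case bookkeeping is exactly what the verification requires.
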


\begin{prop}
Let $M_{2}=I\times_{\phi^{p_{1}}}F_{1}\times_{\phi^{p_{2}}}F_{2}$ be a degenerate multiply warped product, let the vector fields $\partial/ \partial t\in\Gamma(TI)$ and $U_{j}, V_{j}, W_{j}\in\Gamma(TF_{j}),$ for $j\in \{1, 2\}.$ Let $\widetilde{\mathcal{K}}$ be the almost product Koszul form on $I\times_{\phi^{p_{1}}}F_{1}\times_{\phi^{p_{2}}}F_{2}$ and $\widetilde{\mathcal{K}}_{I}$, $\widetilde{\mathcal{K}}_{F_{j}}$ be the lifts of the almost product Koszul form on $I,$ $F_{j},$ respectively. Then
\begin{align}
&(1)\widetilde{\mathcal{K}}(\frac{\partial}{\partial t}, \frac{\partial}{\partial t}, \frac{\partial}{\partial t})=\widetilde{\mathcal{K}}_{I}(\frac{\partial}{\partial t}, \frac{\partial}{\partial t}, \frac{\partial}{\partial t})=0;\nonumber\\
&(2)\widetilde{\mathcal{K}}(\frac{\partial}{\partial t}, \frac{\partial}{\partial t}, W_{j})=\widetilde{\mathcal{K}}(\frac{\partial}{\partial t}, W_{j}, \frac{\partial}{\partial t})=\widetilde{\mathcal{K}}(W_{j}, \frac{\partial}{\partial t}, \frac{\partial}{\partial t})=0;\nonumber\\
&(3)\widetilde{\mathcal{K}}(\frac{\partial}{\partial t}, V_{i}, W_{j})=\phi^{p_{j}}\frac{\partial \phi^{p_{j}}}{\partial t}g_{F_{j}}(V_{j}, W_{j}),~if~i=j;\nonumber
\end{align}
\begin{align}
&(4)\widetilde{\mathcal{K}}(V_{i}, \frac{\partial}{\partial t}, W_{j})=-\widetilde{\mathcal{K}}(V_{i}, W_{j}, \frac{\partial}{\partial t})=\frac{1}{2}(\phi^{p_{j}}\frac{\partial \phi^{p_{j}}}{\partial t}g_{F_{j}}(V_{j}, W_{j})+\phi^{p_{j}}J_{I}(\frac{\partial \phi^{p_{j}}}{\partial t})g_{F_{j}}(V_{j}, J_{F_{j}}W_{j})),\nonumber\\
&~if~i=j;\nonumber\\
&(5)\widetilde{\mathcal{K}}(\frac{\partial}{\partial t}, V_{i}, W_{j})=\widetilde{\mathcal{K}}(V_{i}, \frac{\partial}{\partial t}, W_{j})=\widetilde{\mathcal{K}}(V_{i}, W_{j}, \frac{\partial}{\partial t})=0,~if~i\neq j;\nonumber\\
&(6)\widetilde{\mathcal{K}}(U_{i}, V_{j}, W_{k})=\phi^{2p_{j}}\widetilde{\mathcal{K}}_{F_{j}}(U_{j}, V_{j}, W_{j}),~if~i=j=k;\nonumber\\
&(7)\widetilde{\mathcal{K}}(U_{i}, V_{j}, W_{k})=\widetilde{\mathcal{K}}(U_{i}, W_{k}, V_{j})=\widetilde{\mathcal{K}}(W_{k}, U_{i}, V_{j})=0,~if~i=j\neq k.\nonumber
\end{align}
\end{prop}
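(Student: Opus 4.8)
The plan is to deduce this Proposition as the specialization of Proposition 4.4 to the data defining $M_2$. Under the dictionary $B = I$ (the open interval, with the single coordinate $t$ and metric $g_I = -dt^2$), $m = 2$, and $b_j = \phi^{p_j}$ for $j \in \{1,2\}$, the arbitrary base fields $X, Y, Z \in \Gamma(TB)$ of Proposition 4.4 all become (multiples of) $\partial/\partial t$, and the two warping functions collapse to $\phi^{p_1}, \phi^{p_2}$. The only base-direction derivatives that occur in Proposition 4.4 are $X(b_j)$ and $(J_B X)(b_j)$; for $X = \partial/\partial t$ these are $\partial\phi^{p_j}/\partial t$ and $(J_I\,\partial/\partial t)(\phi^{p_j})$, which is exactly what produces the right-hand sides in items (3) and (4).

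First I would settle the purely base-manifold contribution in item (1). Because $g_I(\partial/\partial t, \partial/\partial t) = -1$ is constant and $[\partial/\partial t, \partial/\partial t] = 0$, every term of the Koszul form $\mathcal{K}_I(\partial/\partial t, \partial/\partial t, \partial/\partial t)$ vanishes; moreover, on the one-dimensional $I$ the conditions $J_I^2 = \mathrm{id}$ and $g_I(J_I X, J_I Y) = g_I(X,Y)$ force $J_I(\partial/\partial t) = \pm\,\partial/\partial t$, so the polarized form $\widetilde{\mathcal{K}}_I(\partial/\partial t, \partial/\partial t, \partial/\partial t)$ also vanishes. With item (1) in hand, items (2) and (5) are the verbatim specializations of items (2) and (5) of Proposition 4.4, while items (3), (4), (6) and (7) follow from items (3), (4), (6) and (7) after inserting $b_j = \phi^{p_j}$ and the two derivative identities recorded above (noting $b_j^2 = \phi^{2p_j}$ in item (6)).

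Finally I would observe that item (8) of Proposition 4.4, the case of three pairwise distinct fiber indices $i, j, k$, cannot occur here, since $M_2$ carries only the two fibers $F_1$ and $F_2$; this is precisely why the present list stops at item (7). There is no genuine obstacle in the argument: the single point demanding a line of verification is the vanishing of the base contribution $\widetilde{\mathcal{K}}_I(\partial/\partial t, \partial/\partial t, \partial/\partial t)$, which is immediate from the one-dimensionality of $I$ and the constancy of $g_I(\partial/\partial t, \partial/\partial t)$.
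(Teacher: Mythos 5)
Your proposal is correct and matches the paper's (implicit) route: the paper states this proposition in Section 8 without proof, as the direct specialization of Proposition 4.4 to $B=I$, $m=2$, $b_j=\phi^{p_j}$, which is exactly what you carry out. Your extra check that $\widetilde{\mathcal{K}}_I(\partial/\partial t,\partial/\partial t,\partial/\partial t)=0$ (constancy of $g_I(\partial_t,\partial_t)$ plus $J_I=\pm\,\mathrm{id}$ on the one-dimensional base) is the only substantive point beyond substitution, and it is handled correctly.
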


\begin{thm}
Let $M_{2}=I\times_{\phi^{p_{1}}}F_{1}\times_{\phi^{p_{2}}}F_{2}$ be a degenerate multiply warped product, let the vector fields $\partial/ \partial t\in\Gamma(TI)$ and $U_{j}, V_{j}, W_{j}, Q_{j}\in\Gamma(TF_{j}),$ then
\begin{align}
&(1)\widetilde{R}(\frac{\partial}{\partial t}, \frac{\partial}{\partial t}, \frac{\partial}{\partial t}, \frac{\partial}{\partial t})=\widetilde{R}_{I}(\frac{\partial}{\partial t}, \frac{\partial}{\partial t}, \frac{\partial}{\partial t}, \frac{\partial}{\partial t})=0;\nonumber\\
&(2)\widetilde{R}(\frac{\partial}{\partial t}, \frac{\partial}{\partial t}, \frac{\partial}{\partial t}, W_{j})=\widetilde{R}(\frac{\partial}{\partial t}, W_{j}, \frac{\partial}{\partial t}, \frac{\partial}{\partial t})=0;\nonumber\\
&(3)\widetilde{R}(\frac{\partial}{\partial t}, \frac{\partial}{\partial t},  V_{i}, W_{j})=\widetilde{R}(V_{i}, W_{j}, \frac{\partial}{\partial t}, \frac{\partial}{\partial t})=0;\nonumber\\
&(4)\widetilde{R}(\frac{\partial}{\partial t}, V_{i}, \frac{\partial}{\partial t}, W_{j})=\frac{\phi^{p_{j}}}{2}\frac{\partial^{2} \phi^{p_{j}}}{\partial t^{2}}g_{F_{j}}(V_{j}, W_{j})+\frac{\phi^{p_{j}}}{2}\frac{\partial }{\partial t}(J_{I}(\frac{\partial \phi^{p_{j}}}{\partial t}))g_{F_{j}}(V_{j}, J_{F_{j}}W_{j}),~if~i=j;\nonumber\\
&(5)\widetilde{R}(\frac{\partial}{\partial t}, V_{i}, \frac{\partial}{\partial t}, W_{j})=0,~if~i\neq j;\nonumber\\
&(6)\widetilde{R}(\frac{\partial}{\partial t}, V_{i}, W_{j}, U_{k})=0;\nonumber\\
&(7)\widetilde{R}(U_{k}, V_{i}, \frac{\partial}{\partial t}, W_{j})=\frac{\phi^{p_{j}}}{4}\frac{\partial \phi^{p_{j}}}{\partial t}(-\mathcal{K}_{F_{j}}(V_{j}, W_{j}, U_{j})+\mathcal{K}_{F_{j}}(U_{j}, W_{j}, V_{j})+\mathcal{K}_{F_{j}}(V_{j}, J_{F_{j}}W_{j}, J_{F_{j}}U_{j})\nonumber\\
&-\mathcal{K}_{F_{j}}(U_{j}, J_{F_{j}}W_{j}, J_{F_{j}}V_{j}))-\frac{\phi^{p_{j}}}{4}J_{I}(\frac{\partial \phi^{p_{j}}}{\partial t})(-\mathcal{K}_{F_{j}}(V_{j}, W_{j}, J_{F_{j}}U_{j})+\mathcal{K}_{F_{j}}(U_{j}, W_{j}, J_{F_{j}}V_{j})\nonumber\\
&+\mathcal{K}_{F_{j}}(V_{j}, J_{F_{j}}W_{j}, U_{j})-\mathcal{K}_{F_{j}}(U_{j}, J_{F_{j}}W_{j}, V_{j})),~if~i=j=k;\nonumber\\
&(8)\widetilde{R}(U_{k}, V_{i}, \frac{\partial}{\partial t}, W_{j})=0,~other~cases;\nonumber\\
&(9)\widetilde{R}(U_{k}, V_{i}, W_{j}, Q_{s})=\phi^{2p_{j}}\widetilde{R}_{F_{j}}(U_{j}, V_{j}, W_{j}, Q_{j})+\frac{1}{4}\phi^{2p_{j}}[g^{*}_{I}(d\phi^{p_{j}}, d\phi^{p_{j}})(g_{F_{j}}(U_{j}, W_{j})g_{F_{j}}(V_{j}, Q_{j})\nonumber\\
&-g_{F_{j}}(V_{j}, W_{j})g_{F_{j}}(U_{j}, Q_{j}))+g^{*}_{I}(d\phi^{p_{j}}\circ J_{I}, d\phi^{p_{j}})(g_{F_{j}}(U_{j}, J_{F_{j}}W_{j})g_{F_{j}}(V_{j}, Q_{j})\nonumber\\
&-g_{F_{j}}(V_{j}, J_{F_{j}}W_{j})g_{F_{j}}(U_{j}, Q_{j})+g_{F_{j}}(U_{j}, W_{j})g_{F_{j}}(V_{j}, J_{F_{j}}Q_{j})-g_{F_{j}}(V_{j}, W_{j})g_{F_{j}}(U_{j}, J_{F_{j}}Q_{j}))\nonumber\\
&+g^{*}_{I}(d\phi^{p_{j}}\circ J_{I}, d\phi^{p_{j}}\circ J_{I})(g_{F_{j}}(U_{j}, J_{F_{j}}W_{j})g_{F_{j}}(V_{j}, J_{F_{j}}Q_{j})-g_{F_{j}}(V_{j}, J_{F_{j}}W_{j})g_{F_{j}}(U_{j}, J_{F_{j}}Q_{j}))],\nonumber\\
&~if~i=j=k=s;\nonumber
\end{align}
\begin{align}
&(10)\widetilde{R}(U_{k}, V_{i}, W_{j}, Q_{s})=\frac{1}{4}\phi^{p_{i}}\phi^{p_{j}}(g^{*}_{I}(d\phi^{p_{i}}, d\phi^{p_{j}})g_{F_{i}}(V_{i}, Q_{i})g_{F_{j}}(U_{j}, W_{j})+g^{*}_{I}(d\phi^{p_{i}}\circ J_{I}, d\phi^{p_{j}})\nonumber\\
&\times g_{F_{i}}(V_{i}, J_{F_{i}}Q_{i})g_{F_{j}}(U_{j}, W_{j})+g^{*}_{I}(d\phi^{p_{i}}, d\phi^{p_{j}}\circ J_{I})g_{F_{i}}(V_{i}, Q_{i})g_{F_{j}}(U_{j}, J_{F_{j}}W_{j})\nonumber\\
&+g^{*}_{I}(d\phi^{p_{i}}\circ J_{I}, d\phi^{p_{j}}\circ J_{I})g_{F_{i}}(V_{i}, J_{F_{j}}Q_{i})g_{F_{j}}(U_{j}, J_{F_{j}}W_{j})),~if~j=k\neq i=s;\nonumber\\
&(11)\widetilde{R}(U_{k}, V_{i}, W_{j}, Q_{s})=0,~if~i=k\neq j=s.\nonumber
\end{align}
\end{thm}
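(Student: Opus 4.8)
The plan is to recognize the statement as the concrete $m=2$ specialization of the general almost product curvature formula, Theorem 4.8, to the generalized Kasner datum $B=I$, $b_j=\phi^{p_j}$. Theorem 4.8 is itself assembled from the master identity of Proposition 4.5, which expresses $\widetilde{R}$ through values and covariant contractions of the almost product Koszul form $\widetilde{\mathcal{K}}$, together with the multiply warped Koszul forms of Proposition 4.4; in the present case those Koszul forms have already been recorded, in specialized form, in Proposition 8.19. First I would fix the substitution dictionary: every base argument is $\partial/\partial t$, each warping function $b_j$ becomes $\phi^{p_j}\in C^{\infty}(I)$, so that $X(b_j)\mapsto\partial_t\phi^{p_j}$, $XY(b_j)\mapsto\partial_t^2\phi^{p_j}$, $J_BX(b_j)\mapsto(J_I\partial_t)(\phi^{p_j})$, and $db_j\mapsto(\partial_t\phi^{p_j})\,dt$, so that the base pairings $g^{*}_B(db_i,db_j)$ turn into $g^{*}_I(d\phi^{p_i},d\phi^{p_j})$.

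The computation collapses because $I$ is one-dimensional. In case (1) all four arguments equal $\partial/\partial t$, so the base term $\widetilde{R}_I(\partial_t,\partial_t,\partial_t,\partial_t)$ vanishes by the antisymmetry $\widetilde{R}(X,Y,Z,T)=-\widetilde{R}(Y,X,Z,T)$. Moreover $g_I(\partial/\partial t,\partial/\partial t)$ is constant, so Definition 2.2 gives $\mathcal{K}_I(\partial_t,\partial_t,\partial_t)=0$, whence $\widetilde{\mathcal{K}}_I\equiv0$ and, by Proposition 8.19(1)--(2), $\widetilde{\nabla}^{\flat}_{\partial_t}\partial_t=0$; consequently the two terms $-\tfrac{1}{2}b_j\,g^{*}_B(\widetilde{\nabla}^{\flat}_XY,db_j)\,g_{F_j}(V_j,W_j)$ and $-\tfrac{1}{2}b_j\,g^{*}_B(\widetilde{\nabla}^{\flat}_XY,db_j\circ J_B)\,g_{F_j}(V_j,J_{F_j}W_j)$ of Theorem 4.8(4) drop out once $X=Y=\partial_t$. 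With these reductions, cases (1)--(9) follow term by term from Theorem 4.8(1)--(9), the $J_I$- and $J_{F_j}$-contributions being transcribed verbatim from Proposition 8.19; case (10) is Theorem 4.8(10) after replacing $g^{*}_B(db_i,db_j)$ by $g^{*}_I(d\phi^{p_i},d\phi^{p_j})$; and case (11), like every index pattern not listed, is an instance of the vanishing ``other cases'' conclusion of Theorem 4.8(11).

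The only genuine work is bookkeeping: matching the index case-split of Theorem 4.8 --- organized by which of $i,j,k,s$ coincide among the two fibers $F_1$ and $F_2$ --- to the shorter list here, and carrying the almost product structure $J=(J_I,J_{F_j})$ through each pairing without sign errors, using $g(J\,\cdot,J\,\cdot)=g(\cdot,\cdot)$ together with $db_j\circ J_I=\big((J_I\partial_t)\phi^{p_j}\big)\,dt$. I expect this mechanical verification to be the main, and indeed the only, obstacle; no geometric input beyond the vanishing of the connection and curvature of the one-dimensional factor $(I,-dt^2)$ is needed. Accordingly I would present cases (1)--(3), (5), (6), (8), (11) as immediate and write out only (4), (7), (9), (10) in detail, following the parallel computations already performed for Theorem 4.8.
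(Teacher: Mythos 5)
Your proposal is correct and follows essentially the same route as the paper: Theorem 8.16 is presented there precisely as the specialization of Theorem 4.8 (itself built from Propositions 4.4 and 4.5) to $B=I$, $b_j=\phi^{p_j}$, with the one-dimensionality of $(I,-dt^2)$ killing $\widetilde{\mathcal{K}}_I$, $\widetilde{\nabla}^{\flat}_{\partial_t}\partial_t$ and $\widetilde{R}_I$ exactly as you describe. Your substitution dictionary and the reduction of each case (1)--(11) to the corresponding item of Theorem 4.8 are all accurate.
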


\begin{prop}
Let $M_{3}=I\times_{\phi^{p_{1}}}F_{1}\times_{\phi^{p_{2}}}F_{2}\times_{\phi^{p_{3}}}F_{3}$ be a degenerate multiply warped product, let the vector fields $\partial/ \partial t\in\Gamma(TI)$ and $U_{j}, V_{j}, W_{j}\in\Gamma(TF_{j}),$ for $j\in \{1, 2, 3\}.$ Let $\overline{\mathcal{K}}$ be the semi-symmetric metric Koszul form on $I\times_{\phi^{p_{1}}}F_{1}\times_{\phi^{p_{2}}}F_{2}\times_{\phi^{p_{3}}}F_{3}$ and $\overline{\mathcal{K}}_{I}$, $\overline{\mathcal{K}}_{F_{j}}$ be the lifts of the semi-symmetric metric Koszul form on $I,$ $F_{j},$ respectively. Let $P=\partial/ \partial t,$ then
\begin{align}
&(1)\overline{\mathcal{K}}(\frac{\partial}{\partial t}, \frac{\partial}{\partial t}, \frac{\partial}{\partial t})=\overline{\mathcal{K}}_{I}(\frac{\partial}{\partial t}, \frac{\partial}{\partial t}, \frac{\partial}{\partial t})=0;\nonumber\\
&(2)\overline{\mathcal{K}}(\frac{\partial}{\partial t}, \frac{\partial}{\partial t}, W_{j})=\overline{\mathcal{K}}(\frac{\partial}{\partial t}, W_{j}, \frac{\partial}{\partial t})=\overline{\mathcal{K}}(W_{j}, \frac{\partial}{\partial t}, \frac{\partial}{\partial t})=0;\nonumber\\
&(3)\overline{\mathcal{K}}(\frac{\partial}{\partial t}, V_{i}, W_{j})=\phi^{p_{j}}\frac{\partial \phi^{p_{j}}}{\partial t}g_{F_{j}}(V_{j}, W_{j}),~if~i=j;\nonumber\\
&(4)\overline{\mathcal{K}}(V_{i}, \frac{\partial}{\partial t}, W_{j})=-\overline{\mathcal{K}}(V_{i}, W_{j}, \frac{\partial}{\partial t})=\phi^{p_{j}}\frac{\partial \phi^{p_{j}}}{\partial t}g_{F_{j}}(V_{j}, W_{j})-\phi^{2p_{j}}g_{F_{j}}(V_{j}, W_{j}),~if~i=j;\nonumber\\
&(5)\overline{\mathcal{K}}(\frac{\partial}{\partial t}, V_{i}, W_{j})=\overline{\mathcal{K}}(V_{i}, \frac{\partial}{\partial t}, W_{j})=\overline{\mathcal{K}}(V_{i}, W_{j}, \frac{\partial}{\partial t})=0,~if~i\neq j;\nonumber\\
&(6)\overline{\mathcal{K}}(U_{i}, V_{j}, W_{k})=\phi^{2p_{j}}\mathcal{K}_{F_{j}}(U_{j}, V_{j}, W_{j}),~if~i=j=k;\nonumber\\
&(7)\overline{\mathcal{K}}(U_{i}, V_{j}, W_{k})=0,~other~cases.\nonumber
\end{align}
\end{prop}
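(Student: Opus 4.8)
The plan is to obtain all seven identities by specializing Proposition 2.6 to the concrete data of $M_{3}$, namely the base $B=I$ with the Lorentzian metric $g_{I}=-dt^{2}$, the warping functions $b_{j}=\phi^{p_{j}}$, the fibre fields $U_{j},V_{j},W_{j}\in\Gamma(TF_{j})$, and the distinguished field $P=\partial/\partial t\in\Gamma(TI)=\Gamma(TB)$. Since the index-pattern formulae in Proposition 2.6 are insensitive to the number of fibre factors, passing from the two-factor case (Proposition 8.11) to the present three-factor case introduces no new ingredient; only the bookkeeping of the index $j\in\{1,2,3\}$ changes.

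First I would record the two numerical inputs that drive every substitution: the directional derivative $X(b_{j})=\frac{\partial}{\partial t}(\phi^{p_{j}})=\frac{\partial\phi^{p_{j}}}{\partial t}$, and the norm $g_{B}(X,P)=g_{I}(\partial/\partial t,\partial/\partial t)=-1$ coming from the sign convention $g_{I}=-dt^{2}$. With these in hand, items $(2)$, $(3)$, $(5)$, $(6)$, $(7)$ follow by direct substitution of $X=Y=Z=\partial/\partial t$ and $b_{j}=\phi^{p_{j}}$ into the corresponding items of Proposition 2.6; in particular $(3)$ reads $\overline{\mathcal{K}}(\partial/\partial t,V_{i},W_{j})=\phi^{p_{j}}\frac{\partial\phi^{p_{j}}}{\partial t}g_{F_{j}}(V_{j},W_{j})$ for $i=j$, and $(6)$ reduces the triple-fibre Koszul form to $\phi^{2p_{j}}\mathcal{K}_{F_{j}}(U_{j},V_{j},W_{j})$ when $i=j=k$.

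The only two items deserving a line of verification are $(1)$ and $(4)$. For $(1)$, I would argue that on the one-dimensional factor $I$ the coefficient $g_{I}(\partial/\partial t,\partial/\partial t)=-1$ is constant and $[\partial/\partial t,\partial/\partial t]=0$, so that $\mathcal{K}_{I}(\partial/\partial t,\partial/\partial t,\partial/\partial t)=0$; moreover, because $P=\partial/\partial t$, the two metric-correction terms in the defining formula for $\overline{\mathcal{K}}_{I}$ cancel, whence $\overline{\mathcal{K}}(\partial/\partial t,\partial/\partial t,\partial/\partial t)=\overline{\mathcal{K}}_{I}(\partial/\partial t,\partial/\partial t,\partial/\partial t)=0$. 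For $(4)$, substituting $g_{B}(X,P)=-1$ into the general expression $b_{j}X(b_{j})g_{F_{j}}(V_{j},W_{j})+b_{j}^{2}g_{B}(X,P)g_{F_{j}}(V_{j},W_{j})$ yields precisely $\phi^{p_{j}}\frac{\partial\phi^{p_{j}}}{\partial t}g_{F_{j}}(V_{j},W_{j})-\phi^{2p_{j}}g_{F_{j}}(V_{j},W_{j})$, while the antisymmetry $\overline{\mathcal{K}}(V_{i},\partial/\partial t,W_{j})=-\overline{\mathcal{K}}(V_{i},W_{j},\partial/\partial t)$ is inherited verbatim from item $(4)$ of Proposition 2.6.

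There is no genuine obstacle here: the statement is a transcription of a general formula into a special case, and the extension from $M_{2}$ to $M_{3}$ costs nothing because the fibre indices never interact across distinct factors. The one point to watch is the sign produced by the Lorentzian convention $g_{I}=-dt^{2}$, which is exactly what generates the $-\phi^{2p_{j}}$ term in $(4)$; getting that sign right constitutes essentially the entire content of the verification.
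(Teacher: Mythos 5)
Your proposal is correct and matches the paper's (implicit) justification: Proposition 8.23 is stated without proof precisely because it is the specialization of Proposition 2.6 to $B=I$, $g_{I}=-dt^{2}$, $b_{j}=\phi^{p_{j}}$, $P=\partial/\partial t$, and you carry out exactly that substitution, correctly isolating $g_{I}(\partial/\partial t,\partial/\partial t)=-1$ as the source of the $-\phi^{2p_{j}}$ term in item $(4)$. Nothing further is needed.
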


\begin{prop}
Let $M_{3}=I\times_{\phi^{p_{1}}}F_{1}\times_{\phi^{p_{2}}}F_{2}\times_{\phi^{p_{3}}}F_{3}$ be a degenerate multiply warped product, let the vector fields $\partial/ \partial t\in\Gamma(TI)$ and $U_{j}, V_{j}, W_{j}\in\Gamma(TF_{j}),$ for $j\in \{1, 2, 3\}.$ Let $\overline{\mathcal{K}}$ be the semi-symmetric metric Koszul form on $I\times_{\phi^{p_{1}}}F_{1}\times_{\phi^{p_{2}}}F_{2}\times_{\phi^{p_{3}}}F_{3}$ and $\overline{\mathcal{K}}_{I}$, $\overline{\mathcal{K}}_{F_{j}}$ be the lifts of the semi-symmetric metric Koszul form on $I,$ $F_{j},$ respectively. Let $P\in\Gamma(TF_{l}),$ then
\begin{align}
&(1)\overline{\mathcal{K}}(\frac{\partial}{\partial t}, \frac{\partial}{\partial t}, \frac{\partial}{\partial t})=\mathcal{K}_{I}(\frac{\partial}{\partial t}, \frac{\partial}{\partial t}, \frac{\partial}{\partial t})=0;\nonumber\\
&(2)\overline{\mathcal{K}}(\frac{\partial}{\partial t}, \frac{\partial}{\partial t}, W_{j})=-\overline{\mathcal{K}}(\frac{\partial}{\partial t}, W_{j}, \frac{\partial}{\partial t})=\phi^{2p_{j}}g_{F_{j}}(W_{j}, P),~if~j=l;\nonumber\\
&(3)\overline{\mathcal{K}}(\frac{\partial}{\partial t}, \frac{\partial}{\partial t}, W_{j})=\overline{\mathcal{K}}(\frac{\partial}{\partial t}, W_{j}, \frac{\partial}{\partial t})=0,~if~j\neq l;\nonumber\\
&(4)\overline{\mathcal{K}}(W_{j}, \frac{\partial}{\partial t}, \frac{\partial}{\partial t})=0;\nonumber\\
&(5)\overline{\mathcal{K}}(\frac{\partial}{\partial t}, V_{i}, W_{j})=\overline{\mathcal{K}}(V_{i}, \frac{\partial}{\partial t}, W_{j})=-\overline{\mathcal{K}}(V_{i}, W_{j}, \frac{\partial}{\partial t})=\phi^{p_{j}}\frac{\partial \phi^{p_{j}}}{\partial t}g_{F_{j}}(V_{j}, W_{j}),~if~i=j;\nonumber
\end{align}
\begin{align}
&(6)\overline{\mathcal{K}}(\frac{\partial}{\partial t}, V_{i}, W_{j})=\overline{\mathcal{K}}(V_{i}, \frac{\partial}{\partial t}, W_{j})=\overline{\mathcal{K}}(V_{i}, W_{j}, \frac{\partial}{\partial t})=0,~if~i\neq j;\nonumber\\
&(7)\overline{\mathcal{K}}(U_{i}, V_{j}, W_{k})=\phi^{2p_{j}}\mathcal{K}_{F_{j}}(U_{j}, V_{j}, W_{j})+\phi^{4p_{j}}g_{F_{j}}(U_{j}, W_{j})g_{F_{j}}(V_{j}, P)-\phi^{4p_{j}}g_{F_{j}}(U_{j}, V_{j})g_{F_{j}}(W_{j}, P),\nonumber\\
&~if~i=j=k=l;\nonumber\\
&(8)\overline{\mathcal{K}}(U_{i}, V_{j}, W_{k})=-\overline{\mathcal{K}}(U_{i}, W_{k}, V_{j})=-\phi^{2p_{j}}\phi^{2p_{k}}g_{F_{j}}(U_{j}, V_{j})g_{F_{k}}(W_{k}, P),~if~i=j\neq k=l;\nonumber\\
&(9)\overline{\mathcal{K}}(U_{i}, V_{j}, W_{k})=0,~other~cases.\nonumber
\end{align}
\end{prop}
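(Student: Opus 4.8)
The plan is to deduce the statement directly from Proposition 2.7 (the semi-symmetric metric Koszul forms of a general degenerate multiply warped product with $P\in\Gamma(TF_l)$), of which it is the special case $m=3$, $B=I$ with $g_I=-dt^2$, warping functions $b_j=\phi^{p_j}$, and $P\in\Gamma(TF_l)$. Under this identification each of the nine items matches, verbatim up to substitution, the correspondingly numbered item of Proposition 2.7; the base vector fields $X,Y,Z\in\Gamma(TB)$ are all taken to be $\partial/\partial t$, and the abstract warping data $b_j,\,X(b_j)$ are replaced by $\phi^{p_j},\,\partial\phi^{p_j}/\partial t$. Thus the entire argument reduces to transcription together with two short one-dimensional computations.

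First I would settle the base terms. Since $g_I(\partial/\partial t,\partial/\partial t)=-1$ is constant and $[\partial/\partial t,\partial/\partial t]=0$, Definition 2.2 gives $\mathcal{K}_I(\partial/\partial t,\partial/\partial t,\partial/\partial t)=0$; combined with item (1) of Proposition 2.7 this yields item (1). For items (5)--(6) the only change is that the directional derivative $X(b_j)$ with $X=\partial/\partial t$ and $b_j=\phi^{p_j}$ equals $\partial\phi^{p_j}/\partial t$, so the factor $b_jX(b_j)$ becomes $\phi^{p_j}\,\partial\phi^{p_j}/\partial t$; the chain of equalities $\overline{\mathcal{K}}(\partial/\partial t,V_i,W_j)=\overline{\mathcal{K}}(V_i,\partial/\partial t,W_j)=-\overline{\mathcal{K}}(V_i,W_j,\partial/\partial t)$ and the vanishing for $i\neq j$ are inherited unchanged. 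Item (4) is the clause $\overline{\mathcal{K}}(W_j,X,Y)=0$, and items (7)--(9) follow by replacing $b_j^2,\,b_j^4,\,b_j^2 b_k^2$ with $\phi^{2p_j},\,\phi^{4p_j},\,\phi^{2p_j}\phi^{2p_k}$ in items (7)--(9) of Proposition 2.7.

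The one point meriting attention is the sign appearing in item (2). Item (2) of Proposition 2.7 reads $\overline{\mathcal{K}}(X,Y,W_j)=-b_j^2 g_B(X,Y)g_{F_j}(W_j,P)$ for $j=l$; inserting $X=Y=\partial/\partial t$ and the Lorentzian value $g_I(\partial/\partial t,\partial/\partial t)=-1$ gives $\overline{\mathcal{K}}(\partial/\partial t,\partial/\partial t,W_j)=-\phi^{2p_j}(-1)g_{F_j}(W_j,P)=\phi^{2p_j}g_{F_j}(W_j,P)$, precisely the stated formula, while the antisymmetry $\overline{\mathcal{K}}(\partial/\partial t,\partial/\partial t,W_j)=-\overline{\mathcal{K}}(\partial/\partial t,W_j,\partial/\partial t)$ transfers directly; item (3) is then the $j\neq l$ clause.

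I expect the only genuine subtlety to be exactly this signature-induced sign flip: because the base line element is $-dt^2$ rather than positive definite, the factor $g_I(\partial/\partial t,\partial/\partial t)=-1$ turns the abstract minus of Proposition 2.7 into the plus that appears in item (2). No new geometric input is required, and the remaining items are mechanical specializations, so I would simply remark that the result follows from Proposition 2.7 by this substitution.
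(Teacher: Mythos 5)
Your proposal is correct and matches the paper's (implicit) treatment: the paper states this proposition without proof as a direct specialization of Proposition 2.7 to $B=I$, $g_I=-dt^2$, $b_j=\phi^{p_j}$, $m=3$, which is exactly your argument. Your explicit check of the sign in item (2) coming from $g_I(\partial/\partial t,\partial/\partial t)=-1$ is the one nontrivial point, and you handled it correctly.
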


\begin{thm}
Let $M_{3}=I\times_{\phi^{p_{1}}}F_{1}\times_{\phi^{p_{2}}}F_{2}\times_{\phi^{p_{3}}}F_{3}$ be a degenerate multiply warped product, let the vector fields $\partial/ \partial t\in\Gamma(TI)$ and $U_{j}, V_{j}, W_{j}, Q_{j}\in\Gamma(TF_{j}).$  Let $P=\partial/ \partial t,$ then
\begin{align}
&(1)\overline{R}(\frac{\partial}{\partial t}, \frac{\partial}{\partial t}, \frac{\partial}{\partial t}, \frac{\partial}{\partial t})=\overline{R}_{I}(\frac{\partial}{\partial t}, \frac{\partial}{\partial t}, \frac{\partial}{\partial t}, \frac{\partial}{\partial t})=0;\nonumber\\
&(2)\overline{R}(\frac{\partial}{\partial t}, \frac{\partial}{\partial t}, \frac{\partial}{\partial t}, W_{j})=\overline{R}(\frac{\partial}{\partial t}, W_{j}, \frac{\partial}{\partial t}, \frac{\partial}{\partial t})=0;\nonumber\\
&(3)\overline{R}(\frac{\partial}{\partial t}, \frac{\partial}{\partial t},  V_{i}, W_{j})=\overline{R}(V_{i}, W_{j}, \frac{\partial}{\partial t}, \frac{\partial}{\partial t})=0,~if~i=j;\nonumber\\
&(4)\overline{R}(\frac{\partial}{\partial t}, V_{i}, W_{j}, \frac{\partial}{\partial t})=-\phi^{p_{j}}\frac{\partial^{2} \phi^{p_{j}}}{\partial t^{2}}g_{F_{j}}(V_{j}, W_{j})+\phi^{p_{j}}\frac{\partial \phi^{p_{j}}}{\partial t}g_{F_{j}}(V_{j}, W_{j}),~if~i=j;\nonumber\\
&(5)\overline{R}(\frac{\partial}{\partial t}, \frac{\partial}{\partial t},  V_{i}, W_{j})=\overline{R}(V_{i}, W_{j}, \frac{\partial}{\partial t}, \frac{\partial}{\partial t})=\overline{R}(\frac{\partial}{\partial t}, V_{i}, W_{j}, \frac{\partial}{\partial t})=0,~if~i\neq j;\nonumber\\
&(6)\overline{R}(\frac{\partial}{\partial t}, V_{i}, W_{j}, U_{k})=\overline{R}(W_{j}, U_{k}, \frac{\partial}{\partial t}, V_{i})=0,~if~i=j=k;\nonumber\\
&(7)\overline{R}(\frac{\partial}{\partial t}, V_{i}, W_{j}, U_{k})=\overline{R}(\frac{\partial}{\partial t},  U_{k}, V_{i}, W_{j})=\overline{R}(W_{j}, U_{k}, \frac{\partial}{\partial t}, V_{i})=\overline{R}(V_{i}, W_{j}, \frac{\partial}{\partial t},  U_{k})=0,\nonumber\\
&~if~i=j\neq k~or~i, j, k~are~different;\nonumber\\
&(8)\overline{R}(U_{k}, V_{i}, W_{j}, Q_{s})=\phi^{2p_{j}}R_{F_{j}}(U_{j}, V_{j}, W_{j}, Q_{j})+(\phi^{2p_{j}}g^{*}_{I}(d\phi^{p_{j}}, d\phi^{p_{j}})+2\phi^{3p_{j}}\frac{\partial \phi^{p_{j}}}{\partial t}-\phi^{4p_{j}})\nonumber\\
&\times(g_{F_{j}}(U_{j}, W_{j})g_{F_{j}}(V_{j}, Q_{j})-g_{F_{j}}(V_{j}, W_{j})g_{F_{j}}(U_{j}, Q_{j})),~if~i=j=k=s;\nonumber\\
&(9)\overline{R}(U_{k}, V_{i}, W_{j}, Q_{s})=\overline{R}(W_{j}, Q_{s}, U_{k}, V_{i},)=0,~if~i=j=s\neq k;\nonumber\\
&(10)\overline{R}(U_{k}, V_{i}, W_{j}, Q_{s})=(\phi^{p_{i}}\phi^{p_{j}}g^{*}_{I}(d\phi^{p_{i}}, d\phi^{p_{j}})+\phi^{p_{i}}\phi^{2p_{j}}\frac{\partial \phi^{p_{i}}}{\partial t}+\phi^{p_{j}}\phi^{2p_{i}}\frac{\partial \phi^{p_{j}}}{\partial t}-\phi^{2p_{i}}\phi^{2p_{j}})\nonumber\\
&\times g_{F_{i}}(V_{i}, Q_{i})g_{F_{j}}(U_{j}, W_{j}),~if~j=k\neq i=s;\nonumber\\
&(11)\overline{R}(U_{k}, V_{i}, W_{j}, Q_{s})=0,~other~cases.\nonumber
\end{align}
\end{thm}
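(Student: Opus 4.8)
The plan is to obtain this theorem as the direct specialization of Theorem 2.11 to the generalized Kasner setting in which the base is the one-dimensional interval $B = I$ with metric $-dt^{2}$, the number of fibers is $m = 3$, the warping functions are $b_{j} = \phi^{p_{j}}$, and the distinguished vector field is $P = \partial/\partial t \in \Gamma(TI)$. Since $I$ is one-dimensional and flat, several base-quantities appearing in Theorem 2.11 collapse: $R_{I} = \overline{R}_{I} = 0$ by antisymmetry in the first two slots, $\mathcal{K}_{I}(\frac{\partial}{\partial t}, \frac{\partial}{\partial t}, \frac{\partial}{\partial t}) = 0$ because $g_{I}(\frac{\partial}{\partial t}, \frac{\partial}{\partial t}) = -1$ is constant and $[\frac{\partial}{\partial t}, \frac{\partial}{\partial t}] = 0$, and consequently $\nabla^{\flat}_{\partial/\partial t}\frac{\partial}{\partial t} = 0$, so every term of the form $g^{*}_{I}(\nabla^{\flat}_{\partial/\partial t}\frac{\partial}{\partial t}, d\phi^{p_{j}})$ drops out. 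First I would record these reductions together with the single numerical input $g_{I}(\frac{\partial}{\partial t}, \frac{\partial}{\partial t}) = g_{I}(P, P) = -1$, which is what distinguishes this Lorentzian computation from the general template.

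Next I would run through the twelve cases of Theorem 2.11 one at a time, substituting $b_{j} \mapsto \phi^{p_{j}}$, $X = Y = P = \frac{\partial}{\partial t}$, and $g_{B}(P, P) = g_{B}(X, P) = g_{B}(X, Y) = -1$. For instance, case (4) of Theorem 2.11 contributes the two terms $b^{2}_{j} g_{B}(X, P) g_{B}(Y, P)$ and $-b^{2}_{j} g_{B}(X, Y) g_{B}(P, P)$, which upon setting all inner products to $-1$ become $+\phi^{2p_{j}}$ and $-\phi^{2p_{j}}$ and cancel, leaving exactly the stated $-\phi^{p_{j}}\frac{\partial^{2} \phi^{p_{j}}}{\partial t^{2}} + \phi^{p_{j}}\frac{\partial \phi^{p_{j}}}{\partial t}$ in part (4). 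Similarly, in case (9) the coefficient $b^{2}_{j} g^{*}_{B}(db_{j}, db_{j}) + 2b^{3}_{j} P(b_{j}) + b^{4}_{j} g_{B}(P, P)$ turns into $\phi^{2p_{j}} g^{*}_{I}(d\phi^{p_{j}}, d\phi^{p_{j}}) + 2\phi^{3p_{j}}\frac{\partial \phi^{p_{j}}}{\partial t} - \phi^{4p_{j}}$, matching part (8); and in case (11) the four-term coefficient reproduces the mixed expression in part (10). The remaining cases either vanish outright or transcribe verbatim.

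The main obstacle is purely bookkeeping: one must track carefully the signs introduced by $g_{I}(P, P) = -1$, since it is precisely these signs that produce the subtractive terms $-\phi^{4p_{j}}$ and $-\phi^{2p_{i}}\phi^{2p_{j}}$ together with the cancellations above, and one must verify that the case labels of Theorem 2.11 map correctly onto those of the present statement under the reindexing forced by $X = Y = \frac{\partial}{\partial t}$. It is worth remarking that, because every fiber here satisfies $\dim F_{j} = 1$, the terms $R_{F_{j}}$ and the curvature-type combinations with all four fiber arguments in a single $F_{j}$ vanish identically by antisymmetry; I would retain the general expressions inherited from Theorem 2.11 so that the statement stays uniform, but note that the genuinely nonzero curvature of $M_{3}$ is carried by the mixed case (10) with $i \neq j$.
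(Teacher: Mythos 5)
Your proposal is correct and is exactly how the paper obtains this result: Section 8 presents it without a separate proof as the specialization of Theorem 2.11 to $B=I$, $g_{I}=-dt^{2}$, $b_{j}=\phi^{p_{j}}$, $P=\partial/\partial t$, using $g_{I}(\partial_{t},\partial_{t})=-1$, $\mathcal{K}_{I}(\partial_{t},\partial_{t},\partial_{t})=0$ and $R_{I}=0$, and your sample checks of cases (4), (8) and (10) (including the cancellation of the two $\pm\phi^{2p_{j}}$ terms and the sign of $-\phi^{4p_{j}}$) match the stated formulas. Your side remark that $\dim F_{j}=1$ forces $R_{F_{j}}=0$ and kills the $2\times2$ Gram determinants in case (8) is accurate but, as you note, immaterial to the statement as written.
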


\begin{thm}
Let $M_{3}=I\times_{\phi^{p_{1}}}F_{1}\times_{\phi^{p_{2}}}F_{2}\times_{\phi^{p_{3}}}F_{3}$ be a degenerate multiply warped product, let the vector fields $\partial/ \partial t\in\Gamma(TI)$ and $U_{j}, V_{j}, W_{j}, Q_{j}\in\Gamma(TF_{j}).$  Let $P\in\Gamma(TF_{l}),$ then
\begin{align}
&(1)\overline{R}(\frac{\partial}{\partial t}, \frac{\partial}{\partial t}, \frac{\partial}{\partial t}, \frac{\partial}{\partial t})=0;\nonumber\\
&(2)\overline{R}(\frac{\partial}{\partial t}, \frac{\partial}{\partial t}, \frac{\partial}{\partial t}, W_{j})=\overline{R}(\frac{\partial}{\partial t}, W_{j}, \frac{\partial}{\partial t}, \frac{\partial}{\partial t})=0;\nonumber\\
&(3)\overline{R}(\frac{\partial}{\partial t}, \frac{\partial}{\partial t},  V_{i}, W_{j})=\overline{R}(V_{i}, W_{j}, \frac{\partial}{\partial t}, \frac{\partial}{\partial t})=0;\nonumber\\
&(4)\overline{R}(\frac{\partial}{\partial t}, V_{i}, W_{j}, \frac{\partial}{\partial t})=-\phi^{p_{j}}\frac{\partial^{2} \phi^{p_{j}}}{\partial t^{2}}g_{F_{j}}(V_{j}, W_{j})+\phi^{2p_{j}}(\mathcal{K}_{F_{j}}(V_{j}, P, W_{j})+\phi^{2p_{j}}g_{F_{j}}(V_{j}, W_{j})\nonumber\\
&\times g_{F_{j}}(P, P)-\phi^{2p_{j}}g_{F_{j}}(V_{j}, P)g_{F_{j}}(W_{j}, P)),~if~i=j=l;\nonumber\\
&(5)\overline{R}(\frac{\partial}{\partial t}, V_{i}, W_{j}, \frac{\partial}{\partial t})=-\phi^{p_{j}}\frac{\partial^{2} \phi^{p_{j}}}{\partial t^{2}}g_{F_{j}}(V_{j}, W_{j})+\phi^{2p_{j}}\phi^{2p_{l}}g_{F_{j}}(V_{j}, W_{j})g_{F_{j}}(P, P),~if~i=j\neq l;\nonumber\\
&(6)\overline{R}(\frac{\partial}{\partial t}, V_{i}, W_{j}, \frac{\partial}{\partial t})=0,~other~cases;\nonumber\\
&(7)\overline{R}(\frac{\partial}{\partial t}, V_{i}, W_{j}, U_{k})=-\overline{R}(W_{j}, U_{k}, \frac{\partial}{\partial t}, V_{i})=\phi^{3p_{j}}\frac{\partial \phi^{p_{j}}}{\partial t}(g_{F_{j}}(V_{j}, U_{j})g_{F_{j}}(W_{j}, P)-g_{F_{j}}(V_{j}, W_{j})\nonumber\\
&\times g_{F_{j}}(U_{j}, P)),~if~i=j=k=l;\nonumber\\
&(8)\overline{R}(\frac{\partial}{\partial t}, V_{i}, W_{j}, U_{k})=-\overline{R}(W_{j}, U_{k}, \frac{\partial}{\partial t}, V_{i})=-\phi^{p_{l}}\phi^{2p_{j}}\frac{\partial \phi^{p_{l}}}{\partial t}g_{F_{l}}(U_{l}, P)g_{F_{j}}(V_{j}, W_{j}),\nonumber\\
&~if~i=j\neq k=l;\nonumber\\
&(9)\overline{R}(\frac{\partial}{\partial t}, V_{i}, W_{j}, U_{k})=0,~other~cases;\nonumber\\
&(10)\overline{R}(U_{k}, V_{i}, W_{j}, Q_{s})=\phi^{2p_{j}}R_{F_{j}}(U_{j}, V_{j}, W_{j}, Q_{j})+(\phi^{2p_{j}}g^{*}_{I}(d\phi^{p_{j}}, d\phi^{p_{j}})+\phi^{6p_{j}}g_{F_{j}}(P, P))\nonumber\\
&\times(g_{F_{j}}(U_{j}, W_{j})g_{F_{j}}(V_{j}, Q_{j})-g_{F_{j}}(V_{j}, W_{j})g_{F_{j}}(U_{j}, Q_{j}))+\phi^{4p_{j}}g_{F_{j}}(V_{j}, Q_{j})\mathcal{K}_{F_{j}}(U_{j}, P, W_{j})\nonumber\\
&-\phi^{4p_{j}}g_{F_{j}}(U_{j}, Q_{j})\mathcal{K}_{F_{j}}(V_{j}, P, W_{j})-\phi^{4p_{j}}g_{F_{j}}(V_{j}, W_{j})\mathcal{K}_{F_{j}}(U_{j}, P, Q_{j})+\phi^{4p_{j}}g_{F_{j}}(U_{j}, W_{j})\nonumber\\
&\times \mathcal{K}_{F_{j}}(V_{j}, P, Q_{j})+\phi^{6p_{j}}g_{F_{j}}(U_{j}, P)(g_{F_{j}}(V_{j}, W_{j})g_{F_{j}}(Q_{j}, P)-g_{F_{j}}(V_{j}, Q_{j})g_{F_{j}}(W_{j}, P))\nonumber\\
&-\phi^{6p_{j}}g_{F_{j}}(V_{j}, P)(g_{F_{j}}(U_{j}, W_{j})g_{F_{j}}(Q_{j}, P)-g_{F_{j}}(U_{j}, Q_{j})g_{F_{j}}(W_{j}, P)),~if~i=j=k=s=l;\nonumber\\
&(11)\overline{R}(U_{k}, V_{i}, W_{j}, Q_{s})=\phi^{2p_{j}}R_{F_{j}}(U_{j}, V_{j}, W_{j}, Q_{j})+(\phi^{2p_{j}}g^{*}_{I}(d\phi^{p_{j}}, d\phi^{p_{j}})+\phi^{2p_{l}}\phi^{4p_{j}}g_{F_{l}}(P, P))\nonumber\\
&\times(g_{F_{j}}(U_{j}, W_{j})g_{F_{j}}(V_{j}, Q_{j})-g_{F_{j}}(V_{j}, W_{j})g_{F_{j}}(U_{j}, Q_{j})),~if~i=j=k=s\neq l;\nonumber\\
&(12)\overline{R}(U_{k}, V_{i}, W_{j}, Q_{s})=\phi^{p_{i}}\phi^{p_{j}}g^{*}_{I}(d\phi^{p_{i}}, d\phi^{p_{j}})g_{F_{i}}(V_{i}, Q_{i})g_{F_{j}}(U_{j}, W_{j})+\phi^{2p_{i}}\phi^{2p_{j}}g_{F_{j}}(U_{j}, W_{j})\nonumber\\
&\times(\mathcal{K}_{F_{i}}(V_{i}, P, Q_{i})+\phi^{2p_{i}}g_{F_{i}}(P, P)g_{F_{i}}(V_{i}, Q_{i})-\phi^{2p_{i}}g_{F_{i}}(V_{i}, P)g_{F_{i}}(Q_{i}, P)),\nonumber\\
&~if~j=k\neq i=s=l;\nonumber\\
&(13)\overline{R}(U_{k}, V_{i}, W_{j}, Q_{s})=\phi^{p_{i}}\phi^{p_{j}}g^{*}_{I}(d\phi^{p_{i}}, d\phi^{p_{j}})g_{F_{i}}(V_{i}, Q_{i})g_{F_{j}}(U_{j}, W_{j})+\phi^{2p_{i}}\phi^{2p_{j}}g_{F_{i}}(V_{i}, Q_{i})\nonumber\\
&\times(\mathcal{K}_{F_{j}}(U_{j}, P, W_{j})+\phi^{2p_{j}}g_{F_{j}}(P, P)g_{F_{j}}(U_{j}, W_{j})-\phi^{2p_{j}}g_{F_{j}}(U_{j}, P)g_{F_{j}}(W_{j}, P)),\nonumber\\
&~if~l=j=k\neq i=s;\nonumber
\end{align}
\begin{align}
&(14)\overline{R}(U_{k}, V_{i}, W_{j}, Q_{s})=\phi^{p_{i}}\phi^{p_{j}}g^{*}_{I}(d\phi^{p_{i}}, d\phi^{p_{j}})g_{F_{i}}(V_{i}, Q_{i})g_{F_{j}}(U_{j}, W_{j})+\phi^{2p_{i}}\phi^{2p_{j}}\phi^{2p_{l}}g_{F_{i}}(V_{i}, Q_{i})\nonumber\\
&\times g_{F_{j}}(U_{j}, W_{j})g_{F_{l}}(P, P),~where~i=s, j=k, l~are~different;\nonumber\\
&(15)\overline{R}(U_{k}, V_{i}, W_{j}, Q_{s})=0,~other~cases.\nonumber
\end{align}
\end{thm}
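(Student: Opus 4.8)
The plan is to obtain this theorem as the specialization of Theorem 2.12 (the case $P \in \Gamma(TF_l)$ for a general multiply warped product $B\times_{b_1}F_1\times\cdots\times_{b_m}F_m$) to the data $B = I$, $m = 3$, and $b_j = \phi^{p_j}$, so that essentially no new computation is required beyond substituting the Kasner warping functions and exploiting the one-dimensionality of the base. First I would record the structural facts about the base factor $(I,-dt^2)$: being a one-dimensional non-degenerate (Lorentzian) manifold, its curvatures vanish, $R_I = \overline{R}_I = 0$; its Koszul form satisfies $\mathcal{K}_I(\partial/\partial t,\partial/\partial t,\partial/\partial t) = \frac{1}{2}\partial_t(g_I(\partial/\partial t,\partial/\partial t)) = 0$ since $g_I(\partial/\partial t,\partial/\partial t) = -1$ is constant; and consequently the lower covariant derivative $\nabla^{\flat}_{\partial/\partial t}(\partial/\partial t)$ vanishes. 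Together with the antisymmetry relations for $\overline{R}$, these identities immediately dispose of cases (1)--(3).

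Next I would isolate the three substitutions that drive every remaining case. Since $P \in \Gamma(TF_l)$ is tangent to a fiber while each $\phi^{p_j}$ is a function on $I$ alone, we have $P(\phi^{p_j}) = 0$; this annihilates all the $P(b_j)$-type terms of the general formula and explains why only $\partial/\partial t$-derivatives of $\phi^{p_j}$ survive. The directional derivative along the unique base direction is $X(b_j) = \frac{\partial \phi^{p_j}}{\partial t}$, and the Hessian combination $b_j\,XY(b_j) - b_j\,g^{*}_{B}(\nabla^{\flat}_{X}Y, db_j)$ collapses, by $\nabla^{\flat}_{\partial/\partial t}(\partial/\partial t) = 0$, to $\phi^{p_j}\frac{\partial^2 \phi^{p_j}}{\partial t^2}$. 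One must also carry the normalization $g_B(\partial/\partial t,\partial/\partial t) = -1$, whose sign flips every factor of the form $g_B(X,Y)$; this is exactly what turns the summand $-b_j^2 g_B(X,Y)(\cdots)$ of case (5) of Theorem 2.12 into the $+\phi^{2p_j}(\cdots)$ of case (4) here.

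With these reductions in hand I would match the cases one by one: each entry (1)--(15) of the present theorem is the corresponding entry of Theorem 2.12 evaluated at $X = Y = Z = T = \partial/\partial t$, the only bookkeeping wrinkle being that the two sub-cases $j=l$ and $j\neq l$ of $\overline{R}(X,Y,Z,W_j)$ both reduce to $0$ and hence merge into entry (2). The genuinely fiber quantities, namely $g_{F_l}(P,P)$ and the forms $\mathcal{K}_{F_j}(\cdot,P,\cdot)$ and $R_{F_j}$, pass through unchanged, since $P$ lives in the $l$-th fiber and does not acquire the base sign. As an independent check one may instead compute $\overline{R}$ directly from the curvature identity of Proposition 2.8, feeding in the specialized semi-symmetric metric Koszul forms for $M_3$ established in the two preceding propositions; this follows the identical pattern.

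The argument is conceptually routine; the main obstacle is purely the sign bookkeeping forced by the Lorentzian value $g_I(\partial/\partial t,\partial/\partial t) = -1$ together with the simultaneous replacement of the generic base Hessian by an ordinary second derivative. The delicate point is therefore the case-by-case verification that each substitution lands precisely on the stated right-hand side, most notably in the long case (10), where the terms $\mathcal{K}_{F_j}(\cdot,P,\cdot)$ and the degree-six factors $\phi^{6p_j}g_{F_j}(\cdot,P)$ must be tracked in concert. Once this verification is completed the proof is finished.
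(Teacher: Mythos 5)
Your proposal is correct and is essentially the paper's own (implicit) argument: the theorem is stated in Section 8 without proof precisely because it is the specialization of Theorem 2.12 to $B=I$, $b_j=\phi^{p_j}$, using $R_I=0$, $\nabla^{\flat}_{\partial/\partial t}(\partial/\partial t)=0$, $P(\phi^{p_j})=0$, and $g_I(\partial/\partial t,\partial/\partial t)=-1$, exactly as you describe. Your case-by-case matching, including the cancellation that merges the $j=l$ and $j\neq l$ subcases into entry (2), is the intended verification.
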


\begin{prop}
Let $M_{3}=I\times_{\phi^{p_{1}}}F_{1}\times_{\phi^{p_{2}}}F_{2}\times_{\phi^{p_{3}}}F_{3}$ be a degenerate multiply warped product, let the vector fields $\partial/ \partial t\in\Gamma(TI)$ and $U_{j}, V_{j}, W_{j}\in\Gamma(TF_{j}),$ for $j\in \{1, 2, 3\}.$ Let $\widehat{\mathcal{K}}$ be the semi-symmetric non-metric Koszul form on $I\times_{\phi^{p_{1}}}F_{1}\times_{\phi^{p_{2}}}F_{2}\times_{\phi^{p_{3}}}F_{3}$ and $\widehat{\mathcal{K}}_{I}$, $\widehat{\mathcal{K}}_{F_{j}}$ be the lifts of the semi-symmetric non-metric Koszul form on $I,$ $F_{j},$ respectively. Let $P=\partial/ \partial t,$ then
\begin{align}
&(1)\widehat{\mathcal{K}}(\frac{\partial}{\partial t}, \frac{\partial}{\partial t}, \frac{\partial}{\partial t})=\widehat{\mathcal{K}}_{I}(\frac{\partial}{\partial t}, \frac{\partial}{\partial t}, \frac{\partial}{\partial t})=1;\nonumber\\
&(2)\widehat{\mathcal{K}}(\frac{\partial}{\partial t}, \frac{\partial}{\partial t}, W_{j})=\widehat{\mathcal{K}}(\frac{\partial}{\partial t}, W_{j}, \frac{\partial}{\partial t})=\widehat{\mathcal{K}}(W_{j}, \frac{\partial}{\partial t}, \frac{\partial}{\partial t})=0;\nonumber\\
&(3)\widehat{\mathcal{K}}(\frac{\partial}{\partial t}, V_{i}, W_{j})=-\widehat{\mathcal{K}}(V_{i}, W_{j}, \frac{\partial}{\partial t})=\phi^{p_{j}}\frac{\partial \phi^{p_{j}}}{\partial t}g_{F_{j}}(V_{j}, W_{j}),~if~i=j;\nonumber\\
&(4)\widehat{\mathcal{K}}(V_{i}, \frac{\partial}{\partial t}, W_{j})=\phi^{p_{j}}\frac{\partial \phi^{p_{j}}}{\partial t}g_{F_{j}}(V_{j}, W_{j})-\phi^{2p_{j}}g_{F_{j}}(V_{j}, W_{j}),~if~i=j;\nonumber\\
&(5)\widehat{\mathcal{K}}(\frac{\partial}{\partial t}, V_{i}, W_{j})=\widehat{\mathcal{K}}(V_{i}, \frac{\partial}{\partial t}, W_{j})=\widehat{\mathcal{K}}(V_{i}, W_{j}, \frac{\partial}{\partial t})=0,~if~i\neq j;\nonumber\\
&(6)\widehat{\mathcal{K}}(U_{i}, V_{j}, W_{k})=\phi^{2p_{j}}\mathcal{K}_{F_{j}}(U_{j}, V_{j}, W_{j}),~if~i=j=k;\nonumber\\
&(7)\widehat{\mathcal{K}}(U_{i}, V_{j}, W_{k})=0,~if~i=j\neq k~or~i, j, k~are~different.\nonumber
\end{align}
\end{prop}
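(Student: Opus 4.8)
The plan is to deduce this proposition as a direct specialization of Proposition 3.3 to the generalized Kasner space-time $M_{3}.$ Here the base is the one-dimensional interval $B=I$ equipped with the Lorentzian metric $-dt^{2},$ the warping functions are $b_{j}=\phi^{p_{j}},$ and the distinguished field is $P=\partial/\partial t\in\Gamma(TI).$ Since every base vector field appearing in the statement equals $\partial/\partial t,$ the entire computation reduces to substituting $X=Y=Z=\partial/\partial t,$ $b_{j}=\phi^{p_{j}}$ and $P=\partial/\partial t$ into the seven identities of Proposition 3.3.

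First I would record the two elementary facts on $I$ that govern all the signs, namely $g_{I}(\frac{\partial}{\partial t}, \frac{\partial}{\partial t})=-1$ (from the metric $-dt^{2}$) and $[\frac{\partial}{\partial t}, \frac{\partial}{\partial t}]=0.$ From the bracket identity together with the constancy of $g_{I}(\frac{\partial}{\partial t}, \frac{\partial}{\partial t}),$ the Koszul formula of Definition 2.2 gives $\mathcal{K}_{I}(\frac{\partial}{\partial t}, \frac{\partial}{\partial t}, \frac{\partial}{\partial t})=0.$ Hence, directly from Definition 3.1,
\begin{align}
\widehat{\mathcal{K}}_{I}(\tfrac{\partial}{\partial t}, \tfrac{\partial}{\partial t}, \tfrac{\partial}{\partial t})&=\mathcal{K}_{I}(\tfrac{\partial}{\partial t}, \tfrac{\partial}{\partial t}, \tfrac{\partial}{\partial t})+g_{I}(\tfrac{\partial}{\partial t}, P)g_{I}(\tfrac{\partial}{\partial t}, \tfrac{\partial}{\partial t})\nonumber\\
&=0+(-1)(-1)=1,\nonumber
\end{align}
which, combined with part (1) of Proposition 3.3, yields (1).

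Next I would handle the mixed and fibre terms. For (2), (3), (5), (6) and (7) the substitution is verbatim, since none of the right-hand sides of parts (2), (3), (5), (6), (7) of Proposition 3.3 involves $g_{B}(X,P)$: with $b_{j}\frac{\partial}{\partial t}(b_{j})=\phi^{p_{j}}\frac{\partial\phi^{p_{j}}}{\partial t}$ and $b_{j}^{2}=\phi^{2p_{j}}$ these parts pass directly into the stated identities. The only place the Lorentzian sign re-enters is part (4), where $g_{B}(X,P)=g_{I}(\frac{\partial}{\partial t}, \frac{\partial}{\partial t})=-1,$ so that
\begin{align}
\widehat{\mathcal{K}}(V_{i}, \tfrac{\partial}{\partial t}, W_{j})&=b_{j}\tfrac{\partial}{\partial t}(b_{j})g_{F_{j}}(V_{j}, W_{j})+b_{j}^{2}g_{I}(\tfrac{\partial}{\partial t}, P)g_{F_{j}}(V_{j}, W_{j})\nonumber\\
&=\phi^{p_{j}}\tfrac{\partial\phi^{p_{j}}}{\partial t}g_{F_{j}}(V_{j}, W_{j})-\phi^{2p_{j}}g_{F_{j}}(V_{j}, W_{j}),\nonumber
\end{align}
which is (4). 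The argument is therefore pure specialization; the only step requiring care is the sign $g_{I}(\frac{\partial}{\partial t}, \frac{\partial}{\partial t})=-1,$ and no genuine obstacle arises.
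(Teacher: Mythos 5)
Your proposal is correct and is exactly the route the paper intends: Section 8's propositions are stated as immediate specializations of the general results, and yours is the verbatim substitution $B=I$, $g_B=-dt^2$, $b_j=\phi^{p_j}$, $P=\partial/\partial t$ into Proposition 3.3, with the Lorentzian sign $g_I(\partial/\partial t,\partial/\partial t)=-1$ correctly producing the value $1$ in (1) and the term $-\phi^{2p_j}g_{F_j}(V_j,W_j)$ in (4). No issues.
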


\begin{prop}
Let $M_{3}=I\times_{\phi^{p_{1}}}F_{1}\times_{\phi^{p_{2}}}F_{2}\times_{\phi^{p_{3}}}F_{3}$ be a degenerate multiply warped product, let the vector fields $\partial/ \partial t\in\Gamma(TI)$ and $U_{j}, V_{j}, W_{j}\in\Gamma(TF_{j}),$ for $j\in \{1, 2, 3\}.$ Let $\widehat{\mathcal{K}}$ be the semi-symmetric non-metric Koszul form on $I\times_{\phi^{p_{1}}}F_{1}\times_{\phi^{p_{2}}}F_{2}\times_{\phi^{p_{3}}}F_{3}$ and $\widehat{\mathcal{K}}_{I}$, $\widehat{\mathcal{K}}_{F_{j}}$ be the lifts of the semi-symmetric non-metric Koszul form on $I,$ $F_{j},$ respectively. Let $P\in\Gamma(TF_{l}),$ then
\begin{align}
&(1)\widehat{\mathcal{K}}(\frac{\partial}{\partial t}, \frac{\partial}{\partial t}, \frac{\partial}{\partial t})=\mathcal{K}_{I}(\frac{\partial}{\partial t}, \frac{\partial}{\partial t}, \frac{\partial}{\partial t})=0;\nonumber\\
&(2)\widehat{\mathcal{K}}(\frac{\partial}{\partial t}, \frac{\partial}{\partial t}, W_{j})=\widehat{\mathcal{K}}(W_{j}, \frac{\partial}{\partial t}, \frac{\partial}{\partial t})=0,~if~j=l;\nonumber\\
&(3)\widehat{\mathcal{K}}(\frac{\partial}{\partial t}, W_{j}, \frac{\partial}{\partial t})=-\phi^{2p_{j}}g_{F_{j}}(W_{j}, P),~if~j=l;\nonumber\\
&(4)\widehat{\mathcal{K}}(\frac{\partial}{\partial t}, \frac{\partial}{\partial t}, W_{j})=\widehat{\mathcal{K}}(\frac{\partial}{\partial t}, W_{j}, \frac{\partial}{\partial t})=\widehat{\mathcal{K}}(W_{j}, \frac{\partial}{\partial t}, \frac{\partial}{\partial t})=0,~if~j\neq l;\nonumber\\
&(5)\widehat{\mathcal{K}}(\frac{\partial}{\partial t}, V_{i}, W_{j})=\widehat{\mathcal{K}}(V_{i}, \frac{\partial}{\partial t}, W_{j})=-\widehat{\mathcal{K}}(V_{i}, W_{j}, \frac{\partial}{\partial t})=\phi^{p_{j}}\frac{\partial \phi^{p_{j}}}{\partial t}g_{F_{j}}(V_{j}, W_{j}),~if~i=j;\nonumber
\end{align}
\begin{align}
&(6)\widehat{\mathcal{K}}(\frac{\partial}{\partial t}, V_{i}, W_{j})=\widehat{\mathcal{K}}(V_{i}, \frac{\partial}{\partial t}, W_{j})=\widehat{\mathcal{K}}(V_{i}, W_{j}, \frac{\partial}{\partial t})=0,~if~i\neq j;\nonumber\\
&(7)\widehat{\mathcal{K}}(U_{i}, V_{j}, W_{k})=\phi^{2p_{j}}\mathcal{K}_{F_{j}}(U_{j}, V_{j}, W_{j})+\phi^{4p_{j}}g_{F_{j}}(U_{j}, W_{j})g_{F_{j}}(V_{j}, P),~if~i=j=k=l;\nonumber\\
&(8)\widehat{\mathcal{K}}(U_{i}, V_{j}, W_{k})=\phi^{2p_{j}}\mathcal{K}_{F_{j}}(U_{j}, V_{j}, W_{j}),~if~i=j= k\neq l;\nonumber\\
&(9)\widehat{\mathcal{K}}(U_{i}, W_{k}, V_{j})=\phi^{2p_{j}}\phi^{2p_{k}}g_{F_{j}}(U_{j}, V_{j})g_{F_{k}}(W_{k}, P),~if~i=j\neq k=l;\nonumber\\
&(10)\widehat{\mathcal{K}}(U_{i}, V_{j}, W_{k})=0,~other~cases.\nonumber
\end{align}
\end{prop}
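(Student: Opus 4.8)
The plan is to derive this proposition as a direct specialization of the general formula for the semi-symmetric non-metric Koszul form with $P \in \Gamma(TF_l)$ established earlier for multiply warped products (Proposition 3.4), applied to $M_3 = I \times_{\phi^{p_1}} F_1 \times_{\phi^{p_2}} F_2 \times_{\phi^{p_3}} F_3$. In the language of that proposition I put the base $B = I$ with its metric $g_I = -dt^2$, the number of fibers $m = 3$, and the warping functions $b_j = \phi^{p_j}$ for $j \in \{1,2,3\}$. Since $\dim I = 1$, every section of $TI$ is a smooth multiple of $\partial/\partial t$, so all the base-directed arguments $X, Y, Z$ occurring in Proposition 3.4 may be taken equal to $\partial/\partial t$ without loss of generality; the ten cases of Proposition 3.4 then become precisely the ten cases (1)--(10) to be proved.

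First I would settle item (1). By item (1) of Proposition 3.4 together with the definition $\widehat{\mathcal{K}}(X,Y,Z) = \mathcal{K}(X,Y,Z) + g(Y,P)g(X,Z)$, and using that $P \in \Gamma(TF_l)$ is $g$-orthogonal to $\Gamma(TI)$ so that the correction term vanishes, I obtain $\widehat{\mathcal{K}}(\partial/\partial t, \partial/\partial t, \partial/\partial t) = \mathcal{K}_I(\partial/\partial t, \partial/\partial t, \partial/\partial t)$. Since $g_I(\partial/\partial t, \partial/\partial t) = -1$ is constant and $[\partial/\partial t, \partial/\partial t] = 0$, the defining formula for the Koszul form collapses to $\tfrac{1}{2}(\partial/\partial t)\bigl(g_I(\partial/\partial t, \partial/\partial t)\bigr) = 0$, which gives (1).

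Next I would transcribe items (2)--(10) of Proposition 3.4 under the substitution $b_j = \phi^{p_j}$. The only elementary identities required are $b_j^2 = \phi^{2p_j}$, $b_j^4 = \phi^{4p_j}$, $b_j^2 b_k^2 = \phi^{2p_j}\phi^{2p_k}$, and $X(b_j) = (\partial/\partial t)(\phi^{p_j}) = \partial \phi^{p_j}/\partial t$; feeding these into the ten cases of Proposition 3.4 reproduces the ten displayed cases verbatim, including the index dichotomies $j = l$ versus $j \neq l$ and $i = j$ versus $i \neq j$, and, in the purely fiber-directed cases, the finer splitting $i = j = k = l$, $i = j = k \neq l$, and $i = j \neq k = l$. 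These carry over unchanged because $M_3$ has exactly the three fibers indexed by $\{1,2,3\}$ and the distinguished index $l$ ranges over the same set.

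There is no substantive obstacle here: the entire proof reduces to a one-dimensional base calculation, which forces $\mathcal{K}_I(\partial/\partial t, \partial/\partial t, \partial/\partial t) = 0$ in item (1), followed by bookkeeping. The point that most repays care is keeping the fiber indices $i, j, k$ aligned with the distinguished index $l$ throughout the case split, so that the correct powers of $\phi$ and the correct placement of the factor $g_{F_l}(\cdot, P)$ appear in items (7), (8) and (9); this is exactly where each term of the present statement must be matched against its counterpart in Proposition 3.4.
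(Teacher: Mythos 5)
Your proposal is correct and matches the paper's (implicit) route: the Section~8 propositions are stated without proof precisely because they are direct specializations of Proposition 3.4 with $B=I$, $g_I=-dt^2$, $m=3$ and $b_j=\phi^{p_j}$, exactly as you describe. The one detail worth making explicit in your transcription step is that the factor $g_B(X,Y)$ in item (3) of Proposition 3.4 evaluates to $g_I(\partial/\partial t,\partial/\partial t)=-1$, which is what produces the minus sign in $\widehat{\mathcal{K}}(\partial/\partial t, W_j, \partial/\partial t)=-\phi^{2p_j}g_{F_j}(W_j,P)$; your opening normalization $g_I=-dt^2$ covers this.
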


\begin{thm}
Let $M_{3}=I\times_{\phi^{p_{1}}}F_{1}\times_{\phi^{p_{2}}}F_{2}\times_{\phi^{p_{3}}}F_{3}$ be a degenerate multiply warped product, let the vector fields $\partial/ \partial t\in\Gamma(TI)$ and $U_{j}, V_{j}, W_{j}, Q_{j}\in\Gamma(TF_{j}).$ Let $P=\partial/ \partial t,$ then
\begin{align}
&(1)\widehat{R}(\frac{\partial}{\partial t}, \frac{\partial}{\partial t}, \frac{\partial}{\partial t}, \frac{\partial}{\partial t})=\widehat{R}_{I}(\frac{\partial}{\partial t}, \frac{\partial}{\partial t}, \frac{\partial}{\partial t}, \frac{\partial}{\partial t})=0;\nonumber\\
&(2)\widehat{R}(\frac{\partial}{\partial t}, \frac{\partial}{\partial t}, \frac{\partial}{\partial t}, W_{j})=\widehat{R}(\frac{\partial}{\partial t}, \frac{\partial}{\partial t}, W_{j}, \frac{\partial}{\partial t})=\widehat{R}(\frac{\partial}{\partial t}, W_{j}, \frac{\partial}{\partial t}, \frac{\partial}{\partial t})=0;\nonumber\\
&(3)\widehat{R}(\frac{\partial}{\partial t}, \frac{\partial}{\partial t},  V_{i}, W_{j})=\widehat{R}(V_{i}, W_{j}, \frac{\partial}{\partial t}, \frac{\partial}{\partial t})=0,~if~i=j;\nonumber\\
&(4)\widehat{R}(V_{i}, \frac{\partial}{\partial t}, W_{j}, \frac{\partial}{\partial t})=-\widehat{R}(\frac{\partial}{\partial t}, V_{i}, W_{j}, \frac{\partial}{\partial t})=\phi^{p_{j}}\frac{\partial^{2} \phi^{p_{j}}}{\partial t^{2}}g_{F_{j}}(V_{j}, W_{j})+2\phi^{p_{j}}\frac{\partial \phi^{p_{j}}}{\partial t}g_{F_{j}}(V_{j}, W_{j}),\nonumber\\
&~if~i=j;\nonumber\\
&(5)\widehat{R}(V_{i}, \frac{\partial}{\partial t}, \frac{\partial}{\partial t}, W_{j})=-\widehat{R}(\frac{\partial}{\partial t}, V_{i}, \frac{\partial}{\partial t}, W_{j})=-\phi^{p_{j}}\frac{\partial^{2} \phi^{p_{j}}}{\partial t^{2}}g_{F_{j}}(V_{j}, W_{j}),~if~i=j;\nonumber\\
&(6)\widehat{R}(\frac{\partial}{\partial t}, \frac{\partial}{\partial t},  V_{i}, W_{j})=\widehat{R}(V_{i}, W_{j}, \frac{\partial}{\partial t}, \frac{\partial}{\partial t})=\widehat{R}(\frac{\partial}{\partial t}, V_{i}, W_{j}, \frac{\partial}{\partial t})=\widehat{R}(\frac{\partial}{\partial t}, V_{i}, \frac{\partial}{\partial t}, W_{j})=0,\nonumber\\
&~if~i\neq j;\nonumber\\
&(7)\widehat{R}(\frac{\partial}{\partial t}, V_{i}, W_{j}, U_{k})=\widehat{R}(W_{j}, U_{k}, \frac{\partial}{\partial t}, V_{i})=0,~if~i=j=k;\nonumber\\
&(8)\widehat{R}(W_{j}, U_{k}, V_{i}, \frac{\partial}{\partial t})=-\phi^{2p_{j}}(\mathcal{K}_{F_{j}}(W_{j}, V_{j}, U_{j})-\mathcal{K}_{F_{j}}(U_{j}, V_{j}, W_{j})),~if~i=j=k;\nonumber\\
&(9)\widehat{R}(\frac{\partial}{\partial t}, V_{i}, W_{j}, U_{k})=\widehat{R}(W_{j}, U_{k}, \frac{\partial}{\partial t}, V_{i})=\widehat{R}(W_{j}, U_{k}, V_{i}, \frac{\partial}{\partial t})=0,~other~cases;\nonumber\\
&(10)\widehat{R}(U_{k}, V_{i}, W_{j}, Q_{s})=\phi^{2p_{j}}R_{F_{j}}(U_{j}, V_{j}, W_{j}, Q_{j})+\phi^{2p_{j}}g^{*}_{I}(d\phi^{p_{j}}, d\phi^{p_{j}})(g_{F_{j}}(U_{j}, W_{j})g_{F_{j}}(V_{j}, Q_{j})\nonumber\\
&-g_{F_{j}}(V_{j}, W_{j})g_{F_{j}}(U_{j}, Q_{j})),~if~i=j=k=s;\nonumber\\
&(11)\widehat{R}(U_{k}, V_{i}, W_{j}, Q_{s})=\phi^{p_{i}}\phi^{p_{j}}g^{*}_{I}(d\phi^{p_{i}}, d\phi^{p_{j}})g_{F_{i}}(V_{i}, Q_{i})g_{F_{j}}(U_{j}, W_{j}),~if~k=j\neq i=s;\nonumber\\
&(12)\widehat{R}(U_{k}, V_{i}, W_{j}, Q_{s})=-\phi^{p_{j}}\phi^{p_{k}}g^{*}_{I}(d\phi^{p_{j}}, d\phi^{p_{k}})g_{F_{j}}(V_{j}, W_{j})g_{F_{k}}(U_{k}, Q_{k}),~if~k=s\neq j=i;\nonumber\\
&(13)\widehat{R}(U_{k}, V_{i}, W_{j}, Q_{s})=0,~other~cases.\nonumber
\end{align}
\end{thm}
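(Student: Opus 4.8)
The plan is to derive all thirteen cases by specializing the general computation of the semi-symmetric non-metric curvature $\widehat{R}$ for a degenerate multiply warped product with $P\in\Gamma(TB)$ (Theorem 3.7) to the present data: base $B=I$, three fibers $F_{1},F_{2},F_{3}$, warping functions $b_{j}=\phi^{p_{j}}$, and all base vector fields $X,Y,Z,T$ together with $P$ taken equal to $\partial/\partial t$. Since $I\subseteq\mathbb{R}$ is one-dimensional with metric $-dt^{2}$, three simplifying facts should be recorded at the outset: its Levi-Civita connection is flat, so that $\nabla^{\flat}_{\partial/\partial t}\partial/\partial t=0$ and $R_{I}=\widehat{R}_{I}=0$; the inner product $g_{I}(\partial/\partial t,\partial/\partial t)=-1$ is a constant; and the derivatives $X(b_{j})$, $XY(b_{j})$ reduce to $\partial\phi^{p_{j}}/\partial t$, $\partial^{2}\phi^{p_{j}}/\partial t^{2}$.

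First I would substitute these into each case of Theorem 3.7 in order. Cases (1)--(3) are immediate: $\widehat{R}_{I}=0$ forces (1) to vanish, and the vanishing assertions carry over verbatim. For the mixed cases (4)--(9) the only point requiring attention is the systematic appearance of the factor $g_{B}(\,\cdot\,,P)=g_{I}(\partial/\partial t,\partial/\partial t)=-1$. For instance, the term $-2b_{j}X(b_{j})g_{B}(Y,P)$ in the general case (4) becomes $+2\phi^{p_{j}}(\partial\phi^{p_{j}}/\partial t)g_{F_{j}}(V_{j},W_{j})$; the term $-b_{j}^{2}X(g_{B}(Y,P))$ in case (5) vanishes because $g_{I}(\partial/\partial t,\partial/\partial t)$ is constant; and the prefactor $b_{j}^{2}g_{B}(X,P)$ in case (8) collapses to $-\phi^{2p_{j}}$. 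In each of these the connection contribution $g^{*}_{B}(\nabla^{\flat}_{X}Y,db_{j})$ drops out by flatness of $I$.

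For the pure-fiber cases (10)--(13) there is no base inner product against $P$ left to simplify, so these reduce to the renaming $b_{j}\mapsto\phi^{p_{j}}$, $g^{*}_{B}\mapsto g^{*}_{I}$ of the corresponding cases of Theorem 3.7, with the fiber curvatures $R_{F_{j}}$ and fiber Koszul forms $\mathcal{K}_{F_{j}}$ carried along unevaluated. An equivalent route, matching the organization of this section, is to feed the $M_{3}$-specific non-metric Koszul forms computed in the immediately preceding proposition directly into the curvature identity of Proposition 3.5; both approaches yield the same list.

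The step I expect to require the most care is organizational rather than analytic: correctly matching each abstract index-configuration of Theorem 3.7 ($i=j=k=s$, $k=j\neq i=s$, $k=s\neq j=i$, and the ``other cases'') with the corresponding case of the present statement, and ensuring that the sign produced by $g_{I}(\partial/\partial t,\partial/\partial t)=-1$ is applied exactly once per occurrence of a base inner product against $P$. Because every fiber-intrinsic quantity remains symbolic, no genuine calculation beyond these substitutions and sign bookkeeping is involved.
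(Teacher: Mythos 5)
Your proposal is correct and is exactly the route the paper intends: Section 8 states this result without proof as a direct specialization of Theorem 3.7 to $B=I$ with $g_I=-dt^2$, $b_j=\phi^{p_j}$, $X=Y=Z=T=P=\partial/\partial t$, and your bookkeeping of the three simplifications (flatness of $I$, constancy of $g_I(\partial_t,\partial_t)=-1$, and the resulting sign flips in cases (4) and (8)) reproduces each of the thirteen cases, including the case-by-case index matching.
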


\begin{thm}
Let $M_{3}=I\times_{\phi^{p_{1}}}F_{1}\times_{\phi^{p_{2}}}F_{2}\times_{\phi^{p_{3}}}F_{3}$ be a degenerate multiply warped product, let the vector fields $\partial/ \partial t\in\Gamma(TI)$ and $U_{j}, V_{j}, W_{j}, Q_{j}\in\Gamma(TF_{j}).$ Let $P\in\Gamma(TF_{l}),$ then
\begin{align}
&(1)\widehat{R}(\frac{\partial}{\partial t}, \frac{\partial}{\partial t}, \frac{\partial}{\partial t}, \frac{\partial}{\partial t})=R_{I}(\frac{\partial}{\partial t}, \frac{\partial}{\partial t}, \frac{\partial}{\partial t}, \frac{\partial}{\partial t})=0;\nonumber\\
&(2)\widehat{R}(\frac{\partial}{\partial t}, \frac{\partial}{\partial t}, \frac{\partial}{\partial t}, W_{j})=\widehat{R}(\frac{\partial}{\partial t}, \frac{\partial}{\partial t}, W_{j}, \frac{\partial}{\partial t})=\widehat{R}(\frac{\partial}{\partial t}, W_{j}, \frac{\partial}{\partial t}, \frac{\partial}{\partial t})=\widehat{R}(W_{j}, \frac{\partial}{\partial t}, \frac{\partial}{\partial t}, \frac{\partial}{\partial t})=0;\nonumber\\
&(3)\widehat{R}(\frac{\partial}{\partial t}, \frac{\partial}{\partial t},  V_{i}, W_{j})=\widehat{R}(V_{i}, W_{j}, \frac{\partial}{\partial t}, \frac{\partial}{\partial t})=0;\nonumber\\
&(4)\widehat{R}(V_{i}, \frac{\partial}{\partial t}, W_{j}, \frac{\partial}{\partial t})=-\widehat{R}(\frac{\partial}{\partial t}, V_{i}, W_{j}, \frac{\partial}{\partial t})=\phi^{p_{j}}\frac{\partial^{2} \phi^{p_{j}}}{\partial t^{2}}g_{F_{j}}(V_{j}, W_{j})-\phi^{2p_{j}}V_{j}(g_{F_{j}}(W_{j}, P)),\nonumber\\
&~if~i=j=l;\nonumber\\
&(5)\widehat{R}(V_{i}, \frac{\partial}{\partial t}, W_{j}, \frac{\partial}{\partial t})=-\widehat{R}(\frac{\partial}{\partial t}, V_{i}, W_{j}, \frac{\partial}{\partial t})=\phi^{p_{j}}\frac{\partial^{2} \phi^{p_{j}}}{\partial t^{2}}g_{F_{j}}(V_{j}, W_{j}),~if~i=j\neq l;\nonumber\\
&(6)\widehat{R}(V_{i}, \frac{\partial}{\partial t}, \frac{\partial}{\partial t}, W_{j})=-\widehat{R}(\frac{\partial}{\partial t}, V_{i}, \frac{\partial}{\partial t}, W_{j})=-\phi^{p_{j}}\frac{\partial^{2} \phi^{p_{j}}}{\partial t^{2}}g_{F_{j}}(V_{j}, W_{j}),~if~i=j=l\nonumber\\
&~or~i=j\neq l;\nonumber\\
&(7)\widehat{R}(V_{i}, \frac{\partial}{\partial t}, W_{j}, \frac{\partial}{\partial t})=\widehat{R}(V_{i}, \frac{\partial}{\partial t}, \frac{\partial}{\partial t}, W_{j})=0,~other~cases;\nonumber\\
&(8)\widehat{R}(\frac{\partial}{\partial t}, V_{i}, W_{j}, U_{k})=2\phi^{3p_{j}}\frac{\partial \phi^{p_{j}}}{\partial t}(g_{F_{j}}(V_{j}, U_{j})g_{F_{j}}(W_{j}, P)+g_{F_{j}}(V_{j}, W_{j})g_{F_{j}}(U_{j}, P)),\nonumber\\
&~if~i=j=k=l;\nonumber\\
&(9)\widehat{R}(\frac{\partial}{\partial t}, V_{i}, W_{j}, U_{k})=\widehat{R}(\frac{\partial}{\partial t}, V_{i}, U_{k}, W_{j})=2\phi^{p_{j}}\phi^{2p_{k}}\frac{\partial \phi^{p_{j}}}{\partial t}g_{F_{j}}(V_{j}, W_{j})g_{F_{k}}(U_{k}, P),\nonumber\\
&~if~i=j\neq k=l;\nonumber\\
&(10)\widehat{R}(\frac{\partial}{\partial t}, V_{i}, W_{j}, U_{k})=\widehat{R}(W_{j}, U_{k}, \frac{\partial}{\partial t}, V_{i})=\widehat{R}(W_{j}, U_{k}, V_{i}, \frac{\partial}{\partial t})=0,~other~cases;\nonumber\\
&(11)\widehat{R}(U_{k}, V_{i}, W_{j}, Q_{s})=\phi^{2p_{j}}R_{F_{j}}(U_{j}, V_{j}, W_{j}, Q_{j})+\phi^{2p_{j}}g^{*}_{I}(d\phi^{p_{j}}, d\phi^{p_{j}})(g_{F_{j}}(U_{j}, W_{j})g_{F_{j}}(V_{j}, Q_{j})\nonumber\\
&-g_{F_{j}}(V_{j}, W_{j})g_{F_{j}}(U_{j}, Q_{j}))+\phi^{4p_{j}}g_{F_{j}}(Q_{j}, P)(\mathcal{K}_{F_{j}}(U_{j}, W_{j}, V_{j})-\mathcal{K}_{F_{j}}(V_{j}, W_{j}, U_{j}))\nonumber\\
&+\phi^{4p_{j}}U_{j}(g_{F_{j}}(W_{j}, P))g_{F_{j}}(V_{j}, Q_{j})-\phi^{4p_{j}}V_{j}(g_{F_{j}}(W_{j}, P))g_{F_{j}}(U_{j}, Q_{j}),~if~i=j=k=s=l;\nonumber\\
&(12)\widehat{R}(U_{k}, V_{i}, W_{j}, Q_{s})=\phi^{2p_{j}}R_{F_{j}}(U_{j}, V_{j}, W_{j}, Q_{j})+\phi^{2p_{j}}g^{*}_{I}(db_{j}, db_{j})(g_{F_{j}}(U_{j}, W_{j})g_{F_{j}}(V_{j}, Q_{j})\nonumber\\
&-g_{F_{j}}(V_{j}, W_{j})g_{F_{j}}(U_{j}, Q_{j}))~if~i=j=k=s\neq l;\nonumber\\
&(13)\widehat{R}(U_{k}, V_{i}, W_{j}, Q_{s})=\phi^{p_{i}}\phi^{p_{j}}g^{*}_{I}(d\phi^{p_{i}}, d\phi^{p_{j}})g_{F_{i}}(V_{i}, Q_{i})g_{F_{j}}(U_{j}, W_{j}),~if~j=k\neq i=s=l\nonumber\\
&or~i=s, j=k, l~are~different;\nonumber\\
&(14)\widehat{R}(U_{k}, V_{i}, W_{j}, Q_{s})=\phi^{p_{i}}\phi^{p_{j}}g^{*}_{I}(d\phi^{p_{i}}, d\phi^{p_{j}})g_{F_{i}}(V_{i}, Q_{i})g_{F_{j}}(U_{j}, W_{j})+\phi^{2p_{i}}\phi^{2p_{j}}U_{j}(g_{F_{j}}(W_{j}, P))\nonumber\\
&\times g_{F_{i}}(V_{i}, Q_{i}),~if~l=j=k\neq i=s;\nonumber\\
&(15)\widehat{R}(U_{k}, V_{i}, W_{j}, Q_{s})=-\phi^{p_{j}}\phi^{p_{k}}g^{*}_{I}(d\phi^{p_{j}}, d\phi^{p_{k}})g_{F_{j}}(V_{j}, W_{j})g_{F_{k}}(U_{k}, Q_{k})-\phi^{2p_{j}}\phi^{2p_{k}}V_{j}(g_{F_{j}}(W_{j}, P))\nonumber\\
&\times g_{F_{k}}(U_{k}, Q_{k}),~if~k=s\neq i=j=l;\nonumber
\end{align}
\begin{align}
&(16)\widehat{R}(U_{k}, V_{i}, W_{j}, Q_{s})=-\phi^{p_{j}}\phi^{p_{k}}g^{*}_{I}(d\phi^{p_{j}}, d\phi^{p_{k}})g_{F_{j}}(V_{j}, W_{j})g_{F_{k}}(U_{k}, Q_{k}),~if~l=k=s\neq i=j\nonumber\\
&or~i=j, k=s, l~are~different;\nonumber\\
&(17)\widehat{R}(U_{k}, V_{i}, W_{j}, Q_{s})=\phi^{2p_{j}}\phi^{2p_{k}}g_{F_{k}}(U_{k}, P)(\mathcal{K}_{F_{j}}(V_{j}, Q_{j}, W_{j})-\mathcal{K}_{F_{j}}(W_{j}, Q_{j}, V_{j})),\nonumber\\
&~if~l=k\neq i=j=s;\nonumber\\
&(18)\widehat{R}(U_{k}, V_{i}, W_{j}, Q_{s})=0,~other~cases.\nonumber
\end{align}
\end{thm}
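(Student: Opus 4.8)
The plan is to obtain this statement as the specialization of the general formula for $\widehat{R}$ of singular multiply warped products when $P\in\Gamma(TF_l)$, established earlier in Section 3, to the base $B=I$, fibre number $m=3$, and warping functions $b_j=\phi^{p_j}$. Since $M_3=I\times_{\phi^{p_1}}F_1\times_{\phi^{p_2}}F_2\times_{\phi^{p_3}}F_3$ is a generalized Kasner space-time it is a semi-symmetric non-metric semi-regular semi-Riemannian manifold, so $\widehat{R}$ is well defined and the general formula applies verbatim with the base vector fields $X,Y,Z,T$ all taken to be $\partial/\partial t$.

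First I would record the simplifications forced by $\dim I=1$ and the Lorentzian form $g=-dt^2\oplus\cdots$. On the interval $I$ the metric coefficient $g_I(\partial/\partial t,\partial/\partial t)=-1$ is constant, so the base Koszul form vanishes, $\mathcal{K}_I(\partial/\partial t,\partial/\partial t,\partial/\partial t)=0$, the base curvature vanishes, $R_I=0$, and $\nabla^{\flat}_{\partial/\partial t}\partial/\partial t=0$; consequently every occurrence of $g^{*}_B(\nabla^{\flat}_X Y,db_j)$ drops out. The surviving base data are $X(b_j)=\partial\phi^{p_j}/\partial t$ and $XY(b_j)=\partial^2\phi^{p_j}/\partial t^2$, while the factor $g_B(X,Y)=g_I(\partial/\partial t,\partial/\partial t)=-1$ is precisely what converts several signs in the general formula into the signs displayed here.

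With these substitutions I would run through the index patterns of the general theorem one case at a time. The purely basic cases collapse: whenever two base slots are filled by $\partial/\partial t$, antisymmetry in the first pair forces the bracketed differences of $\mathcal{K}_I$-terms (and the combinations $X(b_j)g_B(Y,Z)-Y(b_j)g_B(X,Z)$) to vanish, giving the zero entries (1)--(3). The mixed base--fibre cases and the pure fibre cases then transcribe directly after replacing $b_j\mapsto\phi^{p_j}$, $b_j^2\mapsto\phi^{2p_j}$, $b_j^4\mapsto\phi^{4p_j}$, and $g^{*}_B\mapsto g^{*}_I$ with $db_j=d\phi^{p_j}$; the configurations $i=j=l$, $i=j=k=l$, $i=j\neq k=l$, $i=j=k=s=l$, $j=k\neq i=s=l$, $l=j=k\neq i=s$, $k=s\neq i=j=l$, and the "$l,\dots$ are different" patterns reproduce entries (4)--(17). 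Equivalently, one may bypass the general theorem and compute each value from the defining identity relating $\widehat{R}$ to $\widehat{\mathcal{K}}$ (the curvature formula recalled from \cite{Wang1} in Section 3) together with the two preceding propositions, which list $\widehat{\mathcal{K}}$ for $M_3$ in the case $P\in\Gamma(TF_l)$.

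The argument is entirely routine; the only genuine obstacle is bookkeeping. One must manage a five-index case analysis of $i,j,k,s$ against $l$, match each pattern to the correct line of the general theorem, and verify that the powers of $\phi$ and the signs---several of which are generated by the constant $g_I(\partial/\partial t,\partial/\partial t)=-1$---emerge as stated. Some care is also needed to confirm that every configuration absent from (1)--(17) genuinely lies in the "other cases" class of the general theorem and therefore contributes $0$, so that entry (18) is exhaustive.
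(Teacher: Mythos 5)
Your proposal is correct and is exactly the route the paper intends: the theorems of Section 8 are stated without proof as direct specializations of the general results, and this one is Theorem 3.7 (the case $P\in\Gamma(TF_{l})$) with $B=I$, $m=3$, $b_{j}=\phi^{p_{j}}$, using $g_{I}(\partial/\partial t,\partial/\partial t)=-1$, $\mathcal{K}_{I}=0$, $R_{I}=0$ and $\nabla^{\flat}_{\partial/\partial t}\partial/\partial t=0$ to collapse the base terms. Your case-by-case matching and sign bookkeeping check out against the general theorem.
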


\begin{prop}
Let $M_{3}=I\times_{\phi^{p_{1}}}F_{1}\times_{\phi^{p_{2}}}F_{2}\times_{\phi^{p_{3}}}F_{3}$ be a degenerate multiply warped product, let the vector fields $\partial/ \partial t\in\Gamma(TI)$ and $U_{j}, V_{j}, W_{j}\in\Gamma(TF_{j}),$ for $j\in \{1, 2, 3\}.$ Let $\widetilde{\mathcal{K}}$ be the almost product Koszul form on $I\times_{\phi^{p_{1}}}F_{1}\times_{\phi^{p_{2}}}F_{2}\times_{\phi^{p_{3}}}F_{3}$ and $\widetilde{\mathcal{K}}_{I}$, $\widetilde{\mathcal{K}}_{F_{j}}$ be the lifts of the almost product Koszul form on $I,$ $F_{j},$ respectively. Then
\begin{align}
&(1)\widetilde{\mathcal{K}}(\frac{\partial}{\partial t}, \frac{\partial}{\partial t}, \frac{\partial}{\partial t})=\widetilde{\mathcal{K}}_{I}(\frac{\partial}{\partial t}, \frac{\partial}{\partial t}, \frac{\partial}{\partial t})=0;\nonumber\\
&(2)\widetilde{\mathcal{K}}(\frac{\partial}{\partial t}, \frac{\partial}{\partial t}, W_{j})=\widetilde{\mathcal{K}}(\frac{\partial}{\partial t}, W_{j}, \frac{\partial}{\partial t})=\widetilde{\mathcal{K}}(W_{j}, \frac{\partial}{\partial t}, \frac{\partial}{\partial t})=0;\nonumber\\
&(3)\widetilde{\mathcal{K}}(\frac{\partial}{\partial t}, V_{i}, W_{j})=\phi^{p_{j}}\frac{\partial \phi^{p_{j}}}{\partial t}g_{F_{j}}(V_{j}, W_{j}),~if~i=j;\nonumber\\
&(4)\widetilde{\mathcal{K}}(V_{i}, \frac{\partial}{\partial t}, W_{j})=-\widetilde{\mathcal{K}}(V_{i}, W_{j}, \frac{\partial}{\partial t})=\frac{1}{2}(\phi^{p_{j}}\frac{\partial \phi^{p_{j}}}{\partial t}g_{F_{j}}(V_{j}, W_{j})+\phi^{p_{j}}J_{I}(\frac{\partial \phi^{p_{j}}}{\partial t})g_{F_{j}}(V_{j}, J_{F_{j}}W_{j})),\nonumber\\
&~if~i=j;\nonumber\\
&(5)\widetilde{\mathcal{K}}(\frac{\partial}{\partial t}, V_{i}, W_{j})=\widetilde{\mathcal{K}}(V_{i}, \frac{\partial}{\partial t}, W_{j})=\widetilde{\mathcal{K}}(V_{i}, W_{j}, \frac{\partial}{\partial t})=0,~if~i\neq j;\nonumber\\
&(6)\widetilde{\mathcal{K}}(U_{i}, V_{j}, W_{k})=\phi^{2p_{j}}\widetilde{\mathcal{K}}_{F_{j}}(U_{j}, V_{j}, W_{j}),~if~i=j=k;\nonumber\\
&(7)\widetilde{\mathcal{K}}(U_{i}, V_{j}, W_{k})=\widetilde{\mathcal{K}}(U_{i}, W_{k}, V_{j})=\widetilde{\mathcal{K}}(W_{k}, U_{i}, V_{j})=0,~if~i=j\neq k;\nonumber\\
&(8)\widetilde{\mathcal{K}}(U_{i}, V_{j}, W_{k})=0,~where~i, j, k~are~different.\nonumber
\end{align}
\end{prop}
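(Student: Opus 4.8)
The plan is to obtain this proposition as the specialization of Proposition 4.4 to the generalized Kasner data $B=I$, $m=3$, and warping functions $b_j=\phi^{p_j}$, together with the substitution $X=\partial/\partial t$ for the base direction. Since Proposition 4.4 already records the value of $\widetilde{\mathcal{K}}$ on every admissible triple of lifted vector fields on an arbitrary degenerate multiply warped product $B\times_{b_1}F_1\times\cdots\times_{b_m}F_m$, the whole content of the present statement is encoded there; what remains is to perform the replacements and to simplify the two base-dependent ingredients, namely the base Koszul term $\widetilde{\mathcal{K}}_I$ in part (1) and the directional derivatives $b_jX(b_j)$ and $b_j\,(J_BX)(b_j)$ in parts (3)--(4).

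First I would transcribe parts (2), (5), (6), (7), (8) of Proposition 4.4 with $b_j=\phi^{p_j}$ and $X=\partial/\partial t$; these are purely formal renamings, with the factor $b_j^2\widetilde{\mathcal{K}}_{F_j}$ in (6) becoming $\phi^{2p_j}\widetilde{\mathcal{K}}_{F_j}$ and all vanishing cases passing over unchanged. Next, for parts (3) and (4) I would substitute $X(b_j)=\partial_t(\phi^{p_j})=\partial\phi^{p_j}/\partial t$ and $(J_BX)(b_j)=(J_I\partial_t)(\phi^{p_j})$, the latter being exactly the term displayed as $\phi^{p_j}J_I(\partial\phi^{p_j}/\partial t)$ in (4); the factor $b_j=\phi^{p_j}$ is carried along untouched, and the antisymmetry $\widetilde{\mathcal{K}}(V_i,\partial/\partial t,W_j)=-\widetilde{\mathcal{K}}(V_i,W_j,\partial/\partial t)$ is inherited verbatim from Proposition 4.4(4). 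Note that the one-dimensionality of the fibers $F_j$ is not needed for any of these formulas, since they are written abstractly through $g_{F_j}$ and $\widetilde{\mathcal{K}}_{F_j}$.

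The only step requiring an actual (short) computation rather than pure substitution is part (1). Proposition 4.4(1) gives $\widetilde{\mathcal{K}}(\partial/\partial t,\partial/\partial t,\partial/\partial t)=\widetilde{\mathcal{K}}_I(\partial/\partial t,\partial/\partial t,\partial/\partial t)$, so I must verify that the right-hand side is zero. Because $I$ is one-dimensional with $g_I=-dt^2$, the function $g_I(\partial_t,\partial_t)=-1$ is constant and $[\partial_t,\partial_t]=0$, so every one of the six terms in the Koszul form of Definition 2.2 is either the derivative of a constant or a Lie bracket, hence vanishes; thus $\mathcal{K}_I(\partial_t,\partial_t,\partial_t)=0$. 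Since $J_I\partial_t=\pm\partial_t$ on the one-dimensional $I$, the second summand $\mathcal{K}_I(\partial_t,J_I\partial_t,J_I\partial_t)$ in Definition 4.2 vanishes for the same reason, giving $\widetilde{\mathcal{K}}_I(\partial_t,\partial_t,\partial_t)=0$. Consequently this proof presents no genuine obstacle: it is entirely a matter of bookkeeping, and the only subtlety is tracking the almost-product notation $(J_BX)(b_j)$ correctly through $B=I$, $X=\partial/\partial t$, so I would simply assemble the eight specialized identities and remark that they follow from Proposition 4.4 together with the vanishing of the one-dimensional base form just verified.
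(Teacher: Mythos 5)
Your proposal is correct and coincides with what the paper does: the paper offers no separate proof for this proposition in Section 8, presenting it as the direct specialization of Proposition 4.4 to $B=I$, $m=3$, $b_j=\phi^{p_j}$, $X=\partial/\partial t$, which is exactly your route. Your extra verification that $\widetilde{\mathcal{K}}_I(\partial_t,\partial_t,\partial_t)=0$ on the one-dimensional base with constant metric $-dt^2$ is the only non-trivial point, and you handle it correctly.
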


\begin{thm}
Let $M_{3}=I\times_{\phi^{p_{1}}}F_{1}\times_{\phi^{p_{2}}}F_{2}\times_{\phi^{p_{3}}}F_{3}$ be a degenerate multiply warped product, let the vector fields $\partial/ \partial t\in\Gamma(TI)$ and $U_{j}, V_{j}, W_{j}, Q_{j}\in\Gamma(TF_{j}),$ then
\begin{align}
&(1)\widetilde{R}(\frac{\partial}{\partial t}, \frac{\partial}{\partial t}, \frac{\partial}{\partial t}, \frac{\partial}{\partial t})=\widetilde{R}_{I}(\frac{\partial}{\partial t}, \frac{\partial}{\partial t}, \frac{\partial}{\partial t}, \frac{\partial}{\partial t})=0;\nonumber\\
&(2)\widetilde{R}(\frac{\partial}{\partial t}, \frac{\partial}{\partial t}, \frac{\partial}{\partial t}, W_{j})=\widetilde{R}(\frac{\partial}{\partial t}, W_{j}, \frac{\partial}{\partial t}, \frac{\partial}{\partial t})=0;\nonumber\\
&(3)\widetilde{R}(\frac{\partial}{\partial t}, \frac{\partial}{\partial t},  V_{i}, W_{j})=\widetilde{R}(V_{i}, W_{j}, \frac{\partial}{\partial t}, \frac{\partial}{\partial t})=0;\nonumber\\
&(4)\widetilde{R}(\frac{\partial}{\partial t}, V_{i}, \frac{\partial}{\partial t}, W_{j})=\frac{1}{2}\phi^{p_{j}}\frac{\partial^{2} \phi^{p_{j}}}{\partial t^{2}}g_{F_{j}}(V_{j}, W_{j})+\frac{1}{2}\phi^{p_{j}}\frac{\partial }{\partial t}(J_{I}(\frac{\partial \phi^{p_{j}}}{\partial t}))g_{F_{j}}(V_{j}, J_{F_{j}}W_{j}),~if~i=j;\nonumber\\
&(5)\widetilde{R}(\frac{\partial}{\partial t}, V_{i}, \frac{\partial}{\partial t}, W_{j})=0,~if~i\neq j;\nonumber\\
&(6)\widetilde{R}(\frac{\partial}{\partial t}, V_{i}, W_{j}, U_{k})=0;\nonumber
\end{align}
\begin{align}
&(7)\widetilde{R}(U_{k}, V_{i}, \frac{\partial}{\partial t}, W_{j})=\frac{1}{4}\phi^{p_{j}}\frac{\partial \phi^{p_{j}}}{\partial t}(-\mathcal{K}_{F_{j}}(V_{j}, W_{j}, U_{j})+\mathcal{K}_{F_{j}}(U_{j}, W_{j}, V_{j})+\mathcal{K}_{F_{j}}(V_{j}, J_{F_{j}}W_{j}, J_{F_{j}}U_{j})\nonumber\\
&-\mathcal{K}_{F_{j}}(U_{j}, J_{F_{j}}W_{j}, J_{F_{j}}V_{j}))-\frac{1}{4}\phi^{p_{j}}J_{I}(\frac{\partial \phi^{p_{j}}}{\partial t})(-\mathcal{K}_{F_{j}}(V_{j}, W_{j}, J_{F_{j}}U_{j})+\mathcal{K}_{F_{j}}(U_{j}, W_{j}, J_{F_{j}}V_{j})\nonumber\\
&+\mathcal{K}_{F_{j}}(V_{j}, J_{F_{j}}W_{j}, U_{j})-\mathcal{K}_{F_{j}}(U_{j}, J_{F_{j}}W_{j}, V_{j})),~if~i=j=k;\nonumber\\
&(8)\widetilde{R}(U_{k}, V_{i}, \frac{\partial}{\partial t}, W_{j})=0,~other~cases;\nonumber\\
&(9)\widetilde{R}(U_{k}, V_{i}, W_{j}, Q_{s})=\phi^{2p_{j}}\widetilde{R}_{F_{j}}(U_{j}, V_{j}, W_{j}, Q_{j})+\frac{1}{4}\phi^{2p_{j}}[g^{*}_{I}(d\phi^{p_{j}}, d\phi^{p_{j}})(g_{F_{j}}(U_{j}, W_{j})g_{F_{j}}(V_{j}, Q_{j})\nonumber\\
&-g_{F_{j}}(V_{j}, W_{j})g_{F_{j}}(U_{j}, Q_{j}))+g^{*}_{I}(d\phi^{p_{j}}\circ J_{I}, d\phi^{p_{j}})(g_{F_{j}}(U_{j}, J_{F_{j}}W_{j})g_{F_{j}}(V_{j}, Q_{j})\nonumber\\
&-g_{F_{j}}(V_{j}, J_{F_{j}}W_{j})g_{F_{j}}(U_{j}, Q_{j})+g_{F_{j}}(U_{j}, W_{j})g_{F_{j}}(V_{j}, J_{F_{j}}Q_{j})-g_{F_{j}}(V_{j}, W_{j})g_{F_{j}}(U_{j}, J_{F_{j}}Q_{j}))\nonumber\\
&+g^{*}_{I}(d\phi^{p_{j}}\circ J_{I}, d\phi^{p_{j}}\circ J_{I})(g_{F_{j}}(U_{j}, J_{F_{j}}W_{j})g_{F_{j}}(V_{j}, J_{F_{j}}Q_{j})-g_{F_{j}}(V_{j}, J_{F_{j}}W_{j})g_{F_{j}}(U_{j}, J_{F_{j}}Q_{j}))],\nonumber\\
&~if~i=j=k=s;\nonumber\\
&(10)\widetilde{R}(U_{k}, V_{i}, W_{j}, Q_{s})=\frac{1}{4}\phi^{p_{i}}\phi^{p_{j}}(g^{*}_{I}(d\phi^{p_{i}}, d\phi^{p_{j}})g_{F_{i}}(V_{i}, Q_{i})g_{F_{j}}(U_{j}, W_{j})+g^{*}_{I}(d\phi^{p_{i}}\circ J_{I}, d\phi^{p_{j}})\nonumber\\
&\times g_{F_{i}}(V_{i}, J_{F_{i}}Q_{i})g_{F_{j}}(U_{j}, W_{j})+g^{*}_{I}(d\phi^{p_{i}}, d\phi^{p_{j}}\circ J_{I})g_{F_{i}}(V_{i}, Q_{i})g_{F_{j}}(U_{j}, J_{F_{j}}W_{j})\nonumber\\
&+g^{*}_{I}(d\phi^{p_{i}}\circ J_{I}, d\phi^{p_{j}}\circ J_{I})g_{F_{i}}(V_{i}, J_{F_{j}}Q_{i})g_{F_{j}}(U_{j}, J_{F_{j}}W_{j})),~if~j=k\neq i=s;\nonumber\\
&(11)\widetilde{R}(U_{k}, V_{i}, W_{j}, Q_{s})=0,~other~cases.\nonumber
\end{align}
\end{thm}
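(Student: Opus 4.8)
The plan is to derive this theorem as the specialization of the general almost product curvature formula, Theorem 4.8, to the base $B=I$ equipped with the warping functions $b_j=\phi^{p_j}$, $1\le j\le 3$. First I would record that $M_3=I\times_{\phi^{p_1}}F_1\times_{\phi^{p_2}}F_2\times_{\phi^{p_3}}F_3$ is exactly a degenerate multiply warped product whose base $(I,-dt^2)$ is almost product non-degenerate and whose three fibers $(F_j,g_{F_j},J_{F_j})$ are one-dimensional, hence trivially almost product semi-regular. Since $b_j=\phi^{p_j}\in C^\infty(I)$, the hypotheses of Theorem 4.7 and Theorem 4.8 are met, so $\widetilde{R}$ is well defined and the eleven cases of Theorem 4.8 apply verbatim; the present statement is then read off from them by setting all base vectors equal to $\partial/\partial t$.

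Second, I would substitute $X=Y=Z=T=\partial/\partial t$ into each case of Theorem 4.8 and simplify. Because every base vector equals $\partial/\partial t$, the directional derivatives collapse to $X(b_j)=\partial\phi^{p_j}/\partial t$ and $XY(b_j)=\partial^{2}\phi^{p_j}/\partial t^{2}$, the contraction data $g^{*}_{B}(\widetilde{\nabla}^{\flat}_{X}Y,db_j)$, $g^{*}_{B}(db_i,db_j)$ and their $J_B$-twisted analogues pass to $g^{*}_{I}(d\phi^{p_i},d\phi^{p_j})$ and $g^{*}_{I}(d\phi^{p_i}\circ J_I,d\phi^{p_j})$, and $b_j^{2}=\phi^{2p_j}$, $b_ib_j=\phi^{p_i}\phi^{p_j}$. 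In particular the purely base term $\widetilde{R}_I(\partial/\partial t,\partial/\partial t,\partial/\partial t,\partial/\partial t)$ vanishes by the antisymmetry $\widetilde{R}(X,Y,Z,T)=-\widetilde{R}(Y,X,Z,T)$, which yields case (1). Matching the remaining nonzero cases is then bookkeeping: case (4) is the $i=j$ instance of Theorem 4.8(4), case (7) is the $i=j=k$ instance of (7), case (9) is the $i=j=k=s$ instance of (9), and case (10) is the $j=k\neq i=s$ instance of (10), with $J_B$ replaced by $J_I$ throughout.

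Third, I would collect the vanishing statements. The zero entries (2), (3), (5), (6), (8), (11), together with the ``other cases'' clauses, correspond to the mixed-index and ``other cases'' items (5), (6), (8), (11) of Theorem 4.8; since $m=3$ there are index configurations carrying three or four distinct fiber labels among $U_k,V_i,W_j,Q_s$, but with all base slots filled by $\partial/\partial t$ these patterns are not among those producing a nonzero entry in Theorem 4.8 and therefore contribute $0$.

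The part requiring the most care is the faithful transport of the $J_I$-twisted terms on the one-dimensional base. The almost product structure $J_I$ on $TI$ satisfies $J_I^{2}=\mathrm{id}$ and $g_I(J_IX,J_IX)=g_I(X,X)$; I would keep $J_I(\partial\phi^{p_j}/\partial t)$ and $d\phi^{p_j}\circ J_I$ symbolic, exactly as in the statement, rather than collapsing them, and verify that no base-bracket or base-curvature contribution is silently dropped in passing from Theorem 4.8. Everything else is the mechanical substitution $b_j=\phi^{p_j}$, so I would exhibit one representative computation in full, say case (9) or case (10), and remark that the others follow identically, in the same manner as the proof of Theorem 4.8.
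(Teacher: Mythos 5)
Your proposal is correct and follows the same route the paper intends: Theorem 8.26 is obtained by specializing Theorem 4.8 to $B=I$ with $g_I=-dt^2$, $b_j=\phi^{p_j}$ and $m=3$, and the paper supplies no separate argument beyond this substitution. The only point to make explicit in the write-up is that the terms $g^{*}_{B}(\widetilde{\nabla}^{\flat}_{X}Y,db_j)$ and $g^{*}_{B}(\widetilde{\nabla}^{\flat}_{X}Y,db_j\circ J_B)$ in Theorem 4.8(4) vanish (they become $g^{*}_{I}(\widetilde{\nabla}^{\flat}_{\partial_t}\partial_t,\cdot)$ with $\widetilde{\mathcal{K}}_I(\partial_t,\partial_t,\cdot)=0$ on the flat interval), rather than ``passing to'' $g^{*}_{I}(d\phi^{p_i},d\phi^{p_j})$ as your second paragraph loosely suggests.
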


Let $M_{4}=(0, 1)\times_{b}F$ be a degenerate twisted product with the metric tensor $g=-dt^{2}\oplus b^{2}g_{F}$ where $t\in(0, 1),$ $b: (0, 1)\times F\rightarrow \mathbb{R}.$

\begin{prop}
Let $M_{4}=(0, 1)\times_{b}F$ be a degenerate twisted product, let the vector fields $\partial/ \partial t\in\Gamma(T(0, 1))$ and $U, V, W\in\Gamma(TF).$ Let $\overline{\mathcal{K}}$ be the semi-symmetric metric Koszul form on $(0, 1)\times_{b}F$ and $\overline{\mathcal{K}}_{(0, 1)},$ $\overline{\mathcal{K}}_{F}$ be the lifts of the semi-symmetric metric Koszul form on $(0, 1),$ $F,$ respectively. Let $P=\partial/ \partial t,$ then
\begin{align}
&(1)\overline{\mathcal{K}}(\frac{\partial}{\partial t}, \frac{\partial}{\partial t}, \frac{\partial}{\partial t})=\overline{\mathcal{K}}_{(0, 1)}(\frac{\partial}{\partial t}, \frac{\partial}{\partial t}, \frac{\partial}{\partial t})=0;\nonumber\\
&(2)\overline{\mathcal{K}}(\frac{\partial}{\partial t}, \frac{\partial}{\partial t}, W)=\overline{\mathcal{K}}(\frac{\partial}{\partial t}, W, \frac{\partial}{\partial t})=\overline{\mathcal{K}}(W, \frac{\partial}{\partial t}, \frac{\partial}{\partial t})=0;\nonumber\\
&(3)\overline{\mathcal{K}}(\frac{\partial}{\partial t}, V, W)=b\frac{\partial b}{\partial t}g_{F}(V, W);\nonumber\\
&(4)\overline{\mathcal{K}}(V, \frac{\partial}{\partial t}, W)=-\overline{\mathcal{K}}(V, W, \frac{\partial}{\partial t})=b\frac{\partial b}{\partial t}g_{F}(V, W)-b^{2}g_{F}(V, W);\nonumber\\
&(5)\overline{\mathcal{K}}(U, V, W)=bU(b)g_{F}(V, W)+bV(b)g_{F}(W, U)-bW(b)g_{F}(U, V)+b^{2}\mathcal{K}_{F}(U, V, W).\nonumber
\end{align}
\end{prop}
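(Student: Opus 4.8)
The plan is to read this proposition off Proposition 5.7, which already records the semi-symmetric metric Koszul form $\overline{\mathcal{K}}$ of an arbitrary degenerate twisted product $B\times_b F$ for a distinguished field $P\in\Gamma(TB)$. Here $M_4=(0,1)\times_b F$ is precisely such a product with one-dimensional Lorentzian base $\bigl((0,1),-dt^2\bigr)$, so I would specialize Proposition 5.7 by taking the base vector fields $X,Y,Z$ all equal to $\partial/\partial t$ and setting $P=\partial/\partial t\in\Gamma(TB)$. No fresh computation on the total space is needed; the entire argument is substitution into the five items of Proposition 5.7, together with one elementary evaluation of the base Koszul form.

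First I would dispose of item (1). On the interval the metric coefficient $g(\partial/\partial t,\partial/\partial t)=-1$ is constant and $[\partial/\partial t,\partial/\partial t]=0$, so every term of the defining expression for $\mathcal{K}_{(0,1)}(\partial/\partial t,\partial/\partial t,\partial/\partial t)$ vanishes; feeding $\mathcal{K}_{(0,1)}=0$ and $P=\partial/\partial t$ into the definition of the semi-symmetric metric Koszul form gives $\overline{\mathcal{K}}_{(0,1)}(\partial/\partial t,\partial/\partial t,\partial/\partial t)=(-1)(-1)-(-1)(-1)=0$, which is item (1). Items (2), (3) and (5) then transfer verbatim from the corresponding items of Proposition 5.7 upon reading $X(b)$ as $\partial b/\partial t$: the mixed-direction vanishing in (2), the value $b\,(\partial b/\partial t)\,g_F(V,W)$ in (3), and the purely fibrewise expression in (5) make no reference to the base metric and are therefore unaffected by the specialization.

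The only step requiring attention is item (4), where the Lorentzian signature enters. Proposition 5.7(4) reads $\overline{\mathcal{K}}(V,X,W)=bX(b)g_F(V,W)+b^2 g_B(X,P)g_F(V,W)$, and the second term now carries the factor $g_B(\partial/\partial t,\partial/\partial t)=-1$, so it becomes $-b^2 g_F(V,W)$ rather than $+b^2 g_F(V,W)$; combined with $bX(b)=b\,(\partial b/\partial t)$ this produces $b\,(\partial b/\partial t)\,g_F(V,W)-b^2 g_F(V,W)$, while the antisymmetry $\overline{\mathcal{K}}(V,\partial/\partial t,W)=-\overline{\mathcal{K}}(V,W,\partial/\partial t)$ is inherited directly. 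Thus the whole statement follows by direct substitution, the one subtlety---and the only genuine obstacle---being to track this Lorentzian sign correctly in item (4).
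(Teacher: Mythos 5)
Your proposal is correct and is exactly the argument the paper intends: Section 8's examples are direct specializations of the general twisted-product results, and you correctly substitute $B=(0,1)$, $g_B=-dt^2$, $X=Y=Z=P=\partial/\partial t$ into Proposition 5.7, including the key sign $g_B(\partial/\partial t,\partial/\partial t)=-1$ that turns $+b^2 g_B(X,P)g_F(V,W)$ into $-b^2 g_F(V,W)$ in item (4). Nothing is missing.
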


\begin{prop}
Let $M_{4}=(0, 1)\times_{b}F$ be a degenerate twisted product, let the vector fields $\partial/ \partial t\in\Gamma(T(0, 1))$ and $U, V, W\in\Gamma(TF).$ Let $\overline{\mathcal{K}}$ be the semi-symmetric metric Koszul form on $(0, 1)\times_{b}F$ and $\overline{\mathcal{K}}_{(0, 1)},$ $\overline{\mathcal{K}}_{F}$ be the lifts of the semi-symmetric metric Koszul form on $(0, 1),$ $F,$ respectively. Let $P\in\Gamma(TF),$ then
\begin{align}
&(1)\overline{\mathcal{K}}(\frac{\partial}{\partial t}, \frac{\partial}{\partial t}, \frac{\partial}{\partial t})=\mathcal{K}_{(0, 1)}(\frac{\partial}{\partial t}, \frac{\partial}{\partial t}, \frac{\partial}{\partial t})=0;\nonumber\\
&(2)\overline{\mathcal{K}}(\frac{\partial}{\partial t}, \frac{\partial}{\partial t}, W)=-\overline{\mathcal{K}}(\frac{\partial}{\partial t}, W, \frac{\partial}{\partial t})=b^{2}g_{F}(P, W);\nonumber\\
&(3)\overline{\mathcal{K}}(W, \frac{\partial}{\partial t}, \frac{\partial}{\partial t})=0;\nonumber\\
&(4)\overline{\mathcal{K}}(\frac{\partial}{\partial t}, V, W)=\overline{\mathcal{K}}(V, \frac{\partial}{\partial t}, W)=-\overline{\mathcal{K}}(V, W, \frac{\partial}{\partial t})=b\frac{\partial b}{\partial t}g_{F}(V, W);\nonumber\\
&(5)\overline{\mathcal{K}}(U, V, W)=bU(b)g_{F}(V, W)+bV(b)g_{F}(W, U)-bW(b)g_{F}(U, V)+b^{2}\mathcal{K}_{F}(U, V, W)\nonumber\\
&+b^{4}g_{F}(V, P)g_{F}(U, W)-b^{4}g_{F}(W, P)g_{F}(U, V).\nonumber
\end{align}
\end{prop}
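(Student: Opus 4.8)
The plan is to obtain this proposition as a direct specialization of the Section~5 formulas for the semi-symmetric metric Koszul form $\overline{\mathcal{K}}$ on a degenerate twisted product $B\times_b F$ with $P\in\Gamma(TF)$. Indeed $M_4=(0,1)\times_b F$ is exactly such a twisted product, with base $B=(0,1)$ carrying the Lorentzian metric $g_{(0,1)}=-dt^2$; since the only base direction is $\partial/\partial t$, every base vector field $X,Y,Z$ in the general statement is now a multiple of $\partial/\partial t$. First I would record the two structural facts about the one-dimensional base: the component $g_{(0,1)}(\partial/\partial t,\partial/\partial t)=-1$ is constant in the coordinate frame, and $[\partial/\partial t,\partial/\partial t]=0$.

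From the constancy of the metric I would show that $\mathcal{K}_{(0,1)}$ vanishes: in the definition of the Koszul form each term is either a derivative of the constant $g_{(0,1)}(\partial/\partial t,\partial/\partial t)$ or a pairing against the vanishing bracket, and the semi-symmetric correction terms contribute nothing because $g_{(0,1)}(\partial/\partial t,P)=0$ when $P\in\Gamma(TF)$. This gives item (1), namely $\overline{\mathcal{K}}(\partial/\partial t,\partial/\partial t,\partial/\partial t)=\mathcal{K}_{(0,1)}(\partial/\partial t,\partial/\partial t,\partial/\partial t)=0$. Substituting $X=\partial/\partial t$ into the $F$-mixed parts of the general proposition then yields items (3), (4) and (5) verbatim, after rewriting $(\partial/\partial t)(b)$ as $\partial b/\partial t$; in particular part (5), whose arguments are all tangent to $F$, carries over with no change at all.

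The one place that requires genuine care --- and the main obstacle --- is item (2), where the Lorentzian signature intervenes. Setting $X=Y=\partial/\partial t$ in the corresponding part of the general proposition produces
\begin{equation}
\overline{\mathcal{K}}(\tfrac{\partial}{\partial t},\tfrac{\partial}{\partial t},W)=-\overline{\mathcal{K}}(\tfrac{\partial}{\partial t},W,\tfrac{\partial}{\partial t})=-b^{2}\,g_{(0,1)}(\tfrac{\partial}{\partial t},\tfrac{\partial}{\partial t})\,g_{F}(P,W),
\end{equation}
and I would then insert $g_{(0,1)}(\partial/\partial t,\partial/\partial t)=-1$ so that the two minus signs cancel and the right-hand side becomes $+b^{2}g_{F}(P,W)$, exactly as claimed. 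I expect this sign bookkeeping to be the only subtle point: every other entry is a literal specialization in which the base index collapses to the single direction $\partial/\partial t$, so that beyond invoking the general Section~5 proposition together with the identities $g_{(0,1)}(\partial/\partial t,\partial/\partial t)=-1$ and $\mathcal{K}_{(0,1)}=0$, no further computation is needed.
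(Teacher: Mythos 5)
Your proof is correct and follows the route the paper intends: Proposition 8.32 is stated in the examples section without proof precisely because it is the specialization of Proposition 5.8 to $B=(0,1)$, $g_{(0,1)}=-dt^2$, and you carry out that specialization accurately, including the sign flip in item (2) coming from $g_{(0,1)}(\partial/\partial t,\partial/\partial t)=-1$ and the vanishing of $\mathcal{K}_{(0,1)}$ and of the cross terms $g(\partial/\partial t,P)$.
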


\begin{thm}
Let $M_{4}=(0, 1)\times_{b}F$ be a degenerate twisted product, let the vector fields $\partial/ \partial t\in\Gamma(T(0, 1))$and $U, V, W, Q\in\Gamma(TF).$ Let $P=\partial/ \partial t,$ then
\begin{align}
&(1)\overline{R}(\frac{\partial}{\partial t}, \frac{\partial}{\partial t}, \frac{\partial}{\partial t}, \frac{\partial}{\partial t})=\overline{R}_{(0, 1)}(\frac{\partial}{\partial t}, \frac{\partial}{\partial t}, \frac{\partial}{\partial t}, \frac{\partial}{\partial t})=0;\nonumber\\
&(2)\overline{R}(\frac{\partial}{\partial t}, \frac{\partial}{\partial t}, \frac{\partial}{\partial t}, W)=\overline{R}(\frac{\partial}{\partial t}, W, \frac{\partial}{\partial t}, \frac{\partial}{\partial t})=0;\nonumber\\
&(3)\overline{R}(\frac{\partial}{\partial t}, \frac{\partial}{\partial t},  V, W)=\overline{R}(V, W, \frac{\partial}{\partial t}, \frac{\partial}{\partial t})=0;\nonumber\\
&(4)\overline{R}(\frac{\partial}{\partial t}, V, \frac{\partial}{\partial t}, W)=b\frac{\partial^{2} b}{\partial t^{2}}g_{F}(V, W)-b\frac{\partial b}{\partial t}g_{F}(V, W);\nonumber\\
&(5)\overline{R}(\frac{\partial}{\partial t}, V, U, W)=\overline{R}(U, W, \frac{\partial}{\partial t}, V)=bU(\frac{\partial b}{\partial t})g_{F}(W, V)-U(b)\frac{\partial b}{\partial t}g_{F}(W, V)\nonumber\\
&-bW(\frac{\partial b}{\partial t})g_{F}(U, V)+W(b)\frac{\partial b}{\partial t}g_{F}(U, V);\nonumber\\
&(6)\overline{R}(U, V, W, Q)=b^{2}R_{F}(U, V, W, Q)+(b^{2}g^{*}_{(0, 1)}(db, db)-g^{*}_{F}(db, db)+2b^{3}\frac{\partial b}{\partial t}-b^{4})\nonumber\\
&\times (g_{F}(U, W)g_{F}(V, Q)-g_{F}(V, W)g_{F}(U, Q))
+(bUW(b)-2U(b)W(b)-bg^{*}_{F}(\nabla^{\flat}_{U}W, db))\nonumber\\
&\times g_{F}(V, Q)+(bVQ(b)-2V(b)Q(b)-bg^{*}_{F}(\nabla^{\flat}_{V}Q, db))g_{F}(U, W)
-(bVW(b)-2V(b)W(b)\nonumber\\&-bg^{*}_{F}(\nabla^{\flat}_{V}W, db))g_{F}(U, Q)
-(bUQ(b)-2U(b)Q(b)-bg^{*}_{F}(\nabla^{\flat}_{U}Q, db))g_{F}(V, W).\nonumber
\end{align}
\end{thm}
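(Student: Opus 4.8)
The plan is to derive this theorem as the specialization of Theorem 5.9 to the base $B=(0,1)$ equipped with the Lorentzian metric $g_B=-dt^2$, an arbitrary fiber $(F,g_F)$, and the distinguished field $P=\partial/\partial t\in\Gamma(T(0,1))=\Gamma(TB)$. Since $M_4$ is precisely a degenerate twisted product of the form $B\times_b F$, and $db|_{b=0}=0$ guarantees (by the twisted-product semi-regularity of Theorem 5.5 together with the equivalence of semi-regularity and semi-symmetric metric semi-regularity established earlier) that $M_4$ is a semi-symmetric metric semi-regular semi-Riemannian manifold, the tensor $\overline{R}$ is well defined and the six formulas (1)--(6) of Theorem 5.9 apply verbatim once the base vector fields $X,Y,Z,T$ are all taken equal to $\partial/\partial t$. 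Thus no independent curvature computation is needed; the entire argument is a substitution followed by simplification, exactly as in the proof of Theorem 2.11.

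The next step is to record the structural simplifications forced by the base being one-dimensional and flat, and then push them through each item. Because $g_{(0,1)}(\partial/\partial t,\partial/\partial t)=-1$ is constant and $[\partial/\partial t,\partial/\partial t]=0$, the base Koszul form vanishes, so $\mathcal{K}_{(0,1)}(\partial/\partial t,\partial/\partial t,\partial/\partial t)=0$ and $\nabla^\flat_{\partial/\partial t}(\partial/\partial t)=0$; moreover $R_{(0,1)}$ and $\overline{R}_{(0,1)}$ vanish by antisymmetry in the first two slots. Substituting $X=Y=P=\partial/\partial t$ then gives $X(b)=\partial b/\partial t$, $XY(b)=\partial^2 b/\partial t^2$, and $P(b)=\partial b/\partial t$. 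The key bookkeeping sits in item (4): the two quadratic terms $b^2 g_B(P,P)g_B(X,Y)$ and $-b^2 g_B(X,P)g_B(P,Y)$ each acquire a product $(-1)(-1)=1$ and so collapse to $b^2-b^2=0$, while $bP(b)g_B(X,Y)=-b\,(\partial b/\partial t)$ survives, leaving $\overline{R}(\partial/\partial t,V,\partial/\partial t,W)=\big(b\,\partial^2 b/\partial t^2-b\,\partial b/\partial t\big)g_F(V,W)$. The same mechanism in item (6) turns $b^4 g_B(P,P)$ into $-b^4$ and $2b^3 P(b)$ into $2b^3\,\partial b/\partial t$, with the fiber Christoffel and $R_F$ terms untouched, reproducing the stated coefficient $b^2 g^*_{(0,1)}(db,db)-g^*_F(db,db)+2b^3\,\partial b/\partial t-b^4$.

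The only genuine obstacle I anticipate is the sign accounting imposed by the $-dt^2$ metric: because every pairing of $\partial/\partial t$ with itself yields $-1$ rather than $+1$, one must track each factor carefully to see both the cancellation in item (4) and the precise surviving sign $-b^4$ in item (6), where a Riemannian base would instead produce $+b^4$. A secondary routine point is item (5), where I would use $[\partial/\partial t,U]=0$ for lifts $U\in\Gamma(TF)$ to rewrite $\partial/\partial t\big(U(b)\big)$ as $U(\partial b/\partial t)$, matching the displayed form. With these substitutions in place, each of (1)--(6) reduces directly from the corresponding item of Theorem 5.9, completing the proof.
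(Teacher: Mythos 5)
Your proposal is correct and follows exactly the route the paper intends: Section 8 states this result without proof as a direct specialization of Theorem 5.9 to $B=(0,1)$, $g_B=-dt^2$, $X=Y=Z=T=P=\partial/\partial t$, and your sign bookkeeping for the Lorentzian base (the cancellation $b^2-b^2=0$ in item (4), the surviving $-b\,\partial b/\partial t$ term, and the $-b^4$ in item (6)) as well as the use of $[\partial/\partial t,U]=0$ in item (5) all check out.
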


\begin{thm}
Let $M_{4}=(0, 1)\times_{b}F$ be a degenerate twisted product, let the vector fields $\partial/ \partial t\in\Gamma(T(0, 1))$and $U, V, W, Q\in\Gamma(TF).$ Let $P\in\Gamma(TF),$ then
\begin{align}
&(1)\overline{R}(\frac{\partial}{\partial t}, \frac{\partial}{\partial t}, \frac{\partial}{\partial t}, \frac{\partial}{\partial t})=0;\nonumber\\
&(2)\overline{R}(\frac{\partial}{\partial t}, \frac{\partial}{\partial t}, \frac{\partial}{\partial t}, W)=\overline{R}(\frac{\partial}{\partial t}, W, \frac{\partial}{\partial t}, \frac{\partial}{\partial t})=0;\nonumber
\end{align}
\begin{align}
&(3)\overline{R}(\frac{\partial}{\partial t}, \frac{\partial}{\partial t},  V, W)=\overline{R}(V, W, \frac{\partial}{\partial t}, \frac{\partial}{\partial t})=0;\nonumber\\
&(4)\overline{R}(\frac{\partial}{\partial t}, V, \frac{\partial}{\partial t}, W)=b\frac{\partial^{2} b}{\partial t^{2}}g_{F}(V, W)+b^{4}g_{F}(V, P)g_{F}(P, W)-b^{4}g_{F}(V, W)g_{F}(P, P)\nonumber\\
&-bV(b)g_{F}(P, W)-bP(b)g_{F}(W, V)+bW(b)g_{F}(V, P)-b^{2}\mathcal{K}_{F}(V, P, W);\nonumber\\
&(5)\overline{R}(\frac{\partial}{\partial t}, V, U, W)=bU(\frac{\partial b}{\partial t})g_{F}(W, V)-U(b)\frac{\partial b}{\partial t}g_{F}(W, V)-bW(\frac{\partial b}{\partial t})g_{F}(U, V)\nonumber\\
&+W(b)\frac{\partial b}{\partial t}g_{F}(U, V)+b^{3}\frac{\partial b}{\partial t}g_{F}(U, P)g_{F}(W, V)-b^{3}\frac{\partial b}{\partial t}g_{F}(W, P)g_{F}(U, V);\nonumber\\
&(6)\overline{R}(U, W, \frac{\partial}{\partial t}, V)=bU(\frac{\partial b}{\partial t})g_{F}(W, V)-U(b)\frac{\partial b}{\partial t}g_{F}(W, V)-bW(\frac{\partial b}{\partial t})g_{F}(U, V)\nonumber\\
&+W(b)\frac{\partial b}{\partial t}g_{F}(U, V)-b^{3}\frac{\partial b}{\partial t}g_{F}(U, P)g_{F}(W, V)+b^{3}\frac{\partial b}{\partial t}g_{F}(W, P)g_{F}(U, V);\nonumber\\
&(7)\overline{R}(U, V, W, Q)=b^{2}R_{F}(U, V, W, Q)+(b^{2}g^{*}_{(0, 1)}(db, db)+g^{*}_{F}(db, db)+2b^{3}P(b)+b^{6}g_{F}(P, P))\nonumber\\
&\times (g_{F}(U, W)g_{F}(V, Q)-g_{F}(V, W)g_{F}(U, Q))
+(bUW(b)-2U(b)W(b)+b^{4}\mathcal{K}_{F}(U, P, W)\nonumber\\
&-bg^{*}_{F}(\nabla^{\flat}_{U}W, db))g_{F}(V, Q)+(bVQ(b)-2V(b)Q(b)+b^{4}\mathcal{K}_{F}(V, P, Q)\nonumber\\
&-bg^{*}_{F}(\nabla^{\flat}_{V}Q, db))g_{F}(U, W)-(bVW(b)-2V(b)W(b)+b^{4}\mathcal{K}_{F}(V, P, W)\nonumber\\
&-bg^{*}_{F}(\nabla^{\flat}_{V}W, db))g_{F}(U, Q)-(bUQ(b)-2U(b)Q(b)+b^{4}\mathcal{K}_{F}(U, P, Q)\nonumber\\
&-bg^{*}_{F}(\nabla^{\flat}_{U}Q, db))g_{F}(V, W)+b^{3}Q(b)(g_{F}(U, P)g_{F}(V, W)-g_{F}(V, P)g_{F}(U, W))\nonumber\\
&+(b^{6}g_{F}(U, P)-b^{3}U(b))(g_{F}(V, W)g_{F}(P, Q)-g_{F}(P, W)g_{F}(V, Q))\nonumber\\
&-(b^{6}g_{F}(V, P)-b^{3}V(b))(g_{F}(U, W)g_{F}(P, Q)-g_{F}(P, W)g_{F}(U, Q))\nonumber\\
&-b^{3}W(b)(g_{F}(U, P)g_{F}(V, Q)-g_{F}(V, P)g_{F}(U, Q)).\nonumber
\end{align}
\end{thm}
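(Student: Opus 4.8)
The plan is to obtain this theorem as the specialization of Theorem 5.10 to the base $B=(0,1)$ equipped with the flat Lorentzian metric $g_{(0,1)}=-dt^{2}$, taking the abstract horizontal fields $X,Y,Z,T$ all equal to $\partial/\partial t$ while the fibre fields $U,V,W,Q$ and the distinguished vector $P$ remain in $\Gamma(TF)$. First I would record the three structural simplifications that hold on a one-dimensional base: the curvature $R_{(0,1)}$ vanishes identically; the metric contraction is $g_{(0,1)}(\partial/\partial t,\partial/\partial t)=-1$; and the Koszul form satisfies $\mathcal{K}_{(0,1)}(\partial/\partial t,\partial/\partial t,\partial/\partial t)=\tfrac12\,\partial_{t}(-1)=0$, whence $\nabla^{\flat}_{\partial/\partial t}\,\partial/\partial t=0$ and therefore $g^{*}_{(0,1)}(\nabla^{\flat}_{\partial/\partial t}\partial/\partial t,db)=0$. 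I would also note that the horizontal lift of any $U\in\Gamma(TF)$ commutes with $\partial/\partial t$, so that $\partial_{t}U(b)=U(\partial_{t}b)=U(\tfrac{\partial b}{\partial t})$; this is precisely what rewrites the mixed second derivatives of Theorem 5.10 into the form displayed in items (5) and (6).

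With these facts in hand I would substitute $X=Y=Z=T=\partial/\partial t$ into the seven cases of Theorem 5.10 one at a time. In item (1) both products $g_{(0,1)}(\partial/\partial t,\partial/\partial t)^{2}$ equal $1$, so the two $b^{2}g_{F}(P,P)$ terms agree and cancel, leaving only $R_{(0,1)}=0$; item (3) is already identically zero in Theorem 5.10(3); and item (2) collapses because the surviving terms of Theorem 5.10(2) are antisymmetric under interchange of the first two horizontal slots and hence vanish once those slots coincide. Item (4) is the one requiring genuine care: the factor $g_{(0,1)}(\partial/\partial t,\partial/\partial t)=-1$ multiplies the $b^{4}$, $bV(b)$, $bP(b)$, $bW(b)$ and $b^{2}\mathcal{K}_{F}$ contributions, flipping each of their signs relative to Theorem 5.10(4), while the term $-bg^{*}_{(0,1)}(\nabla^{\flat}_{\partial/\partial t}\partial/\partial t,db)$ drops out and $b\,\partial_{t}\partial_{t}(b)$ becomes $b\,\tfrac{\partial^{2}b}{\partial t^{2}}$; collecting these reproduces exactly the displayed expression.

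For items (5) and (6) I would stress that these must be read off separately from the two distinct formulas Theorem 5.10(5) and Theorem 5.10(6), since $\overline{R}$ enjoys only the antisymmetries $\overline{R}(X,Y,Z,T)=-\overline{R}(Y,X,Z,T)=-\overline{R}(Y,X,T,Z)$ and not the pair-interchange symmetry $\overline{R}(X,Y,Z,T)=\overline{R}(Z,T,X,Y)$. Setting $X=\partial/\partial t$ and rewriting $\partial_{t}U(b)=U(\tfrac{\partial b}{\partial t})$ yields the two expressions, which indeed differ precisely in the sign of the $b^{3}\tfrac{\partial b}{\partial t}$ terms. Finally item (7) contains no horizontal argument whatsoever, so Theorem 5.10(7) transcribes verbatim, the only change being the notational replacement of $g^{*}_{B}(db,db)$ by $g^{*}_{(0,1)}(db,db)$. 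The principal obstacle is thus not conceptual but arithmetical: correctly attaching the $-1$ coming from the timelike norm $g_{(0,1)}(\partial/\partial t,\partial/\partial t)$ to the right monomials in item (4), and resisting the temptation to identify (5) with (6) through a pairing symmetry that $\overline{R}$ does not possess.
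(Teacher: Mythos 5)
Your proposal is correct and is exactly the route the paper intends: Section 8 states this theorem without proof as a direct specialization of Theorem 5.10 to $B=(0,1)$ with $g_{(0,1)}=-dt^{2}$ and $X=Y=Z=T=\partial/\partial t$. Your bookkeeping of the sign $g_{(0,1)}(\partial/\partial t,\partial/\partial t)=-1$ in item (4), the vanishing of $R_{(0,1)}$ and of $\nabla^{\flat}_{\partial/\partial t}\partial/\partial t$, the commutation $[\partial/\partial t,U]=0$ used in items (5)--(6), and the observation that (5) and (6) come from the two separate formulas 5.10(5) and 5.10(6) rather than from a pair-interchange symmetry all check out against the stated identities.
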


\begin{prop}
Let $M_{4}=(0, 1)\times_{b}F$ be a degenerate twisted product, let the vector fields $\partial/ \partial t\in\Gamma(T(0, 1))$ and $U, V, W\in\Gamma(TF).$ Let $\widehat{\mathcal{K}}$ be the semi-symmetric non-metric Koszul form on $(0, 1)\times_{b}F$ and $\widehat{\mathcal{K}}_{(0, 1)}$, $\widehat{\mathcal{K}}_{F}$ be the lifts of the semi-symmetric non-metric Koszul form on $(0, 1),$ $F,$ respectively. Let $P=\partial/ \partial t,$ then
\begin{align}
&(1)\widehat{\mathcal{K}}(\frac{\partial}{\partial t}, \frac{\partial}{\partial t}, \frac{\partial}{\partial t})=\widehat{\mathcal{K}}_{(0, 1)}(\frac{\partial}{\partial t}, \frac{\partial}{\partial t}, \frac{\partial}{\partial t})=1;\nonumber\\
&(2)\widehat{\mathcal{K}}(\frac{\partial}{\partial t}, \frac{\partial}{\partial t}, W)=\widehat{\mathcal{K}}(\frac{\partial}{\partial t}, W, \frac{\partial}{\partial t})=\widehat{\mathcal{K}}(W, \frac{\partial}{\partial t}, \frac{\partial}{\partial t})=0;\nonumber\\
&(3)\widehat{\mathcal{K}}(\frac{\partial}{\partial t}, V, W)=-\widehat{\mathcal{K}}(V, W, \frac{\partial}{\partial t})=b\frac{\partial b}{\partial t}g_{F}(V, W);\nonumber\\
&(4)\widehat{\mathcal{K}}(V, \frac{\partial}{\partial t}, W)=b\frac{\partial b}{\partial t}g_{F}(V, W)-b^{2}g_{F}(V, W);\nonumber\\
&(5)\widehat{\mathcal{K}}(U, V, W)=bU(b)g_{F}(V, W)+bV(b)g_{F}(W, U)-bW(b)g_{F}(U, V)+b^{2}\mathcal{K}_{F}(U, V, W).\nonumber
\end{align}
\end{prop}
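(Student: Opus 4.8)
The plan is to obtain this proposition as the specialization of Proposition 6.1 to the one-dimensional Lorentzian base $B=(0,1)$ with $g_{(0,1)}=-dt^{2}$ and distinguished field $P=\partial/\partial t\in\Gamma(T(0,1))$. Because $T(0,1)$ is spanned by $\partial/\partial t$, every base entry appearing in Proposition 6.1 is forced to equal $\partial/\partial t$, so the entire statement reduces to substituting $X=Y=Z=\partial/\partial t$ into that proposition and evaluating the resulting base-metric pairings. The only item that is not a pure transcription is the normalization (1).

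First I would settle (1). On $(0,1)$ the metric $-dt^{2}$ is constant and $[\partial/\partial t,\partial/\partial t]=0$, so every term in the Koszul form of Definition 2.2 vanishes, giving $\mathcal{K}_{(0,1)}(\partial/\partial t,\partial/\partial t,\partial/\partial t)=0$. Proposition 6.1(1) identifies $\widehat{\mathcal{K}}(\partial/\partial t,\partial/\partial t,\partial/\partial t)$ with the base value $\widehat{\mathcal{K}}_{(0,1)}(\partial/\partial t,\partial/\partial t,\partial/\partial t)$, and Definition 3.1 applied on $(0,1)$ with $P=\partial/\partial t$ yields
\begin{align}
\widehat{\mathcal{K}}_{(0,1)}\Bigl(\frac{\partial}{\partial t},\frac{\partial}{\partial t},\frac{\partial}{\partial t}\Bigr)
&=\mathcal{K}_{(0,1)}\Bigl(\frac{\partial}{\partial t},\frac{\partial}{\partial t},\frac{\partial}{\partial t}\Bigr)
+g_{(0,1)}\Bigl(\frac{\partial}{\partial t},P\Bigr)g_{(0,1)}\Bigl(\frac{\partial}{\partial t},\frac{\partial}{\partial t}\Bigr)\nonumber\\
&=0+(-1)(-1)=1,\nonumber
\end{align}
since $g_{(0,1)}(\partial/\partial t,\partial/\partial t)=-1$. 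This is exactly the value recorded in (1).

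Next I would read off (2)--(5) from the corresponding items of Proposition 6.1 under the substitution $X=\partial/\partial t$, noting that $X(b)=\partial b/\partial t$ and that $b\in C^{\infty}((0,1)\times F)$, so that the fiber derivatives $U(b),V(b),W(b)$ need not vanish. Items (2) and (3) transcribe verbatim. In (4), the general expression $\widehat{\mathcal{K}}(V,X,W)=bX(b)g_{F}(V,W)+b^{2}g_{B}(X,P)g_{F}(V,W)$ of Proposition 6.1(4) becomes, using $g_{B}(\partial/\partial t,P)=g_{(0,1)}(\partial/\partial t,\partial/\partial t)=-1$, the stated $b(\partial b/\partial t)g_{F}(V,W)-b^{2}g_{F}(V,W)$. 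Item (5) contains no base entry and no pairing against $P$, hence coincides with Proposition 6.1(5) unchanged; here the full fiber-derivative terms $bU(b)g_{F}(V,W)+bV(b)g_{F}(W,U)-bW(b)g_{F}(U,V)$ survive precisely because $b$ is a genuine twisting function rather than a warping function on the base.

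The single delicate point --- and thus the only real obstacle, if one may call it that --- is the consistent bookkeeping of the Lorentzian sign $g_{(0,1)}(\partial/\partial t,\partial/\partial t)=-1$, which simultaneously produces the normalization $1$ in (1) and the $-b^{2}g_{F}(V,W)$ term in (4). With that sign fixed, the proposition is an immediate corollary of Proposition 6.1 and requires no further computation.
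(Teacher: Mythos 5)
Your proposal is correct and is exactly the derivation the paper intends: Section 8 states this proposition without proof as a direct specialization of Proposition 6.1 to $B=(0,1)$, $g_{(0,1)}=-dt^{2}$, $P=\partial/\partial t$, and your sign bookkeeping $g_{(0,1)}(\partial/\partial t,\partial/\partial t)=-1$ correctly produces both the value $1$ in (1) and the term $-b^{2}g_{F}(V,W)$ in (4).
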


\begin{prop}
Let $M_{4}=(0, 1)\times_{b}F$ be a degenerate twisted product, let the vector fields $\partial/ \partial t\in\Gamma(T(0, 1))$ and $U, V, W\in\Gamma(TF).$ Let $\widehat{\mathcal{K}}$ be the semi-symmetric non-metric Koszul form on $(0, 1)\times_{b}F$ and $\widehat{\mathcal{K}}_{(0, 1)}$, $\widehat{\mathcal{K}}_{F}$ be the lifts of the semi-symmetric non-metric Koszul form on $(0, 1),$ $F,$ respectively. Let $P\in\Gamma(TF),$ then
\begin{align}
&(1)\widehat{\mathcal{K}}(\frac{\partial}{\partial t}, \frac{\partial}{\partial t}, \frac{\partial}{\partial t})=\mathcal{K}_{(0, 1)}(\frac{\partial}{\partial t}, \frac{\partial}{\partial t}, \frac{\partial}{\partial t})=0;\nonumber\\
&(2)\widehat{\mathcal{K}}(\frac{\partial}{\partial t}, \frac{\partial}{\partial t}, W)=\widehat{\mathcal{K}}(W, \frac{\partial}{\partial t}, \frac{\partial}{\partial t})=0;\nonumber\\
&(3)\widehat{\mathcal{K}}(\frac{\partial}{\partial t}, W, \frac{\partial}{\partial t})=-b^{2}g_{F}(W, P);\nonumber\\
&(4)\widehat{\mathcal{K}}(\frac{\partial}{\partial t}, V, W)=\widehat{\mathcal{K}}(V, \frac{\partial}{\partial t}, W)=-\widehat{\mathcal{K}}(V, W, \frac{\partial}{\partial t})=b\frac{\partial b}{\partial t}g_{F}(V, W);\nonumber\\
&(5)\widehat{\mathcal{K}}(U, V, W)=bU(b)g_{F}(V, W)+bV(b)g_{F}(W, U)-bW(b)g_{F}(U, V)+b^{2}\mathcal{K}_{F}(U, V, W)\nonumber\\
&+b^{4}g_{F}(V, P)g_{F}(U, W).\nonumber
\end{align}
\end{prop}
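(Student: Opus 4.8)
The plan is to obtain this proposition as the specialization of Proposition 6.2 to the base $B=(0,1)$ equipped with the Lorentzian metric $g_{B}=-dt^{2}$, the fibre $F$ with twisting function $b$, and the distinguished vector field $P\in\Gamma(TF)$. Proposition 6.2 already records the five structural identities for $\widehat{\mathcal{K}}$ on an arbitrary degenerate twisted product $B\times_{b}F$ with $P\in\Gamma(TF)$, written in terms of arbitrary $X,Y,Z\in\Gamma(TB)$ and $U,V,W\in\Gamma(TF)$. Since $\dim(0,1)=1$, every base vector field is a function multiple of $\partial/\partial t$, so it suffices to substitute $X=Y=Z=\partial/\partial t$ into each of the five items of Proposition 6.2. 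The three facts driving the substitution are $g_{B}(\partial/\partial t,\partial/\partial t)=-1$, the identity $(\partial/\partial t)(b)=\partial b/\partial t$, and the vanishing of the Koszul form of the interval on $\partial/\partial t$.

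First I would dispose of item (1). By item (1) of Proposition 6.2 we have $\widehat{\mathcal{K}}(\partial/\partial t,\partial/\partial t,\partial/\partial t)=\mathcal{K}_{(0,1)}(\partial/\partial t,\partial/\partial t,\partial/\partial t)$, and this last quantity I would evaluate directly from the definition of the Koszul form (Definition 2.2): because $g_{B}(\partial/\partial t,\partial/\partial t)\equiv-1$ is constant and $[\partial/\partial t,\partial/\partial t]=0$, all six terms in the defining expression vanish, giving $0$. Items (2), (4) and (5) then follow by literal substitution: items (2) and (4) of Proposition 6.2 with $X=\partial/\partial t$ yield exactly the stated vanishing and the value $b\frac{\partial b}{\partial t}g_{F}(V,W)$ once $X(b)$ is written as $\partial b/\partial t$, while item (5) of Proposition 6.2 involves no base vector field and is reproduced verbatim.

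The one place demanding attention is item (3). Item (3) of Proposition 6.2 reads $\widehat{\mathcal{K}}(X,W,Y)=b^{2}g_{B}(X,Y)g_{F}(W,P)$; setting $X=Y=\partial/\partial t$ and inserting $g_{B}(\partial/\partial t,\partial/\partial t)=-1$ produces the sign-flipped expression $\widehat{\mathcal{K}}(\partial/\partial t,W,\partial/\partial t)=-b^{2}g_{F}(W,P)$, which matches the statement. Thus the proposition is essentially a bookkeeping exercise with no analytic content; the only genuine subtlety is tracking the minus sign contributed by the timelike base direction in item (3), together with the observation that the one-dimensional base supplies a vanishing Koszul form in item (1).
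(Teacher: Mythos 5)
Your proposal is correct and is exactly the route the paper intends: Section 8 states this proposition without proof as a direct specialization of Proposition 6.2 to $B=(0,1)$ with $g_{B}=-dt^{2}$, and your substitution, including the sign flip $g_{B}(\partial/\partial t,\partial/\partial t)=-1$ in item (3) and the vanishing of $\mathcal{K}_{(0,1)}$ on the constant-norm field $\partial/\partial t$ in item (1), accounts for every discrepancy between the general and specialized statements.
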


\begin{thm}
Let $M_{4}=(0, 1)\times_{b}F$ be a degenerate twisted product, let the vector fields $\partial/ \partial t\in\Gamma(T(0, 1))$and $U, V, W, Q\in\Gamma(TF).$ Let $P=\partial/ \partial t,$ then
\begin{align}
&(1)\widehat{R}(\frac{\partial}{\partial t}, \frac{\partial}{\partial t}, \frac{\partial}{\partial t}, \frac{\partial}{\partial t})=\widehat{R}_{(0, 1)}(\frac{\partial}{\partial t}, \frac{\partial}{\partial t}, \frac{\partial}{\partial t}, \frac{\partial}{\partial t})=0;\nonumber\\
&(2)\widehat{R}(\frac{\partial}{\partial t}, \frac{\partial}{\partial t}, \frac{\partial}{\partial t}, W)=\widehat{R}(\frac{\partial}{\partial t}, \frac{\partial}{\partial t}, W, \frac{\partial}{\partial t})=\widehat{R}(\frac{\partial}{\partial t}, W, \frac{\partial}{\partial t}, \frac{\partial}{\partial t})=0;\nonumber\\
&(3)\widehat{R}(\frac{\partial}{\partial t}, \frac{\partial}{\partial t}, V, W)=\widehat{R}(V, W, \frac{\partial}{\partial t}, \frac{\partial}{\partial t})=0;\nonumber\\
&(4)\widehat{R}(V, \frac{\partial}{\partial t}, W, \frac{\partial}{\partial t})=-\widehat{R}(\frac{\partial}{\partial t}, V, W, \frac{\partial}{\partial t})=b\frac{\partial^{2} b}{\partial t^{2}}g_{F}(V, W)+2b\frac{\partial b}{\partial t}g_{F}(V, W);\nonumber\\
&(5)\widehat{R}(V, \frac{\partial}{\partial t}, \frac{\partial}{\partial t}, W)=-\widehat{R}(\frac{\partial}{\partial t}, V, \frac{\partial}{\partial t}, W)=-b\frac{\partial^{2} b}{\partial t^{2}}g_{F}(V, W);\nonumber\\
&(6)\widehat{R}(\frac{\partial}{\partial t}, U, V, W)=\widehat{R}(V, W, \frac{\partial}{\partial t}, U)=bV(\frac{\partial b}{\partial t})g_{F}(W, U)-V(b)\frac{\partial b}{\partial t}g_{F}(W, U)\nonumber\\
&-bW(\frac{\partial b}{\partial t})g_{F}(U, V)+W(b)\frac{\partial b}{\partial t}g_{F}(U, V);\nonumber\\
&(7)\widehat{R}(V, W, U, \frac{\partial}{\partial t})=-bV(\frac{\partial b}{\partial t})g_{F}(W, U)+V(b)\frac{\partial b}{\partial t}g_{F}(W, U)+bW(\frac{\partial b}{\partial t})g_{F}(U, V)\nonumber\\
&-W(b)\frac{\partial b}{\partial t}g_{F}(U, V)+2bW(b)g_{F}(U, V)-2bV(b)g_{F}(U, W)-b^{2}(\mathcal{K}_{F}(W, U, V)-\mathcal{K}_{F}(V, U, W));\nonumber\\
&(8)\widehat{R}(U, V, W, Q)=b^{2}R_{F}(U, V, W, Q)+(b^{2}g^{*}_{(0, 1)}(db, db)+g^{*}_{F}(db, db))(g_{F}(U, W)g_{F}(V, Q)\nonumber\\
&-g_{F}(V, W)g_{F}(U, Q))+(bUW(b)-2U(b)W(b)-bg^{*}_{F}(\nabla^{\flat}_{U}W, db))g_{F}(V, Q)\nonumber\\
&+(bVQ(b)-2V(b)Q(b)-bg^{*}_{F}(\nabla^{\flat}_{V}Q, db))g_{F}(U, W)-(bVW(b)-2V(b)W(b)\nonumber\\
&-bg^{*}_{F}(\nabla^{\flat}_{V}W, db))g_{F}(U, Q)-(bUQ(b)-2U(b)Q(b)-bg^{*}_{F}(\nabla^{\flat}_{U}Q, db))g_{F}(V, W).\nonumber
\end{align}
\end{thm}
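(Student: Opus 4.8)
The plan is to derive this theorem as the specialization of Theorem 6.3 (the semi-symmetric non-metric curvature of a singular twisted product with $P\in\Gamma(TB)$) to the base $B=(0,1)$ equipped with the non-degenerate metric $g_B=-dt^2$, fiber $F$, twisting function $b$, and dilation vector field $P=\partial/\partial t$. First I would isolate the three structural facts that drive every simplification: (i) $g_B(\partial/\partial t,\partial/\partial t)=-1$ is constant, so in particular $\partial/\partial t(g_B(\partial/\partial t,\partial/\partial t))=0$; (ii) $(0,1)$ is one-dimensional and flat, whence $R_{(0,1)}=\widehat{R}_{(0,1)}=0$ and $\nabla^{\flat}_{\partial/\partial t}\partial/\partial t=0$, so every term of the form $g^{*}_{B}(\nabla^{\flat}_{X}Y,db)$ with $X,Y$ in the base drops out; and (iii) for a lifted fiber field $V$ one has $[\partial/\partial t,V]=0$, hence $\partial/\partial t(V(b))=V(\partial b/\partial t)$. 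These three identities are exactly what turn the abstract operators $X(b)$, $XY(b)$, and $X(g_B(Y,P))$ of Theorem 6.3 into the explicit $t$-derivatives appearing in the statement.

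I would then pass through the cases of Theorem 6.3 with $X=Y=Z=T=\partial/\partial t$ and $P=\partial/\partial t$. Items (1)--(3) are immediate: the antisymmetry $\widehat{R}(X,Y,Z,T)=-\widehat{R}(Y,X,Z,T)$ recorded after Proposition 3.5 annihilates the repeated-slot expressions, while the remaining mixed ``single fiber index'' brackets such as $\widehat{R}(\partial/\partial t,W,\partial/\partial t,\partial/\partial t)$ reduce, via the general expression in Proposition 3.5 together with the vanishing Koszul values of Proposition 6.1 and the antisymmetry of $R$ in its last two slots, to zero. For item (4) I would take case (4) of Theorem 6.3: the term $bXY(b)g_F(V,W)$ becomes $b(\partial^{2}b/\partial t^{2})g_F(V,W)$, the $g^{*}_{B}(\nabla^{\flat}_{X}Y,db)$ term drops by (ii), and $-2bX(b)g_B(Y,P)=-2b(\partial b/\partial t)(-1)$ by (i) yields the $+2b(\partial b/\partial t)g_F(V,W)$ summand. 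Item (5) is handled the same way, with the correction $-b^{2}X(g_B(Y,P))$ vanishing by (i). Items (6) and (7) come from cases (6) and (7): there the commutator identity (iii) rewrites $bXV(b)$ as $bV(\partial b/\partial t)$ and so on, and the value $g_B(\partial/\partial t,\partial/\partial t)=-1$ flips the semi-symmetric corrections $-2bW(b)g_B(X,P)$, $+2bV(b)g_B(X,P)$, and $b^{2}g_B(X,P)(\mathcal{K}_F(W,U,V)-\mathcal{K}_F(V,U,W))$ into the $+2bW(b)$, $-2bV(b)$, and $-b^{2}(\mathcal{K}_F(W,U,V)-\mathcal{K}_F(V,U,W))$ terms displayed in (7). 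Finally, item (8) reproduces case (8) of Theorem 6.3 verbatim, with $g^{*}_{B}$ replaced by $g^{*}_{(0,1)}$, since that purely fiber expression never involves $P$.

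The one genuine care-point, and the place I expect to be the main source of slips, is the Lorentzian normalization: because $g_B(\partial/\partial t,\partial/\partial t)=-1$ rather than $+1$, each factor $g_B(\cdot,\partial/\partial t)$ contributes a $-1$, and these are precisely the slots (in items (4), (5) and (7)) where the present formulas carry a sign opposite to a naive positive-definite reading of Theorem 6.3. I would therefore verify each $g_B(\cdot,\partial/\partial t)$ substitution individually, and confirm that the semi-regularity hypothesis $db|_{b=0}=0$ underlying Theorem 6.3 is inherited by $M_4$, so that $\widehat{R}$ is well defined across the degenerate locus $b=0$. Once these substitutions are in place the identities (1)--(8) follow term by term, and no computation beyond the specialization of Theorem 6.3 is required.
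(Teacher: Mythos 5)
Your proposal is correct and follows exactly the route the paper intends: the theorem is one of the Section 8 examples stated without proof, obtained by specializing Theorem 6.3 to $B=(0,1)$, $g_B=-dt^2$, $P=\partial/\partial t$, and your three structural observations (flatness of the base, the sign $g_B(\partial/\partial t,\partial/\partial t)=-1$, and $[\partial/\partial t,V]=0$) are precisely what is needed to reproduce items (1)--(8), including the sign flips in (4) and (7).
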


\begin{thm}
Let $M_{4}=(0, 1)\times_{b}F$ be a degenerate twisted product, let the vector fields $\partial/ \partial t\in\Gamma(T(0, 1))$and $U, V, W, Q\in\Gamma(TF).$ Let $P\in\Gamma(TF),$ then
\begin{align}
&(1)\widehat{R}(\frac{\partial}{\partial t}, \frac{\partial}{\partial t}, \frac{\partial}{\partial t}, \frac{\partial}{\partial t})=R_{(0, 1)}(\frac{\partial}{\partial t}, \frac{\partial}{\partial t}, \frac{\partial}{\partial t}, \frac{\partial}{\partial t})=0;\nonumber\\
&(2)\widehat{R}(\frac{\partial}{\partial t}, \frac{\partial}{\partial t}, \frac{\partial}{\partial t}, W)=\widehat{R}(\frac{\partial}{\partial t}, \frac{\partial}{\partial t}, W, \frac{\partial}{\partial t})=\widehat{R}(\frac{\partial}{\partial t}, W, \frac{\partial}{\partial t}, \frac{\partial}{\partial t})=0;\nonumber\\
&(3)\widehat{R}(\frac{\partial}{\partial t}, \frac{\partial}{\partial t}, V, W)=\widehat{R}(V, W, \frac{\partial}{\partial t}, \frac{\partial}{\partial t})=0;\nonumber\\
&(4)\widehat{R}(\frac{\partial}{\partial t}, V, W, \frac{\partial}{\partial t})=-\widehat{R}(V, \frac{\partial}{\partial t}, W, \frac{\partial}{\partial t})=-b\frac{\partial^{2} b}{\partial t}g_{F}(V, W)+2bV(b)g_{F}(W, P)+b^{2}V(g_{F}(W, P));\nonumber\\
&(5)\widehat{R}(\frac{\partial}{\partial t}, V, \frac{\partial}{\partial t}, W)=-\widehat{R}(V, \frac{\partial}{\partial t}, \frac{\partial}{\partial t}, W)=b\frac{\partial^{2} b}{\partial t^{2}}g_{F}(V, W);\nonumber\\
&(6)\widehat{R}(\frac{\partial}{\partial t}, V, U, W)=-\widehat{R}(V, \frac{\partial}{\partial t}, U, W)=bU(\frac{\partial b}{\partial t})g_{F}(W, V)-U(b)\frac{\partial b}{\partial t}g_{F}(W, V)\nonumber\\
&-bW(\frac{\partial b}{\partial t})g_{F}(V, U)+W(b)\frac{\partial b}{\partial t}g_{F}(V, U)+2b^{3}\frac{\partial b}{\partial t}(g_{F}(U, P)g_{F}(V, W)\nonumber\\
&+g_{F}(W, P)g_{F}(U, V));\nonumber\\
&(7)\widehat{R}(U, W, \frac{\partial}{\partial t}, V)=-\widehat{R}(U, W, V, \frac{\partial}{\partial t})=bU(\frac{\partial b}{\partial t})g_{F}(W, V)-U(b)\frac{\partial b}{\partial t}g_{F}(W, V)\nonumber\\
&-bW(\frac{\partial b}{\partial t})g_{F}(V, U)+W(b)\frac{\partial b}{\partial t}g_{F}(V, U);\nonumber\\
&(8)\widehat{R}(U, V, W, Q)=b^{2}R_{F}(U, V, W, Q)+(b^{2}g^{*}_{(0, 1)}(db, db)+g^{*}_{F}(db, db))(g_{F}(U, W)g_{F}(V, Q)\nonumber\\
&-g_{F}(V, W)g_{F}(U, Q))+(bUW(b)-2U(b)W(b)-bg^{*}_{F}(\nabla^{\flat}_{U}W, db))g_{F}(V, Q)\nonumber\\
&+(bVQ(b)-2V(b)Q(b)-bg^{*}_{F}(\nabla^{\flat}_{V}Q, db))g_{F}(U, W)-(bVW(b)-2V(b)W(b)\nonumber\\
&-bg^{*}_{F}(\nabla^{\flat}_{V}W, db))g_{F}(U, Q)-(bUQ(b)-2U(b)Q(b)-bg^{*}_{F}(\nabla^{\flat}_{U}Q, db))g_{F}(V, W)\nonumber\\
&+2b^{3}U(b)(g_{F}(W, P)g_{F}(V, Q)+g_{F}(Q, P)g_{F}(V, W))-2b^{3}V(b)(g_{F}(W, P)g_{F}(U, Q)\nonumber\\
&+g_{F}(Q, P)g_{F}(U, W))+b^{4}U(g_{F}(W, P))g_{F}(V, Q)-b^{4}V(g_{F}(W, P))g_{F}(U, Q)\nonumber\\
&+b^{4}g_{F}(P, Q)(\mathcal{K}_{F}(U, W, V)-\mathcal{K}_{F}(V, W, U)).\nonumber
\end{align}
\end{thm}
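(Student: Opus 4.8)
The plan is to obtain every component by specializing Theorem 6.4 --- the general formula for the semi-symmetric non-metric Riemann curvature $\widehat{R}$ of a twisted product $B\times_b F$ with $P\in\Gamma(TF)$ --- to the degenerate case in which $B=(0,1)$ carries the Lorentzian metric $g_{(0,1)}=-dt^2$. Since $\dim B=1$, every base argument $X,Y,Z,T$ occurring in Theorem 6.4 must be a function multiple of $\partial/\partial t$, so the present statement is precisely that theorem evaluated at $X=Y=Z=T=\partial/\partial t$. The hypotheses of Theorem 6.4 are met: $M_4$ is the twisted product of the non-degenerate base $(0,1)$ with $F$, and under the standing assumption $db|_{b=0}=0$ it is a semi-regular semi-Riemannian manifold, so $\widehat{R}$ is defined.

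First I would record the elementary facts about the one-dimensional base that collapse the generic terms. Because $(0,1)$ is one-dimensional it is flat, so $R_{(0,1)}=0$ and $\widehat{R}_{(0,1)}=0$; because $g_{(0,1)}(\partial/\partial t,\partial/\partial t)=-1$ is constant, the Koszul-form formula of Definition 2.3 gives $\mathcal{K}_{(0,1)}(\partial/\partial t,\partial/\partial t,\partial/\partial t)=0$ and hence $\nabla^\flat_{\partial/\partial t}\partial/\partial t=0$, so that $g^*_{(0,1)}(\nabla^\flat_{\partial/\partial t}\partial/\partial t,db)=0$; finally the signature supplies the crucial value $g(\partial/\partial t,\partial/\partial t)=-1$. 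I would also note that for a lift $U\in\Gamma(TF)$ one has $[\partial/\partial t,U]=0$, whence $\partial/\partial t\,U(b)=U(\partial b/\partial t)$; this is what converts the mixed second derivatives $XU(b)$ of Theorem 6.4 into the form $U(\partial b/\partial t)$ displayed here.

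Then I would proceed item by item. Item (1) is part (1) of Theorem 6.4 together with $R_{(0,1)}=0$. Item (2) collects parts (2)--(4): with all base slots equal to $\partial/\partial t$ the combination $\mathcal{K}_{(0,1)}(\partial/\partial t,\partial/\partial t,\partial/\partial t)-\mathcal{K}_{(0,1)}(\partial/\partial t,\partial/\partial t,\partial/\partial t)$ and the combination $\partial/\partial t(b)\,g(\partial/\partial t,\partial/\partial t)-\partial/\partial t(b)\,g(\partial/\partial t,\partial/\partial t)$ both vanish. Item (3) is part (5). Item (4) comes from part (6): substituting $X=Y=\partial/\partial t$ annihilates the $\nabla^\flat$ term and turns $-2bV(b)g(X,Y)g_F(W,P)$ and $-b^2V(g_F(W,P))g(X,Y)$ into $+2bV(b)g_F(W,P)$ and $+b^2V(g_F(W,P))$ precisely because $g(\partial/\partial t,\partial/\partial t)=-1$. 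Item (5) is part (7), item (6) is part (8) (here the commutation identity is used), item (7) is part (9), and item (8) is part (10) with $g^*_B(db,db)$ replaced by $g^*_{(0,1)}(db,db)$, the remaining terms being intrinsic to $F$. The antisymmetry relations asserted in (4)--(7) are inherited from $\widehat{R}(X,Y,Z,T)=-\widehat{R}(Y,X,Z,T)$ recorded in Section 3.

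As an independent check, each component can instead be recomputed directly from Proposition 3.5, feeding in the semi-symmetric non-metric Koszul forms of $M_4$ listed in the preceding Proposition together with the ordinary twisted-product curvature $R$ of Theorem 5.6, both specialized to $B=(0,1)$. The argument involves no conceptual obstacle; the only point demanding genuine care is the consistent propagation of the Lorentzian sign $g(\partial/\partial t,\partial/\partial t)=-1$ through the $P$-dependent terms --- this is the sole place where the present formulas change sign relative to their Riemannian-base analogues --- along with the routine but lengthy bookkeeping of the $b$-derivative terms in the pure-$F$ block (8).
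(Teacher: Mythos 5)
Your proposal is correct and is exactly the route the paper intends: the Section~8 statements are presented without proof as direct specializations of the general twisted-product theorems, and item by item your reduction of Theorem~6.4 to $B=(0,1)$ with $g_{(0,1)}=-dt^2$ (using $R_{(0,1)}=0$, $\mathcal{K}_{(0,1)}(\partial_t,\partial_t,\partial_t)=0$, $g(\partial_t,\partial_t)=-1$, and $[\partial_t,U]=0$) reproduces the displayed formulas, including the sign reversals in the $P$-dependent terms of item (4). The only discrepancy is the paper's $\partial^2 b/\partial t$ in item (4), which your derivation correctly identifies as $\partial^2 b/\partial t^2$.
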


\begin{prop}
Let $M_{4}=(0, 1)\times_{b}F$ be a degenerate twisted product, let the vector fields $\partial/ \partial t\in\Gamma(T(0, 1))$ and $U, V, W\in\Gamma(TF).$  Let $\widetilde{\mathcal{K}}$ be the almost product Koszul form on $(0, 1)\times_{b}F$ and $\widetilde{\mathcal{K}}_{(0, 1)}$, $\widetilde{\mathcal{K}}_{F}$ be the lifts of the almost product Koszul form on $(0, 1),$ $F,$ respectively. Then
\begin{align}
&(1)\widetilde{\mathcal{K}}(\frac{\partial}{\partial t}, \frac{\partial}{\partial t}, \frac{\partial}{\partial t})=\widetilde{\mathcal{K}}_{(0, 1)}(\frac{\partial}{\partial t}, \frac{\partial}{\partial t}, \frac{\partial}{\partial t})=0;\nonumber\\
&(2)\widetilde{\mathcal{K}}(\frac{\partial}{\partial t}, \frac{\partial}{\partial t}, W)=\widetilde{\mathcal{K}}(\frac{\partial}{\partial t}, W, \frac{\partial}{\partial t})=\widetilde{\mathcal{K}}(W, \frac{\partial}{\partial t}, \frac{\partial}{\partial t})=0;\nonumber
\end{align}
\begin{align}
&(3)\widetilde{\mathcal{K}}(\frac{\partial}{\partial t}, V, W)=b\frac{\partial b}{\partial t}g_{F}(V, W);\nonumber\\
&(4)\widetilde{\mathcal{K}}(V, \frac{\partial}{\partial t}, W)=-\widetilde{\mathcal{K}}(V, W, \frac{\partial}{\partial t})=\frac{1}{2}(b\frac{\partial b}{\partial t}g_{F}(V, W)+bJ_{(0, 1)}(\frac{\partial b}{\partial t})g_{F}(V, J_{F}W));\nonumber\\
&(5)\widetilde{\mathcal{K}}(U, V, W)=bU(b)g_{F}(V, W)+\frac{1}{2}bV(b)g_{F}(U, W)-\frac{1}{2}bW(b)g_{F}(U, V)\nonumber\\
&+\frac{1}{2}bJ_{F}V(b)g_{F}(U, J_{F}W)-\frac{1}{2}bJ_{F}W(b)g_{F}(U, J_{F}V)+b^{2}\widetilde{\mathcal{K}}_{F}(U, V, W).\nonumber
\end{align}
\end{prop}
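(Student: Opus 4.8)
The plan is to obtain this proposition as the special case $B=(0,1)$, $g_B=-dt^2$ of Proposition 7.1, which computes the almost product Koszul form $\widetilde{\mathcal{K}}$ on a general degenerate twisted product $B\times_b F$ with almost product structure $J=(J_B,J_F)$. Since $(0,1)$ is one-dimensional, every base vector field is a smooth multiple of the coordinate field $\partial/\partial t$, so it suffices to substitute $X=Y=Z=\partial/\partial t$ into items (1)--(5) of Proposition 7.1 and to record the resulting dictionary $X(b)=\partial b/\partial t$ and $(J_{(0,1)}\partial/\partial t)(b)=J_{(0,1)}(\partial b/\partial t)$. With these replacements, items (2), (3) and (5) of the present statement are literally items (2), (3) and (5) of Proposition 7.1; note in particular that (5) involves only the fiber fields $U,V,W\in\Gamma(TF)$, so it is unchanged verbatim. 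Item (4) is item (4) of Proposition 7.1 once the two base terms $\tfrac12 bX(b)g_F(V,W)$ and $\tfrac12 bJ_BX(b)g_F(V,J_FW)$ are rewritten via the dictionary, and the antisymmetry $\widetilde{\mathcal{K}}(V,\partial/\partial t,W)=-\widetilde{\mathcal{K}}(V,W,\partial/\partial t)$ is inherited from the same item.

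The one place that needs a separate short argument is item (1). By item (1) of Proposition 7.1 we have $\widetilde{\mathcal{K}}(\partial/\partial t,\partial/\partial t,\partial/\partial t)=\widetilde{\mathcal{K}}_{(0,1)}(\partial/\partial t,\partial/\partial t,\partial/\partial t)$, so it remains to check that the almost product Koszul form of the one-dimensional base vanishes there. I would compute directly from Definition 4.2 together with the defining formula of the Koszul form: because $g_B(\partial/\partial t,\partial/\partial t)=-1$ is constant and $[\partial/\partial t,\partial/\partial t]=0$, every term of $\mathcal{K}_{(0,1)}(\partial/\partial t,\partial/\partial t,\partial/\partial t)$ vanishes. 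Moreover $J_{(0,1)}^2=\mathrm{id}$ together with $g_B(J_{(0,1)}X,J_{(0,1)}X)=g_B(X,X)$ forces $J_{(0,1)}\partial/\partial t=\pm\partial/\partial t$, so the twisted term $\mathcal{K}_{(0,1)}(\partial/\partial t,J_{(0,1)}\partial/\partial t,J_{(0,1)}\partial/\partial t)$ is a constant ($=(\pm1)^2$) multiple of $\mathcal{K}_{(0,1)}(\partial/\partial t,\partial/\partial t,\partial/\partial t)$ and hence also vanishes. Averaging the two gives $\widetilde{\mathcal{K}}_{(0,1)}(\partial/\partial t,\partial/\partial t,\partial/\partial t)=0$, establishing (1).

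I do not expect any serious obstacle here: the content is a clean specialization of an already established formula, and the proposition is essentially a bookkeeping verification. The only subtlety worth flagging is the meaning of the symbol $J_{(0,1)}(\partial b/\partial t)$, which is to be read as $(J_{(0,1)}\partial/\partial t)(b)$, the action on the twisting function $b$ of the image of the coordinate field under the base almost product structure; keeping this interpretation consistent throughout item (4) is all that is required to complete the argument.
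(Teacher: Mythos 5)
Your proposal is correct and is essentially the route the paper intends: the proposition is stated in Section 8 without proof precisely because it is the specialization of Proposition 7.1 to $B=(0,1)$, $g_B=-dt^2$, $X=Y=Z=\partial/\partial t$, with the notational dictionary $J_BX(b)=(J_{(0,1)}\partial/\partial t)(b)=J_{(0,1)}(\partial b/\partial t)$. Your extra verification that $\widetilde{\mathcal{K}}_{(0,1)}(\partial/\partial t,\partial/\partial t,\partial/\partial t)=0$ (constant metric, vanishing brackets, $J_{(0,1)}\partial/\partial t=\pm\partial/\partial t$) is the only substantive check needed and is carried out correctly.
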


\begin{thm}
Let $M_{4}=(0, 1)\times_{b}F$ be a degenerate twisted product, let the vector fields $\partial/ \partial t\in\Gamma(T(0, 1))$and $U, V, W, Q\in\Gamma(TF),$ then
\begin{align}
&(1)\widetilde{R}(\frac{\partial}{\partial t}, \frac{\partial}{\partial t}, \frac{\partial}{\partial t}, \frac{\partial}{\partial t})=\widetilde{R}_{(0, 1)}(\frac{\partial}{\partial t}, \frac{\partial}{\partial t}, \frac{\partial}{\partial t}, \frac{\partial}{\partial t})=0;\nonumber\\
&(2)\widetilde{R}(\frac{\partial}{\partial t}, \frac{\partial}{\partial t}, \frac{\partial}{\partial t}, W)=\widetilde{R}(\frac{\partial}{\partial t}, W, \frac{\partial}{\partial t}, \frac{\partial}{\partial t})=0;\nonumber\\
&(3)\widetilde{R}(\frac{\partial}{\partial t}, \frac{\partial}{\partial t},  V, W)=\widetilde{R}(V, W, \frac{\partial}{\partial t}, \frac{\partial}{\partial t})=0;\nonumber\\
&(4)\widetilde{R}(\frac{\partial}{\partial t}, V, \frac{\partial}{\partial t}, W)=\frac{b}{2}\frac{\partial^{2} b}{\partial t^{2}}g_{F}(V, W)+\frac{b}{2}\frac{\partial}{\partial t}(J_{(0, 1)}(\frac{\partial b}{\partial t}))g_{F}(V, J_{F}W);\nonumber\\
&(5)\widetilde{R}(\frac{\partial}{\partial t}, V, W, U)=-\frac{1}{2}(\frac{\partial b}{\partial t}W(b)-bW(\frac{\partial b}{\partial t}))g_{F}(V, U)-\frac{1}{2}(\frac{\partial b}{\partial t}J_{F}W(b)-bJ_{F}W(\frac{\partial b}{\partial t}))\nonumber\\
&\times g_{F}(V, J_{F}U)+\frac{1}{2}(\frac{\partial b}{\partial t}U(b)-bU(\frac{\partial b}{\partial t}))g_{F}(V, W)+\frac{1}{2}(\frac{\partial b}{\partial t}J_{F}U(b)-bJ_{F}U(\frac{\partial b}{\partial t}))g_{F}(V, J_{F}W);\nonumber\\
&(6)\widetilde{R}(W, U, \frac{\partial}{\partial t}, V)=\frac{1}{4}(2bW(\frac{\partial b}{\partial t})-W(b)\frac{\partial b}{\partial t}-J_{(0, 1)}(\frac{\partial b}{\partial t})J_{F}W(b))g_{F}(U, V)+\frac{1}{4}(2bWJ_{(0, 1)}(\frac{\partial b}{\partial t})\nonumber\\
&-J_{F}W(b)\frac{\partial b}{\partial t}-J_{(0, 1)}(\frac{\partial b}{\partial t})W(b))g_{F}(U, J_{F}V)-\frac{1}{4}(2bU(\frac{\partial b}{\partial t})-U(b)\frac{\partial b}{\partial t}-J_{(0, 1)}(\frac{\partial b}{\partial t})J_{F}U(b))\nonumber\\
&\times g_{F}(W, V)-\frac{1}{4}(2bUJ_{(0, 1)}(\frac{\partial b}{\partial t})-J_{F}U(b)\frac{\partial b}{\partial t}-J_{(0, 1)}(\frac{\partial b}{\partial t})U(b))g_{F}(W, J_{F}V)\nonumber\\
&-\frac{b}{4}\frac{\partial b}{\partial t}(\mathcal{K}_{F}(U, V, W)-\mathcal{K}_{F}(W, V, U)-\mathcal{K}_{F}(U, J_{F}V, J_{F}W)+\mathcal{K}_{F}(W, J_{F}V, J_{F}U))\nonumber\\
&+\frac{b}{4}J_{(0, 1)}(\frac{\partial b}{\partial t})(\mathcal{K}_{F}(U, V, J_{F}W)-\mathcal{K}_{F}(W, V, J_{F}U)-\mathcal{K}_{F}(U, J_{F}V, W)+\mathcal{K}_{F}(W, J_{F}V, U));\nonumber\\
&(7)\widetilde{R}(U, V, W, Q)=b^{2}\widetilde{R}_{F}(U, V, W, Q)+\frac{1}{4}(b^{2}g^{*}_{(0, 1)}(db, db)+g^{*}_{F}(db, db))(g_{F}(U, W)g_{F}(V, Q)\nonumber\\
&-g_{F}(V, W)g_{F}(U, Q))+\frac{1}{4}(b^{2}g^{*}_{(0, 1)}(db\circ J_{(0, 1)}, db\circ J_{(0, 1)})+g^{*}_{F}(db\circ J_{F}, db\circ J_{F}))(g_{F}(U, J_{F}W)\nonumber\\
&\times g_{F}(V, J_{F}Q)-g_{F}(V, J_{F}W)g_{F}(U, J_{F}Q))+\frac{1}{4}(b^{2}g^{*}_{(0, 1)}(db\circ J_{(0, 1)}, db)+g^{*}_{F}(db\circ J_{F}, db))\nonumber
\end{align}
\begin{align}
&\times (g_{F}(U, J_{F}W)g_{F}(V, Q)-g_{F}(V, J_{F}W)g_{F}(U, Q)+g_{F}(U, W)g_{F}(V, J_{F}Q)-g_{F}(V, W)\nonumber\\
&\times g_{F}(U, J_{F}Q))-\frac{1}{4}(3U(b)W(b)-2bUW(b)+J_{F}U(b)J_{F}W(b)+2bg^{*}_{F}(\widetilde{\nabla}^{\flat}_{U}W, db))g_{F}(V, Q)\nonumber\\
&-\frac{1}{4}(3U(b)J_{F}W(b)-2bUJ_{F}W(b)+J_{F}U(b)W(b)+2bg^{*}_{F}(\widetilde{\nabla}^{\flat}_{U}W, db\circ J_{F}))g_{F}(V, J_{F}Q)\nonumber\\
&-\frac{1}{4}(3V(b)Q(b)-2bVQ(b)+J_{F}V(b)J_{F}Q(b)+2bg^{*}_{F}(\widetilde{\nabla}^{\flat}_{V}Q, db))g_{F}(U, W)\nonumber\\
&-\frac{1}{4}(3V(b)J_{F}Q(b)-2bVJ_{F}Q(b)+J_{F}V(b)Q(b)+2bg^{*}_{F}(\widetilde{\nabla}^{\flat}_{V}Q, db\circ J_{F}))g_{F}(U, J_{F}W)\nonumber\\
&+\frac{1}{4}(3V(b)W(b)-2bVW(b)+J_{F}V(b)J_{F}W(b)+2bg^{*}_{F}(\widetilde{\nabla}^{\flat}_{V}W, db))g_{F}(U, Q)\nonumber\\
&+\frac{1}{4}(3V(b)J_{F}W(b)-2bVJ_{F}W(b)+J_{F}V(b)W(b)+2bg^{*}_{F}(\widetilde{\nabla}^{\flat}_{V}W, db\circ J_{F}))g_{F}(U, J_{F}Q)\nonumber\\
&+\frac{1}{4}(3U(b)Q(b)-2bUQ(b)+J_{F}U(b)J_{F}Q(b)+2bg^{*}_{F}(\widetilde{\nabla}^{\flat}_{U}Q, db))g_{F}(V, W)\nonumber\\
&+\frac{1}{4}(3U(b)J_{F}Q(b)-2bUJ_{F}Q(b)+J_{F}U(b)Q(b)+2bg^{*}_{F}(\widetilde{\nabla}^{\flat}_{U}Q, db\circ J_{F}))g_{F}(V, J_{F}W)\nonumber\\
&+\frac{1}{4}bQ(b)(\mathcal{K}_{F}(V, W, U)-\mathcal{K}_{F}(U, W, V)-\mathcal{K}_{F}(V, J_{F}W, J_{F}U)+\mathcal{K}_{F}(U, J_{F}W, J_{F}V))\nonumber\\
&-\frac{1}{4}bJ_{F}Q(b)(\mathcal{K}_{F}(V, W, J_{F}U)-\mathcal{K}_{F}(U, W, J_{F}V)-\mathcal{K}_{F}(V, J_{F}W, U)+\mathcal{K}_{F}(U, J_{F}W, V))\nonumber\\
&+\frac{1}{4}bW(b)(\mathcal{K}_{F}(V, Q, U)-\mathcal{K}_{F}(U, Q, V)-\mathcal{K}_{F}(V, J_{F}Q, J_{F}U)+\mathcal{K}_{F}(U, J_{F}Q, J_{F}V))\nonumber\\
&-\frac{1}{4}bJ_{F}W(b)(\mathcal{K}_{F}(V, Q, J_{F}U)-\mathcal{K}_{F}(U, Q, J_{F}V)-\mathcal{K}_{F}(V, J_{F}Q, U)+\mathcal{K}_{F}(U, J_{F}Q, V)).\nonumber
\end{align}
\end{thm}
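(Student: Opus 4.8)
The plan is to derive the theorem as the specialization $B=(0,1)$ of the general twisted--product result of Section~7 (the theorem giving $\widetilde{R}$ for $B\times_b F$), so that essentially no new computation is required beyond recording how the one--dimensional flat base collapses the generic terms. Concretely, I would take $(0,1)$ with metric $-dt^2$ as the non--degenerate base, put $X=Y=\partial/\partial t$, and feed the almost product Koszul forms of the preceding proposition into the master curvature identity of Proposition~4.5,
\begin{align}
\widetilde{R}(X,Y,Z,T)&=X(\widetilde{\mathcal{K}}(Y,Z,T))-Y(\widetilde{\mathcal{K}}(X,Z,T))-\widetilde{\mathcal{K}}([X,Y],Z,T)\nonumber\\
&\quad+\widetilde{\mathcal{K}}(X,Z,\bullet)\widetilde{\mathcal{K}}(Y,T,\bullet)-\widetilde{\mathcal{K}}(Y,Z,\bullet)\widetilde{\mathcal{K}}(X,T,\bullet).\nonumber
\end{align}

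First I would record the simplifications forced by $\dim(0,1)=1$ and the flatness of $-dt^2$: the intrinsic curvature $\widetilde{R}_{(0,1)}$ vanishes, and $\widetilde{\mathcal{K}}_{(0,1)}(\partial/\partial t,\partial/\partial t,\partial/\partial t)=0$ gives $\widetilde{\nabla}^{\flat}_{\partial/\partial t}\partial/\partial t=0$, so every $g^{*}_{(0,1)}(\widetilde{\nabla}^{\flat}_{X}Y,\cdot)$ term appearing in the general formulas drops out. Setting $X=Y=\partial/\partial t$ then turns $XY(b)$ into $\partial^{2}b/\partial t^{2}$, $X(b)$ into $\partial b/\partial t$, and $XJ_{(0,1)}Y(b)$ into $\frac{\partial}{\partial t}(J_{(0,1)}(\partial b/\partial t))$, while $g^{*}_{(0,1)}(db,db)$, $g^{*}_{(0,1)}(db\circ J_{(0,1)},db)$ and the fibre data $g^{*}_{F}$, $\widetilde{R}_{F}$, $\widetilde{\mathcal{K}}_{F}$ are carried over verbatim. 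With these substitutions the vanishing components $(1)$--$(3)$ follow at once from the antisymmetries $\widetilde{R}(X,Y,Z,T)=-\widetilde{R}(Y,X,Z,T)=-\widetilde{R}(X,Y,T,Z)$ together with parts $(1)$ and $(2)$ of the preceding proposition, and the pure--fibre component $(7)$ is copied directly from the general $\widetilde{R}(U,V,W,Q)$ formula, since it contains no base derivatives and only the scalar $g^{*}_{(0,1)}(db,db)$ from the base.

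The cases $(4)$--$(6)$ are where the content lies and where I expect the main difficulty. For $(4)$, $\widetilde{R}(\partial/\partial t,V,\partial/\partial t,W)$, the two contraction products in Proposition~4.5 vanish, each factor pairing a $\partial/\partial t$ slot against two fibre slots, so the answer reduces to the $\partial/\partial t$--derivative of $\widetilde{\mathcal{K}}(V,\partial/\partial t,W)$, producing exactly the $\frac{b}{2}\frac{\partial^{2}b}{\partial t^{2}}g_{F}(V,W)$ and $\frac{b}{2}\frac{\partial}{\partial t}(J_{(0,1)}(\partial b/\partial t))g_{F}(V,J_{F}W)$ terms. The mixed components $(5)$ and $(6)$, $\widetilde{R}(\partial/\partial t,V,W,U)$ and $\widetilde{R}(W,U,\partial/\partial t,V)$, each retain one surviving mixed contraction and form the delicate step: because the almost product Koszul form is not symmetric in its first two arguments, $(5)$ and $(6)$ are \emph{not} negatives of one another, and one must track carefully how the $J_{F}$--twisted and $\mathcal{K}_{F}$ terms arise in $(6)$ yet cancel in $(5)$. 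The heaviest bookkeeping is in assembling $(6)$ and, should one verify $(7)$ directly from Proposition~4.5 rather than by citing the Section~7 theorem, in collecting the many monomials in $U(b),V(b),W(b),Q(b)$ and their $J_{F}$--images into the displayed symmetric combinations while confirming that the cross terms between the $dt$-- and $F$--contractions cancel, so that only $g^{*}_{(0,1)}(db,db)$ and $g^{*}_{F}(db,db)$ remain. Since every one of these is precisely the $B=(0,1)$ reading of the general twisted--product theorem, I would execute the proof by that specialization and merely check the few places where the base terms collapse.
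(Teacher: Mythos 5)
Your overall route---reading the theorem off as the specialization $B=(0,1)$, $g_{B}=-dt^{2}$, $X=Y=\partial/\partial t$ of the general twisted-product curvature theorem of Section 7, with the Koszul forms of the preceding proposition fed into Proposition 4.5---is exactly what the paper intends: the Section 8 statements are given without proof precisely because they are such specializations, and your observations that $\widetilde{R}_{(0,1)}=0$, that $\widetilde{\nabla}^{\flat}_{\partial/\partial t}\partial/\partial t=0$ kills every $g^{*}_{(0,1)}(\widetilde{\nabla}^{\flat}_{X}Y,\cdot)$ term, and that $(7)$ is the verbatim transcription of the pure-fibre case are all correct.

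One step as you describe it would, however, fail if executed literally. In case $(4)$ you assert that \emph{both} contraction products in Proposition 4.5 vanish, so that $\widetilde{R}(\partial/\partial t,V,\partial/\partial t,W)$ reduces to $\partial_{t}\bigl(\widetilde{\mathcal{K}}(V,\partial/\partial t,W)\bigr)$. Only the first contraction, $\widetilde{\mathcal{K}}(\partial/\partial t,\partial/\partial t,\bullet)\widetilde{\mathcal{K}}(V,W,\bullet)$, vanishes. The second, $\widetilde{\mathcal{K}}(V,\partial/\partial t,\bullet)\widetilde{\mathcal{K}}(\partial/\partial t,W,\bullet)$, does not pair a base slot against fibre slots: both factors are nonzero exactly on the fibre part of the contraction, and it equals $\tfrac{1}{2}\partial_{t}(b)\partial_{t}(b)g_{F}(W,V)+\tfrac{1}{2}\partial_{t}(b)J_{(0,1)}(\partial_{t}b)g_{F}(W,J_{F}V)$ (this is the analogue of item $(5)$ in the proof of Theorem 7.3). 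That term is precisely what cancels the first-order pieces $\tfrac{1}{2}(\partial_{t}b)^{2}g_{F}(V,W)+\tfrac{1}{2}\partial_{t}(b)J_{(0,1)}(\partial_{t}b)g_{F}(V,J_{F}W)$ produced when you differentiate $\widetilde{\mathcal{K}}(V,\partial/\partial t,W)=\tfrac{1}{2}\bigl(b\,\partial_{t}(b)g_{F}(V,W)+bJ_{(0,1)}(\partial_{t}b)g_{F}(V,J_{F}W)\bigr)$; see the identical cancellation written out in the proof of case $(4)$ of Theorem 4.8. Dropping it leaves spurious first-derivative terms rather than the stated $\tfrac{b}{2}\partial_{t}^{2}b\,g_{F}(V,W)+\tfrac{b}{2}\partial_{t}(J_{(0,1)}(\partial_{t}b))g_{F}(V,J_{F}W)$. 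If you follow your primary plan and simply substitute into case $(4)$ of the Section 7 theorem, this bookkeeping has already been done and the issue disappears.
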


\vskip 1 true cm
\section{Acknowledgements}

The author was supported in part by  NSFC No.11771070. The author thanks the referee for his (or her) careful reading and helpful comments.

\vskip 1 true cm


\bigskip
\bigskip

\noindent {\footnotesize {\it S. Liu} \\
{School of Mathematics and Statistics, Northeast Normal University, Changchun 130024, China}\\
{Email: liusy719@nenu.edu.cn}

\noindent {\footnotesize {\it T. Wu} \\
{School of Mathematics and Statistics, Northeast Normal University, Changchun 130024, China}\\
{Email: wut977@nenu.edu.cn}

\noindent {\footnotesize {\it Y. Wang} \\
{School of Mathematics and Statistics, Northeast Normal University, Changchun 130024, China}\\
{Email: wangy581@nenu.edu.cn}

\end{document}